\documentclass[a4paper,reqno,11pt]{amsart}
\usepackage{t1enc}
\usepackage[margin=1.0in]{geometry}
\usepackage{ifthen}
\usepackage{graphicx}
\usepackage{amsmath,amstext,amssymb,amsopn,amsthm,color}
\usepackage{enumitem}

\usepackage{commands}

\numberwithin{equation}{section}

\title [Generalized Bessel-Dunkl diffusions]{Generalized Bessel-Dunkl diffusions}
\thanks{}

\subjclass[2020]{Primary 60H10; Secondary 60J60, 60K35, 60B20, 33C52}

\keywords{Generalized Bessel-Dunkl diffusions, singular stochastic differential equations, Weyl chambers, root systems, invariant polynomials, boundary non-sticking, multiple collisions, Dyson Brownian motion, Wishart processes, mean-field limits}
\author{Jacek Małecki}
\address{Department of Mathematics \\ Wroc{\l}aw University of Science and Technology \\ ul. Wybrze{\.z}e Wyspia{\'n}\-skiego 27 \\ 50-370 Wroc{\l}aw, Poland}
\email{jacek.malecki@pwr.edu.pl}

\date{\today}

\begin{document}

\begin{abstract}
We develop a general theory of Bessel-Dunkl type diffusions in Weyl chambers associated with classical root systems.  The class considered here allows time-dependent and configuration-dependent diffusion and drift coefficients, as well as state-dependent singular repulsion coefficients which may vanish on the walls of the chamber.  This includes, in a unified framework, Dyson-type logarithmic particle systems, radial Dunkl processes, squared Bessel and Wishart particle systems, and non-colliding diffusion models.

Using the geometry of the underlying root system and a symmetric-polynomial approach, we establish weak existence in both the strictly positive repulsion and the degenerate regimes.  In the strictly positive regime, we prove that positive repulsion prevents positive-time multiple collisions.  In the degenerate case, we identify geometric conditions which exclude boundary sticking and allow one to recover the genuine singular equation from its interior form.  We also prove non-explosion under a radial linear-growth condition, obtain pathwise uniqueness and strong existence in the non-sticky class under natural Yamada-Watanabe type or locally Lipschitz assumptions, and establish a mean-field convergence theorem for systems of types $\AN$, $\BN$, and $\DN$.

Together, these results build the structural foundations for a systematic theory of generalized Bessel-Dunkl diffusions with non-constant and possibly degenerate coefficients.
\end{abstract}

\maketitle


\section{Introduction}
\label{sec:introduction}

We consider the Euclidean space $\R^\no$, where $\no\in \N_+$, equipped with
the usual inner product and the corresponding Euclidean norm
    \begin{equation*}
        \inner{x}{y} = \sum_{k=1}^{\no} x_k y_k, \qquad \norm{x}^2=\inner{x}{x}, \qquad x,y\in\R^\no .
    \end{equation*}
Let $R$ denote the irreducible finite root system in $\R^\no$, with chosen subsystem of positive roots $R_+$, positive Weyl chamber $\C$, and closed Weyl chamber $\Cc$.  The paper is devoted to the study of existence, uniqueness, boundary behaviour, non-explosion, and mean-field limits for chamber-valued solutions $X=(X_1,\ldots,X_\no)$ of the singular stochastic differential system
    \begin{equation}
        \label{eq:main:SDE:cord}
        dX_k(t) = \sigma_k(t,X(t))\,dB_k(t) + b_k(t,X(t))\,dt + \sum_{\alpha\in R_+} \frac{k_\alpha(t,X(t))\alpha_k}{\langle X(t),\alpha\rangle}\,dt, \qquad k=1,\ldots,\no ,
    \end{equation}
such that $X(t)\in \Cc$, $0\le t<\life{X}$. Here $\life{X}$ denotes the lifetime of the process, which may be a fixed deterministic time horizon or the explosion time.  The driving process $B=(B_1,\ldots,B_\no)$ is an $\no$-dimensional Brownian motion with independent one-dimensional components.  The diffusion matrix is diagonal, $\sigma(t,x)= \operatorname{diag}\bigl(\sigma_1(t,x),\ldots,\sigma_\no(t,x)\bigr)$,  the regular drift is $b=(b_1,\ldots,b_\no)$, and the repulsion coefficients $k_\alpha$ are non-negative functions on $[0,\infty)\times\Cc$.  Equivalently, \eqref{eq:main:SDE:cord} can be written in vector form as
    \begin{equation}
        \label{eq:main:SDE}
        dX(t) = \sigma(t,X(t))\,dB(t) + b(t,X(t))\,dt + \sum_{\alpha\in R_+} \frac{k_\alpha(t,X(t))\,\alpha}{\langle X(t),\alpha\rangle}\,dt .
    \end{equation}
The singular drift in \eqref{eq:main:SDE} acts in the normal directions to the root hyperplanes $\{x\in\R^\no:\langle x,\alpha\rangle=0\}$, $\alpha\in R_+$. Thus the boundary of the Weyl chamber plays the role of a collision boundary. In type $\AN$, this boundary is formed by the hyperplanes $x_i=x_j$.  In types $\BN$ and $\DN$, one additionally encounters walls corresponding to collisions with mirror images and, in type $\BN$, the wall $x_i=0$.  The term ${k_\alpha(t,X(t))\,\alpha}/{\langle X(t),\alpha\rangle}$ is therefore a Bessel-type repulsion from the corresponding wall.  The rank-one case makes this interpretation explicit. Indeed, if $\no=1$ and $R_+=\{1\}$, then $\C=(0,\infty)$ and $\Cc=[0,\infty)$, and \eqref{eq:main:SDE} reduces to
    \begin{equation*}
        dX(t) = \sigma(t,X(t))\,dB(t) + b(t,X(t))\,dt + \frac{k(t,X(t))}{X(t)}\,dt .
    \end{equation*}
In the special case $\sigma\equiv1$, $b\equiv0$, and $k(t,x)\equiv k_0>0$, this is the stochastic differential equation for a
Bessel process of dimension $2k_0+1$ or, equivalently, of index $\nu=k_0-\frac12$ (see \cite{bib:RevuzYor:1999}). Thus the singular terms in \eqref{eq:main:SDE} are Bessel-type repulsions from the walls of the chamber.  The multidimensional root-system case replaces the single boundary point $0$ by the family of root hyperplanes $\{\langle x,\alpha\rangle=0\}$, $\alpha\in R_+$.

\medskip

We shall call solutions of \eqref{eq:main:SDE} generalized Bessel-Dunkl diffusions.  The word ``Bessel'' reflects the rank-one reduction above, while ``Dunkl'' reflects the root-system geometry of the singular drift.  In the classical specialization $\sigma\equiv I$, $b\equiv0$, $k_\alpha(t,x)\equiv k(\alpha)$, where $k$ is a Weyl-invariant multiplicity function, one recovers the usual radial Dunkl processes, also known as multivariate Bessel processes.

\medskip

To the best of our knowledge, equation \eqref{eq:main:SDE} has not been studied before at this level of generality.  The diffusion coefficients may depend on time and on the whole configuration, the regular drift may be time-inhomogeneous and configuration-dependent, and the singular coefficients $k_\alpha$ are not assumed to be constant root multiplicities.  Moreover, the coefficients $k_\alpha$ may vanish on the corresponding wall $\{\langle x,\alpha\rangle=0\}$.  This last feature is essential for degenerate models, including squared Bessel and Wishart-type particle systems, but it also creates the main analytic difficulty: the singular drift may no longer prevent the process from interacting nontrivially with the boundary.

\medskip

In this sense, equation \eqref{eq:main:SDE} provides a natural starting point for a systematic theory of generalized Bessel-Dunkl diffusions.  The classical radial Dunkl processes correspond to the special case $\sigma\equiv I$, $b\equiv0$, and $k_\alpha(t,x)\equiv k(\alpha)$, where $k$ is a Weyl-invariant multiplicity function. The present framework allows one to move beyond that setting by treating time-dependent, state-dependent, and possibly degenerate singular coefficients in the same root-geometric formulation.

\medskip

A key feature of our approach is that it uses the geometry of the underlying root system in an essential way.  The chamber $\Cc$ is not treated merely as an ordered state space with singular pairwise interactions, but as a fundamental domain for the action of the corresponding reflection group.  The boundary faces, their normal directions, and the singular drift are all encoded by the root configuration.  This geometric structure allows us to choose adapted invariant coordinates, to desingularise the equation, and to analyse boundary faces simultaneously rather than one collision hyperplane at a time.  This is one of the reasons why the results can be obtained in the present generality with time-dependent and configuration-dependent coefficients and with repulsion coefficients that may vanish on the walls.  For background on reflection groups, Weyl chambers and invariant polynomial coordinates, see \cite{bib:Humphreys:1990,bib:Chevalley:1955}.

\medskip

The main problem is to construct a chamber-valued process satisfying the singular equation and to understand what happens on the boundary. 
When $k_\alpha$ is strictly positive on the wall $\{\langle x,\alpha\rangle=0\}$, the repulsion is Bessel-type and forces the corresponding singular drift to be integrable along the trajectory.  In particular, the process cannot spend positive Lebesgue time on that wall. Moreover, in the strictly positive regime, the geometry of the root system prevents positive-time multiple collisions, that is, simultaneous hits of non-orthogonal walls. When $k_\alpha$ is allowed to vanish at the wall, this automatic non-sticking property may fail.  We therefore distinguish between a strictly positive repulsion regime and a degenerate regime.  In the degenerate case we first construct solutions of the natural interior equation, and then give a geometric criterion which excludes hidden boundary measures and allows us to recover the genuine singular equation.

\medskip

The method is based on invariant coordinates associated with the root system. The singular equation in the Weyl chamber is transformed into a non-singular system for a suitable family of invariant polynomials.  After solving this regularized system, we return to the closed Weyl chamber through the inverse invariant map and prove that the obtained process satisfies the original singular SDE.  In the degenerate case, the boundary analysis is carried out by means of face detectors, which measure the distance to boundary faces in admissible normal directions.  These detectors allow us to control local times and to decide whether additional finite-variation terms can be supported on the boundary.

\medskip

The main results of the paper are the following.  First, we prove weak existence in the strictly positive repulsion regime and show that positive repulsion prevents positive-time multiple collisions.  Second, we prove weak existence in the degenerate regime, initially for an interior equation in which the singular terms are switched off on the corresponding walls.  Third, under a facewise non-sticking condition, we show that the constructed solution spends zero Lebesgue time on the relevant boundary faces and therefore solves the genuine singular equation \eqref{eq:main:SDE}.  Fourth, we establish a radial linear-growth condition which excludes explosion.  Fifth, we prove pathwise uniqueness and strong existence in the non-sticky class, under either Yamada-Watanabe type assumptions or locally Lipschitz Euclidean assumptions. Finally, we prove a mean-field convergence theorem for systems of types $\AN$, $\BN$, and $\DN$.  This last result connects the present work with the literature on mean-field limits and large-dimensional eigenvalue dynamics, including \cite{bib:RogersShi:1993,bib:BenArousGuionnet:1997,bib:CabanalDuvillardGuionnet:2001,bib:MaleckiPerez:2022,bib:SongYaoYuan:2020,bib:SongYaoYuan:2021}.

\medskip

We now recall the main sources of motivation for such systems.  The first classical example is Dyson's Brownian motion \cite{bib:Dyson:1962}.  Dyson showed that the eigenvalues of Brownian motion on spaces of random matrices evolve as interacting one-dimensional particles with logarithmic electrostatic repulsion.  For real symmetric, complex Hermitian, and quaternionic self-dual matrix Brownian motions one obtains the classical parameters $\beta=1,2,4$.  The same equation, however, is meaningful for arbitrary positive values of the repulsion parameter. This model is fundamental in random matrix theory and statistical physics, where it appears as the dynamical version of the logarithmic Coulomb gas or log-gas; see \cite{bib:Mehta:2004,bib:AndersonGuionnetZeitouni:2010,bib:Forrester:2010}. For general beta ensembles and related diffusion limits, see also \cite{bib:DumitriuEdelman:2002,bib:RamirezRiderVirag:2011}. The connection between eigenvalue diffusions and limiting spectral laws was also studied in \cite{bib:Chan:1992,bib:RogersShi:1993}. The same inverse-square and logarithmic interaction structures are closely related to Calogero-Moser-Sutherland systems; see \cite{bib:Calogero:1971,bib:Sutherland:1971,bib:Sutherland:1972, bib:Moser:1975,bib:OlshanetskyPerelomov:1976,bib:OlshanetskyPerelomov:1981}.

\medskip

After Dyson, singularly interacting Brownian particles were studied from several complementary perspectives.  Rogers and Shi \cite{bib:RogersShi:1993} analysed interacting Brownian particles and their connection with the Wigner law.  C\'e{}pa and L\'e{}pingle \cite{bib:CepaLepingle:1997} developed a general framework for diffusing particles with electrostatic repulsion, using stochastic equations with multivalued drift.  Their work is one of the foundational references for one-dimensional ordered particle systems in which the singular repulsion is understood as a boundary mechanism preventing or controlling collisions.  The method of multivalued stochastic differential equations is very effective in that setting, but it does not seem to extend in a direct way to the present root-system framework, where the singularities are organized by all positive roots, the coefficients may depend on the full configuration, and the repulsion may degenerate on boundary faces.  In contrast, the approach of the present paper is intrinsically geometric: it uses the Weyl chamber, reflection symmetries, invariant polynomials, and face structure associated with the root system. Later developments include, among others, the work of Graczyk and Małecki \cite{bib:GraczykMalecki:2014,bib:GraczykMalecki:2019}, where strong solutions and squared Bessel particle systems were studied under assumptions allowing weak or degenerate repulsion.

\medskip

A second important origin of these processes comes from conditioning particles not to collide.  The Karlin-McGregor formula \cite{bib:KarlinMcGregor:1959} gives determinantal transition densities for one-dimensional Markov processes killed at the first collision time.  Applying a Doob $h$-transform \cite{bib:Doob:1984} with the Vandermonde determinant produces non-colliding Brownian motion and recovers the same logarithmic drift as in Dyson's model.  Grabiner \cite{bib:Grabiner:1999} extended this viewpoint to Brownian motion in Weyl chambers and related it to random matrices.  This conditioning interpretation is one of the main reasons why Weyl chambers and root systems are natural state spaces for such diffusions. Related constructions and path-transform interpretations appear in \cite{bib:BianeBougerolOConnell:2005,bib:Warren:2007, bib:KatoriTanemura:2007}.

\medskip

Root systems also enter through Dunkl theory. Dunkl operators were introduced in \cite{bib:Dunkl:1989}; see also the survey
\cite{bib:Roesler:2003}.  The associated Markov processes were studied by
Rösler and Voit \cite{bib:RoeslerVoit:1998}; for further developments on
Dunkl processes and their radial parts, see
\cite{bib:ChybiryakovGallardoYor:2008,bib:Chybiryakov:2008}. The radial parts of Dunkl processes are diffusions in Weyl chambers with drift of the form
    \begin{equation*}
        \sum_{\alpha\in R_+} k(\alpha)\, \frac{\alpha}{\langle x,\alpha\rangle},
    \end{equation*}
where the multiplicity function $k$ is constant on Weyl-group orbits.  Radial Dunkl processes and their boundary hitting properties were further investigated, for example, by Demni \cite{bib:Demni:2009}.  These processes form the classical non-degenerate Bessel-Dunkl class.  The present paper keeps the same geometric singular structure, but allows the coefficients to be time-dependent, configuration-dependent, and possibly degenerate.

\medskip

Matrix-valued stochastic processes provide another major family of examples. Bru introduced Wishart processes \cite{bib:Bru:1991}, which are matrix-valued analogues of squared Bessel processes.  The eigenvalues of such processes satisfy singular SDEs of Laguerre or Wishart type.  In the complex case, K\"o{}nig and O'Connell \cite{bib:KonigOConnell:2001} identified the eigenvalues of the Laguerre process with non-colliding squared Bessel processes.  More recent $\beta$-extensions, such as beta-Wishart particle systems \cite{bib:JourdainKahn:2022}, further illustrate the need for a theory that does not rely on a fixed matrix representation. Further developments on Wishart processes and their eigenvalues include \cite{bib:DonatiMartinDoumercMatsumotoYor:2004, bib:GraczykMaleckiMayerhofer:2018,bib:SongYaoYuan:2020}.

\medskip

The importance of studying \eqref{eq:main:SDE} in this generality is therefore twofold.  On the one hand, it gives a unified stochastic framework for models coming from random matrix theory, statistical physics, Dunkl theory, and conditioned non-colliding particle systems.  On the other hand, it isolates the analytic mechanisms responsible for existence, uniqueness, non-explosion, and boundary non-sticking in singular multidimensional diffusions.  This is especially relevant for models in which the repulsion is weak or degenerate, because in such cases the behaviour at the boundary cannot be inferred from the classical non-colliding theory.

\medskip

The paper is organized as follows.  Section~\ref{sec:assumptions-results} states the assumptions and the main results.  Section~\ref{sec:preliminaries} recalls the necessary notation concerning root systems, Weyl chambers, symmetric polynomials, and collision faces.  The next sections develop the invariant-polynomial construction, analyse collisions and integrability of the singular drift, and prove existence in the non-degenerate and degenerate regimes.  The uniqueness results are proved in Section~\ref{sec:uniqueness}. Section~\ref{sec:mean-field-proof} contains the proof of the mean-field convergence theorem.  The final section discusses examples, including Dyson-C\'e{}pa-L\'e{}pingle logarithmic systems, generalized particle systems introduced in \cite{bib:GraczykMalecki:2014}, squared Bessel and Wishart particle systems, and non-colliding Brownian bridge models.


\section{Assumptions and Results}
\label{sec:assumptions-results}

This section is devoted to presenting the main results of the paper together with an explanation of the relevant assumptions. The proofs, as well as a more detailed description of the notation, are postponed to later sections, and here we introduce only the objects necessary to state the results. The assumptions are presented in groups, and this division is closely related to the theorem or proposition to which they apply.

Throughout this section, as mentioned in the Introduction, $R$ denotes one of the classical root systems considered in the paper, $R_+$ is the chosen set of positive roots, $\C$ is the open Weyl chamber, and $\Cc$ is its closure. The boundary is denoted by $\Cb$. The coefficients in \eqref{eq:main:SDE} are always evaluated on $[0,\infty)\times\Cc$. See Section~\ref{sec:preliminaries} for a more detailed introduction to and description of these objects.

\medskip

We begin with two structural assumptions that will be used throughout the paper. The first concerns continuity of the non-singular coefficients and the repulsion coefficients, while the second concerns compatibility of the diffusion coefficient with the boundary geometry of the Weyl chamber.

\medskip

\begin{enumerate}[label=(C\arabic*), ref=(C\arabic*)]
    \item \label{ass:cont}
    For every $i=1,\ldots,N$ and every $\alpha\in R_+$, the functions
    \begin{align}
        \sigma_i &: [0,\infty)\times\Cc\longrightarrow \R,
        &
        b_i &: [0,\infty)\times\Cc\longrightarrow \R,
        &
        k_\alpha &: [0,\infty)\times\Cc\longrightarrow [0,\infty)
        \label{eq:ass:continuity-functions}
    \end{align}
    are continuous.

    \item \label{ass:sigma}
    For every $t\ge0$ and every $\alpha\in R_+$ such that $|\alpha|^2=2$, write
    \begin{align*}
        \operatorname{supp}(\alpha)
            &= \{r\in\{1,\ldots,N\}:\alpha_r\neq0\} \\
            &= \{i,j\}.
    \end{align*}
    We assume that
    \begin{align}
        \sigma_i^2(t,x)-\sigma_j^2(t,x)
            &\longrightarrow 0
            \qquad\text{as }x\in\Cc,\ \langle x,\alpha\rangle\downarrow0.
            \label{eq:ass:sigma}
    \end{align}
\end{enumerate}

Assumption~\ref{ass:cont} is standard. It is imposed on the non-singular coefficients $\sigma_i$, $b_i$, and $k_\alpha$, and the same type of continuity hypothesis already appears in the work of Graczyk and Małecki \cite{bib:GraczykMalecki:2014}. In their setting, the diffusion coefficient is scalar and depends on a single space variable, so continuity of $\sigma:\R\to\R$ is sufficient.

Assumption~\ref{ass:sigma} is specific to, and natural in, the present level of generality. Here the diffusion coefficient is diagonal with components $\sigma=(\sigma_1,\ldots,\sigma_N)$ and depends on the whole configuration $x\in\Cc$, so one needs an additional continuity-type compatibility condition at long-root walls. For $\alpha=e_i-e_j$ or $\alpha=e_i+e_j$, it requires that $\sigma_i^2(t,x)$ and $\sigma_j^2(t,x)$ become asymptotically equal as $\langle x,\alpha\rangle\downarrow0$ within $\Cc$. In particular, it is not an ellipticity assumption. In type $A_{N-1}$ it reduces to the usual requirement that colliding particles have the same variance. In type $B_N$ it is imposed only on long roots, so it does not force $\sigma_i$ to vanish at the short-root wall $x_i=0$. This condition is precisely what prevents the It\^o correction terms in the invariant-coordinate equations from creating singular boundary contributions.

\medskip


\subsection{Strictly positive repulsion}
\label{subsec:positive-repulsion}

The first existence regime we consider is the one in which each repulsion coefficient is strictly positive on its corresponding wall. This is the natural analogue of the one-dimensional Bessel-type situation: the singular drift is strong enough to prevent the process from spending positive Lebesgue time on the boundary, although instantaneous visits to the boundary may still occur.

\medskip

\begin{enumerate}[label=(A\arabic*), ref=(A\arabic*)]
    \item \label{ass:k:positive}
    For every $\alpha\in R_+$, every $t\ge0$, and every $x\in\Cc$ satisfying
    $\langle x,\alpha\rangle=0$, we assume that
    \begin{align}
        k_\alpha(t,x) &> 0.
        \label{eq:k:positive}
    \end{align}
\end{enumerate}

Assumption~\ref{ass:k:positive} is local to the wall associated with the root $\alpha$. Away from this wall, the quotient $k_\alpha(t,x)/\langle x,\alpha\rangle$ is regular, so no lower bound on $k_\alpha$ is needed there.

\begin{theorem}[Existence in the strictly positive regime]
    \label{thm:existence}
    Let $x_0\in\Cc$. Assume \ref{ass:cont}, \ref{ass:sigma}, and \ref{ass:k:positive}. Then there exists a weak solution $X=(X_1,\ldots,X_N)$ of \eqref{eq:main:SDE}, started from $x_0$, which takes values in $\Cc$ up to its lifetime $\life{X}>0$. More precisely, there exist a filtered probability space carrying an $N$-dimensional Brownian motion $B$ and a continuous $\Cc$-valued semimartingale $X=(X_1,\ldots,X_N)$ such that
    \begin{align}
        X(t)
            & = x_0 + \int_0^t \sigma(s,X(s))\,dB(s) + \int_0^t b(s,X(s))\,ds  +\sum_{\alpha\in R_+}\alpha \int_0^t \frac{k_\alpha(s,X(s))}{\langle X(s),\alpha\rangle}\,ds
            \label{eq:thm:existence-sde}
    \end{align}
    for every $t\in[0,\life{X})$. Moreover, the singular integrals in \eqref{eq:thm:existence-sde} are finite on compact subintervals of $[0,\life{X})$, with the usual improper interpretation at the initial time if $x_0\in\Cb$.
\end{theorem}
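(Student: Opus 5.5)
The plan is to pass to invariant polynomial coordinates, in which the equation is no longer singular, solve the resulting regular system by a standard weak-existence argument, and then pull the solution back to the closed chamber through the inverse invariant map.

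\textbf{Step 1: invariant coordinates.} Let $W$ be the Weyl group of $R$. Choose a basic system of $W$-invariant polynomials $p=(p_1,\ldots,p_N)$, of the kind recalled in Section~\ref{sec:preliminaries}:
\begin{itemize}
\item in type $A_{N-1}$, the elementary symmetric polynomials (or power sums) in $x$;
\item in types $B_N$ and $D_N$, the same polynomials in $x_1^2,\ldots,x_N^2$, with the Pfaffian-type invariant $x_1\cdots x_N$ in type $D_N$.
\end{itemize}
The map $p:\Cc\to p(\Cc)$ is a homeomorphism onto a closed set, and its restriction to $\C$ is a diffeomorphism. I would formally apply It\^o's formula to $Y=p(X)$ for a solution $X$ of \eqref{eq:main:SDE}. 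The drift of $Y_m$ is
\begin{align*}
\sum_i b_i\,\partial_i p_m+\tfrac12\sum_i \sigma_i^2\,\partial_{ii}p_m+\sum_{\alpha\in R_+}\frac{k_\alpha\,\langle\alpha,\nabla p_m\rangle}{\langle x,\alpha\rangle}.
\end{align*}

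\textbf{Step 2: removing the singularity.} Since $p_m$ is $W$-invariant, $\langle\alpha,\nabla p_m\rangle$ is anti-invariant under the reflection $s_\alpha$. Hence it vanishes on $\{\langle x,\alpha\rangle=0\}$, and $\langle\alpha,\nabla p_m\rangle/\langle x,\alpha\rangle$ is a polynomial. The singular drift therefore becomes a continuous function of $(t,x)$. The diffusion matrix $\sum_i\sigma_i^2\,\partial_ip_m\,\partial_ip_l$ is continuous as well, as is the rest of the drift. I would express all coefficients as functions of $(t,y)$ through the inverse map $p^{-1}:p(\Cc)\to\Cc$; continuity of $p^{-1}$ keeps them continuous. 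Assumption~\ref{ass:sigma} is not needed to remove the singular drift itself. It enters when writing the martingale part of $Y$ through a square root $a(t,y)^{1/2}$ of the diffusion matrix $a(t,y)$, which must depend continuously on $y$ up to the boundary. Near a long-root wall, the It\^o correction and cross-variation terms produce differences of the form $(\sigma_i^2-\sigma_j^2)$ times symmetric-function expressions divided by $x_i\mp x_j$. Condition~\eqref{eq:ass:sigma} is what lets one choose these quotients continuously (a continuous Gram-type factorization, e.g.\ $a=J\Sigma^2J^\top$ with $J$ the Jacobian of $p$), possibly after reorganizing the equation by symmetrization. This gives a regular SDE for $Y$ with continuous coefficients.

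\textbf{Step 3: solving and staying in $p(\Cc)$.} Extend the coefficients continuously, with compact support, to all of $\mathbb{R}^N$. Skorokhod's weak existence theorem then gives a solution up to the exit time from large balls, which yields a lifetime $\life{X}>0$. The main obstacle is showing that $Y$ stays in $p(\Cc)$, which is the image of a closed chamber and whose boundary is contained in the discriminant hypersurface $\{\Delta=0\}$, where $\Delta=\prod_{\alpha}\langle x,\alpha\rangle^2$. Here I would use~\ref{ass:k:positive}. I would show that the semimartingale $\Delta(p^{-1}(Y))$, or more precisely $\log$ of face detectors $\langle X,\alpha\rangle$, cannot cross zero: near a wall, the drift of $\langle X,\alpha\rangle$ is $k_\alpha|\alpha|^2/\langle X,\alpha\rangle$ plus bounded terms. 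This is a Bessel-type comparison with positive dimension parameter. A McKean-type argument applied to $\Delta$, or a one-dimensional comparison with a Bessel process of index $>-1$ locally, should keep the process in the closure and make the local time at zero vanish.

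\textbf{Step 4: back to the chamber.} Set $X=p^{-1}(Y)$. In the open chamber, It\^o's formula gives \eqref{eq:thm:existence-sde} locally. Integrability of $k_\alpha/\langle X,\alpha\rangle$ follows from the semimartingale decomposition of $\langle X,\alpha\rangle$: apply Tanaka's formula to $|\langle X,\alpha\rangle|$, and use that $k_\alpha$ is bounded below near the wall, so that $\int \mathbf 1\{\langle X,\alpha\rangle=0\}\,ds=0$. Monotone convergence then identifies the finite-variation part and yields finiteness on compacts, with the improper interpretation at $t=0$ when $x_0\in\Cb$.
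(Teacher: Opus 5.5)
Your Steps 1--2 are sound. The anti-invariance of $\langle\alpha,\nabla p_m\rangle$ under $s_\alpha$ is a clean uniform substitute for the paper's explicit $F_{\alpha,k}$, $G_{\alpha,N}$ computations. Step~3, however, has a genuine gap: you never deal with \emph{multiple collisions}, i.e.\ points where two non-orthogonal roots vanish at once, and neither of your mechanisms survives there.

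\begin{itemize}
\item Your claim that near a wall the drift of $\langle X,\alpha\rangle$ is $k_\alpha|\alpha|^2/\langle X,\alpha\rangle$ plus bounded terms fails at such points. For $\alpha=e_1-e_2$, $\beta=e_2-e_3$ the drift contains $-k_\beta/(X_2-X_3)$, which is unbounded and negative.
\item At such points $\partial p(\Cc)$ is not a smooth hypersurface; near a triple collision in $A_2$ it is the cusp of the cubic discriminant. So there is no one-dimensional Bessel comparison to set up.
\item The McKean argument on $\log\Delta$ proves too much: it would show that walls are never reached. That is false under \ref{ass:k:positive} alone. Already for $N=2$, $\sigma\equiv1$, $b\equiv0$, $0<k<1/2$, the gap is a multiple of a Bessel process of dimension $2k+1<2$, which hits $0$.
\item Exit from $p(\Cc)$ has to be detected by functions that are smooth in $y$ across $\partial p(\Cc)$ and become negative outside it. The quantity $\langle p^{-1}(Y),\alpha\rangle$ is neither defined after an exit nor known to be a semimartingale at the wall. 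In general $\Delta$ alone does not cut out $p(\Cc)$ either.
\end{itemize}

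The missing ingredient is the paper's Proposition~\ref{prop:no_multi_collisions:first}, proved on $[0,\tau_{\mathcal K}]$ to avoid circularity. McKean's argument is applied not to $\Delta$ but to $-\ln q_m(X)$, where $q_m$ is the product over clusters $\varphi$ of length $m$ of $S_\varphi(x)=\sum_{\alpha\in R_+(\varphi)}\langle x,\alpha\rangle^2$. The singular drift is desingularised by the reflection identity \eqref{eq:inv:master}. Near a maximal cluster the It\^o correction is shown to be non-positive: because the variances coincide on the colliding block, it reduces to a common factor times $2\sum_k h_k^2-S_\varphi\sum_{\alpha\in R_+(\varphi)}|\alpha|^2$, with $h_k=\sum_{\alpha\in R_+(\varphi)}\alpha_k\langle x,\alpha\rangle$. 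This expression is $\le0$ by an explicit computation in each cluster type.

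That reduction is where \ref{ass:sigma} is indispensable. The use you assign to it is not needed: the martingale part of $Y$ is $J\Sigma\,dB$, which is continuous with no square root, and $\tfrac12\sum_i\sigma_i^2\partial_{ii}p_m$ contains no division by $x_i\mp x_j$.

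Once the active roots at a putative exit point are known to be pairwise orthogonal, Proposition~\ref{prop:no-exit-K-classical} gives the correct version of your Bessel comparison. Locally $\mathcal K=\{r_1\ge0,\ldots,r_q\ge0\}$, with smooth local discriminants $r_\ell(w(x))=c_\ell(x)\langle x,\alpha_\ell\rangle^2$, $c_\ell>0$. These satisfy $d\langle r_\ell(U)\rangle\le C|r_\ell(U)|\,dt$ and have drift $\ge c>0$ on $\{r_\ell\le0\}$, so Tanaka's formula excludes exit.

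The same gap affects Step~4. Making $\langle X,\alpha\rangle$ a semimartingale at a wall by smoothing (the paper's $\psi_\varepsilon$ and $\sqrt{\cdot+\varepsilon}$ arguments) needs the other singular terms to stay locally bounded. So does recovering the individual coordinates from symmetric combinations (Proposition~\ref{prop:root:smooth}).

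Finally, if $x_0$ is itself a multiple collision point, none of these local arguments can start. The paper obtains a $\mathcal K$-valued solution by approximation from interior starting points (Proposition~\ref{prop:boundary-starts-by-approximation}). It then uses instantaneous diffraction (Proposition~\ref{prop:diffraction}) and a one-sided limiting argument to reach $t=0$.
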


By continuity of $k_\alpha$ and Assumption~\ref{ass:k:positive}, the quotient $k_\alpha(t,x)/\langle x,\alpha\rangle$ extends continuously to the wall $\{\langle x,\alpha\rangle=0\}$ as the value $+\infty$ in the extended real sense. Consequently, the finiteness of the singular drift integral in Theorem~\ref{thm:existence} implies that the process cannot spend positive Lebesgue time on that wall. 

The proof of Theorem~\ref{thm:existence} is based on the
invariant-coordinate map.  One first transforms the singular equation in the
chamber into a non-singular system for the basic invariant polynomials.  After
solving that system, one returns to the closed chamber via the inverse
invariant map and verifies that the recovered process satisfies
\eqref{eq:main:SDE}.  The zero-occupation property discussed above then
follows from the wall positivity of $k_\alpha$ and the finiteness of the
corresponding singular drift integrals.

The proof also yields an additional geometric conclusion.  The crucial point
is that, in the strictly positive regime, the process cannot hit multiple
collision strata after time zero.  This property is used in the identification
of the singular drift, and, once Theorem~\ref{thm:existence} is proved, it
becomes a property of the constructed solution.  Due to its importance, we
state it separately.

\begin{theorem}[Absence of positive-time multiple collisions]
    \label{thm:no-multiple-collisions-positive}
    Let $x_0\in\Cc$.  Assume \ref{ass:cont}, \ref{ass:sigma}, and \ref{ass:k:positive}.  Then the weak solution $X$ constructed in Theorem~\ref{thm:existence} has no multiple collisions at positive times.  More precisely, for every two different roots $\alpha,\beta \in R_+$ such that $\sum_{i=1}^\no \alpha_i^2\beta_i^2>0$ we have 
        \begin{equation*}
            \inner{X(t)}{\alpha}+\inner{X(t)}{\beta}>0, \qquad 0<t<\life{X}\qquad \textrm{a.s.} 
        \end{equation*}
\end{theorem}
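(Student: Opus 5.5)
We plan to prove Theorem~\ref{thm:no-multiple-collisions-positive} by a Bessel-dimension comparison on the two-dimensional collision stratum. The condition $\sum_i\alpha_i^2\beta_i^2>0$ means that $\alpha$ and $\beta$ share a coordinate, so they are non-orthogonal, or they coincide up to scaling on a common support. The intersection $\{\langle x,\alpha\rangle=\langle x,\beta\rangle=0\}$ is then a codimension-two face $F$ of the chamber. Let $W_F$ be the rank-two parabolic subgroup generated by the reflections in $\alpha$ and $\beta$, and let $R_F^+$ be its positive roots: all positive roots vanishing on $F$, of type $A_2$, $B_2$, or $A_1\times A_1$ with overlapping support in the $B/D$ cases. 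Near $F$ we use the localized "radial" functional
\[
\rho(x)=\sum_{\gamma\in R_F^+}\langle x,\gamma\rangle^2 ,
\]
which is comparable to the squared distance to $F$ in $\Cc$. We also use the associated $W_F$-invariant polynomials from the invariant-coordinate construction underlying Theorem~\ref{thm:existence}. It suffices to show that, for each stopping time $\tau$ at which $X(\tau)$ is at positive distance from other strata, $\rho(X)$ does not hit $0$ on $(\tau,\tau')$, where $\tau'$ is the exit time of a small neighbourhood. A countable covering of $(0,\life{X})$ by such random intervals then gives the claim, since a hit of $F$ at a positive time would occur after some time at which the process is off $F$.

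\emph{Step 1: Itô formula for $\rho$.} We apply Itô's formula to $\rho(X)$ using equation \eqref{eq:thm:existence-sde}. The singular integrals are finite by Theorem~\ref{thm:existence}, and $\rho$ is a polynomial, so this is legitimate. The singular drift contribution is
\[
2\sum_{\gamma\in R_F^+}\langle x,\gamma\rangle\sum_{\delta\in R_+}\frac{k_\delta\langle\gamma,\delta\rangle}{\langle x,\delta\rangle}.
\]
For $\delta\notin R_F^+$ this term is bounded near $F$ away from other strata. For $\delta\in R_F^+$, the standard root-system identity (the $W_F$-invariance of $\rho$ and the fact that $\sum_\gamma\langle x,\gamma\rangle\gamma$ is proportional to the projection of $x$ onto $\mathrm{span}\,R_F^+$) yields $c\,k_\delta\,|\delta|^2$ up to bounded errors. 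The martingale part has quadratic variation $4\sum_i\sigma_i^2(\partial_i\rho)^2$. Assumption~\ref{ass:sigma} lets us replace $\sigma_i^2$ by a common value $s^2(t,x)$ on the relevant coordinates up to an $o(1)$ error as $x\to F$. This yields
\[
d\rho=2s\sqrt{c'\rho}\,d\beta_t+\bigl(c''\textstyle\sum_{\delta\in R_F^+}k_\delta+s^2 d_F+o(1)\bigr)dt+O(1)\,dt,
\]
which is a time-changed squared Bessel process of "dimension" $d_F+\kappa\sum k_\delta/s^2$. Here $d_F=2$ is the codimension of $F$.

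\emph{Step 2: Comparison.} By \ref{ass:k:positive} and continuity, $\sum_{\delta\in R_F^+}k_\delta\ge\epsilon>0$ near $F$. Hence the effective dimension exceeds $2$ in a neighbourhood, uniformly on compact time intervals. The main obstacle is a possible degeneracy $s\to0$ at $F$. If $s$ is small the dimension only gets larger, but the time change can then stall. We handle this by working directly with $\log\rho$. Itô's formula gives
\[
d\log\rho=\frac{d\rho}{\rho}-\frac{d\langle\rho\rangle}{2\rho^2},
\]
and the drift of $\log\rho$ is at least $\bigl(c\epsilon+(d_F-2)s^2\bigr)/\rho-C\ge -C$ near $F$. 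Before exiting the neighbourhood, $\log\rho(X)$ is therefore a local martingale plus a drift bounded below. It cannot reach $-\infty$ in finite time, since on the event of a hit, the time-changed local martingale would have to diverge to $-\infty$, which is impossible. This excludes hitting $F$ and proves the claim.

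The delicate point is Step 1: checking that the cross terms from roots in $R_F^+$ cancel exactly into the constant $c\,k_\delta$. This needs the root identities for $A_2$ and $B_2$, the latter with unequal lengths and the asymmetric assumption \ref{ass:sigma}. It also needs a check that the $o(1)$ corrections from \ref{ass:sigma} are genuinely dominated by $\epsilon$ after multiplying by $1/\rho$. For the $B_2$ case we expect to have to weight long and short roots differently in $\rho$ to make this work.
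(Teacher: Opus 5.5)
Your Lyapunov mechanism is the same as the paper's. Proposition~\ref{prop:no_multi_collisions:first} also applies a McKean argument to $-\ln$ of sums $\sum_{\gamma\in R_+(\varphi)}\langle x,\gamma\rangle^2$: the It\^o term has a good sign and the $k_\delta$-terms are strictly favourable. The gap is your reduction to rank two. In the closed chamber, the set $\{\langle x,\alpha\rangle=\langle x,\beta\rangle=0\}$ is in general not a codimension-two face. In type $B_N$ ($N\ge3$) with $\alpha=e_1$, $\beta=e_1-e_2$, it is $\{0\}$. In type $A$ with $\alpha=e_1-e_2$, $\beta=e_1-e_4$, the ordering forces $x_1=x_2=x_3=x_4$. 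Even for $\alpha=e_1-e_2$, $\beta=e_2-e_3$, the set to be avoided contains quadruple-collision points $y$. A priori a path can reach such a $y$ without first visiting a pure triple collision, so these points must be excluded directly.

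Near such a $y$ your $\rho=\sum_{1\le i<j\le 3}(x_i-x_j)^2$ fails. The root $\delta=e_3-e_4$ vanishes at $y$, is not in $R_F^+$, and is not orthogonal to it. It contributes $6k_\delta(x_3-\bar x)/(x_3-x_4)\le0$ to the drift of $\rho$, where $\bar x=\frac13(x_1+x_2+x_3)$. After division by $\rho$ this is unbounded below when $x_3-x_4$ is much smaller than $\operatorname{dist}(x,F)$. So the drift of $\log\rho$ is not bounded below near $y$. Your localization ``at positive distance from other strata'' cannot be arranged, because the hitting point itself lies on the deeper stratum.

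The repair is to let the hitting point choose the subsystem; this is what the paper does by working at $y\in\partial C^{(m)}$ with a maximal cluster. Given a positive-time hit at a (random) point $y$, handled by a countable cover as in Proposition~\ref{prop:no-exit-K-classical}, let $\Phi\subseteq S(y)=\{\gamma\in R_+:\langle y,\gamma\rangle=0\}$ be the block containing $\alpha,\beta$. This is the irreducible component, or the zero pair $\{e_i\pm e_j\}$ in type $D$. Then use $\rho_\Phi=\sum_{\gamma\in\Phi}\langle x,\gamma\rangle^2$.
\begin{itemize}
\item Roots in $S(y)\setminus\Phi$ are orthogonal to $\operatorname{span}\Phi$ and drop out exactly.
\item Roots outside $S(y)$, and the regular drift $b$, contribute $O(\rho_\Phi^{1/2})$ to the drift of $\rho_\Phi$ near $y$. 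They are therefore $O(\rho_\Phi^{-1/2})$ in the drift of $\log\rho_\Phi$, not $O(1)$ as your display suggests, and this suffices.
\item By Schur's lemma, $\sum_{\gamma\in\Phi}\gamma\gamma^{T}=c_\Phi P_\Phi$, with $P_\Phi$ the orthogonal projection onto $\operatorname{span}\Phi$ (it is directly $2I$ for $D_2$). So the singular terms give exactly $2c_\Phi\sum_{\delta\in\Phi}k_\delta$, and no reweighting is needed in $B_2$.
\item The long roots of $\Phi$ and \ref{ass:sigma} give a common variance $s^2$ on $\operatorname{supp}\Phi$.
\end{itemize}
The drift of $\log\rho_\Phi$ is then $\bigl(c_\Phi(\operatorname{rank}\Phi-2)s^2+2c_\Phi\sum_{\delta\in\Phi}k_\delta+o(1)\bigr)/\rho_\Phi+O(\rho_\Phi^{-1/2})$. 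This reproduces the paper's factors $q(3-q)$, $(2-q)(2q-1)$ and $-2(q-1)(q-2)$, and $\operatorname{rank}\Phi\ge2$ is exactly what gives the It\^o term its sign.

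Two smaller points. First, your claim that a positive-time hit is preceded by times off the bad set needs the zero occupation of walls, which follows from finite singular integrals plus \ref{ass:k:positive}; the paper's route handles this through Proposition~\ref{prop:diffraction}. Second, your argument is a posteriori: it runs on the SDE of Theorem~\ref{thm:existence}, whose proof in the paper already uses Proposition~\ref{prop:no_multi_collisions}. That is legitimate for the statement as posed, but it could not replace the paper's a priori argument on the invariant-coordinate process stopped at $\tau_{\mathcal K}$.
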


Since $X(t)\in\Cc$, all quantities $\inner{X(t)}{\alpha}$ are non-negative. The condition $\sum_{i=1}^{\no}\alpha_i^2\beta_i^2>0$
means that the two roots $\alpha$ and $\beta$ involve at least one common coordinate. Thus the conclusion says that two such walls cannot be hit simultaneously after time zero. This includes, in particular, all pairs of non-orthogonal positive roots. In type $\AN$, it implies that three or more particles cannot collide at a positive time. In types $\BN$ and $\DN$, it also includes the orthogonal zero-pair $\alpha=e_i-e_j$, $\beta=e_i+e_j$ and therefore excludes the simultaneous vanishing of $X_i-X_j$ and $X_i+X_j$
at a positive time.

This should be distinguished from simultaneous collisions of genuinely orthogonal type with disjoint coordinate supports. It may still be possible that two distinct roots $\alpha,\beta\in R_+$ with $\sum_{i=1}^{\no}\alpha_i^2\beta_i^2=0$, satisfy
\[
    \inner{X(t)}{\alpha}=\inner{X(t)}{\beta}=0
\]
at the same time. Related results excluding such simultaneous collisions for a different class of
interacting particle systems were obtained in
\cite{bib:AndrausHufnagelMalecki:2025}.


\subsection{Degenerate repulsion}
\label{subsec:degenerate-repulsion}

We next allow the singular repulsion to vanish on the walls. This generalization lets us treat a substantially wider class of particle systems, including squared Bessel particle systems and related Wishart-type examples, in which the repulsion coefficients may vanish at specific boundary points.

\medskip

We begin by introducing two assumptions that replace Assumption~\ref{ass:k:positive}. Both are natural: their role is to ensure that there is still enough repulsion at the boundary even when the coefficient of the singular term vanishes. The formulation of the second assumption requires the introduction of the collision faces and the associated detectors. More precisely, for $y\in\Cb$, set
    \begin{align*}
        S(y) &:= \{\alpha\in R_+:\langle y,\alpha\rangle=0\}.
    \end{align*}
Let $\mathfrak S$ be the family of all non-empty sets of the form $S(y)$, and for $S\in\mathfrak S$ define
    \begin{align}
        F_S &:= \{x\in\Cc: \langle x,\alpha\rangle=0 \text{ for }\alpha\in S,\  \langle x,\beta\rangle>0 \text{ for }\beta\in R_+\setminus S\}.
    \end{align}
Thus $F_S$ is the relatively open part of the boundary on which exactly the roots in $S$ vanish. A vector $u\in\operatorname{span}S$ is called an admissible detector for $F_S$ if $\tau_u(x):=\langle u,x\rangle$ satisfies
    \begin{align*}
        \tau_u(x)&\ge0, &&x\in\Cc, \\
        \tau_u(x)&=0, &&x\in F_S,\\
        \langle u,\alpha\rangle&\ge0, &&\alpha\in S.
    \end{align*}
The roots of the face crossed by $u$ are collected in
    \begin{align*}
        \Gamma(S,u) &:= \{\alpha\in S:\langle u,\alpha\rangle>0\}.
    \end{align*}
For every $S\in\mathfrak S$, we fix a finite family $\mathfrak D(S)$ of admissible detectors. We choose this family so that it contains the canonical detector $u_S:=\sum_{\beta\in S}\beta$, and such that each $\alpha\in S$ belongs to $\Gamma(S,u)$ for at least one $u\in\mathfrak D(S)$. For an explicit example, consider type $A_{N-1}$ with $\C=\{x\in\R^N:x_1>\cdots>x_N\}$, and the face
    \begin{align*}
        F_S = \{x\in\Cc:x_1>x_2=x_3=x_4>x_5>\cdots>x_N\}
    \end{align*}
corresponding to
    \begin{align*}
        S = \{e_2-e_3,\ e_2-e_4,\ e_3-e_4\}.
    \end{align*}
An admissible detector for this face is $\tau_u(x)=x_2-x_4=\langle e_2-e_4,x\rangle$: it is nonnegative on $\Cc$, vanishes on $F_S$, and is strictly sensitive to each root in $S$. One may therefore take $\mathfrak D(S) = \{e_2-e_4\}$, because then $\Gamma(S,e_2-e_4)=S$. Finally, for $u\in\mathfrak D(S)$, define the regular detector drift by
    \begin{align}
        B_{S,u}(t,x) &:= \langle u,b(t,x)\rangle + \sum_{\beta\in R_+\setminus S}\frac{k_\beta(t,x)\langle u,\beta\rangle}{\langle x,\beta\rangle}.
        \label{eq:def:B-S-u-degenerate}
    \end{align}
Only roots not vanishing on the face appear in $B_{S,u}$. Hence, after localizing away from neighboring faces, the denominators in \eqref{eq:def:B-S-u-degenerate} stay bounded away from zero, and $B_{S,u}$ admits a continuous trace on $F_S$. Intuitively, $B_{S,u}$ is the regular part of the drift seen in the detector direction.

\medskip

\begin{enumerate}[label=(D\arabic*), ref=(D\arabic*)]
    \item \label{ass:k:dom}
    \emph{Normal dominance near the wall.}
    For every $T,R>0$ there exist constants
    $\varepsilon=\varepsilon(T,R)>0$ and $\gamma=\gamma(T,R)>0$ such that, for every $t\in[0,T]$, every $x\in\Cc$ with $|x|\le R$, and every $\alpha\in R_+$,
    \begin{align}
        0\le\langle x,\alpha\rangle\le\varepsilon
            &\Longrightarrow
            k_\alpha(t,x)\ge \gamma  \sum_{i=1}^N \alpha_i^2 \sigma_i^2(t,x).
        \label{eq:ass:k-dom}
    \end{align}

    \item \label{ass:face:sign}
    \emph{Face sign condition.} For every $S\in\mathfrak S$ and every $u\in\mathfrak D(S)$, the continuous trace of the regular detector drift satisfies
        \begin{align}
            B_{S,u}(t,x) &\ge0, \qquad t\ge0, \qquad x\in F_S.
            \label{eq:ass:face-sign}
        \end{align}
\end{enumerate}

Since $\sum_{i=1}^N \alpha_i^2 \sigma_i^2(t,x)$ is the quadratic variation density of the martingale part in the normal direction $\alpha$, Assumption~\ref{ass:k:dom} says that when $x$ approaches the wall $\{\langle x,\alpha\rangle=0\}$, the repulsion coefficient $k_\alpha(t,x)$ may vanish, but not faster than the corresponding normal martingale variation. In this sense, the possible loss of repulsion is compensated by a comparable loss of noise in the same direction. This is the Bessel-type balance that is later used to show that the relevant detector local times vanish. It also makes the framework flexible enough to cover, in particular, $\beta$-versions of known particle systems also those with vanishing coefficients.
 
Assumption~\ref{ass:face:sign} is a weak inward-pointing condition on the regular part of the detector drift. It is used to eliminate finite-variation measures which could otherwise be supported on the boundary. Together, these two assumptions yield the general degenerate existence result below. By themselves, however, they do not force the process to spend zero Lebesgue time on the boundary.

\begin{theorem}[Existence in the degenerate case]
    \label{thm:degenerate-existence}
    Let $x_0\in\Cc$. Assume \ref{ass:cont}, \ref{ass:sigma}, \ref{ass:k:dom}, and \ref{ass:face:sign}. Then there exists a filtered probability space carrying an $N$-dimensional Brownian motion $B$ and a continuous $\Cc$-valued semimartingale $X=(X_1,\ldots,X_N)$, defined up to its lifetime $\life{X}>0$, such that 
        \begin{align}
            X(t) &= x_0 + \int_0^t \sigma(s,X(s))\,dB(s) + \int_0^t b(s,X(s))\,ds + \sum_{\alpha\in R_+}\alpha
            \int_0^t \frac{k_\alpha(s,X(s))}{\langle X(s),\alpha\rangle} \mathbf 1_{\{\langle X(s),\alpha\rangle>0\}}\,ds\/,
        \label{eq:degenerate-existence-indicator}
    \end{align}
    for every $t\in[0,\life{X})$. In particular, for every $\alpha\in R_+$ the singular integrals in \eqref{eq:degenerate-existence-indicator} are finite on compact subintervals of $[0,\life{X})$.
\end{theorem}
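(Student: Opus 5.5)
The plan is to reduce the degenerate case to the strictly positive regime of Theorem~\ref{thm:existence} by approximation, and then identify the limit through the invariant-coordinate representation and the face detectors.

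\emph{Step 1: approximating equations.} For $n\ge1$ set $k^{(n)}_\alpha:=k_\alpha+\tfrac1n$. These coefficients are continuous and strictly positive on every wall, so \ref{ass:k:positive} holds. Assumptions \ref{ass:cont} and \ref{ass:sigma} are unaffected, and \ref{ass:k:dom} holds uniformly in $n$ with the same $\varepsilon,\gamma$. Theorem~\ref{thm:existence} therefore gives weak solutions $X^{(n)}$ of \eqref{eq:main:SDE} with $k_\alpha$ replaced by $k^{(n)}_\alpha$, started from $x_0$.

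\emph{Step 2: tightness.} I localize at $\tau_R^{(n)}=\inf\{t:|X^{(n)}(t)|\ge R\}\wedge T$. The singular drift has no uniform bound, so I pass to the invariant polynomial coordinates $p(X^{(n)})$. There the equation is non-singular: the $1/\langle x,\alpha\rangle$ terms combine with the It\^o corrections into polynomial expressions in $x$, and \ref{ass:sigma} removes the boundary-singular remainder. The coefficients of $p(X^{(n)})$ are then bounded on $\{|x|\le R\}$ uniformly in $n$, so Kolmogorov's criterion gives tightness of $p(X^{(n)})$. Because the inverse invariant map onto $\Cc$ is a homeomorphism, $X^{(n)}$ is tight as well.

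I also need uniform control of the singular terms. For each $\alpha$, apply It\^o's formula to $\langle X^{(n)},\alpha\rangle$ itself, or to $\langle X^{(n)},\alpha\rangle\wedge\varepsilon$. This shows that $\int_0^{\tau_R}\frac{k^{(n)}_\alpha}{\langle X^{(n)},\alpha\rangle}\mathbf 1_{\{\langle X^{(n)},\alpha\rangle\le\varepsilon\}}ds$ is bounded in $L^1$ by the increment of $\langle X^{(n)},\alpha\rangle$ plus bounded drift and martingale terms, using that all other contributions are nonnegative or controlled. Combining \ref{ass:k:dom} with Fatou's lemma then gives finiteness of the limiting integral $\int_0^t \frac{k_\alpha}{\langle X,\alpha\rangle}\mathbf 1_{\{\langle X,\alpha\rangle>0\}}ds$.

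\emph{Step 3: identification of the limit.} By Skorokhod's representation theorem I obtain $X^{(n)}\to X$ almost surely, locally uniformly. On $\{\langle X,\alpha\rangle>\delta\}$ the singular drifts converge by continuity. The limit therefore satisfies \eqref{eq:degenerate-existence-indicator} up to an extra continuous finite-variation process $A$ supported on $\{s:X(s)\in\Cb\}$; the martingale part is identified by the usual martingale-problem argument. To show $A\equiv0$, I decompose $\Cb$ into the faces $F_S$. Near $F_S$ (after localizing away from the neighbouring faces), I apply the It\^o--Tanaka formula to $\tau_u(X)$ for each $u\in\mathfrak D(S)$. Assumption \ref{ass:k:dom} says the normal quadratic variation is dominated by the normal repulsion, a Bessel-type balance; the standard argument $\int \frac{d\langle\tau_u\rangle}{\tau_u}<\infty$ then forces the local time $L^0(\tau_u(X))$ to vanish. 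Consequently $\mathbf 1_{F_S}(X)\,d\tau_u(X)=0$.

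On $F_S$ the detector differential consists of $B_{S,u}\,ds$, which is $\ge0$ by \ref{ass:face:sign}, plus $\langle u,dA\rangle$. The contributions $\langle u,\alpha\rangle$ of $A$ are nonnegative for $\alpha\in S$, since $A$ arises as a limit of nonnegative singular drifts. The only way the total can vanish is that both terms vanish, so $\langle u,dA\rangle=0$ on $F_S$. Because every $\alpha\in S$ lies in $\Gamma(S,u)$ for some $u\in\mathfrak D(S)$, each component $A_\alpha$ vanishes on $F_S$. Summing over the finitely many faces gives $A=0$. The lifetime is $\lim_R\tau_R$, which is positive.

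The main obstacle is this last step: showing that boundary-supported finite-variation mass disappears in the limit, and in particular that the local times of the detectors vanish under the degenerate balance \ref{ass:k:dom}. I would first handle faces of codimension one, and then treat higher faces by induction on $|S|$ using the canonical detector $u_S$.
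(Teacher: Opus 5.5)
Your architecture matches the paper's: approximate by $k_\alpha+\frac1n$, get tightness from the non-singular invariant-coordinate equations, pass to a Skorokhod limit, identify the drift away from each wall, and remove the residual wall-supported measures with a detector. That detector has vanishing local time by \ref{ass:k:dom} and nonnegative regular drift by \ref{ass:face:sign}.

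The step that fails is the uniform $L^1$ bound on the approximating singular drifts in Step~2. Everything downstream rests on it: tightness of the drift measures, the rootwise decomposition of the residual as $\sum_\alpha\alpha\Lambda^\alpha$ with $d\Lambda^\alpha\ge0$ carried by $\{\langle X,\alpha\rangle=0\}$, and the interior integrability that feeds your local-time estimate. Testing with $\langle X^{(n)},\alpha\rangle$ does not give this bound. Its drift is
\[
\langle\alpha,b\rangle+\sum_{\beta\in R_+}\langle\alpha,\beta\rangle\frac{k^{(n)}_\beta}{\langle X^{(n)},\beta\rangle},
\]
and many cross terms have $\langle\alpha,\beta\rangle<0$; in type $\AN$, take $\alpha=e_1-e_2$ and $\beta=e_2-e_3$. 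Near a multiple collision these terms are negative and singular, with no bound uniform in $n$. So it is not true that ``all other contributions are nonnegative or controlled.'' Truncating at $\varepsilon$ only adds a local time at level $\varepsilon$ and leaves the cross terms in place. The dependence is also circular: $e_1-e_2$ and $e_2-e_3$ each enter the other's equation with a negative sign, so no ordering of the roots rescues a rootwise argument.

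The missing idea is to test against a single vector $\rho\in\C$. Then $\langle\rho,\beta\rangle>0$ for every $\beta\in R_+$, so every singular drift enters $\langle\rho,X^{(n)}\rangle$ with a positive coefficient. At the same time, $\langle\rho,X^{(n)}\rangle$ is bounded on the localized interval. Cap $\sum_\beta\langle\rho,\beta\rangle A^{(n),\beta}$ at level $m$, take expectations, and let $m\to\infty$; this gives $\sup_n\mathbb{E}A^{(n),\alpha}(\chi^{(n)}_r)\le C_{r,T}/\langle\rho,\alpha\rangle$. The same positivity shows that no limiting $A^\alpha$ can jump, since a jump in one component could not be cancelled by the others.

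Relatedly, \ref{ass:k:dom} is not what makes the limiting interior integral finite; Fatou's lemma and this bound do that. \ref{ass:k:dom} is used afterwards, to turn it into $\int a_\beta\langle X,\beta\rangle^{-1}\mathbf 1_{\{\langle X,\beta\rangle>0\}}\,ds<\infty$.

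With this repaired, your face step is a legitimate variant of the paper's. The paper uses only the canonical detector $u_S$, exploiting $\langle u_S,\beta\rangle>0$ for all $\beta\in S$. You instead use the whole family $\mathfrak D(S)$ together with the covering $\bigcup_u\Gamma(S,u)=S$. For a general admissible $u$, your local-time estimate needs $\tau_u$ to dominate the gaps $\langle x,\alpha_i\rangle$ of the simple roots of $S$ that occur in $u$. This holds, because $u$ is a nonnegative combination of those simple roots; alternatively, use the paper's device of passing to $u+u_S$. No induction on $|S|$ is needed, since localizing in $\mathcal U_{S,\eta,r}$ treats every face directly.
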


Theorem~\ref{thm:degenerate-existence} is the basic existence statement in the degenerate regime. In contrast to the strictly positive case, it does not exclude the possibility that the process spends a positive amount of Lebesgue time on the boundary. In the present framework, such behavior may occur only at genuinely degenerate boundary points, that is, at points where the corresponding coefficient $k_\alpha$ vanishes. By Assumption~\ref{ass:k:dom}, this then forces the normal quadratic variation coefficient $a_\alpha(t,x)$ to vanish as well. Thus any possible sticking can only occur at boundary points where both the singular repulsion and the normal martingale variation degenerate simultaneously.

The indicator in \eqref{eq:degenerate-existence-indicator} means that the singular drift is interpreted through its interior density. This is natural in the degenerate case, because when both $k_\alpha(t,x)$ and $\langle x,\alpha\rangle$ vanish, the ratio $k_\alpha(t,x)/\langle x,\alpha\rangle$ need not have a canonical boundary value and, in general, need not admit a continuous extension to the wall. For this reason, the interior formulation with an indicator is the natural object to consider. A formulation of this type already appears in \cite{bib:GraczykMalecki:2019}, where squared Bessel particle systems are considered.

Finally, any genuine solution of the equation without the indicator is automatically a solution of \eqref{eq:degenerate-existence-indicator} whenever it spends zero Lebesgue time on every root wall, because the two drift densities then differ only on a Lebesgue-null set of times. The converse need not hold in general: the indicator formulation may admit sticky behavior at degenerate walls. This is why removing the indicator requires an additional non-sticking input, which we isolate in the next subsection.


\subsection{A non-sticking criterion for degenerate walls}
\label{subsec:removing-indicator}

Theorem~\ref{thm:degenerate-existence} gives a solution of the degenerate equation in its natural interior form. To pass to the formulation without the indicator, one has to exclude sticking on the boundary. We impose a structural face-by-face escape condition expressed in terms of admissible detectors and the quadratic-variation densities of the corresponding detector martingales. This is the form that seems most useful in applications.

\medskip

\begin{enumerate}[label=(D\arabic*), ref=(D\arabic*)]
    \setcounter{enumi}{2}
    \item \label{ass:nonsticky}
    \emph{Facewise escape at fully degenerate points.}
    For every collision face $F_S$, every $t\ge0$, and every $x\in F_S$,
    \begin{align}
        \max_{u\in\mathfrak D(S)} \sum_{i=1}^N u_i^2 \sigma_i^2(t,x)=0
            &\Longrightarrow
            \max_{u\in\mathfrak D(S)} B_{S,u}(t,x)>0.
        \label{eq:ass:nonsticky}
    \end{align}
\end{enumerate}

Assumption~\ref{ass:nonsticky} concerns only the genuinely degenerate situation in which all admissible detectors have zero quadratic variation at the face point. If for some admissible detector $u$ the quantity
\begin{align*}
    \sum_{i=1}^N u_i^2 \sigma_i^2(t,x)
\end{align*}
is positive, then that detector already provides the stochastic escape mechanism needed in the local argument. Assumption~\ref{ass:nonsticky} says that in the complementary case, when all detector martingale variations vanish, at least one detector must still have strictly positive regular drift.

This condition is closely related in spirit to the non-degeneration condition used by Graczyk and Małecki \cite{bib:GraczykMalecki:2014}. In their one-dimensional ordered-particle setting, when the pairwise martingale part and the pairwise repulsion vanish at a degenerate collision point, one assumes that the remaining drift of the colliding block does not vanish. Here the same idea is formulated geometrically on each collision face: if every admissible detector loses its martingale variation, then some detector must still point strictly away from the face. In the Wishart-type examples, this role is played by a block-center detector on a zero block.

\begin{theorem}[Degenerate existence with non-sticking boundary behavior]
    \label{thm:degenerate-remove-indicator}
    Let $x_0\in\Cc$. Assume \ref{ass:cont}, \ref{ass:sigma}, \ref{ass:k:dom}, \ref{ass:face:sign}, and \ref{ass:nonsticky}. Then there exists a filtered probability space carrying an $N$-dimensional Brownian motion $B$ and a continuous $\Cc$-valued semimartingale $X=(X_1,\ldots,X_N)$, defined up to its lifetime $\life{X}>0$, such that $X(0)=x_0$ and, for every $t\in[0,\life{X})$,
    \begin{align}
        X(t) &= x_0 + \int_0^t \sigma(s,X(s))\,dB(s) + \int_0^t b(s,X(s))\,ds + \sum_{\alpha\in R_+}\alpha \int_0^t \frac{k_\alpha(s,X(s))}{\langle X(s),\alpha\rangle}\,ds.
        \label{eq:degenerate-existence-without-indicator}
    \end{align}
    The singular integrals in \eqref{eq:degenerate-existence-without-indicator} are finite on compact subintervals of $[0,\life{X})$, with the usual improper interpretation at the initial time if $x_0\in\Cb$.
\end{theorem}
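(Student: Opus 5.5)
We take the solution $X$ given by Theorem~\ref{thm:degenerate-existence}, which satisfies \eqref{eq:degenerate-existence-indicator}. As noted after that theorem, \eqref{eq:degenerate-existence-without-indicator} follows once we show that for every $\alpha\in R_+$,
\begin{align*}
    \int_0^{t}\mathbf 1_{\{\langle X(s),\alpha\rangle=0\}}\,ds=0, \qquad t<\life{X},\ \text{a.s.}
\end{align*}
The two drift densities then agree for almost every $s$. On the set where $k_\alpha$ vanishes on the wall we adopt the convention that the integrand is $0$; where $k_\alpha>0$ on the wall the value is $+\infty$, but that set is already Lebesgue-null by finiteness of the interior integrals. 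The boundary decomposes into the finitely many faces $F_S$, $S\in\mathfrak S$. It therefore suffices to prove $\int_0^t\mathbf 1_{\{X(s)\in F_S\}}ds=0$ for each $S$. We localize with stopping times keeping $|X|\le R$, $t\le T$, and keeping $X$ away from the faces adjacent to $F_S$, meaning all $\langle X,\beta\rangle\ge\delta$ for $\beta\notin S$. On such pieces $B_{S,u}$ is continuous and bounded.

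Fix $S$ and $u\in\mathfrak D(S)$, and apply It\^o's formula to the detector $\tau_u(X)=\langle u,X\rangle$. The martingale part has quadratic variation density $a_u:=\sum_i u_i^2\sigma_i^2$. The drift is $B_{S,u}(s,X)+\sum_{\alpha\in\Gamma(S,u)}\langle u,\alpha\rangle k_\alpha\langle X,\alpha\rangle^{-1}\mathbf 1_{\{\langle X,\alpha\rangle>0\}}$, and roots in $S\setminus\Gamma(S,u)$ contribute nothing. Since $\tau_u\ge0$ and $\tau_u=0$ on $F_S$, the occupation formula gives
\begin{align*}
    \int_0^t \mathbf 1_{\{\tau_u(X_s)=0\}}\,d\langle \tau_u(X)\rangle_s=\int_0^t\mathbf 1_{\{\tau_u(X_s)=0\}}a_u(s,X_s)\,ds=0 .
\end{align*}
Hence $a_u=0$ for almost every time spent on $F_S$, for every $u\in\mathfrak D(S)$. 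We also use $\mathbf 1_{\{\tau_u=0\}}d\tau_u(X)=\tfrac12 dL^0(\tau_u)$, where $L^0$ is the symmetric local time. This expresses $\int\mathbf 1_{\{X\in F_S\}}B_{S,u}\,ds$ (the singular terms vanish there because of the indicators) as half the local time at $0$ of $\tau_u(X)$.

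The key intermediate claim is that $L^0(\tau_u(X))\equiv0$ under \ref{ass:k:dom}. This is the Bessel-type balance, and we take it from the proof of Theorem~\ref{thm:degenerate-existence}, where the face detector local times were shown to vanish, reusing that argument here. If it has to be redone: near the face, $d\langle\tau_u\rangle\le C\sum_{\alpha\in\Gamma(S,u)}k_\alpha\,ds$. We compare the singular drift $\ge c\,\langle u,\alpha\rangle k_\alpha/\langle X,\alpha\rangle$ with $\tau_u\le C'\langle X,\alpha\rangle$ for roots in $\Gamma$. Applying Tanaka's formula to $\tau_u^{p}$ or $\log$-type functions of $\tau_u$ then shows that $\int_0 d\langle\tau_u\rangle_s/\tau_u(X_s)<\infty$. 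This forces $L^0=0$ by the standard criterion $\int_0^\infty L^a\,da/a<\infty$.

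With $L^0=0$ we get $\int_0^t\mathbf 1_{\{X_s\in F_S\}}B_{S,u}(s,X_s)\,ds=0$ for each $u$. Since $B_{S,u}\ge0$ on $F_S$ by \ref{ass:face:sign}, it follows that $B_{S,u}(s,X_s)=0$ for a.e.\ $s$ with $X_s\in F_S$. Together with $a_u(s,X_s)=0$ for all $u\in\mathfrak D(S)$ at those times, this puts $X_s$ in the set excluded by \ref{ass:nonsticky}. That set is empty, so the time spent on $F_S$ is Lebesgue-null. Removing the localizations and summing over the finitely many faces gives zero occupation of $\Cb$ at positive times. The identity \eqref{eq:degenerate-existence-without-indicator} and finiteness of its singular integrals then follow from Theorem~\ref{thm:degenerate-existence}, with the improper interpretation at $0$ if $x_0\in\Cb$.

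The main obstacle is the local-time vanishing step, especially on higher-codimension faces. There $u$ crosses several roots, and some roots of $S$ may lie outside $\Gamma(S,u)$. The comparison $\tau_u\lesssim\langle X,\alpha\rangle$ must be arranged using the chamber geometry, since $u\in\operatorname{span}S$ has nonnegative pairing with $S$. The localization away from neighboring faces must also be handled carefully, so that exit times accumulate only at the boundary of strata of smaller dimension, which are treated by induction on $|S|$.
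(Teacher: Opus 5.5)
Your proposal has a genuine gap at the step where you pass from $L^0(\tau_u(X))=0$ to $\int_0^t\mathbf 1_{\{X(s)\in F_S\}}B_{S,u}(s,X(s))\,ds=0$.

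\emph{Why the step fails.} Tanaka's formula only yields $\int_0^t\mathbf 1_{\{\tau_u(X(s))=0\}}B_{S,u}(s,X(s))\,ds=0$. For a non-canonical admissible detector, the zero set $\{\tau_u=0\}$ is in general strictly larger than $F_S$. This holds even inside the localized region $\mathcal U_{S,\eta,r}$, which only keeps the roots outside $S$ away from zero.

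For instance, in type $A$ take the face $x_1=x_2>x_3=x_4=x_5$ and the admissible detector $u=e_1-e_2$. Then $\tau_u$ also vanishes on the faces $x_1=x_2>x_3=x_4>x_5$, $x_1=x_2>x_3>x_4=x_5$, and so on. At points of such a face $F_{S'}$ one has $B_{S,u}=B_{S',u}$, and $u$ need not belong to $\mathfrak D(S')$. So \ref{ass:face:sign} says nothing about the sign of $B_{S,u}$ there.

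Consequently the vanishing of the integral over $\{\tau_u=0\}$ may come from cancellation between $F_S$ and these larger faces. You cannot conclude $B_{S,u}=0$ a.e.\ on $F_S$, which is exactly what you need to contradict \ref{ass:nonsticky}.

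The local-time step has a related problem. The paper's Proposition~\ref{prop:deg:detector-local-time} is proved only for $v=u_S$ and $v=u+u_S$. Its energy bound uses $\tau_v\ge\tau_S\ge\langle x,\beta\rangle$ for every $\beta\in S$, so your citation does not cover the bare detector $\tau_u$. The comparison you sketch, $\tau_u\le C'\langle X,\alpha\rangle$, also points the wrong way: to control $d\langle\tau\rangle/\tau$ you need a lower bound on the detector.

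\emph{The missing idea} is the paper's shifted detector $v=u+u_S$.
\begin{itemize}
\item Since $\tau_v=\tau_u+\tau_S\ge\tau_S$, on $\mathcal U_{S,\eta,r}$ one has $\{\tau_v=0\}=F_S$ exactly.
\item Because $d\langle\tau_v(X)\rangle\le C\sum_{\beta\in S}a_\beta\,ds$ and $1/\tau_v\le 1/\langle X,\beta\rangle$, assumption \ref{ass:k:dom} together with finiteness of the interior singular integrals gives $\int\mathbf 1_{\{\tau_v>0\}}\,d\langle\tau_v(X)\rangle/\tau_v(X)<\infty$. Hence $L^0(\tau_v(X))=0$.
\item Tanaka's formula then yields $\int\mathbf 1_{\{X\in F_S\}}(B_{S,u}+B_S)\,ds=0$. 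Both summands are nonnegative on $F_S$ by \ref{ass:face:sign}, since $u,u_S\in\mathfrak D(S)$.
\item Therefore $\int\mathbf 1_{\{X\in F_S\}}B_{S,u}\,ds=0$. From here your use of the occupation formula for the $a_u$ and of \ref{ass:nonsticky} goes through as written.
\end{itemize}

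An induction increasing in $|S|$, which first proves zero occupation of the faces $F_{S'}$ with $S'\subsetneq S$, would also remove the discrepancy. Your induction remark runs the other way and targets a non-issue. The set $\{t:X(t)\in F_S\}$ is already covered by countably many rational intervals on which $X$ stays in some $\mathcal U_{S,\eta,r}$, so no accumulation argument is needed.

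Finally, your aside that wall time with $k_\alpha>0$ is already null ``by finiteness of the interior integrals'' is incorrect. The indicator integrals do not see wall time. This becomes moot once zero occupation of every face is proved.
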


By the discussion above, at each collision face there is an escape mechanism: either some admissible detector retains non-trivial quadratic variation, or, in the fully degenerate case, Assumption~\ref{ass:nonsticky} provides a strictly positive detector drift. Consequently, the solution from Theorem~\ref{thm:degenerate-existence} spends zero Lebesgue time on every collision face, hence also on every root wall $\{x\in\Cc:\langle x,\alpha\rangle=0\}$, $\alpha\in R_+$ and the indicator in \eqref{eq:degenerate-existence-indicator} becomes irrelevant.

The indicator formulation in Theorem~\ref{thm:degenerate-existence} is nevertheless the natural starting point in the general degenerate setting. Indeed, when both $k_\alpha(t,x)$ and $\langle x,\alpha\rangle$ vanish, the ratio ${k_\alpha(t,x)}/{\langle x,\alpha\rangle}$ need not have a canonical boundary value and, in general, need not admit a continuous extension to the wall. Theorem~\ref{thm:degenerate-remove-indicator} shows that under the additional facewise escape condition this ambiguity is harmless, because the boundary is then visited only on a Lebesgue-null set of times.


\subsection{Non-explosion under radial linear growth}
\label{subsec:non-explosion}

The lifetime in the preceding results is included because, up to this point, no growth condition has been imposed on the coefficients. A standard radial linear-growth assumption removes the possibility of an explosion. 

\medskip

\begin{enumerate}[label=(G\arabic*), ref=(G\arabic*)]
    \item \label{ass:growth:p2}
    For every $T>0$ there exists a constant $C_T<\infty$ such that, for every
    $t\in[0,T]$ and every $x\in\Cc$,
    \begin{align}
        \sum_{i=1}^N x_i b_i(t,x)
        +
        \frac12\sum_{i=1}^N \sigma_i^2(t,x)
        +
        \sum_{\alpha\in R_+} k_\alpha(t,x)
            &\le
        C_T\bigl(1+|x|^2\bigr).
        \label{eq:ass:growth:p2}
    \end{align}
\end{enumerate}

Assumption~\ref{ass:growth:p2} is the usual radial linear-growth condition for the Lyapunov function $|x|^2$. It controls at once the contribution of the drift, the It\^o correction coming from the diffusion part, and the radial contribution of the singular repulsion. This is the analogue of the non-explosion condition used by Graczyk and Małecki \cite{bib:GraczykMalecki:2014}. We use here the radial bound in the form involving $|x|^2$; in the formulation of condition (C2) in \cite{bib:GraczykMalecki:2014}, the upper bound for the interaction function $H$ is written with a term of the form $|xy|$, but the estimate needed in the non-explosion argument is the radial one, namely a bound by a constant times $1+|x|^2+|y|^2$. These two bounds are not equivalent in the required direction.

\begin{theorem}[Non-explosion]
    \label{thm:non-explosion}
    Assume \ref{ass:growth:p2}. Let $X$ be any $\Cc$-valued weak solution, defined up to its explosion lifetime, satisfying either the singular equation \eqref{eq:thm:existence-sde} or the degenerate interior equation \eqref{eq:degenerate-existence-indicator}. Then
    \begin{align}
        \life{X}
            &=
        \infty,
        \qquad\text{a.s.}
        \label{eq:non-explosion-conclusion}
    \end{align}
    Consequently, under Assumption~\ref{ass:growth:p2}, the weak and strong solutions provided by Theorems~\ref{thm:existence}, \ref{thm:degenerate-existence}, \ref{thm:degenerate-remove-indicator}, and \ref{thm:strong:existence:uniqueness} are global in time.
\end{theorem}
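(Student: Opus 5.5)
We would prove Theorem~\ref{thm:non-explosion} by a Lyapunov argument with $V(x)=|x|^2$. The key identity is Euler's relation for the singular drift: for every $\alpha\in R_+$ and every $x$ with $\langle x,\alpha\rangle>0$,
\begin{align*}
    \Bigl\langle x,\frac{k_\alpha(t,x)\,\alpha}{\langle x,\alpha\rangle}\Bigr\rangle = k_\alpha(t,x).
\end{align*}
So in the radial direction the singular term contributes exactly $k_\alpha$, which is bounded by Assumption~\ref{ass:growth:p2}. In the indicator formulation \eqref{eq:degenerate-existence-indicator}, the same computation gives $k_\alpha\mathbf 1_{\{\langle x,\alpha\rangle>0\}}\le k_\alpha$, using $k_\alpha\ge0$. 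Hence both equations are handled simultaneously.

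First we localize. For $n\in\N$, let $\tau_n:=\inf\{t<\life{X}:|X(t)|\ge n\}$; the explosion lifetime is $\life{X}=\lim_n\tau_n$. Fix $T>0$. On $[0,\tau_n\wedge T]$ the singular drift integrals are finite, by the statements of Theorems~\ref{thm:existence} and \ref{thm:degenerate-existence}, or by the definition of a solution. So $X$ is a continuous semimartingale there, with finite-variation part $\int b\,ds+\sum_\alpha\alpha\int(\cdots)\,ds$ and martingale part $\int\sigma\,dB$.

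Next we apply It\^o's formula to $|X|^2$:
\begin{align*}
    d|X(t)|^2 = 2\sum_i X_i\sigma_i\,dB_i + \Bigl(2\sum_i X_ib_i+\sum_i\sigma_i^2+2\sum_{\alpha}k_\alpha\mathbf 1_{\{\langle X,\alpha\rangle>0\}}\Bigr)dt.
\end{align*}
We must justify pairing $X$ with the singular drift pointwise in time. This holds because the drift is absolutely continuous with integrable density, so $\int X_i\,dA_i=\int X_i a_i\,ds$. In the non-indicator case, boundary times have Lebesgue measure zero, as noted after Theorem~\ref{thm:existence}; in any event the integrand $\langle X,\alpha\rangle^{-1}k_\alpha\langle X,\alpha\rangle$ equals $k_\alpha$ wherever it is defined. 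By \eqref{eq:ass:growth:p2}, the $dt$-coefficient is at most $2C_T(1+|X|^2)$. The stochastic integral stopped at $\tau_n$ has a bounded integrand, because $\sigma$ is continuous on the compact set $[0,T]\times\{|x|\le n\}$, so it is a true martingale. Taking expectations gives
\[
    \E\bigl[1+|X(t\wedge\tau_n)|^2\bigr]\le 1+|x_0|^2+2C_T\int_0^t\E\bigl[1+|X(s\wedge\tau_n)|^2\bigr]ds.
\]
Gronwall's lemma then yields $\E[1+|X(T\wedge\tau_n)|^2]\le(1+|x_0|^2)e^{2C_TT}$, uniformly in $n$.

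Finally, on the event $\{\tau_n\le T\}$ we have $|X(\tau_n)|=n$, so $n^2\,\PP(\tau_n\le T)\le(1+|x_0|^2)e^{2C_TT}$. Letting $n\to\infty$ gives $\PP(\life{X}\le T)=0$, and since $T$ is arbitrary, $\life{X}=\infty$ a.s. The consequence for the constructed solutions is immediate, since each of them satisfies one of the two equations.

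The main delicate point is the initial time when $x_0\in\Cb$. There the singular integrals are only improper at $0$, so It\^o's formula should be applied on $[\epsilon,t\wedge\tau_n]$ and then $\epsilon\downarrow0$, using continuity of $X$ and monotone convergence for the nonnegative terms $k_\alpha$. Everything else is routine.
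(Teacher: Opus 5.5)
Your argument is essentially the paper's proof: It\^o's formula for $1+|X|^2$, the identity $\langle x,\alpha\rangle\,k_\alpha/\langle x,\alpha\rangle=k_\alpha$ (with the indicator bounded by $1$, so both formulations are handled at once), Assumption~\ref{ass:growth:p2}, Gronwall for the stopped process, and a Chebyshev bound on $\{\tau_n\le T\}$. The one extra step in the paper is that it multiplies by $\mathbf 1_{\{1+|X(0)|^2\le m\}}\in\mathcal F_0$ before taking expectations and lets $m\to\infty$ at the end; you need this in place of your constant $1+|x_0|^2$ to cover the random initial laws $\nu$, possibly without a second moment, allowed in Theorem~\ref{thm:strong:existence:uniqueness}.
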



\subsection{Uniqueness}
\label{subsec:uniqueness}

The assumptions used in the existence theorems are not sufficient to imply uniqueness. We therefore introduce a separate set of hypotheses tailored to pathwise uniqueness. These assumptions are natural from two complementary viewpoints. On the one hand, they extend the standard one-dimensional uniqueness assumptions from the literature. On the other hand, they are adapted to the root-system geometry underlying the particle representation of the Bessel-Dunkl diffusions considered here.

\medskip

\begin{enumerate}[label=(U\arabic*), ref=(U\arabic*)]
    \item \label{ass:uniq:sigma}
    For every $T,r>0$, at least one of the following two alternatives holds:
    \begin{enumerate}[label=(\alph*), ref=(\alph*)]
        \item there exists an increasing function
        $\rho=\rho_{T,r}:\R_+\to\R_+$ such that
        \begin{align}
            \rho(0)&=0,
            &
            \rho(u)&>0 \quad\text{for }u>0,
            &
            \int_{0+}\frac{du}{\rho(u)}&=\infty,
            \label{eq:uniq:rho}
        \end{align}
        and, for every $i=1,\ldots,N$,
        \begin{align}
            |\sigma_i(t,x)-\sigma_i(t,y)|^2
                &\le
                \rho(|x_i-y_i|)
            \label{eq:uniq:sigma:yw}
        \end{align}
        whenever $t\in[0,T]$, $x,y\in\Cc$, and $|x|\vee |y|\le r$;

        \item there exists a constant $L_{T,r}<\infty$ such that, for every $i=1,\ldots,N$,
        \begin{align}
            |\sigma_i(t,x)-\sigma_i(t,y)|
                &\le
                L_{T,r}|x-y|
            \label{eq:uniq:sigma:lip}
        \end{align}
        whenever $t\in[0,T]$, $x,y\in\Cc$, and $|x|\vee |y|\le r$.
    \end{enumerate}

    \item \label{ass:uniq:b}
    For every $T,r>0$ there exists a constant $L_{T,r}<\infty$ such that
    \begin{align}
        \sum_{i=1}^N
        \operatorname{sgn}(x_i-y_i)
        \bigl(b_i(t,x)-b_i(t,y)\bigr)
            &\le
            L_{T,r}\sum_{i=1}^N |x_i-y_i|
        \label{eq:uniq:b}
    \end{align}
    whenever $t\in[0,T]$, $x,y\in\Cc$, and $|x|\vee |y|\le r$.


    \item \label{ass:uniq:k}
For $x\in\C$, define the singular force
\begin{equation}
    \label{eq:uniq:singular-force-G}
    G(t,x)
    :=
    \sum_{\alpha\in R_+}
    \alpha\,
    \frac{k_\alpha(t,x)}{\langle x,\alpha\rangle}.
\end{equation}
For every $T,r>0$, every $t\in[0,T]$, and all $x,y\in\C$ with
$|x|\vee |y|\le r$,
\begin{equation}
    \label{eq:uniq:k:l1-dissipative}
    \sum_{i=1}^N
    \operatorname{sgn}(x_i-y_i)
    \bigl(G_i(t,x)-G_i(t,y)\bigr)
    \le0.
\end{equation}
\end{enumerate}
We shall also use the following Euclidean alternatives in the locally Lipschitz
diffusion regime.

\medskip
\begin{enumerate}[label=(U\arabic*'), ref=(U\arabic*')]
\setcounter{enumi}{1}
\item \label{ass:uniq:b:euclidean}
For every $T,r>0$ there exists a constant $L_{T,r}<\infty$ such that
\begin{equation}
    \langle x-y,b(t,x)-b(t,y)\rangle
    \le
    L_{T,r}|x-y|^2
\end{equation}
whenever $t\in[0,T]$, $x,y\in\Cc$, and $|x|\vee |y|\le r$.

\item \label{ass:uniq:k:dissipative}
For every $T,r>0$, every $t\in[0,T]$, and all $x,y\in\C$ with
$|x|\vee |y|\le r$,
\begin{equation}
    \langle x-y,G(t,x)-G(t,y)\rangle
    \le
    0.
\end{equation}
\end{enumerate}

The assumptions above split the uniqueness theory into two regimes. Assumption~\ref{ass:uniq:sigma}\textup{(a)} is the coordinatewise Yamada-Watanabe modulus condition and, in one dimension, reduces to the classical criterion allowing less than Lipschitz regularity in the martingale part.  It is the natural framework for square-root-type examples.  In this regime the proof is based on Tanaka's formula in the $\ell^1$-geometry.  Therefore it is paired with Assumption~\ref{ass:uniq:b}, the corresponding one-sided Lipschitz condition on the regular drift, and with the rootwise monotonicity condition Assumption~\ref{ass:uniq:k}.

Assumption~\ref{ass:uniq:sigma}\textup{(b)} is the standard local Lipschitz condition.  In this regime the proof is based on It\^o's formula for $|X-\widetilde X|^2$.  Therefore the appropriate drift condition is the Euclidean one-sided Lipschitz condition Assumption~\ref{ass:uniq:b:euclidean}.  The singular part may then be controlled either by the rootwise condition Assumption~\ref{ass:uniq:k}, or more generally by the global dissipativity condition Assumption~\ref{ass:uniq:k:dissipative}.

In the ordered-particle setting of Graczyk and Małecki \cite{bib:GraczykMalecki:2014}, alternative~\textup{(a)} in Assumption~\ref{ass:uniq:sigma} together with Assumption~\ref{ass:uniq:b} plays the role of their condition~\textup{(C1)}, while Assumption~\ref{ass:uniq:k} is the analogue of their condition~\textup{(A1)}.

\begin{remark}[Logarithmic potentials]
\label{rem:uniq:logarithmic-potentials}
A useful way to verify Assumption~\ref{ass:uniq:k:dissipative} is through a logarithmic potential.  Suppose that, for each $t$, there exists a positive function $h_t$ on $\C$ such that $\log h_t$ is concave and
    \begin{equation*}
        G(t,x)=\nabla\log h_t(x), \qquad x\in\C.
    \end{equation*}
Then Assumption~\ref{ass:uniq:k:dissipative} holds, because the gradient of a concave function is dissipative:
    \begin{equation*}
        \langle x-y,\nabla\log h_t(x)-\nabla\log h_t(y)\rangle\le0.
    \end{equation*}
Equivalently, if $G(t,x)=-\nabla\Phi_t(x)$ for a convex function $\Phi_t$, then Assumption~\ref{ass:uniq:k:dissipative} holds. In particular, the constant-multiplicity root-barrier force is obtained from
    \begin{equation*}
        h_t(x) = \prod_{\alpha\in R_+} \langle x,\alpha\rangle^{m_\alpha(t)}, \qquad m_\alpha(t)>0.
    \end{equation*}
Then
    \begin{equation*}
        \nabla\log h_t(x) = \sum_{\alpha\in R_+} m_\alpha(t) \frac{\alpha}{\langle x,\alpha\rangle},
    \end{equation*}
which corresponds to $k_\alpha(t,x)=m_\alpha(t)$.  This is the root-system analogue of the logarithmic electrostatic repulsion appearing in the Cépa-Lépingle framework.
\end{remark}

As already observed in \cite{bib:GraczykMalecki:2019}, equations with an indicator in the singular term may admit solutions of two genuinely different types when degenerate boundary points are present: sticky solutions, which may spend positive Lebesgue time on the boundary, and non-sticky solutions, for which the time spent on the boundary has Lebesgue measure zero. One therefore cannot expect pathwise uniqueness in the full class of solutions of \eqref{eq:degenerate-existence-indicator}. For this reason, we restrict attention to non-sticky solutions, that is, solutions satisfying
\begin{align}
    \int_0^t
    \mathbf 1_{\{\langle X(s),\alpha\rangle=0\}}\,ds
        &=
        0,
        \qquad
        \alpha\in R_+,
        \qquad
        t<\life{X},
        \qquad
        \text{a.s.}
    \label{eq:zero:leb}
\end{align}
In this class, the indicator is irrelevant, and a non-sticky solution of \eqref{eq:degenerate-existence-indicator} is equivalently a solution of \eqref{eq:main:SDE}.

\begin{theorem}[Pathwise uniqueness in the non-sticky class]
    \label{thm:pathwise-uniq-indicator-nonsticky}
    Assume that one of the following two uniqueness regimes holds.

    \begin{enumerate}[label=\textup{(\roman*)}]
        \item The Yamada-Watanabe alternative \textup{(a)} in
        Assumption~\ref{ass:uniq:sigma} holds, together with
        Assumptions~\ref{ass:uniq:b} and \ref{ass:uniq:k}.

        \item The locally Lipschitz alternative \textup{(b)} in
        Assumption~\ref{ass:uniq:sigma} holds, together with
        \ref{ass:uniq:b:euclidean} and Assumption~\ref{ass:uniq:k:dissipative}.
    \end{enumerate}
    Let $(X,B)$ and $(\widetilde X,B)$ be two solutions of
    \eqref{eq:degenerate-existence-indicator}, defined on the same filtered
    probability space, driven by the same Brownian motion, and with the same
    initial condition. Assume moreover that \eqref{eq:zero:leb} holds for both
    $X$ and $\widetilde X$. Then
    \begin{align}
        X(t) &= \widetilde X(t), \qquad 0\le t<\life{X}\wedge\life{\widetilde X}, \qquad \text{a.s.}
        \label{eq:pathwise-uniq-conclusion}
    \end{align}
    In particular,
    \begin{align*}
        \life{X} &= \life{\widetilde X}, \qquad\text{a.s.}
    \end{align*}
\end{theorem}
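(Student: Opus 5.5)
The plan is to localize and then run two standard comparison arguments, one per regime. Fix $T,r>0$ and let $\tau_r:=\inf\{t:|X(t)|\vee|\widetilde X(t)|\ge r\}\wedge T$. Stopping at $\tau_r$ lets us use the constants $L_{T,r}$ and the modulus $\rho_{T,r}$. Since both solutions satisfy \eqref{eq:zero:leb}, the indicator in \eqref{eq:degenerate-existence-indicator} can be dropped for a.e.\ $s$. Hence for a.e.\ $s$ both $X(s)$ and $\widetilde X(s)$ lie in the open chamber $\C$, and the singular drift is $G(s,X(s))$, respectively $G(s,\widetilde X(s))$. Set $\Delta:=X-\widetilde X$. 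Then
\[
d\Delta_i=(\sigma_i(s,X)-\sigma_i(s,\widetilde X))\,dB_i+(b_i(s,X)-b_i(s,\widetilde X))\,ds+(G_i(s,X)-G_i(s,\widetilde X))\,ds .
\]
Each drift term is locally integrable on $[0,\tau_r]$, because the singular integrals are finite on compacts by assumption on the solutions.

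\textbf{Regime (i).} I apply the Yamada-Watanabe/Tanaka argument coordinatewise. First I show that the local time $L^0_t(\Delta_i)$ vanishes. Take the usual functions $\psi_n$ built from $\rho$: they approximate $|\cdot|$, have $|\psi_n'|\le1$, and satisfy $\psi_n''(u)\le 2/(n\rho(|u|))$ on a shrinking annulus. The second-order term is then bounded by $\int 2\rho(|\Delta_i|)/(n\rho(|\Delta_i|))\,ds\to0$ by \eqref{eq:uniq:sigma:yw}. This gives
\[
E|\Delta_i(t\wedge\tau_r)|=E\int_0^{t\wedge\tau_r}\operatorname{sgn}(\Delta_i)\,d(\text{drift of }\Delta_i),
\]
with the convention $\operatorname{sgn}(0)=0$ (the local time at $0$ vanishes by Le Gall's criterion with $\rho$). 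Summing over $i$, the $b$-terms are bounded by $L_{T,r}\sum_i|\Delta_i|$ by \eqref{eq:uniq:b}, and the $G$-terms are $\le0$ a.e.\ in time by \eqref{eq:uniq:k:l1-dissipative}, which is applicable because $X(s),\widetilde X(s)\in\C$ for a.e.\ $s$. Gronwall then gives $E\|\Delta(t\wedge\tau_r)\|_{1}=0$.

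\textbf{Regime (ii).} I apply Itô to $|\Delta|^2$:
\[
d|\Delta|^2=2\langle\Delta,d\Delta\rangle+\sum_i(\sigma_i(s,X)-\sigma_i(s,\widetilde X))^2ds .
\]
The last term is $\le N L^2|\Delta|^2$ by \eqref{eq:uniq:sigma:lip}. The $b$-term is $\le 2L|\Delta|^2$ by \ref{ass:uniq:b:euclidean}. The $G$-term is $2\langle\Delta,G(s,X)-G(s,\widetilde X)\rangle\le0$ a.e.\ by \ref{ass:uniq:k:dissipative}. The stochastic integral is a true martingale up to $\tau_r$ because its integrand is bounded there. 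Taking expectations and applying Gronwall again gives $\Delta=0$ on $[0,\tau_r]$.

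\textbf{Conclusion and lifetimes.} Letting $r,T\to\infty$ yields \eqref{eq:pathwise-uniq-conclusion} on $[0,\life{X}\wedge\life{\widetilde X})$ by continuity. For the lifetimes, suppose $\life{X}<\life{\widetilde X}$ on a set of positive probability. On that event $X=\widetilde X$ on $[0,\life{X})$, while $\widetilde X$ stays bounded near $\life{X}$. If the lifetime is the explosion time, this is a contradiction. If the lifetime is a deterministic horizon, the two lifetimes coincide by convention.

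\textbf{Main obstacle.} I expect the delicate step to be justifying the use of \eqref{eq:uniq:k:l1-dissipative} and \ref{ass:uniq:k:dissipative}, which are stated only on $\C$, inside integrals whose integrands are only locally integrable and singular near the boundary. The point is that the zero-Lebesgue-time condition makes the boundary times a null set, so the pointwise inequality holds $ds$-a.e. The integrability needed for Tanaka/Itô comes from the finiteness of the singular integrals rather than any bound on $G$. I would also check that the $\operatorname{sgn}(0)$ convention is harmless, since on $\{\Delta_i=0\}$ the $i$-th drift difference contributes nothing to $|\Delta_i|$.
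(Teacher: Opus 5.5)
Your proposal is correct and follows essentially the same route as the paper. You localize by exit times and use zero Lebesgue occupation to put both solutions in the open chamber for a.e.\ time, so the $\ell^1$ or Euclidean dissipativity of the singular force applies. You then run Tanaka with vanishing Yamada--Watanabe local times in regime (i) and It\^o's formula for $|X-\widetilde X|^2$ in regime (ii), followed by Gronwall and the exit-time argument for the lifetimes. The one difference is that the paper adds a second localization $\tau_{r,m}$ capping the accumulated drift so that all drift integrals have finite expectation. You rely only on pathwise finiteness, which suffices because the stopped left-hand side and regular-drift part are bounded, so dropping the nonpositive singular term pathwise before taking expectations is legitimate.
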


Under alternative~(a) in Assumption~\ref{ass:uniq:sigma}, the proof follows the classical Tanaka-Gronwall strategy: the coordinatewise local times vanish, Assumption~\ref{ass:uniq:b} controls the contribution of the regular drift, and Assumption~\ref{ass:uniq:k} makes the singular term non-positive in the $\ell^1$ estimate. The treatment of alternative~(b) is different and will be handled separately.

Again, every solution of \eqref{eq:main:SDE} satisfying \eqref{eq:zero:leb} is also a solution of \eqref{eq:degenerate-existence-indicator}. Therefore, combining the weak existence results with Theorem~\ref{thm:pathwise-uniq-indicator-nonsticky} yields strong existence and uniqueness by the Yamada-Watanabe principle, after localization by the explosion time. In the degenerate regime this is done only after Theorem~\ref{thm:degenerate-remove-indicator}, because only then do we have a weak solution of \eqref{eq:main:SDE} in the non-sticky class.

\begin{theorem}[Strong existence and uniqueness]
    \label{thm:strong:existence:uniqueness}
    Let $\nu$ be a probability measure on $\Cc$.  Assume \ref{ass:cont} and
    \ref{ass:sigma}.  Assume moreover that one of the two uniqueness regimes in
    Theorem~\ref{thm:pathwise-uniq-indicator-nonsticky} holds.  Suppose also that
    one of the following alternatives holds:
    \begin{enumerate}[label=(S\arabic*), ref=(S\arabic*)]
        \item \label{ass:exist:positive}
        Assumption~\ref{ass:k:positive} holds.

        \item \label{ass:exist:degenerate}
        Assumptions~\ref{ass:k:dom}, \ref{ass:face:sign}, and
        \ref{ass:nonsticky} hold.
    \end{enumerate}
    If $\xi$ is a $\Cc$-valued random variable with law $\nu$, independent of an
    $N$-dimensional Brownian motion $B$, then \eqref{eq:main:SDE} has a strong
    solution $X$ with $X(0)=\xi$, adapted to the completed filtration generated by
    $\xi$ and $B$, up to its lifetime.  This solution is pathwise unique in the
    class of solutions satisfying \eqref{eq:zero:leb}, and uniqueness in law holds
    in the same class.
\end{theorem}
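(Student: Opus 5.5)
The argument combines weak existence, pathwise uniqueness and the Yamada-Watanabe principle, localized at the explosion time. Throughout, the solution class is the non-sticky one, i.e. solutions of \eqref{eq:main:SDE} satisfying \eqref{eq:zero:leb}.

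\emph{Step 1: weak existence from a fixed point.} Fix $x_0\in\Cc$. Under \ref{ass:exist:positive}, Theorem~\ref{thm:existence} gives a weak solution of \eqref{eq:main:SDE}. Since $k_\alpha/\langle x,\alpha\rangle=+\infty$ on the wall and the singular integrals are finite, \eqref{eq:zero:leb} holds automatically. Under \ref{ass:exist:degenerate}, Theorem~\ref{thm:degenerate-remove-indicator} gives a weak solution of \eqref{eq:main:SDE} with finite singular integrals. The same reasoning then yields \eqref{eq:zero:leb}: this is the zero-Lebesgue-time statement established in that proof, and I would quote it explicitly. In both cases the solution is also a non-sticky solution of \eqref{eq:degenerate-existence-indicator}, because the two drift densities agree off a Lebesgue-null set of times.

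\emph{Step 2: general initial law.} To pass to a random initial condition $\xi$ with law $\nu$, I need the law $P_x$ of a solution started at $x$ to be chosen measurably in $x$. The approach is as follows. By pathwise uniqueness (Theorem~\ref{thm:pathwise-uniq-indicator-nonsticky}) together with the Yamada-Watanabe theorem for each deterministic initial point, the law of $(X,B)$ is unique in the non-sticky class. Hence $x\mapsto P_x$ is well defined. It is measurable because it is a limit of laws of solutions of regularized or localized equations, or alternatively by the standard Stroock-Varadhan measurable-selection argument. Mixing $P_x$ over $\nu$ then gives a weak solution with initial law $\nu$, and this solution is still non-sticky.

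\emph{Step 3: localization.} The solutions are defined only up to the lifetime $\life{X}$, so I localize with
\[
\tau_n=\inf\{t:|X(t)|\ge n\}\wedge n .
\]
On $[0,\tau_n]$ the coefficient assumptions (U) hold with constants depending on $T=n$ and $r=n$. Theorem~\ref{thm:pathwise-uniq-indicator-nonsticky} then gives pathwise uniqueness of the stopped processes. Note that its conclusion, including $\life{X}=\life{\widetilde X}$, is already stated up to the lifetime.

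Apply the Yamada-Watanabe theorem (in the Kurtz version, which handles general initial laws and stopped equations) to the stopped equation. This yields a measurable functional $F_n$ such that $X^{\tau_n}=F_n(\xi,B)$. By pathwise uniqueness, consecutive functionals are consistent: $F_{n+1}$ stopped at $\tau_n$ coincides with $F_n$. Gluing them produces a process $X$ up to $\life{X}=\lim_n\tau_n$ that is adapted to the completed filtration of $(\xi,B)$. This is the strong solution. Uniqueness in law in the non-sticky class follows from the same theorem.

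The main obstacle is that the solution class is not closed: the non-sticky constraint \eqref{eq:zero:leb} must be carried through every step. It has to hold for the stopped processes, for the mixture over $\nu$, and in the Yamada-Watanabe construction itself, where one takes the product of two conditional laws given $(\xi,B)$. Stopping and mixing preserve it easily. In the Yamada-Watanabe coupling, each marginal satisfies \eqref{eq:zero:leb} almost surely because the marginal laws are unchanged, so both coupled copies are non-sticky and Theorem~\ref{thm:pathwise-uniq-indicator-nonsticky} applies. I would verify this point explicitly. I would also check that \eqref{eq:zero:leb} is a measurable property of the path, so that it transfers across laws.
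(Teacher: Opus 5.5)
Your proposal is correct and follows essentially the same route as the paper: weak existence in the non-sticky class from Theorem~\ref{thm:existence} under \ref{ass:exist:positive} or from Theorem~\ref{thm:degenerate-remove-indicator} under \ref{ass:exist:degenerate}, pathwise uniqueness from Theorem~\ref{thm:pathwise-uniq-indicator-nonsticky}, and the Yamada--Watanabe principle applied to the equations stopped at exit times, with consistency of the stopped strong solutions giving a solution up to the lifetime. Your Step~2 (mixing over $\nu$ via measurability of $x\mapsto P_x$) and your check that the non-sticky constraint survives the Yamada--Watanabe coupling fill in points the paper passes over silently. For the measurability, rely on the Borel-graph or measurable-selection argument; the alternative via limits of regularized laws would itself need measurability in $x$ of the approximating laws, which the construction does not provide.
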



\subsection{Mean-field convergence}
\label{subsec:mean-field-convergence}

One of the most significant results in the theory of particle systems concerns the convergence of systems in the case where the number of particles tends to infinity. The starting point for studying phenomena of this type is always provided by theorems concerning the existence and uniqueness of solutions to stochastic differential equations. Therefore, the previously presented results, as well as their generality, make it possible to explore this part of the theory in a broad context.

Below we present one such result concerning mean-field convergence. In this case, however, we restrict the class of particles under consideration to those for which the martingale and regular drift coefficients depend only on one coordinate, while the repulsion coefficient attached to a pair root depends only on the two particles involved in this interaction. This is precisely the structure which makes the limiting integro-differential equation for the normalized empirical measures closed. Nevertheless, we retain the dependence of all coefficients on the time variable, and the model still covers most examples considered in the literature.

Fix a type $\mathfrak r\in\{A,B,D\}$ and set $E_A = \R$, $E_B=E_D=[0,\infty)$ together with $\eta_A^0 =\eta_D^0=0$, $\eta_B^0  =1$. We will drop the subscript related to the root type in the notation below and write simply $E$ in all cases. In the $\DN$ case, under our chamber convention, only the last coordinate may be negative. We assume that the martingale and drift coefficients are functions of the form
    \begin{equation*}
        \sigma_i(t,x) = \sigma(t,x_i)\/,\qquad b_i(t,x) = b(t,x_i)
    \end{equation*}
with a fairly clear abuse of notation, and $\sigma,b:[0,\infty)\times E \to \R$. The coefficient describing the singular behavior is of the form
    \begin{equation}
        k_\alpha(t,x) = k(t,x_i,x_j)\/,\quad \textrm{for }\alpha  = e_i\pm e_j\/,\qquad k_\alpha(t,x) = k(t,x_i)\/,\quad \textrm{for }\alpha = e_i\/.
        \label{eq:kalpha:special}
    \end{equation}
Thus, whenever we write $k_\alpha$ below we understand it to be in the form given in \eqref{eq:kalpha:special}, with the simplified convention that whenever $k$ has two spatial arguments, then it corresponds to the pair roots, and with only one spatial argument, whenever we are dealing with short roots. It is clear that $k(t,x,y)=k(t,y,x)$ in this setting. In the $\DN$ case, if the last coordinate is involved in the evaluation of a coefficient, we use its absolute value in the argument of $\sigma$, $b$, and $k$.

Additionally, we consider the set of test functions $D = C_b^2(\R)$, whenever we work in the $\AN$ case and we get $D = \{f\in C_b^2([0,\infty)):\ f'(0)=0\}$ in the cases $\BN$ and $\DN$. Note that although for $\DN$ one of the particle might be negative, since we consider the limit with number of particles growing to infinity, it is irrelevant and the limiting measures have supports on non-negative half-line. Finally, we define
    \begin{align*}
            D_f^{-}(x,y) &:=
                \begin{cases}
                    \dfrac{f'(x)-f'(y)}{x-y}, & x\neq y,\\[1.2ex]
                    f''(x), & x=y,
                \end{cases}
    \end{align*}
and additionally for $x,y\geq 0$ we set
    \begin{align*}
        D_f^{+}(x,y) &:=
            \begin{cases}
                \dfrac{f'(x)+f'(y)}{x+y}, & x+y>0,\\[1.2ex]
                f''(0), & x=y=0.
            \end{cases}
    \end{align*}
Using those two we can cover all three considered case by writing $D_f(x,y) = D_f^{-}(x,y)$ in $\AN$ case and $D_f(x,y) = D_f^{-}(x,y)+D_f^{+}(x,y)$ in the remaining two cases. 

\medskip

For each $N$, let $X^{(N)}=(X_1^{(N)},\ldots,X_N^{(N)})$ be a continuous solution, taking values in the appropriate closed chamber, of the system
\begin{align}
    dX_i^{(N)}(t)
        &=
        \frac{1}{\sqrt N}\sigma\bigl(t,X_i^{(N)}(t)\bigr)\,dB_i(t)
        +
        b\bigl(t,X_i^{(N)}(t)\bigr)\,dt
        +
        \sum_{\alpha\in R_+}
        \alpha_i
        \frac{
            k_{\alpha}^{(N)}\bigl(t,X^{(N)}(t)\bigr)
        }{
            \langle X^{(N)}(t),\alpha\rangle
        }\,dt,
    \label{eq:mf:root-sde}
\end{align}
for $i=1,\ldots,N$
where $k_{e_i\pm e_j}^{(N)}(t,x) = \frac1N k(t,x_i,x_j)$ and $k_{e_i}^{(N)}(t,x) = k(t,x_i)$, so the additional scaling is not present when we are considering the short roots. The corresponding empirical measure is defined as
    \begin{align*}
        \mu_t^{(N)} &:= \frac1N \sum_{i=1}^N \delta_{X_i^{(N)}(t)}
    \end{align*}
in the $\AN$ and $\BN$ cases. In the $\DN$ case, since only the last coordinate may be negative, we define instead
    \begin{align*}
        \mu_t^{(N)}
        &:=
        \frac1{N-1}
        \sum_{i=1}^{N-1}
        \delta_{X_i^{(N)}(t)}.
\end{align*}
This modification has no effect on the limiting measure as $N\to\infty$, and it makes $\mu_t^{(N)}$ supported on $E_D=[0,\infty)$.

Since in our framework the coefficients $\sigma$ and $k$ are not tied together by any specific matrix structure, one may also study different scalings. For instance, one may replace $N^{-1/2}$ by $\sqrt{\varepsilon_N}$ in the martingale part and $N^{-1}$ by $q_N/N$ in the pair interaction, where $\varepsilon_N\ge0$ and $q_N\ge0$ are given convergent sequences. The classical regime considered above corresponds to $\varepsilon_N=1/N$ and $q_N\equiv1$. More general choices lead to different limiting equations and can be treated in exactly the same way.

\medskip

We impose the following assumptions on every finite time interval.

\begin{enumerate}[label=(MF\arabic*), ref=(MF\arabic*)]
    \item \label{ass:mf:exist}
    For every $N$, the system \eqref{eq:mf:root-sde} has a continuous solution on compact time intervals in the appropriate closed chamber, and all singular drift terms in \eqref{eq:mf:root-sde} are integrable on compact time intervals.

        \item \label{ass:mf:coeff}
    The functions $\sigma$, $b$, and the one- and two-variable versions of $k$ are continuous on their respective domains. Moreover, the two-variable function $k$ is symmetric, non-negative, and for every $T>0$ there exists $C_T<\infty$ such that, for all $t\in[0,T]$ and all admissible $x,y$,
    \begin{align}
        |\sigma(t,x)|^2 + |b(t,x)|^2 + k(t,x)
            &\le
            C_T(1+|x|^2), \qquad x\in E\/,
        \label{eq:mf:growth-sigma}
        \\
        k(t,x,y)
            &\le
            C_T(1+|x|)(1+|y|)\/,\qquad x,y \in E\/.
        \label{eq:mf:growth-k}
    \end{align}
    The one-variable function $k(t,x)$ is present only in the $\BN$ case.

    \item \label{ass:mf:initial}
    The initial empirical measures converge in probability to a deterministic probability measure $\mu_0\in\mathcal P(E)$, and
    \begin{align*}
        \sup_{N\ge1}
        \E \int_E |x|^8 \mu_0^{(N)}(dx)<\infty.
    \end{align*}
\end{enumerate}

\begin{theorem}[Mean-field convergence]
    \label{thm:mf:classical}
    Assume \ref{ass:mf:exist}-\ref{ass:mf:initial}. Then, for every $T>0$, the laws of $(\mu_t^{(N)})_{0\le t\le T}$ are tight in $C([0,T],\mathcal P(E_{\mathfrak r}))$. Moreover, every weak limit point $(\mu_t)_{0\le t\le T}$ satisfies, for every $f\in D$ and every $t\in[0,T]$,
    \begin{align}
        \left\langle\mu_t,f\right\rangle &= \left\langle\mu_0,f\right\rangle +\int_0^t \int_E b(s,x)f'(x)\mu_s(dx)\,ds +  \eta_{\mathfrak r}^0 \int_0^t \int_E \frac{k(s,x) f'(x)}{x}\mu_s(dx)\,ds \nonumber \\
        &\qquad + \frac12 \int_0^t \int_{E^2} D_f(x,y)\, k(s,x,y)\,\mu_s(dx)\mu_s(dy)\,ds
        \label{eq:mf:limit-equation}
    \end{align}
    The quotient $k(s,x)f'(x)/x$ is understood at $x=0$ as $k(s,0)f''(0)$.  In particular, every subsequential limit solves \eqref{eq:mf:limit-equation}. If \eqref{eq:mf:limit-equation} has a unique solution in $C([0,T],\mathcal P(E))$, then
    \begin{align*}
        \mu^{(N)}
            \Longrightarrow
            \mu
            \qquad\text{in }C([0,T],\mathcal P(E)).
    \end{align*}
\end{theorem}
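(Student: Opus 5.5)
The plan follows the classical Rogers--Shi / C\'epa--L\'epingle strategy: apply It\^o's formula to $\langle\mu^{(N)}_t,f\rangle$ for $f\in D$, symmetrize the singular terms into bounded difference quotients, and then prove tightness and identify limits by martingale arguments.

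\textbf{Step 1: the It\^o identity.} Fix $f\in D$. In the $\AN$ and $\BN$ cases, It\^o's formula applied to $\frac1N\sum_i f(X_i^{(N)})$, using \eqref{eq:mf:root-sde}, gives four kinds of terms.
\begin{itemize}
\item The regular drift term is $\int_0^t\langle\mu_s^{(N)},b f'\rangle ds$.
\item The It\^o correction is $\frac1{2N}\int_0^t\langle\mu_s^{(N)},\sigma^2f''\rangle ds$.
\item The martingale term $M^{N}_t$ has quadratic variation $N^{-3}\sum_i\int_0^t\sigma^2 f'^2\,ds$.
\item The pair-root terms are symmetrized as follows. For $\alpha=e_i-e_j$ the contributions $f'(X_i)\,k/(X_i-X_j)$ and $-f'(X_j)\,k/(X_i-X_j)$ combine into $k(X_i,X_j)\,D_f^-(X_i,X_j)$. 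For $\alpha=e_i+e_j$ one similarly gets $k\,D_f^+$. After dividing by $N\cdot N$, this becomes $\frac12\iint D_f\,k\,d\mu^{(N)}d\mu^{(N)}$, minus a diagonal correction of order $\frac1N\langle\mu^{(N)},k(\cdot,\cdot)f''\rangle$ coming from the missing $i=j$ terms.
\end{itemize}
For the short roots in $\BN$, the term $k(X_i)f'(X_i)/X_i$ appears unscaled. Because $f'(0)=0$, the ratio $f'(x)/x$ is bounded by $\|f''\|_\infty$ and extends continuously by $f''(0)$. This matches $\eta^0_B$ and the stated convention at $x=0$.

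Since $|D_f|\le\|f''\|_\infty$, the integrability in \ref{ass:mf:exist} makes every term legitimate, and no collision analysis is needed. In the $\DN$ case we work with the normalization $\frac1{N-1}\sum_{i<N}$. The terms involving the last particle and the interaction of the last coordinate with the others are $O(1/N)$ after normalization, using $|X_N|$ in the coefficients and the boundedness of $f'$ and of $D_f$; note that $|D_f^\pm|\le\|f''\|$ requires the evenness extension of $f$ through $f'(0)=0$.

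\textbf{Step 2: moment bounds.} Apply the same identity with $f(x)=x^2$, and then with $x^8$ through localization (these are not in $D$, but It\^o's formula still applies). Then $D_f\equiv 2$ or $4$ for $x^2$, and for $x^p$ the difference quotients are bounded by $C(|x|^{p-2}+|y|^{p-2})$. Combined with \eqref{eq:mf:growth-sigma}, \eqref{eq:mf:growth-k}, \ref{ass:mf:initial}, the BDG inequality and Gronwall's lemma, this should give $\sup_N\E\sup_{t\le T}\langle\mu_t^{(N)},|x|^8\rangle<\infty$. I expect this to be the main obstacle. The pair kernel grows like $(1+|x|)(1+|y|)$, so one must check that the symmetrized quotient for $x^8$ times $k$ is controlled by $\langle\mu,1+|x|^8\rangle^2$-type quantities that close under Gronwall. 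Here I would use the fact that $\langle\mu,|x|^p\rangle$ is bounded by lower moments, and first bound the second moment, which is linear, and then bootstrap upward.

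\textbf{Step 3: tightness.} The moment bound gives compact containment in $\mathcal P(E)$. For a countable dense family of $f\in D$, the drift terms have integrands bounded by $C(1+\langle\mu,|x|^2\rangle^2)$, which yields uniform H\"older-type increment bounds, while $\E\langle M^N\rangle_T=O(N^{-2})$. Aldous/Kolmogorov criteria then give tightness of each $\langle\mu^{(N)},f\rangle$ in $C[0,T]$, hence tightness of $\mu^{(N)}$ in $C([0,T],\mathcal P(E))$ by Jakubowski's criterion.

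\textbf{Step 4: identification of limits.} By Skorokhod representation along a convergent subsequence, the martingale and the $O(1/N)$ corrections vanish in $L^2$. For the nonlinear term, $D_f$ is continuous and bounded, and $k$ is continuous, so $D_f\,k$ is continuous with growth $C(1+|x|)(1+|y|)$. Weak convergence of $\mu_s^{(N)}\otimes\mu_s^{(N)}$ together with uniform integrability, which holds by the eighth-moment bound, passes the limit inside. The drift terms are handled the same way, using growth at most $(1+|x|)$ for $bf'$ and continuity of $k(t,x)f'(x)/x$. Dominated convergence in $s$ then yields \eqref{eq:mf:limit-equation} for all limit points. The final claim follows because tightness plus uniqueness of the limit equation forces convergence of the whole sequence.
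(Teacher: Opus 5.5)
Your plan is the paper's: It\^o's formula for $\langle\mu_t^{(N)},f\rangle$ with pairwise symmetrization into $D_f$, Gronwall moment bounds for $|x|^p$, a fourth-moment increment estimate for tightness, Skorokhod representation, and truncation plus uniform integrability to identify the limit. Two steps, however, do not work as you wrote them.

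\textbf{The moment bound.} You single this out as the main obstacle. It is not one, but the fix you propose is wrong. A bound by $\langle\mu,1+|x|^8\rangle^2$ would \emph{not} close under a linear Gronwall argument: in expectation you cannot control $\mathbb{E}\langle\mu_s^{(N)},|x|^8\rangle^2$ by $\mathbb{E}\langle\mu_s^{(N)},|x|^8\rangle$. Moreover, higher moments are not bounded by lower ones, so there is nothing to bootstrap. What you need is degree counting. You already have $|(x^{p-1}\mp y^{p-1})/(x\mp y)|\le C(|x|^{p-2}+|y|^{p-2})$. Multiply by $k(t,x,y)\le C_T(1+|x|)(1+|y|)$ and apply Young's inequality (e.g.\ $|x|^{p-1}|y|\le |x|^p+|y|^p$); this gives the pointwise bound $C_p(1+|x|^p+|y|^p)$. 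Integrated against $\mu_s^{(N)}\otimes\mu_s^{(N)}$, it becomes $C_p(1+2\langle\mu_s^{(N)},|x|^p\rangle)$, which is linear, so Gronwall closes at once for each $p$. The short-root term satisfies $x^{p-2}k(t,x)\le C(1+x^p)$ and is linear for the same reason. These are exactly the inequalities \eqref{eq:mf:minus-moment-bound}--\eqref{eq:mf:wall-moment-bound} used in the paper.

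\textbf{The type $D_N$ case.} Suppose you apply It\^o's formula only to $\frac1{N-1}\sum_{i<N}f(X_i)$. The roots $e_i\pm e_N$ then contribute $\frac1N k(X_i,|X_N|)\bigl(\frac{1}{X_i-X_N}+\frac{1}{X_i+X_N}\bigr)f'(X_i)$ to the drift of $f(X_i)$. These terms have no partner to be symmetrized with, and they blow up when $X_i=|X_N|$. Boundedness of $f'$ does not make them $O(1/N)$, and $D_f$ never appears. You must include the last particle through $g(x):=f(|x|)$, which is $C^2$ with odd derivative precisely because $f'(0)=0$. Then, for either sign of $X_N$, the two root contributions combine into $\frac1N k(X_i,|X_N|)\bigl(D_f^-+D_f^+\bigr)(X_i,|X_N|)$, which is bounded. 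This is why the paper works with $\widehat\mu_t^{(N)}=\frac1N\sum_{i=1}^N\delta_{|X_i^{(N)}(t)|}$ and then uses the deterministic estimate $|\langle\widehat\mu_t^{(N)},f\rangle-\langle\mu_t^{(N)},f\rangle|\le 2\|f\|_\infty/N$. Your remark about the even extension suggests you had this in mind, but it has to be carried out this way.

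With these two repairs your argument goes through. Your target $\sup_N\mathbb{E}\sup_{t\le T}\langle\mu_t^{(N)},|x|^8\rangle<\infty$, with the supremum inside the expectation, is harmless: BDG handles the extra martingale term, which can be absorbed. It also gives the compact containment needed for tightness in $C([0,T],\mathcal P(E))$ more directly than the paper's $\sup_t\mathbb{E}$ bound.
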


The same argument extends immediately to the case in which the one-particle and two-particle coefficients depend on $N$, provided they satisfy the same growth bounds uniformly in $N$ and converge locally uniformly on compact sets. This is exactly the pattern of the general convergence theorem in Małecki and Pérez \cite{bib:MaleckiPerez:2022}. Since no new idea is needed here, we omit the proof and leave it to the reader.

\begin{corollary}[Mean-field convergence with converging coefficients]
    \label{cor:mf:coeff-convergence}
    Let the finite systems be given by \eqref{eq:mf:root-sde}, with the same $1/N$ mean-field scaling, but with $\sigma,b,k$ replaced by coefficients $\sigma_N,b_N,k_N$. Assume \ref{ass:mf:exist} and \ref{ass:mf:initial}. Assume also that $k_N$ is symmetric, that all coefficients satisfy the growth bounds \eqref{eq:mf:growth-sigma}-\eqref{eq:mf:growth-k} uniformly in $N$, and that, for every $T,R>0$,
    \begin{align}
        \lim_{N\to\infty}
        \sup_{\substack{0\le t\le T\\ |x|\le R}}
        \left|\sigma_N^2(t,x)-\sigma^2(t,x)\right|
            &=
            0,
        \label{eq:mf:sigmaN-convergence}\\
        \lim_{N\to\infty}
        \sup_{\substack{0\le t\le T\\ |x|\le R}}
        |b_N(t,x)-b(t,x)|
            &=
            0,
        \label{eq:mf:bN-convergence}\\
        \lim_{N\to\infty}
        \sup_{\substack{0\le t\le T\\ |x|\le R,\ |y|\le R}}
        |k_N(t,x,y)-k(t,x,y)|
            &=
            0,
        \label{eq:mf:kN-convergence}\\
        \lim_{N\to\infty}
        \sup_{\substack{0\le t\le T\\ 0\le x\le R}}
        |k_N(t,x)-k(t,x)|
            &=
            0.
        \label{eq:mf:cN-convergence}
    \end{align}
    Then the conclusions of Theorem~\ref{thm:mf:classical} remain valid, with the same limiting equation \eqref{eq:mf:limit-equation}.
\end{corollary}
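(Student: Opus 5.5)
The plan is to repeat the proof of Theorem~\ref{thm:mf:classical} with $N$-dependent coefficients, checking at each step that only the uniform growth bounds and the local uniform convergence \eqref{eq:mf:sigmaN-convergence}--\eqref{eq:mf:cN-convergence} are used.

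\emph{It\^o expansion.} Fix $f\in D$ and apply It\^o's formula to $\langle\mu_t^{(N)},f\rangle=\frac1N\sum_i f(X_i^{(N)}(t))$. Symmetrizing the pair terms over $i<j$ gives
\begin{align*}
\langle\mu_t^{(N)},f\rangle
&=\langle\mu_0^{(N)},f\rangle+\int_0^t\langle\mu_s^{(N)},b_N(s,\cdot)f'\rangle\,ds
+\eta^0_{\mathfrak r}\int_0^t\Bigl\langle\mu_s^{(N)},\frac{k_N(s,\cdot)f'}{x}\Bigr\rangle ds\\
&\qquad+\frac12\int_0^t\iint_{x\neq y}D_f\,k_N\,d\mu_s^{(N)}d\mu_s^{(N)}\,ds+\mathcal E_N(t)+M_N(t).
\end{align*}
The diagonal correction, the It\^o term $\frac1{2N}\langle\mu_s^{(N)},\sigma_N^2f''\rangle$ and, in type $\DN$, the effect of the sign of the last coordinate together with the $1/(N-1)$ normalization are all collected in $\mathcal E_N$. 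Each of these carries an explicit factor $1/N$, times quantities bounded through $\|f''\|_\infty$ and the growth bounds.

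The martingale has $\langle M_N\rangle_t\le N^{-2}\|f'\|_\infty^2\int_0^t\langle\mu_s^{(N)},\sigma_N^2\rangle ds$. Here the growth bound holds uniformly in $N$, and the control of moments $\sup_N\E\sup_{t\le T}\langle\mu_t^{(N)},|x|^p\rangle$ comes from the same Lyapunov argument as in the theorem. That argument uses \ref{ass:mf:initial} and only the constants $C_T$, so it is unchanged. Hence $M_N\to0$ and $\mathcal E_N\to0$ in $L^2$, uniformly on $[0,T]$.

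\emph{Tightness.} The Aldous/Kolmogorov-type modulus estimates on $\langle\mu^{(N)},f\rangle$ use only $\|D_f\|_\infty$, the uniform growth bounds and the moments. Tightness in $C([0,T],\mathcal P(E))$ therefore follows verbatim.

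\emph{Identifying the limit.} This is the only genuinely new point. Along a subsequence $\mu^{(N)}\Rightarrow\mu$. Each drift functional splits as $\Phi_N(\mu^{(N)})=\Phi(\mu^{(N)})+[\Phi_N-\Phi](\mu^{(N)})$.
\begin{itemize}
\item The term $\Phi(\mu^{(N)})$ converges to $\Phi(\mu)$ exactly as in the theorem: by continuity of the integrands and truncation at level $R$, using the moment bounds to handle the tails.
\item For the remainder, split the integration region into $\{|x|,|y|\le R\}$ and its complement. On the compact part the bound is $\|D_f\|_\infty\sup|k_N-k|\to0$ by \eqref{eq:mf:kN-convergence}, and similarly with $\sup|b_N-b|$ via \eqref{eq:mf:bN-convergence}.
\item For the short-root term, $|(k_N-k)(s,x)f'(x)/x|\le\|f''\|_\infty\sup_{x\le R}|k_N-k|$, using $f'(0)=0$, and this tends to $0$ by \eqref{eq:mf:cN-convergence}.
\item On the complement, the uniform growth bounds give a contribution of order $C_T\,\E\langle\mu^{(N)},(1+|x|)^2\mathbf 1_{|x|>R}\rangle\le C/R^{2}$ by the eighth-moment control. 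Letting $N\to\infty$ and then $R\to\infty$ shows the remainder vanishes in $L^1$.
\end{itemize}
Note that $\sigma_N$ enters only through $\mathcal E_N$ and $\langle M_N\rangle$, which vanish anyway, so \eqref{eq:mf:sigmaN-convergence} is not even strictly needed.

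Passing to the limit yields \eqref{eq:mf:limit-equation}. If that equation has a unique solution, every subsequential limit coincides with it, and the whole sequence converges.

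The main obstacle is the tail control in the identification step, since the convergence assumptions are only local. The plan is to rely on the moment bounds being uniform in $N$; this holds because the Lyapunov estimate depends only on $C_T$.
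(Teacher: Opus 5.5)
Your proposal is correct and follows the route the paper indicates. The paper omits the proof, saying only that the argument for Theorem~\ref{thm:mf:classical} carries over once the growth bounds are uniform in $N$ and the coefficients converge locally uniformly; you make this explicit with uniform-in-$N$ moment and increment estimates for tightness, and the split $\Phi_N(\mu^{(N)})=\Phi(\mu^{(N)})+[\Phi_N-\Phi](\mu^{(N)})$ with a compact/tail decomposition to identify the limit. Your remark that \eqref{eq:mf:sigmaN-convergence} is not needed under the $N^{-1/2}$ noise scaling is also right. One small fix: write the pair term as the full $\iint_{E^2}$ and put only the index-diagonal terms $i=j$ into $\mathcal E_N$, rather than using $\iint_{x\neq y}$.
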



\section{Preliminaries and notations}
\label{sec:preliminaries}

\subsection{Root systems}

Let $E$ be the Euclidean state space. We shall use finite reduced root systems, possibly in a non-essential ambient realization. More precisely, for a finite set $R\subset E\setminus\{0\}$ we put $E_R:=\operatorname{span}R$.We assume that $R$, viewed as a subset of the Euclidean space $E_R$, is a reduced root system, that is, it satisfies the following properties:
    \begin{enumerate}
        \item $R$ is finite and spans $E_R$.
        \item If $\alpha \in R$, then the only multiples of $\alpha$ in $R$ are $\alpha$ and $-\alpha$.
        \item For every $\alpha \in R$, the reflection $s_{\alpha}$ preserves $R$.
        \item For any two roots $\alpha, \beta \in R$, the quantity $\frac{2\inner{\beta}{\alpha}}{\inner{\alpha}{ \alpha}}$ is an integer.
    \end{enumerate}
The case $E_R=E$ is the essential realization. If $E_R\neq E$, the roots are regarded as vectors in the larger state space $E$, and the orthogonal complement $E_R^\perp$ is left unconstrained by the root system. The reflections are then understood as the corresponding orthogonal reflections in $E$, acting trivially on $E_R^\perp$.

The reflection $s_{\alpha}$ across the hyperplane perpendicular to a root $\alpha \in R$ is given by the formula: 
        \begin{equation}
            \label{eq:defn:reflection}
            s_{\alpha}(x) = x - \frac{2\inner{x}{\alpha}}{|\alpha|^2} \alpha\/,\quad x\in E\/.
        \end{equation}
We denote by $R_+$ the collection of positive roots. We specify those roots in the following with regard to the individual root systems we are considering. In general, the set of roots is divided into positive and negative roots in such a way that for given root $\alpha\in R$ it is either positive or negative, and then $-\alpha$ is then negative or positive, respectively. However, when fixed $\alpha,\beta \in R$ there exists a unique choice of sign such that $\pm s_\alpha(\beta)$ is in $R_+$. We denote the resulting root ($\alpha$ reflected in $\beta$ then a chosen sign) as $\prf{\beta}{\alpha}$. Then we immediately obtain 
        \begin{equation*}
            \ainvbb = \mp\ab\/,\quad \xainvb = \pm\left(\xa-\frac{2\ab\xb}{|\beta|^2}\right)\/,
        \end{equation*}
    and consequently, we arrive at one of the most crucial relations, which we will exploit very often in the sequel
        \begin{equation}
            \label{eq:inv:master}
            \ab\xb+\ainvbb\xainvb  = \frac{2\ab^2\xb}{|\beta|^2}\/.
        \end{equation}

    The \textbf{positive Weyl chamber}, denoted $\C$, is the region of $E$ where vectors have a positive inner product with all simple roots. It is given by:
        \begin{equation*}
            \C = \{ x \in E : \inner{x}{\alpha} > 0 \text{ for all } \alpha \in R_+\}\/.
        \end{equation*}

A root system is said to be \textbf{irreducible} if it cannot be partitioned into two nonempty, mutually orthogonal subsets. We will consider the finite classical root systems $\AN$, $\BN$, and $\DN$. Given the general form of the root systems we are considering, types $\BN$ and $\CN$ lead to exactly the same systems of stochastic differential equations \eqref{eq:main:SDE:cord}, and therefore we do not consider type $\CN$ separately, since it is fully covered by the $\BN$ case.

Throughout the paper the stochastic process is $N$-dimensional, and we realize all three systems in the ambient space $E=\R^N$, with standard basis $e_1,\ldots,e_N$. In types $\BN$ and $\DN$ the roots span all of $\R^N$. In type $\AN$, however, the roots $\pm(e_i-e_j)$ span only the hyperplane $H_N:=\left\{x\in\R^N:\sum_{i=1}^N x_i=0\right\}$. Thus type $\AN$ is used in its standard non-essential realization in $\R^N$: the root system acts on the difference coordinates, while the direction $\R(1,\ldots,1)$ is the unconstrained center-of-mass direction. Accordingly, the type $\AN$ chamber below is the inverse image in $\R^N$ of the essential $A_{N-1}$ Weyl chamber in $H_N$.

The specified positive root systems and positive Weyl chambers in the considered
three types are given below.

    \begin{itemize}
        \item[1.] \textbf{Root system of type $\AN$:}
            \begin{eqnarray*}
                \textrm{{Roots:}} && R = \{ \pm(e_i - e_j) : 1 \le i<j \le \no \}\\
                \textrm{Positive Roots:} && R_+ = \{ e_i - e_j : 1 \le i < j \le \no \}\\
                \textrm{Positive Weyl Chamber:} && \C = \{ x \in \R^\no : x_1 > x_2 > \dots > x_{\no} \}
            \end{eqnarray*}

        \item[2.] \textbf{Root system of type $\BN$:}
            \begin{eqnarray*}
                \textrm{{Roots:}} && R = \{ \pm e_i : 1 \le i \le \no \} \cup\{ \pm(e_i-e_j), \pm(e_i + e_j) : 1 \le i < j \le \no\}\\
                \textrm{Positive Roots:} && R_+ = \{ e_i : 1 \le i \le N \} \cup \{ e_i \pm e_j : 1 \le i < j \le N \}\\
                \textrm{Positive Weyl Chamber:} && \C = \{ x \in \R^\no : x_1 > x_2 > \dots > x_N > 0 \}
            \end{eqnarray*}

        \item[3.] \textbf{Root system of type $\DN$:}
             \begin{eqnarray*}
                \textrm{{Roots:}} && R = \{ \pm(e_i-e_j), \pm(e_i + e_j) : 1 \le i < j \le \no\}\\
                \textrm{Positive Roots:} && R_+ = \{ e_i \pm e_j : 1 \le i < j \le N \}\\
                \textrm{Positive Weyl Chamber:} && \C = \{ x \in \R^\no : x_1 > x_2 > \dots> x_{N-1}> |x_N|\}
            \end{eqnarray*}

    \end{itemize}
            For every fixed $A\subseteq R_+$ and every fixed $\alpha\in R_+$ we define the following sets of positive roots
        \begin{eqnarray}
            \label{defn:N1}
            N_\alpha^{(1)}(A) &=& \{\beta \in A: \beta \neq \alpha, \ab\neq 0\}\/,\\
            \label{defn:N2}
            N_\alpha^{(2)}(A) &=& \{(\beta,\gamma): \beta,\gamma\in A: \beta \neq \gamma, \ab\neq 0, \inv{\beta}{\gamma} = \alpha\}
        \end{eqnarray}

\subsection{Symmetric polynomials}
For a given set $A=\{a_1,\ldots,a_p\}$, $p\in\N$, we denote by $e_n(A)$, $n=1,\ldots,p$ the basic symmetric polynomials in variables $a_1,\ldots,a_p$, i.e. 
\begin{equation*}
  e_n(A) = \sum_{i_1<\ldots<i_n} a_{i_1}\cdot\ldots\cdot a_{i_n}\/,\quad n=1,\ldots,p\/.
\end{equation*}
We also define the corresponding power sums $p_k(A)$, $k=1,2,3,\ldots$ by
\begin{equation*}
	p_k(A) = \frac{1}{k}\sum_{i=1}^p a_i^k\/.
\end{equation*}
We introduce the normalizing factor $1/k$ in the definition to make some of the calculations appearing in the next sections shorter and more transparent. 

\medskip

\newcommand{\SR}{\Delta}
\newcommand{\Cbn}[1]{\Cb^{(#1)}}

\newcommand{\ClusterSet}{\Phi}
\newcommand{\ClusterSetMax}{\ClusterSet^{max}}
\newcommand{\ClusterSetn}[1]{\ClusterSet^{(#1)}}
\newcommand{\Cluster}{\varphi}
\newcommand{\SimpleNo}{s}
\newcommand{\ClusterGen}[1]{R_+(#1)}

\subsection{Clusters and collision points}
    \label{subsec:clusters}
    We begin the section with an introduction of clusters, which is a concept helping to deal with collisions of the process with the Weyl chamber walls. 

    \begin{definition}
        A subset $\Cluster$ of $R_+$ is called \textbf{a cluster}, if there exists $k\in \{1, \ldots, \no\}$ such that $\alpha_k\neq 0$ for every $\alpha\in \Cluster$.
    \end{definition}
    Based on the above definition, the clusters in $\AN$
  are 
    \begin{equation*}
        \varphi_1 = \{e_2-e_3, e_3-e_4\}\qquad 
        \varphi_2 = \{e_1-e_2, e_1-e_6, e_1-e_8\}
    \end{equation*}
    as well as the following are the clusters in $\BN$
    \begin{equation*}
        \varphi_3 = \{e_1,e_1-e_3\}\qquad 
        \varphi_4 = \{e_2, e_2+e_3, e_2-e_4\}
    \end{equation*} 
    
    We will denote by $\ClusterSet$ a family of all clusters and $|\Cluster|$ the length of a cluster,  which is just the number of elements in the cluster. A cluster with length $1$ will be called \textbf{singular cluster} and \textbf{multiple cluster} is a cluster with length greater than $1$. Finally, for each $m\in \N$ we introduce the set of clusters with fixed length
        \begin{equation*}
            \ClusterSetn{m} = \{\Cluster \in \ClusterSet: |\Cluster| = m\}\/.
        \end{equation*}
    For a given cluster $\Cluster \in \ClusterSet$ we denote by $\ClusterGen{\Cluster}$ the set of all positive roots, which can be obtained as a reflection of the cluster elements in the cluster elements (with suitable change of sign if necessary).
        \begin{equation*}
            R_+(\Cluster) = \Cluster \cup \{\prf{\alpha}{\beta}\in R_+: \alpha,\beta \in \Cluster\}\/,
        \end{equation*}
    where we recall that $\prf{\alpha}{\beta} = \pm s_\alpha(\beta)$ and the  sign is chosen in a way to ensure that we obtain a positive root. For example we get
    \begin{eqnarray*}
        R_+(\varphi_1) &=& \{e_2-e_3, e_3-e_4, e_2-e_4\}\\
        R_+(\varphi_2) &=& \{e_1-e_2, e_1-e_6, e_1-e_8, e_2-e_6, e_2-e_8, e_6-e_8\}\\
        R_+(\varphi_4) &=& \{e_2, e_3, e_4, e_2\pm e_3, e_2\pm e_4, e_3\pm e_4\}
    \end{eqnarray*}
    
For every boundary point $y\in \Cb$, we define clusters related to a given boundary point as 
\begin{equation*}
    \ClusterSet(y) = \{\Cluster\in \ClusterSet: \ya = 0\textrm{ for all }\alpha\in \Cluster\}.
\end{equation*}
Therefore, we can decompose the boundary of the Weyl chamber according to the size of the longest cluster in $\ClusterSet(y)$. For every $m= 1,\ldots, \SimpleNo$, we define
\begin{equation*}
    \Cbn{m} = \{y\in \Cb: \max_{\Cluster \in \ClusterSet(y)}|\Cluster| = m\}\/.
\end{equation*}
Then the boundary $\Cb$ is a disjoint union of the above-given sets.
\begin{equation*}
    \Cb = \bigcup_{m=1}^{\SimpleNo} \Cbn{m}\/.
\end{equation*}
The size of the longest possible cluster $\SimpleNo$ depends on $N$ and the kind of a root system we are dealing with, but its specific value is not important for our consideration. We call the boundary points in $\Cbn{m}$ \textbf{a multiple collision point of order $m$}, whenever $m\geq 2$. If $y\in \Cbn{1}$, then it is called \textbf{a single collision point}. Recall that at a single collision point we can see many $\alpha\in R_+$ such that $\ya=0$, but none of them form a cluster of length greater than or equal to $2$. In other words, there might be simultaneous singular collisions, but not multiple ones. 

\subsection{Notation}
For a given diffusion $Z = (Z(t))$ we define its life-time by
\begin{equation*}
    \life{Z} = \inf\{t: |Z(t)| = \infty\} = \lim_{R\to \infty} \inf\{t: |Z(t)|\geq R\}\/.
\end{equation*}
If additionally $Z$ is a continuous semi-martingale, then we denote by $\mart{Z}$ its local martingale part and $\drift{Z}$ will stand for the finite variation part and consequently
\begin{equation*}
    Z(t) = Z(0) + \mart{Z}(t)+\drift{Z}(t)
\end{equation*}
for every $t<\life{Z}$. Moreover, we denote the first hitting time of a set $D\subseteq \R^\no$ by $Z$ as
    \begin{equation*}
        \htime{D}{Z} = \inf\{t: Z(t)\in D\}\/.
    \end{equation*}
Whenever we consider hitting a single point $z$, we will simplify the notation by writing $\htime{z}{Z}$.


\section{Invariant polynomials and construction of a candidate solution}
The core idea of our construction is to leverage the similarities between multivariate Bessel-Dunkl diffusions on $\Cc$ and Bessel processes on the half-line $[0,\infty)$.  Recall that the classical definition of Bessel processes involves starting with a stochastic differential equation (SDE) for squared Bessel processes, where the drift term is non-singular. Subsequently, by applying the inverse function $x\to \sqrt{x}$, Bessel processes are defined, and it is then shown that they satisfy a corresponding SDE. We adopt a similar approach, beginning with an SDE for invariant polynomials, which serve as the multidimensional analogue of the square function used in the classical Bessel process derivation.

\subsection{Stochastic description of symmetric polynomials}
Recall that the idea of constructing a solution based on invariant polynomials was introduced in \cite{bib:GraczykMalecki:2014} for the root system $A_{N-1}$. Since we want to cover also remaining root systems, we have to expand this approach and choose family of polynomials suitable for every considered root system. However, to unify the notation, we define
\begin{align*}
	\Fak{k}{x} = 
\begin{cases}
    \sum\limits_{j=0}^{k-2}x_n^{j}x_m^{k-2-j}, & \text{if $\alpha=r_n-r_m$ and $k=1,2,\ldots$}\/,\\[1.9ex]
		\sum\limits_{j=0}^{k-2}x_n^{j}(-x_m)^{k-2-j}, & \text{if $\alpha=r_n+r_m$ and $k=2,4,6,\ldots$}\/,\\[1.9ex]
    x_n^{k-2}, & \text{if $\alpha=r_n$ and $k=2,4,6,\ldots$}\/,		
  \end{cases}
\end{align*}
which is obviously continuous in $x$ for every $\alpha$ and $k$. 

\medskip

Going back to the stochastic description of the invariant polynomials, we take the starting point $x(0)$ in the interior of the Weyl chamber  $C_+$ and we consider $X(t)=(X_1(t),\ldots,X_N(t))$ as a solution to \eqref{eq:main:SDE} up to the first hitting time $T_C=\inf\{t: X(t)\in\partial C\}$. Consequently, using the It\^o formula, we can write SDEs for the related polynomials $p_k(x)$ for $t<T_C$ as follows
\begin{align*}
   dp_k\left(X\right) & = \sum_{i=1}^N X_i^{k-1}dX_i+\frac{k-1}{2} \sum_{i=1}^N X_i^{k-2}dX_idX_i\\
					 & = \sum_{i=1}^N X_i^{k-1}\sigma_i\left(t, X\right)dB_i+\frac{k-1}{2} X_i^{k-2}\sigma_i^2\left(t, X\right) dt\\
                     &\quad +\sum_{i=1}^N \left[X_i^{k-1}b_i\left(t, X\right)+X_i^{k-1}\sum_{\aR}\dfrac{\kaX \alpha_i}{\Xa}\right]dt.
\end{align*}
The crucial observation is that the singular expression $\kax/\xa$ disappear for the carefully chosen $k$. Indeed, for every $\alpha=r_n-r_m$ and every $k=1,2,\ldots$ we have
    \begin{equation*}
	   \sum_{i=1}^N x_i^{k-1}\alpha_i = x_n^{k-1}-x_m^{k-1} = (x_n-x_m)\sum_{j=0}^{k-2}x_n^{j}x_m^{k-2-j} = \xa \Fak{k}{x}\/.
    \end{equation*}
Moreover, for even $k$ and $\alpha=r_n+r_m$ we obtain
    \begin{equation*}
	   \sum_{i=1}^N x_i^{k-1}\alpha_i = x_n^{k-1}+x_m^{k-1} =  x_n^{k-1}-(-x_m)^{k-1}=(x_n+x_m)\sum_{j=0}^{k-2}x_n^{j}(-x_m)^{k-2-j} = \xa \Fak{k}{x}\/.
    \end{equation*}
Finally, for $\alpha=r_n$ and $k\geq 2$ we simply get
    \begin{equation*}
	   \sum_{i=1}^N x_i^{k-1}\alpha_i = x_n^{k-1} = x_n \cdot x_n^{k-2} = \xa \Fak{k}{x}\/.
    \end{equation*}
Collecting all together we arrive at
    \begin{align}
        dp_k\left(X\right) &= \sum_{i=1}^N X_i^{k-1}\sigma_i\left(t, X\right)dB_i+\sum_{i=1}^N \left(X_i^{k-1}b_i\left(t,X\right)+\frac{k-1}{2} X_i^{k-2}\sigma_i^2\left(t, X\right)\right)dt \nonumber\\
        &\quad +\sum_{\aR}F_{\alpha,k}\left(X\right)\kaX dt\/,
        	   \label{eq:pk:SDE}
    \end{align}
where all the functions appearing above are continuous. 

\medskip

Although the symmetric polynomials $p_k(x)$ are sufficient to define a solution in $A_{N-1}$ and $B_N$ cases, we need to consider $e_N(x)$, which is the product $x_1\cdot,\ldots\cdot x_N$, in the $D_N$ case. Proceeding as previously we apply the It\^o formula to arrive at the following SDEs for $e_N(x)$ assuming the start from the interior of the Weyl chamber and working up to the first hitting time of its boundary
    \begin{equation*}
        de_N\left(X\right)  = \sum_{i=1}^N e_{N-1}^{\overline{x_i}}(X)dX_i = \sum_{i=1}^N e_{N-1}^{\overline{x_i}}\left(X\right)\left(\sigma_i\left(t, X\right)dB_i+b_i\left(t, X\right)dt+\sum_{\aR}\frac{\alpha_i \kaX}{\Xa}dt\right)
    \end{equation*}
Once again the singularity can be removed. For every $\alpha=r_n\pm r_m$ using the relation $e_{N-1}^{\overline{x_n}}(x)=x_m e_{N-2}^{\overline{x_n}, \overline{x_m}}(x)$ we obtain
\begin{align*}
   \sum_{i=1}^N \alpha_i e_{N-1}^{\overline{x_i}}(x) = \alpha_n e_{N-1}^{\overline{x_n}}(x)+\alpha_m e_{N-1}^{\overline{x_m}}(x) = (\alpha_n x_n+\alpha_m x_m) e_{N-2}^{\overline{x_n}, \overline{x_m}}(x) = \xa e_{N-2}^{\overline{x_n}, \overline{x_m}}(x)
\end{align*}
and consequently setting $G_{\alpha,N}(x)= e_{N-2}^{\overline{x_n}, \overline{x_m}}(x)$ for $\alpha=r_n\pm r_m$ we arrive at
\begin{equation}
	\label{eq:eN:SDE}
	de_N\left(X\right) = \sum_{i=1}^N e_{N-1}^{\overline{x_i}}\left(X\right)\sigma_i\left(t, X\right)dB_i+\sum_{i=1}^N e_{N-1}^{\overline{x_i}}\left(X\right) b_i\left(t, X\right)dt+\sum_{\aR} G_{\alpha,N}\left(X\right)\kaX dt\/.
\end{equation}

\begin{remark} We can also remove the singularity from the SDEs for every symmetric polynomial $e_n(x)$, $n=1\ldots, N$ in the $A_{N-1}$ case. Indeed, the similar computations as above but involving the general formula $e_{n-1}^{\overline{x_n}}(x)=x_m e_{n-2}^{\overline{x_n}, \overline{x_m}}(x)+e_{n-1}^{\overline{x_n}, \overline{x_m}}(x)$ (note that for $n=N$ the last expression vanishes) lead to 
\begin{equation*}
  \sum_{i=1}^N \alpha_i e_{n-1}^{\overline{x_i}}(x) = \xa e_{n-2}^{\overline{x_n}, \overline{x_m}}(x) + e_{n-1}^{\overline{x_n}, \overline{x_m}}(x) \sum_{i=1}^N \alpha_i  = \xa e_{n-2}^{\overline{x_n}, \overline{x_m}}(x)\/,
\end{equation*}
where the last equations is a consequence of the fact that $\sum_{i=1}^N\alpha_i=0$ for every $\alpha=r_n-r_m$. Consequently, we have
\begin{equation*}
	de_n\left(X\right) = \sum_{i=1}^N e_{n-1}^{\overline{x_i}}\left(X\right)\left(\sigma_i\left(t, X\right)dB_i+b_i\left(t, X\right)dt\right)+\sum_{\aR} G_{\alpha,n}\left(X\right)\kax dt\/,
\end{equation*}
for every $n=1,\ldots,N$, $G_{\alpha,n}(x) = e_{n-2}^{\overline{x_n}, \overline{x_m}}(x)$ for $\alpha=r_n-r_m\in A_{N-1}^+$.
\end{remark}


\subsection{Basic invariant polynomials}
\label{subsec:basic_polynomials}
For every considered root system, we choose the corresponding basic invariant polynomials $w=(w_1,\ldots,w_N)$ indicated in \cite{bib:Lee:1974}. Since for the $A_N$ root system the basic invariant polynomials are just $e_1,\ldots,e_N$ we will take $w_k=p_k$ for $k=1,\ldots,N$ since it is more convenient for further calculations to work with the power sums than with $e_k$ polynomials. The set of basic invariant polynomials in the $B_N$ case can be chosen as power sums of squares of $x_1,\ldots,x_N$, i.e. we define $w_k=p_{2k}$ for $k=1,\ldots, N$ in this case. Finally, for $D_N$ root system we set $w_k=p_{2k}$ for $k=1,\ldots, N-1$ as in the $B_N$ case, but the last polynomial must be defined as $w_N=e_N$ to have a control on the sign of the smallest particle $x_N$.

\medskip

Such choice of $w=(w_1,\ldots,w_N)$ ensure that there is one-to-one correspondence between $x=(x_1,\ldots,x_N)$ and $(w_1,\ldots,w_N)$ whenever we restrict our consideration to the positive Weyl chamber $C$, i.e. the smooth function 
\begin{equation*}
  w=(w_1,\ldots,w_N):C\longrightarrow w[C]
\end{equation*}
is one-to-one, where $w[C]$ stands for the image of $C$ by $w$. To see this note that in the $A_{N-1}$ case it just follows from the fact that there is one-to-one relation between $(p_1,\ldots,p_N)$ and $(e_1,\ldots,e_N)$, which (with suitable sign) are coefficients of the characteristic polynomial $P(x)=\prod_{i=1}^N(x-x_i)$, i.e. there is one-to one relation between coefficients of the polynomial and its ordered roots. In the $B_N$ case the same argument shows that there is one-to-one relation between $(w_1,\ldots,w_N)$ and $(x_1^2,\ldots,x_N^2)$ and since here we are restricted to the positive $x_i$'s we get the claim. Finally, in the $D_N$ case, knowing $(w_1,\ldots,w_N)$ we can determine $(p_2,p_4, \ldots, p_{2N})$. From $p_2,\ldots,p_{2N-2}$ we determine the first $N-1$ elementary symmetric polynomials of $y_i=x_i^2$, and from $w_N=e_N(x)$ we determine the last one, namely $\prod_i y_i = e_N(x)^2=w_N^2$. Hence we determine the unordered set $\{x_1^2,\ldots,x_N^2\}$. It gives $(x_1,x_2,x_3,\ldots,|x_N|)$, but using $w_N=x_1\cdot\ldots\cdot x_N$ we can determine the sign of $x_N$.

\medskip

Summing these up we can define the inverse diffeomorphism between $C$ and $w[C]$
\begin{equation*}
	f:w[C_+] \stackrel{1-1}{\longrightarrow} C_+\/,
\end{equation*}
which is smooth on $w[C_+]$can be continuously extended to
\begin{equation*}
	f:\overline{w[C_+]} \stackrel{1-1}{\longrightarrow} \overline{C_+}\/.
\end{equation*}
Moreover, there exists extension of $f$ on $\R^N$, which is obviously not $1-1$, but remains continuous
\begin{equation}
 \label{eq:f:ext}
 \tilde{f}:\R^{N}\longrightarrow\overline{C_+}
\end{equation}
and such that $\tilde{f}[(w[C_+])^c]=\partial C_+$. Starting with $A_{N-1}$ recall that the function $g:\C^N\to\C^N$ giving the relation between coefficients of a polynomial and its (suitable ordered) complex roots is continuous (see \cite{bib:Marden:1949}) and in particular the function
$$
	\R^N\ni a=(a_1,\ldots,a_N) \longrightarrow \tilde{f}(a) = H(\Re(g_1(a)),\ldots\Re(g_N(a)))\/,
$$
is continuous, where $H(b_1,\ldots,b_N)=(b_{\pi(1)},\ldots,b_{\pi(N)})$ is the ordering permutation, i.e. we have $b_{\pi(i)}\geq b_{\pi(i+1)}$, $i=1,\ldots,N-1$. Observe also that if $a\in {w[C]}$, then the polynomial with coefficients $a_1,\ldots,a_N$ has all real and distinct roots and $\tilde{f}(a)=f(a)$. If $a\in \partial w[C]$, then we still have all real roots of the polynomial, but some of them are multiple roots, i.e. $\tilde{f}(a)\in\partial C$. Finally, if $a\notin \overline{w[C]}$, then we have a polynomial with real coefficients, but some of its roots are complex. However, since the coefficients are real, for every complex root its conjugate is also a root, which means that some of $\Re(g_i(a)$ are equal, i.e. $\tilde{f}(a)\in\partial C$. Thus we have just constructed the desired extension \eqref{eq:f:ext} for $A_{N-1}$. For $B_N$ root system we just have to compose this with square root function $x\to \sqrt{\textrm{max}(x,0)}$ on every coordinate and $D_N$ case requires additionally to observe that $x_\no=\textrm{sgn}(w_N)|x_\no|$.

\bigskip

To unify all the considered cases we introduce for $R=A_{N-1}$ and every $k=1,\ldots,N$ the following notation
\begin{equation*}
   a_{i,k}^R(t, x) = x_i^{k-1}\sigma_i(t, x)\/,\quad h_{i,k}^R(t, x) = x_i^{k-1}b_i(t, x)+\frac{k-1}{2} x_i^{k-2}\sigma_i^2(t, x)\/,\quad H_{\alpha,k}^R(x) = F_{\alpha,k}(x)\/.
\end{equation*}
Since for $R=B_N$ we just have $u_{k}=p_{2k}$, for every $k=1,\ldots,N$ and the same polynomials are chosen for $R=D_N$ for $k=1,\ldots,N-1$, so in those cases we set
\begin{equation*}
  a_{i,k}^R(t, x) = x_i^{2k-1}\sigma_i(t, x)\/,\quad h_{i,k}^R(t, x) = x_i^{2k-1}b_i(t, x)+\frac{2k-1}{2} x_i^{2k-2}\sigma_i^2(t, x)\/,\quad H_{\alpha,k}^R(x)=F_{\alpha,2k}(x)\/.
\end{equation*}
Finally, for $R=D_N$ and $k=N$ we have
\begin{equation*}
	a_{i,N}^R(t, x) = e_{N-1}^{\overline{x_i}}(x)\sigma_i(t, x)\/,\quad h_{i,N}^R(t, x) = e_{N-1}^{\overline{x_i}}(x)b_i(t, x)\/,\quad H_{\alpha,N}^R(x)=G_{\alpha,N}(x)\/.
\end{equation*}
Now we can define $U=(U_1,\ldots,U_N)$ as solution to appropriate SDEs of the form \eqref{eq:pk:SDE} and/or \eqref{eq:eN:SDE} with $X$ replaced by $\tilde{f}(U)$, which, according to the previous considerations, are SDEs with continuous coefficients. 

\begin{definition}
\label{defn:u}
For a given collection of independent one-dimensional Brownian motions
$(B_1,\ldots,B_N)$ we define $U=(U_1,\ldots,U_N)$ as a solution to the system
\begin{equation}
    \label{eq:u:SDE}
    dU_k
    =
    \sum_{i=1}^N a_{i,k}^{R}(t, \tilde f(U))\,dB_i
    + \sum_{i=1}^N h_{i,k}^{R}(t, \tilde f(U))\,dt
    + \sum_{\alpha\in R_+}
        H_{\alpha,k}^R(\tilde f(U))\,k_\alpha(t,\tilde f(U))\,dt,
\end{equation}
for every $k=1,\ldots,N$, with initial condition $U(0)=u_0\in\R^N$.
\end{definition}

\begin{remark}
The existence of a solution of \eqref{eq:u:SDE} follows from the continuity of $\tilde f$ on $\R^N$ and consequently from the continuity of the coefficients. Similar arguments were used in the construction presented in \cite{bib:GraczykMalecki:2014}. However two arguments were missing in the proof presented there, which was pointed out to the author by Prof. Michael Voit. First is the definition of the extension of $f$. Second is showing that in fact we can choose $U$ to stay in the set $\overline{w[C_+]}$. In the present paper both problems are solved in much more general setting, which keeps all the results from \cite{bib:GraczykMalecki:2014} staying in charge. 
\end{remark}

Let $\life{U}$ denote the lifetime of the solution $U$ of \eqref{eq:u:SDE}.  We set
\begin{equation}
    \label{eq:K-and-tauK-defn}
    \mathcal K:=\overline{w[\C]}\subset\R^N,
    \qquad
    \tau_{\mathcal K}:=
        \inf\{0\le t<\life{U}:U(t)\notin\mathcal K\},
\end{equation}
with the convention that the infimum of the empty set is $\life{U}$.

\begin{definition}
    \label{defn:x}
    Let $x_0\in\Cc$ and put $u_0=w(x_0)$.  Let $U$ be a solution of the invariant-coordinate equation from Definition~\ref{defn:u}, started from $U(0)=u_0$, and let $\life{U}$ be its lifetime.  If among such solutions there exists one satisfying $\tau_{\mathcal K}=\life{U}$ a.s.,  then, in the construction below, we choose such a solution $U$.
    We define the candidate chamber-valued process by
    \begin{equation}
        \label{eq:x:defn}
        X(t)=\tilde f(U(t)),\qquad 0\le t<\life{U}.
    \end{equation}
    We also put $\life{X}:=\life{U}$.
\end{definition}
On the stochastic interval $[0,\tau_{\mathcal K})$ we have
$U(t)\in\mathcal K$, and therefore
\begin{equation}
    \label{eq:K-identification-before-exit}
    X(t)=\tilde f(U(t))=f(U(t)), \qquad U(t)=w(X(t)), \qquad 0\le t<\tau_{\mathcal K}.
\end{equation}
Thus, before the no-exit result is proved, all particle computations are understood with the additional stopping at $\tau_{\mathcal K}$.

\begin{remark}
    \label{rem:direct:Ito}
    Let $p$ be a polynomial invariant under the Weyl group corresponding to the root system $R$.  By the choice of the basic invariant polynomials $w_1,\ldots,w_N$, there exists a polynomial $\widehat p\in\R[u_1,\ldots,u_N]$ such that
    \begin{equation*}
        p(x)=\widehat p(w_1(x),\ldots,w_N(x)),\qquad x\in\C.
    \end{equation*}
    Hence, on $[0,\tau_{\mathcal K})$,
    \begin{equation*}
        p(X(t))=\widehat p(U(t)).
    \end{equation*}
    The rigorous way to compute $p(X)$ on this stopped interval is therefore to apply It\^o's formula to the smooth polynomial $\widehat p(U)$.  In the sequel we shall often use the shorter symbolic notation obtained by applying It\^o's formula formally in the particle variables $x$.  This is only a shorthand on $[0,\tau_{\mathcal K})$: on the open chamber the map $w:\C\to w[\C]$ is a smooth diffeomorphism with inverse $f$, and the identity $p\circ f=\widehat p$ shows, by the chain rule, that the first- and second-order terms coincide with those obtained from It\^o's formula for $\widehat p(U)$.  After Proposition~\ref{prop:no-exit-K-classical} below is proved, $\tau_{\mathcal K}=\life{X}$ and the same shorthand is valid for all $t<\life{X}$.
\end{remark}


\section{Collisions and preservation of the invariant chamber image}

The main difficulty in showing that the process constructed through the invariant-coordinate equation is a genuine solution of the particle system is the behavior at the boundary of the Weyl chamber.  In this section we prove that the invariant-coordinate process does not leave the closed invariant chamber image $\mathcal K=\overline{w[\C]}$

The proof is organized as follows.  We first prove a stopped no-multiple-collision result, valid for interior initial points up to the first possible exit time $\tau_{\mathcal K}$.  This is then used to prove that $\tau_{\mathcal K}=\life{X}$ whenever the process starts from the interior $\C$.  Boundary initial points are handled by approximation from the interior.  Finally, we prove instantaneous diffraction from the boundary and deduce the full no-multiple-collision result after the initial time.

\subsection{Multiple collision points}

    Recall the definitions related to clusters and multiply collision points presented in Subsection \ref{subsec:clusters}. We begin with introducing some additional functions and symmetric polynomials, which enable us to control visits to multiple collision points. First, for given cluster $\Cluster\in\ClusterSet$, we define
        \begin{equation*}
            \rcx = \sum_{\alpha\in \ClusterGen{\Cluster}}\xa^2\/,\quad x \in \Cc\/.
        \end{equation*}
    We also introduce the following function, which up to the constant $2$, is the derivative  of $\rcx$ with respect to $x_k$
        \begin{equation*}
            \hcx{k} = \sum_{\alpha\in \ClusterGen{\Cluster}}\alpha_k\xa\/,\quad x \in \Cc\/, \quad k=1,\ldots,\no\/.
        \end{equation*}
    together with 
        \begin{equation}
            \label{eq:hcxdelta:defn}
            \hcx{\delta} = \sum_{k=1}^\no \delta_k \hcx{k} = \sum_{\alpha\in R_+(\Cluster)}\ad\xa \/.
        \end{equation}
    Please note that the sum in the definition of $\rcy$ is over all elements generated by the given cluster. In particular, if $x$ is an element of the boundary and $\Cluster$ is a cluster of $\ClusterSet(x)$, then the function $\Ss{\Cluster}(x)$ is zero. Conversely, if $\Ss{\Cluster}(x)=0$ for some cluster $\Cluster\in\ClusterSet$ and $x\in \Cc$, then $x\in \Cb$ and $\Cluster \in \ClusterSet(x)$. Consequently, defining
        \begin{equation}
            \label{eq:q:defn}
            \qm(x) = \prod_{\ccm} \rcx\/,\quad m=1,\ldots, \SimpleNo
        \end{equation}
    we obtain a symmetric polynomial, which controls if the point $x$ is an element of $\Cbn{m}$. In fact, if $x\in \Cbn{m}$, then there exists a cluster $\Cluster$ of length $m$ in $\ClusterSet(x)$, which implies $\Ss{\Cluster}(x) = 0$  and consequently $\q{m}(x)=0$. Conversely, if $\qm(x)=0$, then there exists $\ccm$ such that $\rcx=0$, which implies $x\in \Cbn{j}$ with $j\geq m$.

    \medskip

    In what follows, we focus on the stochastic description and the properties of the symmetric polynomial process defined as
        \begin{equation}
            \label{eq:qproc:defn}
            \qm(X) = \prod_{\Cluster\in \ClusterSetn{m}}\Ss{\Cluster}(X)\/.
        \end{equation}
    As we have discussed previously, the process $\q{m}(X)$ controls, in some way, the collisions between particles, which makes it natural to study its first hitting time of zero
        \begin{equation*}
            \taumz := \htime{0}{\q{m}(X)} = \inf\{t\geq 0: \q{m}(X(t))=0\}\/.
        \end{equation*}
    We will use the classical approach to study the hitting time considering its logarithm $\Q{m} = -\ln(\q{m}(X))$ and its explosion time $\htime{\infty}{\Q{m}}$.  

    \begin{proposition}
        \label{prop:qm:form}
        Let $\qm(X)$ be defined in \eqref{eq:qproc:defn} and assume that $X(0) = x_0\in \Cc$ is such that $\qm(x_0)>0$. Then, for $t<  \taumz\wedge \tau_{\mathcal K}$, the process $\Qm = -\ln \qm(X)$ is a continuous semi-martingale with the local martingale part 
            \begin{equation}
                \label{eq:lam:mart}
                d\mart{\Qm} = -2\sum_{\ccm} \frac{1}{\rcx} \sum_{k=1}^N \hcX{k} \sigma_k(X)dB_k
            \end{equation}
        and the drift giving as a sum $d\drift{\Qm} = h_1(X)dt+h_2(X)dt+h_3(X)dt$, where
            \begin{eqnarray}
                \label{eq:lam:drift:1}
	           h_1(x) &=& \sum_{\ccm} \frac{1}{(\rcx)^2}\sum_{k=1}^N\left(2(\hcx{k})^2-\rcx \sum_{\alpha\in R_+(\Cluster)}\alpha_k^2\right)\sigma_k^2(x),\\
                \label{eq:lam:drift:2}
	           h_2(x) &=& -2\sum_{\ccm} \frac{1}{\rcx} \sum_{k=1}^N \hcx{k} b_k(x),\\
                \label{eq:lam:drift:3}
	        h_3(x) &=& -2\sum_{\ccm} \frac{1}{\rcx} \sum_{\dR} \frac{\kdx \hcx{\delta} }{\xd} \/,
            \end{eqnarray}

    \end{proposition}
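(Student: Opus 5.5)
The plan is a direct It\^o computation, carried out rigorously via Remark~\ref{rem:direct:Ito}. Each $\Ss{\Cluster}(x)=\sum_{\alpha\in R_+(\Cluster)}\xa^2$ is a polynomial in $x$, and so is the product $\qm$. The set $\ClusterSetn{m}$ is stable under the Weyl group, and for every $w$ in the Weyl group the set $w(R_+(\Cluster))$ equals $R_+(\Cluster')$ for some cluster $\Cluster'$ of the same length, up to signs; squares kill the signs. Hence $\qm$ is $W$-invariant and $\qm(X(t))=\widehat{\qm}(U(t))$ on $[0,\tau_{\mathcal K})$. On $[0,\taumz\wedge\tau_{\mathcal K})$ the value $\qm(X)$ is strictly positive, so $-\ln$ is smooth along the path. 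Thus $\Qm$ is a continuous semimartingale there, and its decomposition may be computed by the shorthand of formally applying It\^o's formula in the particle variables, using \eqref{eq:main:SDE} on the stopped interval. One caveat: $X$ may touch single walls before $\taumz$, so the singular drift enters as a term with locally integrable density. I would justify this either by localizing at $\tau_\varepsilon=\inf\{t:\min_\alpha\langle X,\alpha\rangle\le\varepsilon\}$ and letting $\varepsilon\downarrow0$, or, more cleanly, by noting that $\widehat{\qm}(U)$ has continuous coefficients. In the latter route, $h_3$ expressed through $U$ turns out to be continuous, because $\hcx{\delta}/\xd$ is handled through the $W$-invariance of the combined sum, which I would accept from the structure of \eqref{eq:u:SDE}.

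The computation itself runs as follows. Write $\Qm=-\sum_{\ccm}\ln\Ss{\Cluster}(X)$, so it suffices to treat a single cluster and sum. We have $\partial_k\Ss{\Cluster}=2\hcx{k}$ and $\partial_k\partial_l\Ss{\Cluster}=2\sum_{\alpha\in R_+(\Cluster)}\alpha_k\alpha_l$. Since $\sigma$ is diagonal and the $B_k$ are independent, only the diagonal second derivatives $2\sum_\alpha\alpha_k^2$ survive. Applying It\^o to $-\ln S$ gives $-dS/S+\tfrac12 d\langle S\rangle/S^2$. The martingale part of $-dS/S$ is $-\frac{2}{S}\sum_k\hcX{k}\sigma_k\,dB_k$, which yields \eqref{eq:lam:mart}. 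The quadratic-variation correction $\frac{1}{2S^2}\sum_k4(\hcx{k})^2\sigma_k^2$, combined with the It\^o term $-\frac{1}{2S}\sum_k 2\sum_\alpha\alpha_k^2\sigma_k^2$, gives exactly $h_1$ in \eqref{eq:lam:drift:1}. The drift $b$ contributes $-\frac{2}{S}\sum_k\hcx{k}b_k$, which is $h_2$. The singular drift contributes $-\frac{2}{S}\sum_k\hcx{k}\sum_{\delta}\delta_k\kdx/\xd$. Exchanging the sums and using definition \eqref{eq:hcxdelta:defn} gives $h_3$.

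The main obstacle is not the algebra but the justification at times when $X$ sits on a single wall with $\qm(X)>0$, where $1/\xd$ is singular. I would first try the invariant-coordinate route. Summed over $\delta$ and clusters, $h_3$ is a $W$-invariant rational expression, and pairing $\delta$ with its reflection $\prf{\delta}{\alpha}$ via the master relation \eqref{eq:inv:master} should cancel the poles on walls not in a cluster of length $m$, just as the singularity cancelled in \eqref{eq:pk:SDE}. Failing that, I would use localization away from walls together with continuity of both sides, which suffices for the stated identity on the stopped interval.
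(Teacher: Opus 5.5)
Your proposal is correct and follows the paper's proof, which is exactly the direct It\^o computation for $-\sum_{\varphi}\ln S_\varphi(X)$ that you carry out, with $h_1$, $h_2$, $h_3$ arising as you describe. The paper does not address contacts with single walls before $\tau_0^m$ at all. Your argument through the $W$-invariance of $q_m$ and the continuous-coefficient equation for $U$ is the right way to fill this gap: identify the drift on the open chamber, then extend it by continuity to $\{q_m>0\}$, where the pairing behind \eqref{eq:h3:final} shows that $h_3$ has no $1/\langle x,\delta\rangle$ poles. The $\tau_\varepsilon$-localization alone would only reach the first hitting time of the chamber boundary, not the full interval.
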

    \begin{proof}

        \medskip
    
        Since we simply have $\ln\qm(x) = \sum_{\ccm}\ln\rcx$, the It\^o formula implies
            \begin{equation*}
	           d\Qm = -\sum_{\ccm} d\ln \rcx = -\sum_{\ccm}\left(\frac{1}{\rcx} d\rcx-\dfrac12 \frac{d\rcx d\rcx}{(\rcx)^2}\right)\/,
            \end{equation*}	
        which gives the local martingale part $d\mart{\Qm}$ of the form \eqref{eq:lam:mart} and the drift part is represented as a sum $d\drift{\Qm} = h_1(X)dt+h_2(X)dt+h_3(X)dt$, where $h_1$ and $h_2$ are given in \eqref{eq:lam:drift:1} and \eqref{eq:lam:drift:2} respectively. For the last part we just have
            \begin{eqnarray*}
                \nonumber
                h_3(x) &=& -2\sum_{\ccm} \frac{1}{\rcx} \sum_{k=1}^N \hcx{k}\sum_{\dR} \frac{\kdx \delta_k}{\xd}
                = -2\sum_{\ccm} \frac{1}{\rcx} \sum_{\dR}  \frac{\kdx \hcx{\delta}}{\xd}\/.
            \end{eqnarray*}
    \end{proof}

    Note that for fixed cluster $\ccm$, if $\sum_{\alpha\in\Cluster}\ad^2=0$, then $\sum_{\alpha\in R_+(\Cluster)}\ad^2=0$ and consequently $\hcx{\delta} = 0$. Thus it is enough to consider those $\delta \in R_+$ such that $\sum_{\alpha\in\Cluster}\ad^2>0$ or conversely, for fixed $\delta\in R_+$ we can consider only clusters from
        \begin{equation*}
            \ClusterSetn{m}_\delta = \{\ccm:\sum_{\alpha\in\Cluster}\ad^2>0 \}\/.
        \end{equation*}
    Moreover, for fixed $\delta \in R_+$ and $\Cluster \in \ClusterSetn{m}_\delta$ we define
        \begin{equation*}
            \InCluster = \{\prf{\alpha}{\delta}\in R_+:\alpha\in \Cluster\}\/.
        \end{equation*}
    It is important to see that if $\delta\in R_+(\Cluster)$, directly from the definition of $R_+(\Cluster)$ together with $\prf{\prf{\alpha}{\delta}}{\delta} = \alpha$ we have $R_+(\Cluster) = R_+(\InCluster)$. Then $\hcx{\delta} = \hincx{\delta}$ and $\rcx = \sincx$. 
        
    Note that, since at least one of the inner products $\ad$ is non-zero and obviously $\InCluster\in \ClusterSetn{m}_\delta$, as $\ad = \pm \inner{\prf{\alpha}{\delta}}{\delta}$. We can thus rewrite $h_3(x)$ as
        \begin{equation*}
                h_3(x) = -\sum_{\delta \in R_+} \dfrac{\kdx }{\xd} \sum_{\ccmd}\left(\frac{\hcx{\delta}}{\rcx}+\frac{\hincx{\delta}}{\sincx}\right)
        \end{equation*}
    Recall that 
        \begin{equation*}
            \inv{\alpha}{\delta} = \pm\left(\alpha-\frac{2\ad \delta}{|\delta|^2}\right)\/, \ainvdd = \mp\ad\/,\quad \xainvd = \pm\left(\xa-\frac{2\ad\xd}{|\delta|^2}\right),
        \end{equation*}
    and 
        \begin{equation}
            \label{eq:inv:master:2}
            \ad\xa+\ainvdd\xainvd  = \frac{2\ad^2\xd}{|\delta|^2}\/.
        \end{equation}
    We begin with using \eqref{eq:inv:master:2} to get
    
        \begin{equation*}
            \hcx{\delta} + \hincx{\delta} = \sum_{\alpha\in R_+(\Cluster)} \ad\xa + \ainvdd \xainvd = \frac{2\xd}{|\delta|^2} \sum_{\alpha\in R_+(\Cluster)} \ad^2.
        \end{equation*}
    We have
        \begin{equation*}
            \xa^2-\xainvd^2 = \frac{2\xd}{|\delta|^2}\left(\ad\xa-\ainvdd\xainvd \right).
        \end{equation*}
    Consequently,
        \begin{align*}
            \rcx-\sincx &= \sum_{\alpha\in R_+(\Cluster)} \left(\xa^2-\xainvd^2\right) \\
                        &= \frac{2\xd}{|\delta|^2} \sum_{\alpha\in R_+(\Cluster)} \left( \ad\xa-\ainvdd\xainvd\right) \\
                        &= \frac{2\xd}{|\delta|^2} \left(\hcx{\delta}-\hincx{\delta}\right).
        \end{align*}
    where we have also used the fact that if $\ad=0$ then $\inv{\alpha}{\delta} = \alpha$. Combining all together we get
        \begin{eqnarray*}
            \frac{\hcx{\delta}}{\rcx}+\frac{\hincx{\delta}}{\sincx} &=& 
            \frac{(\rcx+\sincx)(\hcx{\delta}+\hincx{\delta})}{2\rcx\sincx} 
            - \frac{(\rcx-\sincx)(\hcx{\delta}-\hincx{\delta})}{2\rcx\sincx}\\
            &=& \frac{\xd}{|\delta|^2} \left[ \left(\frac{1}{\rcx}+\frac{1}{\sincx}\right)\sum_{\alpha\in R_+(\Cluster)} \ad^2 - \frac{(\hcx{\delta}-\hincx{\delta})^2}{\rcx\sincx}\right]
        \end{eqnarray*}
    which allow us to remove the singularity related to $\ad^{-1}$ in the formula for $h_3(x)$ and arrive at the following representation of $h_3(x)$.
        \begin{equation}
            \label{eq:h3:final}
             -\sum_{\delta \in R_+} \dfrac{\kdx }{|\delta|^2} \sum_{\ccmd}\left[\left(\frac{1}{\rcx}+\frac{1}{\sincx}\right)\sum_{\alpha\in R_+(\Cluster)} \ad^2 - \frac{(\hcx{\delta}-\hincx{\delta})^2}{\rcx\sincx}\right]\/.
        \end{equation}
    Examine the behavior of $h_3(x)$ in the neighborhood of the boundary point $y\in \Cbm$. Recall that for such a point, the largest clusters in $\ClusterSet(y)$ are of length $m$. The function $h_3(x)$ might explode only if $\rcx=0$ for some $\ccmd$ and this might only occur in the small neighborhood of $y$ if additionally $\Cluster \in \ClusterSet(y)$. Fix this $\Cluster \in \ClusterSet(y)\cap \ClusterSetn{m}_\delta$. If $\delta\in R_+(\Cluster)$, then, as we have previously seen, $R_+(\Cluster) = R_+(\InCluster)$ and $\hcx{\delta} = \hincx{\delta}$, $\rcx = \sincx$, which reduce the corresponding expression in \eqref{eq:h3:final} to 
        \begin{equation*}
            -\dfrac{2\kdx }{|\delta|^2} \sum_{\ccmd}\frac{1}{\rcx}\sum_{\alpha\in R_+(\Cluster)} \ad^2\/.
        \end{equation*}

     We use those observations to prove the next proposition, which is one of the most crucial results and shows that whenever the repulsive function $\ka(t,x)$ does not vanish at the multiple boundary points, then those points are not visited by the process.

\begin{proposition}
    \label{prop:no_multi_collisions:first}
    Assume that $X(0)=x_0\in\C$, and \ref{ass:k:positive} holds. Recall that $\tau_0^m = \inf\{t>0:X(t)\in\Cbn{m}\}$, for $m\in \{2,\ldots,\SimpleNo\}$.
    Then
    \begin{equation*}
        \pr\left(\tau_0^m\le \tau_{\mathcal K},\ \tau_0^m<\life{X}\right)=0,\qquad m=2,\ldots,\SimpleNo.
    \end{equation*}
    Equivalently, before and at the first possible exit from $\mathcal K$, the process cannot reach a multiple collision point.
\end{proposition}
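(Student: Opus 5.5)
Since $x_0\in\C$, we have $\qm(x_0)>0$, and on $[0,\tau_{\mathcal K})$ the process $X$ is a genuine particle process, so Proposition~\ref{prop:qm:form} applies. The plan is to argue by induction downward in $m$, from $m=\SimpleNo$ to $m=2$. At the first hitting time of $\Cbn{m}$, the process $\qm(X)$ vanishes. It suffices therefore to show that $\Qm=-\ln\qm(X)$ cannot explode before $\tau_{\mathcal K}\wedge\life{X}$. We localize with stopping times $T_{R,\varepsilon}$: $|X|\le R$, $t\le T$, and $X$ stays at distance at least $\varepsilon$ from the set $\bigcup_{j>m}\Cbn{j}$. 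The latter is legitimate by the induction hypothesis, which says higher-order collisions are not reached before $\tau_{\mathcal K}$. Hence near any boundary point visited, the relevant points are in $\Cbn{m}$ or lower. On the localized interval we aim to prove that $d\Qm\le dM+C\,dt$, where $M$ is a local martingale. Then the standard argument (take expectations of $\Qm$ at the stopping time $\htime{L}{\Qm}\wedge T_{R,\varepsilon}$ and let $L\to\infty$) shows $\pr(\Qm \text{ explodes before } T_{R,\varepsilon})=0$. Strictly, we bound $\E[\Qm(\text{stop})]\le \Qm(0)+CT$, and positivity of $\Qm$ after shifting by a constant forces the probability of reaching level $L$ to be $O(1/L)$. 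Letting $R,T\to\infty$ and $\varepsilon\to0$ concludes.

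The drift has three parts. $h_2$ is $O(1/\sqrt{\rcx})$ near a zero of $\rcx$, since $|\hcx{k}|\lesssim\sqrt{\rcx}$. This is not bounded, so we combine it with the dominant negative contribution coming from $h_3$ (Young's inequality). The key point is $h_3$ in the form \eqref{eq:h3:final}. Fix $y\in\Cbn{m}$ and a cluster $\Cluster\in\ClusterSet(y)\cap\ClusterSetn{m}$. Terms with $\delta\in R_+(\Cluster)$ give $-\tfrac{2\kdx}{|\delta|^2\rcx}\sum_{\alpha\in R_+(\Cluster)}\ad^2$. This is negative and of order $-c/\rcx$, because $k_\delta(t,y)>0$ by \ref{ass:k:positive} and $\delta$ lies on a vanishing wall. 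Terms with $\delta\notin R_+(\Cluster)$ are the delicate ones. If $\InCluster\notin\ClusterSet(y)$, then $\sincx$ is bounded below. Otherwise $\InCluster$ is another cluster of length $m$ in $\ClusterSet(y)$. Then $\Cluster\cup\InCluster\cup\{\delta\}$ should generate a larger cluster vanishing at $y$ whenever $\langle y,\delta\rangle=0$, which contradicts maximality of $m$. So either $\langle y,\delta\rangle>0$, and the factor $\kdx/\xd$ was harmless before the desingularization, or the bracket in \eqref{eq:h3:final} is controlled by Cauchy--Schwarz. The estimate $(\hcx{\delta}-\hincx{\delta})^2\le 2(\hcx{\delta}^2+\hincx{\delta}^2)$, together with $\hcx{\delta}^2\le \rcx\sum_{\alpha}\ad^2$, shows the bracket is at least $-C(1/\rcx+1/\sincx)$. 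These terms must then be absorbed, and the cases need a careful root-system check.

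The main obstacle is $h_1$, which is of order $+C/\rcx$ (the It\^o correction of $-\ln$). It must be dominated by the negative $h_3$ contribution. I would first try to show the exact identity that, near $y$, the singular parts of $h_1$ combine into $\tfrac{1}{\rcx}\sigma^2(y)\cdot c_\Cluster$ with a universal constant. This uses \ref{ass:sigma}: all $\sigma_k^2$ for $k$ in the support of $\Cluster$ coincide asymptotically at $y$, so $\sum_k(2\hcx{k}^2-\rcx\sum_\alpha\alpha_k^2)\sigma_k^2\approx\sigma^2\bigl(2\sum_k\hcx{k}^2-\rcx\sum_{\alpha,k}\alpha_k^2\bigr)$. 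The quadratic form $2|\nabla\rcx/2|^2-\rcx\,\Delta\rcx/2$ is then homogeneous of degree $2$ in the cluster coordinates, and I expect it to be nonpositive, or at least comparable to $\rcx$, by a direct computation on the irreducible subsystem $R_+(\Cluster)$ of rank $m\ge2$. Rank at least $2$ is exactly where the Bessel dimension exceeds $2$. For the error $(\sigma_k^2-\sigma_j^2)/\rcx$ I would try to use continuity only partially. If the quadratic form turns out not to be nonpositive, I would fall back on the $m\ge 2$ dimensional count: the ratio of that form to $\rcx$ is $\le 2-\operatorname{rank}$ times a constant, and then the $h_3$ term with $k_\delta\ge c>0$ provides the remaining strict negativity.
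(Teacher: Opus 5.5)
Your architecture matches the paper's. You take $-\ln$ of the product of the cluster functions $S_\varphi(x)=\sum_{\alpha\in R_+(\varphi)}\langle x,\alpha\rangle^2$ and split the drift into $h_1+h_2+h_3$. The roots $\delta\in R_+(\varphi)$ give a strictly negative $-c/S_\varphi$ contribution via \ref{ass:k:positive}, the $O(S_\varphi^{-1/2})$ terms are absorbed by Young's inequality, and maximality of $m$ is used. Your downward induction is a cleaner way to organise the paper's $\tau_0^m<\tau_0^{m+1}$ bookkeeping.

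\textbf{The gap is the It\^o term $h_1$.} You say it ``must be dominated by the negative $h_3$ contribution'' and leave its sign as an expectation. Domination is not available: \ref{ass:k:positive} only gives $k_\delta(t,y)>0$, with no size relative to $\sigma_k^2(t,y)$. A leading term $+c\,\sigma^2(t,y)/S_\varphi(x)$ with $c>0$ could therefore never be absorbed.

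What is needed, and what the paper proves, is a sign statement. First freeze $\sigma_k^2$ at $y$ and use \ref{ass:sigma} to make $\sigma_k^2(t,y)$ equal on the coordinates of $R_+(\varphi)$. Then, with $h_k(x)=\sum_{\alpha\in R_+(\varphi)}\alpha_k\langle x,\alpha\rangle$, the form $2\sum_k h_k(x)^2-S_\varphi(x)\sum_{\alpha\in R_+(\varphi)}|\alpha|^2$ must be $\le 0$ for every maximal cluster at a multiple collision point. The paper lists the possible shapes of $R_+(\varphi)$ on $q$ coordinates: type-$A$ block, mixed $D$ block, $B$ zero block, $D$ zero block. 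It computes the form as $q(3-q)S_\varphi$, $q(3-q)S_\varphi$, $(2-q)(2q-1)S_\varphi$ and $-2(q-1)(q-2)S_\varphi$ respectively.

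Your rank heuristic is in fact right and can be made rigorous. In each case $\sum_{\alpha\in R_+(\varphi)}\alpha\alpha^{\top}=c_\varphi P_\varphi$, where $P_\varphi$ is the orthogonal projection onto $\operatorname{span}R_+(\varphi)$ and $c_\varphi>0$. This follows from Schur's lemma for the irreducible subsystems, and by direct check for the $D$ zero pair. Hence $\sum_k h_k^2=c_\varphi S_\varphi$, and the form equals $c_\varphi(2-\operatorname{rank}R_+(\varphi))S_\varphi\le 0$, since two distinct roots of $\varphi$ already force rank at least $2$. Without this computation the argument does not close.

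\textbf{Two further corrections.}

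First, in every rank-two case (triple collisions in type $A$, double zeros in types $B$ and $D$) the form vanishes identically. So the strict negativity of the $\delta\in R_+(\varphi)$ part of $h_3$ is genuinely needed to absorb the error of freezing $\sigma_k^2$ at $y$. That error is $o(1)/S_\varphi(x)$, because $\bigl|2h_k^2-S_\varphi\sum_\alpha\alpha_k^2\bigr|\le S_\varphi\sum_\alpha\alpha_k^2$ by Cauchy--Schwarz. This is the precise form of ``using continuity only partially''.

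Second, for $\delta\notin R_+(\varphi)$ your fallback bound is too weak. Bounding the bracket in \eqref{eq:h3:final} below by $-C(1/S_\varphi+1/S_{\varphi'})$, with $\varphi'$ the reflected cluster, yields a positive term of order $1/S_\varphi$ whose constant is unrelated to $k_\delta$. Your first observation is what settles this case. If $\langle\alpha,\delta\rangle\neq0$ for some $\alpha\in\varphi$ and $\langle y,\delta\rangle=0$, then every reflected root vanishes at $y$ and maximality is violated. Hence $\langle y,\delta\rangle>0$, the reflected cluster does not vanish at $y$, and the term is $O(S_\varphi^{-1/2})$. The paper says the same thing differently: the numerator of the bracket tends to a nonnegative limit by an exact Cauchy--Schwarz on the reflected cluster.
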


    \begin{proof}
        We begin with fixing $m\in \{2,\ldots, \SimpleNo\}$ and restricting our consideration to the sets
        \begin{equation*}
            \{\tau_0^{m}<\tau_0^{m+1}\}\cap \{\tau_0^m\leq \tau_{\mathcal K}\}\cap \{\tau_0^m< \life{X}\}\/,
        \end{equation*}
        where we have $\tau_0^{\SimpleNo+1}= \life{X}$. We consider $\Qm = -\ln \qm(X)$, with $\qm(X)$ defined in \eqref{eq:qproc:defn}. By Proposition \ref{prop:qm:form}, we know the form of $\Qm(t)$ for $t<\tau_0^{m}$, which is our starting point for further consideration. We will show that under assumption on $\ka$, this process does not explode to $\infty$ in finite time and consequently $\tau_0^{m}=\tau_0^{m+1}$ on $\{\tau_0^m\leq \tau_{\mathcal K}\}\cap \{\tau_0^m< \life{X}\}$. Indeed, the martingale part $\mart{\Qm}$ given in \eqref{eq:lam:mart}, by the McKean argument, cannot explode in finite time, as it is always a Brownian motion with time changed. 
        
        To deal with the drift part, let us denote by $y=X(\tau_0^m)$. Since we are working on $\{\tau_0^{m}<\tau_0^{m+1}\}$, it follows that $y\in \Cbn{m}$. 
        Let us see that the blow-up of any of the parts $h_1$, $h_2$, or $h_3$ (in the form \eqref{eq:h3:final}) is only possible when $\rcy$ becomes zero, which might happen only if $\Cluster\in \ClusterSet(y)\cap \ClusterSetn{m}$. From now on we will fix $\Cluster$ in $\ClusterSet(y)\cap \ClusterSetn{m}$ and consider the corresponding element from $h_1(x)+h_2(x)+h_3(x)$, i.e. the expression
            \begin{eqnarray}
            \label{eq:sigma:main}
                &&\frac{1}{(\rcx)^2}\sum_{k=1}^N\left(2(\hcx{k})^2-\rcx \sum_{\alpha\in R_+(\Cluster)}\alpha_k^2\right)\sigma_k^2(t, x)\\
            \label{eq:b:main}
                &&- \frac{2}{\rcx} \sum_{k=1}^N \hcx{k} b_k(t, x)\\
            \label{eq:k:main}
                &&-\frac{2}{\rcx}\sum_{\delta \in R_+(\Cluster)} \dfrac{\kdx }{|\delta|^2} \sum_{\alpha\in R_+(\Cluster)} \ad^2\\
            \label{eq:k:rest}
                &&-\frac{1}{\rcx}\sum_{\delta \notin R_+(\Cluster)} \dfrac{\kdx }{|\delta|^2\sincx} \left[\left(\rcx+\sincx\right)\sum_{\alpha\in R_+(\Cluster)} \ad^2 -(\hcx{\delta}-\hincx{\delta})^2\right]
            \end{eqnarray}
        First we deal with the last part. Note that if $\delta\notin R_+(\Cluster)$, then $\InCluster$ cannot be a cluster from $\ClusterSet(y)$. Indeed, otherwise all roots from $\InCluster$ would vanish at $y$, and we would obtain a cluster of length larger than $m$, which is impossible since $y\in\Cbm$. Hence not all roots in  $\InCluster$ vanish at $y$, and therefore $\sincy>0$. Consequently, using $\ad^2=\ainvdd^2$, we get
            \begin{eqnarray*}
                \lim_{x\to y}\lefteqn{[(\rcx+\sincx)\sum_{\alpha\in R_+(\Cluster)}\ad^2 - (\hcx{\delta}-\hincx{\delta})^2] =}\\ 
                &=&\sincy \sum_{\alpha\in R_+(\Cluster)}\ainvdd^2 - (\hincy{\delta})^2\\
                &=&  \sum_{\alpha\in R_+(\Cluster)} \yainvd^2\sum_{\alpha\in R_+(\Cluster)}\ainvdd^2 - \left(\sum_{\alpha\in R_+(\Cluster)}\ainvdd \yainvd \right)^2\geq 0\/.
            \end{eqnarray*}
        Non-negativity of the last expressions follows from Cauchy-Schwarz inequality.
        It is also clear that $\hcx{k} b_k(x)$ goes to zero as $x\to y$.
        These two observations together with positivity of $k_\delta(t,y)$, $\sincy$ and $\sum_{\alpha\in R_+(\Cluster)} \ad^2$ and continuity argument show that there exists a neighborhood $U(y)$ of $y$ such that for every $x\in U(y)$ the sum of \eqref{eq:sigma:main}, \eqref{eq:b:main}, \eqref{eq:k:main} and \eqref{eq:k:rest} is bounded from above by 
        \begin{equation*}
            \frac{1}{(\rcx)^2}\sum_{k=1}^N\left(2(\hcx{k})^2-\rcx \sum_{\alpha\in R_+(\Cluster)}\alpha_k^2\right)\sigma_k^2(t, x)-\frac{1}{\rcx}\sum_{\delta \in R_+(\Cluster)} \dfrac{\kdx }{|\delta|^2} \sum_{\alpha\in R_+(\Cluster)} \ad^2
        \end{equation*}
        Using the Cauchy-Schwarz inequality we get
        \begin{equation*}
            2(\hcx{k})^2-\rcx \sum_{\alpha\in R_+(\Cluster)}\alpha_k^2 \leq \rcx \sum_{\alpha\in R_+(\Cluster)}\alpha_k^2\/.
        \end{equation*}
        Continuity of $\sigma_k(t,x)$ gives $(\sigma_k(t,x)-\sigma_k(t,y)) \to 0$ as $x$ approaches $y$ and the same argument as above allows us to replace the expression by the following  
        \begin{equation*}
            \frac{1}{(\rcx)^2}\sum_{k=1}^N\left(2(\hcx{k})^2-\rcx \sum_{\alpha\in R_+(\Cluster)}\alpha_k^2\right)\sigma_k^2(t, y)-\frac{1}{2\rcx}\sum_{\delta \in R_+(\Cluster)} \dfrac{\kdx }{|\delta|^2} \sum_{\alpha\in R_+(\Cluster)} \ad^2
        \end{equation*}
        perhaps by shrinking the neighborhoods of $U(y)$, if needed. Note that the only problematic element of the expression is $2(\hcx{k})^2$, since the rest is obviously non-positive and thus can not explode to $\infty$. This is way we can restrict the sum over $k$ to the set of indices, which appear in roots from $R_+(\Cluster)$, i.e.
            \begin{equation*}
                I(\Cluster) = \left\{k\in \{1,\ldots, \no\}: \sum_{\alpha\in R_+(\Cluster)} \alpha_k^2>0\right\}
            \end{equation*}
        because only for those $k$ we might have $\hcx{k}\neq 0$.    
        By Assumption~\ref{ass:sigma}, the diffusion variances are constant on the coordinate block generated by the long roots in $R_+(\Cluster)$.  Hence there exists $c_\Cluster(t,y)\ge0$ such that 
            \begin{eqnarray}
                \sigma_k^2(t,y)=c_\Cluster(t,y), \qquad k\in I(\Cluster)\/.
            \end{eqnarray}
        These observations enable us to write
            \begin{eqnarray*}
                \sum_{k=1}^N\left[2(\hcx{k})^2-\rcx \sum_{\alpha\in R_+(\Cluster)}\alpha_k^2\right]\sigma_k^2(t, y) &=& c_\Cluster(t,y)\sum_{k\in I(\Cluster)}\left[2(\hcx{k})^2-\rcx\sum_{\alpha\in R_+(\Cluster)}\alpha_k^2\right]\\
                &=& c_\Cluster(t,y)\sum_{k=1}^\no\left[2(\hcx{k})^2-\rcx\sum_{\alpha\in R_+(\Cluster)}\alpha_k^2\right]\\
                &=& c_\Cluster(t,y)\left[2\sum_{k=1}^\no(\hcx{k})^2-\rcx\sum_{\alpha\in R_+(\Cluster)}|\alpha|^2\right]\/.
            \end{eqnarray*}
    It remains to estimate
        \begin{equation*}
            2\sum_{k=1}^\no(\hcx{k})^2 - \rcx\sum_{\alpha\in R_+(\Cluster)}|\alpha|^2 .
        \end{equation*}
    Put $q:=|I(\Cluster)|$. Since $y\in\Cbn{m}$ and $\Cluster\in\ClusterSet(y)\cap\ClusterSetn{m}$, the cluster $\Cluster$ is maximal at $y$.  Therefore, up to relabelling of the coordinates in $I(\Cluster)$, the generated set $R_+(\Cluster)$ has one of the following forms. First, we may have an $A$-type cluster,
        \begin{equation*}
            R_+(\Cluster) = \{e_i-e_j:\ i,j\in I(\Cluster),\ i<j\}.
        \end{equation*}
    This case may occur in all three classical root systems. Second, in type $\DN$ we may have a mixed nonzero mirror cluster.  In this
    case
        \begin{equation*}
            I(\Cluster)=J\cup\{N\}, \qquad J\subset\{1,\ldots,N-1\},
        \end{equation*}
    and
        \begin{equation*}
            R_+(\Cluster) = \{e_i-e_j:\ i,j\in J,\ i<j\} \cup \{e_i+e_N:\ i\in J\}.
        \end{equation*}
    This corresponds to nonzero collisions of the form $x_i=-x_N\neq0$ together with ordinary equalities among the coordinates
    indexed by $J$. Third, in type $\BN$ we may have a zero cluster,
        \begin{equation*}
            R_+(\Cluster) = \{e_i:\ i\in I(\Cluster)\} \cup \{e_i-e_j,\ e_i+e_j:\ i,j\in I(\Cluster),\ i<j\}.
        \end{equation*}
    Finally, in type $\DN$ we may have a zero cluster,
        \begin{equation*}
            R_+(\Cluster) = \{e_i-e_j,\ e_i+e_j:\ i,j\in I(\Cluster),\ i<j\}.
        \end{equation*}
    Notice that in the last case with $q=2$ this includes the zero-pair cluster $\{e_i-e_j,\ e_i+e_j\}$. We now compute the above expression in these cases.
    
    \medskip

    We now compute the above expression in these cases. In the type $\AN$ case, writing
        \begin{equation*}
            \bar x_I:=\frac1q\sum_{i\in I(\Cluster)}x_i,
        \end{equation*}
    we have
        \begin{equation*}
            \rcx = \sum_{i<j,\ i,j\in I(\Cluster)}(x_i-x_j)^2 = q\sum_{i\in I(\Cluster)}(x_i-\bar x_I)^2,
        \end{equation*}
    and, for $i\in I(\Cluster)$,
        \begin{equation*}
            \hcx{i} = \sum_{j\in I(\Cluster),\,j\neq i}(x_i-x_j) = q(x_i-\bar x_I).
        \end{equation*}
    Hence
        \begin{equation*}
            \sum_{k=1}^{\no}(\hcx{k})^2 = q\,\rcx .
        \end{equation*}
    Since $|e_i-e_j|^2=2$ and there are $q(q-1)/2$ roots in this generated subsystem,
        \begin{equation*}
            \sum_{\alpha\in R_+(\Cluster)}|\alpha|^2 = q(q-1).
        \end{equation*}
    Therefore
        \begin{equation*}
            2\sum_{k=1}^{\no}(\hcx{k})^2 - \rcx\sum_{\alpha\in R_+(\Cluster)}|\alpha|^2 = q(3-q)\rcx \le 0,
        \end{equation*}
    because in the present multiple-collision case $q\ge3$. In the mixed type $\DN$ case, put
        \begin{equation*}
            z_i=x_i,\quad i\in J, \qquad z_N=-x_N, \qquad \bar z_I:=\frac1q\left(\sum_{i\in J}z_i+z_N\right).
        \end{equation*}
    Then
        \begin{equation*}
            \rcx = \sum_{i<j,\ i,j\in J}(x_i-x_j)^2 + \sum_{i\in J}(x_i+x_N)^2 = \sum_{i<j,\ i,j\in I(\Cluster)}(z_i-z_j)^2
                 = q\sum_{i\in I(\Cluster)}(z_i-\bar z_I)^2.
        \end{equation*}
    Moreover, for $i\in J$,
        \begin{equation*}
            \hcx{i} = \sum_{j\in J,\,j\neq i}(x_i-x_j)+(x_i+x_N) = q(z_i-\bar z_I),
        \end{equation*}
    while
        \begin{equation*}
            \hcx{N} = \sum_{i\in J}(x_i+x_N) = -q(z_N-\bar z_I).
        \end{equation*}
    Hence
        \begin{equation*}
            \sum_{k=1}^{\no}(\hcx{k})^2 = q\,\rcx .
        \end{equation*}
    The generated subsystem contains
        \begin{equation*}
            \binom{q-1}{2}+(q-1)=\frac{q(q-1)}2
        \end{equation*}
    positive roots, all of length squared equal to $2$. Therefore
        \begin{equation*}
            \sum_{\alpha\in R_+(\Cluster)}|\alpha|^2=q(q-1).
        \end{equation*}
    Consequently,
        \begin{equation*}
            2\sum_{k=1}^{\no}(\hcx{k})^2 - \rcx\sum_{\alpha\in R_+(\Cluster)}|\alpha|^2 = q(3-q)\rcx \le 0.
        \end{equation*}
    Here $q=2$ would give only the single root $e_i+e_N$, so in the present multiple-collision case we again have $q\ge3$. In the type $\BN$ case we have
        \begin{equation*}
            \rcx = \sum_{i\in I(\Cluster)}x_i^2 + \sum_{i<j,\ i,j\in I(\Cluster)} \left[(x_i-x_j)^2+(x_i+x_j)^2\right] = (2q-1)\sum_{i\in I(\Cluster)}x_i^2.
        \end{equation*}
    Moreover, for $i\in I(\Cluster)$,
        \begin{equation*}
            \hcx{i} = x_i + \sum_{j\in I(\Cluster),\,j\neq i} \left[(x_i-x_j)+(x_i+x_j)\right] = (2q-1)x_i.
        \end{equation*}
    Thus
        \begin{equation*}
            \sum_{k=1}^{\no}(\hcx{k})^2 = (2q-1)\rcx .
        \end{equation*}
    The generated subsystem contains $q$ short roots and $2\binom q2=q(q-1)$ long roots.  Consequently,
        \begin{equation*}
            \sum_{\alpha\in R_+(\Cluster)}|\alpha|^2 = q+2q(q-1) = q(2q-1).
        \end{equation*}
    Hence
        \begin{equation*}
            2\sum_{k=1}^{\no}(\hcx{k})^2 - \rcx\sum_{\alpha\in R_+(\Cluster)}|\alpha|^2 = (2-q)(2q-1)\rcx \le 0,
        \end{equation*}
    because here $q\ge2$ in the multiple-collision case. Finally, in the type $\DN$ case we have
        \begin{equation*}
            \rcx = \sum_{i<j,\ i,j\in I(\Cluster)} \left[(x_i-x_j)^2+(x_i+x_j)^2\right] = 2(q-1)\sum_{i\in I(\Cluster)}x_i^2.
        \end{equation*}
    Moreover, for $i\in I(\Cluster)$,
        \begin{equation*}
            \hcx{i} = \sum_{j\in I(\Cluster),\,j\neq i} \left[(x_i-x_j)+(x_i+x_j)\right] = 2(q-1)x_i.
        \end{equation*}
    Thus
        \begin{equation*}
            \sum_{k=1}^{\no}(\hcx{k})^2 = 2(q-1)\rcx .
        \end{equation*}
    The generated subsystem contains $2\binom q2=q(q-1)$ positive roots, all of length squared equal to $2$. Therefore
        \begin{equation*}
            \sum_{\alpha\in R_+(\Cluster)}|\alpha|^2 = 2q(q-1),
        \end{equation*}
    and
        \begin{equation*}
            2\sum_{k=1}^{\no}(\hcx{k})^2 - \rcx\sum_{\alpha\in R_+(\Cluster)}|\alpha|^2 = -2(q-1)(q-2)\rcx \le 0.
        \end{equation*}
    In particular, the type $\DN$ zero-pair case corresponds to $q=2$, where the last expression is equal to zero.  This is still sufficient, because the active singular term \eqref{eq:k:main} is strictly negative near the zero-pair wall by Assumption~\ref{ass:k:positive}.

    We can now finish the proof, since we get to the conclusion, that the whole expression related to $\rcx$, on sufficiently small neighborhood of $y$, is non-positive and thus can not explode in finite time. It means that the process does not reach $y$ in finite time almost surely, which means that $\tau_0^{m}=\tau_0^{m+1}$ for every $m\geq 2$ on $\{\tau_0^m\leq \tau_{\mathcal K}\}\cap \{\tau_0^m< \life{X}\}$. Since $\tau_0^{\SimpleNo+1} = \life{X}$, this ends the proof. 
    
\end{proof}

    We next prove tha whenever \ref{ass:k:positive} holds, the process $U$, which is the baseline for our construction of a solution $X$ does not leave the set $\mathcal{K}$. It is crucial to remove the restriction related to $\tau_{\mathcal K}$ from our results and consequently provide the general result considering the multi-collisions in the positive repulsion regime.  


\subsection{No exit from the invariant chamber image}
We begin with showing that $U$ stays in $\mathcal{K}$, whenever we start our diffusion from inside of the set. Then, in particular, the positive repulsion assumed in \ref{ass:k:positive} pushed the process away from the multiple collision points, as we have seen in Proposition~\ref{prop:no_multi_collisions:first}. This is one of the most important observation used in the following. 

\begin{proposition}
    \label{prop:no-exit-K-classical}
    Assume \ref{ass:cont}, \ref{ass:sigma}, and \ref{ass:k:positive}.  If $U(0)=w(x_0)$ with $x_0\in\C$, then
        \begin{equation}
            \label{eq:no-exit-K-classical}
            U(t)\in\mathcal K=\overline{w[\C]}, \qquad 0\le t<\life{U},
        \end{equation}
    almost surely.  Consequently $\tau_{\mathcal K}=\life{U}$ a.s. and $X(t)=f(U(t))$ for all $t<\life{X}=\life{U}$.
\end{proposition}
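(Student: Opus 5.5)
The plan is to argue by contradiction at the first exit time $\tau_{\mathcal K}$. I would show that, near any point of $\partial\mathcal K$ that can actually be reached, the invariant process carries one-dimensional squared-Bessel-type coordinates whose positive drift prevents $U$ from crossing to the complement of $\mathcal K$.

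First I locate the exit point. On $\{\tau_{\mathcal K}<\life{U}\}$ we have $U(\tau_{\mathcal K})\in\partial\mathcal K=\partial w[\C]$, so $y:=X(\tau_{\mathcal K})=f(U(\tau_{\mathcal K}))\in\Cb$. Since $x_0\in\C$, Proposition~\ref{prop:no_multi_collisions:first} applies up to and including $\tau_{\mathcal K}$. It gives that $y\in\Cbn{1}$ almost surely. Thus the set $S(y)$ of vanishing roots consists of roots whose coordinate supports are pairwise disjoint, with the type $\DN$ zero-pair $\{e_i-e_N,e_i+e_N\}$ also excluded. The stabilizer of $y$ in the Weyl group is therefore a product of commuting rank-one reflection groups $\{1,s_\alpha\}$, $\alpha\in S(y)$.

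Next comes the local description of $\mathcal K$ near $u_*:=w(y)$. I would apply Chevalley's theorem to the local stabilizer: a germ at $y$ invariant under $\prod_{\alpha\in S(y)}\{1,s_\alpha\}$ is a smooth function of $\langle x,\alpha\rangle^2$, $\alpha\in S(y)$, and of the coordinates orthogonal to $S(y)$. From this I want to produce smooth functions $Z_\alpha$, $\alpha\in S(y)$, on a neighbourhood $V$ of $u_*$ with two properties. The first is
\begin{equation*}
    Z_\alpha(w(x))=\langle x,\alpha\rangle^2,\qquad x\in\Cc,\ w(x)\in V.
\end{equation*}
The second is
\begin{equation*}
    \mathcal K\cap V=\{u\in V: Z_\alpha(u)\ge0,\ \alpha\in S(y)\}.
\end{equation*}
Concretely, in type $\AN$ the polynomial with coefficients $u$ has, near $u_*$, a pair of nearly equal roots for each $\alpha\in S(y)$. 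The local quadratic factor of each pair has a discriminant that is a smooth function of $u$ by the implicit function theorem. That discriminant equals $\langle x,\alpha\rangle^2$ when the roots are real and is negative when they form a complex pair. Types $\BN$ and $\DN$ are handled similarly in the variables $x_i^2$; for the short root $e_i$ one can simply take $Z_{e_i}=x_i^2$, a simple root of the polynomial in $x^2$, which is smooth in $u$. I expect this step to be the main obstacle: obtaining smooth $Z_\alpha$ defined on a full neighbourhood of $u_*$, including outside $\mathcal K$, and checking the sign characterisation of $\mathcal K\cap V$.

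Then I compute the dynamics of $Z_\alpha(U)$ by It\^o's formula in $U$; this is legitimate for all $t$ while $U\in V$, even beyond $\tau_{\mathcal K}$. Inside $\mathcal K$, Remark~\ref{rem:direct:Ito} lets us compute in particle variables. The martingale part has density $4\langle X,\alpha\rangle^2\sum_i\alpha_i^2\sigma_i^2$. The drift is $2\langle X,\alpha\rangle\langle b,\alpha\rangle+\sum_i\alpha_i^2\sigma_i^2+2k_\alpha|\alpha|^2$ plus terms $2\langle X,\alpha\rangle\sum_{\beta\neq\alpha}k_\beta\langle\alpha,\beta\rangle/\langle X,\beta\rangle$. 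These extra terms are regular near $y$, since $\langle\alpha,\beta\rangle=0$ for the other $\beta\in S(y)$. Both coefficients are continuous functions of $u$ through $\tilde f$, and at $u_*$ the drift is at least $2|\alpha|^2k_\alpha(\tau_{\mathcal K},y)>0$ by \ref{ass:k:positive}. By continuity of all coefficients in $(t,u)$, after shrinking $V$ and working on a short random time interval, the drift of $Z_\alpha(U)$ is at least some $c>0$. Moreover, the quadratic variation density is bounded by $C|Z_\alpha(U)|$; this bound holds on both sides of the wall because the density is a smooth function of $u$ that vanishes on $\{Z_\alpha=0\}$. Hence $\int_0^t \mathbf 1_{\{0<|Z|<\varepsilon\}}|Z|^{-1}d\langle Z\rangle_s<\infty$, so the local time of $Z_\alpha(U)$ at $0$ vanishes. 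Tanaka's formula for $Z_\alpha(U)^-$ then gives
\begin{equation*}
    Z_\alpha(U)^-(t)=-\int_{\tau_{\mathcal K}}^t\mathbf 1_{\{Z_\alpha(U)\le0\}}\,dZ_\alpha(U).
\end{equation*}
On $\{Z_\alpha(U)\le0\}$ the martingale density vanishes only on the set $\{Z_\alpha(U)=0\}$, so I would first apply this argument to a smooth approximation of $x^-$. The conclusion should be that $Z_\alpha(U)^-$ is nonincreasing from $0$, hence stays $0$. Therefore $U$ remains in $\mathcal K\cap V$ on a positive time interval after $\tau_{\mathcal K}$, contradicting the definition of $\tau_{\mathcal K}$. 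This gives $\tau_{\mathcal K}=\life{U}$ a.s., and the identity $X=f(U)$ on $[0,\life{X})$ follows from \eqref{eq:K-identification-before-exit}.
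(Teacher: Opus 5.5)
Your route is the paper's own: contradiction at $\tau_{\mathcal K}$, Proposition~\ref{prop:no_multi_collisions:first} at the exit point, local discriminant-type defining functions, wall drift $\sum_i\alpha_i^2\sigma_i^2+2|\alpha|^2k_\alpha>0$, zero local time from $d\langle Z\rangle\le C|Z|\,dt$, and Tanaka. However, the step you justify by ``the density is a smooth function of $u$ that vanishes on $\{Z_\alpha=0\}$'' is false on $V$ whenever $S(y)$ contains two or more (orthogonal) roots, a case you explicitly allow.

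The density is only continuous, since it contains $\sigma_i(t,\tilde f(u))$. More importantly, it does not vanish on $\{Z_\alpha=0\}\setminus\mathcal K$. Off $\mathcal K$ a second active pair may already be complex. Then $\tilde f$ sends $u$ to a real configuration that is not a root configuration, and the $U$-noise attached to the collapsed coordinates splits the $\alpha$-pair at first order.

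Concretely, take type $A$ with $N=4$ and $u$ with $q_u(z)=(z-1)^2(z^2+b^2)$. Then $Z_{e_1-e_2}(u)=0$, $Z_{e_3-e_4}(u)=-4b^2$ and $\tilde f(u)=(1,1,0,0)$. The $dB_3$-coefficient of $Z_{e_1-e_2}(U)$ is $\sigma_3\,\partial_{u_1}Z_{e_1-e_2}(u)=-4b^2\sigma_3/(1+b^2)\neq0$.

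Near such a corner you only have $d\langle Z_\alpha\rangle\le C\bigl(|Z_\alpha|+\sum_{\beta\neq\alpha}(Z_\beta^-)^2\bigr)dt$. Your one-wall-at-a-time Tanaka argument therefore shows only that $Z_\alpha$ stays nonnegative until some other $Z_\beta$ turns negative. It does not exclude all of them leaving $\{Z\ge0\}$ at the same instant from the corner. The paper's estimate \eqref{eq:theta-local-estimate-all-types} is asserted on all of $\mathcal O$ with the same reasoning, so this point cannot simply be cited from there.

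One way to close the gap is to show first that $X(\tau_{\mathcal K})$ a.s.\ has exactly one vanishing root. Before $\tau_{\mathcal K}$ the particle computation is legitimate. Each $\langle X,\alpha\rangle^2$, $\alpha\in S(y)$, behaves near $y$ like a time-changed squared Bessel process of dimension $1+2|\alpha|^2k_\alpha/a_\alpha>1$, with $a_\alpha=\sum_i\alpha_i^2\sigma_i^2$. Their martingale parts are orthogonal because the supports are disjoint. A $-\ln$ Lyapunov argument for a suitably weighted sum $\sum_{\alpha\in S(y)}\lambda_\alpha\langle X,\alpha\rangle^2$, in the spirit of Proposition~\ref{prop:no_multi_collisions:first}, should then show that two such walls are not reached simultaneously.

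With a single active root you may shrink $V$ so that every other factor stays real and separated. Every coordinate of $\tilde f(u)$ outside the $\alpha$-pair is then a genuine root, the corresponding noise does not move the $\alpha$-pair, and $d\langle Z_\alpha\rangle\le C|Z_\alpha|\,dt$ really holds on all of $V$.

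Two smaller repairs are also needed. First, $Z_\alpha^-$ is not pathwise nonincreasing, because the martingale part on $\{Z_\alpha<0\}$ is nonzero. Conclude instead by taking expectations after localisation, or by observing that $Z_\alpha^-+\int\mathbf 1_{\{Z_\alpha\le0\}}\,(\mathrm{drift})\,ds$ is a nonnegative continuous local martingale started at $0$. Second, the random neighbourhood $V$ and constant $c$ should be handled through a countable cover of $\partial\mathcal K$, as the paper does.
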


\begin{proof}
    We write the proof in a form which covers the three classical root systems at once.  The first step is to record the algebraic equations defining the closed chamber image.

    Let $M_R:=|R_+|$.  In type $\AN$, let $P_u^A$ be the monic polynomial whose normalized power sums are $u_1,\ldots,u_N$, and let $z_1(u),\ldots,z_N(u)$ be its complex roots.  Define $\Phi_1^A,\ldots,\Phi_{M_R}^A$ by
        \begin{equation}
            \label{eq:Phi-A-defn}
            \prod_{1\le i<j\le N} \left(\lambda+(z_i(u)-z_j(u))^2\right) = \lambda^{M_R}+\Phi_1^A(u)\lambda^{M_R-1} +\cdots+\Phi_{M_R}^A(u).
        \end{equation}
    In types $\BN$ and $\DN$ we use the squared variables $y_i=x_i^2$.  For $R=\BN$, let $P_u^B$ be the monic polynomial whose normalized power sums are
        \begin{equation*}
            \frac1k\sum_{i=1}^N y_i^k=2u_k, \qquad k=1,\ldots,N.
        \end{equation*}
    Let $y_1(u),\ldots,y_N(u)$ be the complex roots of $P_u^B$ and define $\Phi_1^B,\ldots,\Phi_{N^2}^B$ by
        \begin{equation}
            \label{eq:Phi-B-defn}
            \prod_{i=1}^N(\lambda+y_i(u)) \prod_{1\le i<j\le N} \left((\lambda+y_i(u)+y_j(u))^2-4y_i(u)y_j(u)\right) = \lambda^{N^2}+\Phi_1^B(u)\lambda^{N^2-1} +\cdots+\Phi_{N^2}^B(u).
        \end{equation}
    For $R=\DN$, let $P_u^D$ be the monic polynomial in the variables $y_i=x_i^2$ whose normalized power sums are
        \begin{equation*}
            \frac1k\sum_{i=1}^N y_i^k=2u_k, \qquad k=1,\ldots,N-1,
        \end{equation*}
    and whose top elementary symmetric polynomial is
        \begin{equation*}
            e_N(y_1,\ldots,y_N)=u_N^2.
        \end{equation*}
    Let $y_1(u),\ldots,y_N(u)$ be the complex roots of $P_u^D$ and define $\Phi_1^D,\ldots,\Phi_{N(N-1)}^D$ by
        \begin{equation}
            \label{eq:Phi-D-defn}
            \prod_{1\le i<j\le N} \left((\lambda+y_i(u)+y_j(u))^2-4y_i(u)y_j(u)\right) = \lambda^{N(N -1)}+\Phi_1^D(u)\lambda^{N(N-1)-1} +\cdots+\Phi_{N(N-1)}^D(u).
        \end{equation}
    Note that there is no factor $\prod_i(\lambda+y_i)$ in type $\DN$, because type $\DN$ has no short roots.

    \medskip

    The coefficients in \eqref{eq:Phi-A-defn}-\eqref{eq:Phi-D-defn} are genuine real polynomials in the invariant coordinates $u$.  Indeed, they are symmetric polynomials in the roots of the corresponding polynomial $P_u^R$, and hence are polynomials in its coefficients, equivalently in the basic invariant coordinates chosen above.  Moreover,
        \begin{equation}
            \label{eq:K-Phi-all-types}
            \mathcal K = \{u:\Phi_m^R(u)\ge0,\ m=1,\ldots,M_R\}.
        \end{equation}
    To justify this, observe first that if $u=w(x)$ with $x\in\Cc$, then the factors in \eqref{eq:Phi-A-defn}-\eqref{eq:Phi-D-defn} are exactly $\lambda+\langle x,\alpha\rangle^2$, $\alpha\in R_+$.  Thus all coefficients $\Phi_m^R(u)$ are non-negative.  Conversely, if all these coefficients are non-negative, the polynomial in $\lambda$ on the left-hand side cannot vanish at any positive $\lambda$.  In type $\AN$, a non-real pair $a\pm ib$ among the roots would give the positive zero $\lambda=4b^2$.  In types $\BN$ and $\DN$, a non-real pair $a\pm ib$ among the squared roots $y_i$ gives a positive zero of one of the pair factors
        \begin{equation*}
            (\lambda+y_i+y_j)^2-4y_iy_j.
        \end{equation*}
    In type $\BN$, a negative real squared root gives a positive zero of the short root factor $\lambda+y_i$.  In type $\DN$, negative real squared roots are also impossible: since $\prod_i y_i=u_N^2\ge0$, either at least two negative roots occur, or one negative root occurs together with a zero root, and then one of the pair factors again has a positive zero.  Hence the squared roots are non-negative real numbers in types $\BN$ and $\DN$, and the roots are real in type $\AN$.  Ordering the roots, and in type $\DN$ choosing the sign of the last coordinate according to the sign of $u_N$, gives a point $x\in\Cc$ with $u=w(x)$.  This proves \eqref{eq:K-Phi-all-types}.

    \medskip

    We now prove that the solution cannot leave this set.  Suppose, to the contrary, that $\tau_{\mathcal K}<\life{U}$ with positive probability.  By continuity, $U(\tau_{\mathcal K})\in\partial\mathcal K$ on this event.  Put
        \begin{equation*}
            u_*:=U(\tau_{\mathcal K}), \qquad x_*:=f(u_*).
    \end{equation*}
    By Proposition~\ref{prop:no_multi_collisions:first}, no two non-orthogonal positive roots can vanish at $x_*$. Thus the vanishing roots at $x_*$ are pairwise orthogonal.  The only point which requires separate mention is type $\DN$: if a zero pair occurs, then both $e_i-e_j$ and $e_i+e_j$ vanish, and these two roots are orthogonal.

    \medskip

    We next describe local defining functions for $\mathcal K$ near $u_*$.  After shrinking a neighborhood $\mathcal O$ of $u_*$ if necessary, one can find real analytic functions $r_1,\ldots,r_q$ on $\mathcal O$ such that
        \begin{equation}
            \label{eq:local-K-r-all-types}
            \mathcal K\cap\mathcal O = \{u\in\mathcal O:r_1(u)\ge0,\ldots,r_q(u)\ge0\}.
        \end{equation}
    Before going to the construction given below note that formally we do not start with $u_*$, which is random and then consider the random set $\mathcal{O}$, but instead we cover all boundary points in $\partial \mathcal K$, which have property that the vanishing roots at corresponding boundary point $x$ are pairwise orthogonal by open cover. Since the space considered is separable, we can choose a countable subcover, and then we can construct family of functions $r_\ell$ for each element of this subcover, as it is described below, and finally work on the set $U(\tau_{\mathcal K}) \in \mathcal{O}_i$ for some $i\in \N$. We omit the subscript $i$ in the notation to make the proof easier to read. The construction of local defining functions is as follows. In type $\AN$, each active root corresponds to a separated double root of the local polynomial $P_u^A$.  We factor the corresponding pair as $z^2-s_\ell(u)z+p_\ell(u)$ and put
        \begin{equation*}
            r_\ell(u):=s_\ell(u)^2-4p_\ell(u).
        \end{equation*}
    On $\mathcal K$ this gives
        \begin{equation*}
            r_\ell(w(x))=\langle x,\alpha_\ell\rangle^2.
        \end{equation*}
    In type $\BN$, there are two possible local factors.  For a short-root wall $x_i=0$, the corresponding squared root $y_i$ is a separated simple root of $P_u^B$ and we take
        \begin{equation*}
            r_\ell(u):=y_i(u), \qquad r_\ell(w(x))=x_i^2.
        \end{equation*}
    For a non-zero pair collision $x_i=x_j>0$, the two squared roots form a local quadratic factor $z^2-s_\ell(u)z + p_\ell(u)$ and we take
        \begin{equation*}
            r_\ell(u):=s_\ell(u)^2-4p_\ell(u), \qquad r_\ell(w(x))=(x_i^2-x_j^2)^2.
        \end{equation*}
    The root $e_i+e_j$ cannot occur as an isolated simple wall in type $\BN$ inside $\Cc$; if $x_i+x_j=0$, then both coordinates are zero and non-orthogonal roots also vanish, a case excluded by the stopped no-multiple-collision result. In type $\DN$, a non-zero collision $x_i=x_j\ne0$ or $x_i=-x_j\ne0$ is treated in the same squared-variable way, with
        \begin{equation*}
            r_\ell(w(x))=(x_i^2-x_j^2)^2.
        \end{equation*}
    There is also the zero-pair case.  Then the two small squared roots form a local quadratic factor $z^2-s(u)z+p(u)$ with $s=x_i^2+x_j^2$ on $\mathcal K$.  Since the inactive coordinates stay away from zero, the product coordinate $u_N=e_N(x)$ gives an analytic local function $m(u)$ satisfying $m(w(x))=x_i x_j$. We then  use the two local variables
        \begin{equation*}
            r_-(u):=s(u)-2m(u), \qquad r_+(u):=s(u)+2m(u),  
        \end{equation*}
    so that, on $\mathcal K$,
        \begin{equation*}
            r_-(w(x))=(x_i-x_j)^2, \qquad r_+(w(x))=(x_i+x_j)^2.
        \end{equation*}
    Together with the variables coming from the remaining active, mutually orthogonal collisions, these functions give the local description \eqref{eq:local-K-r-all-types}.

    \medskip

    Fix one of the local functions $r_\ell$ and put $R_\ell(t)=r_\ell(U(t))$. On the stochastic interval on which $U(t)\in\mathcal O$, It\^o's formula applied to the smooth function $r_\ell(U)$ gives
        \begin{equation}
            \label{eq:r-local-SDE-all-types}
            dR_\ell(t) = \sum_{i=1}^N\theta_{\ell i}(t,U(t))\,dB_i(t) +\gamma_\ell(t,U(t))\,dt.
        \end{equation}
    We claim that, after shrinking $\mathcal O$ and localizing to a compact time interval $[0,T]$, there are constants $c,C>0$ such that
        \begin{align}
            \label{eq:theta-local-estimate-all-types}
            \sum_{i=1}^N\theta_{\ell i}^2(t,u) &\le C |r_\ell(u)|, &&0\le t\le T, \ u\in\mathcal O, \\
            \label{eq:gamma-local-estimate-all-types}
            \gamma_\ell(t,u) &\ge c, &&0\le t\le T, \ u\in\mathcal O, \ r_\ell(u)\le0.
        \end{align}
    On the hypersurface $\{r_\ell=0\}$, the two local roots corresponding to the active factor coincide.  Hence the first-order variation of the local discriminant $r_\ell$ in every Brownian direction of the invariant-coordinate equation vanishes:
        \begin{equation*}
            \theta_{\ell i}(t,u)=0, \qquad u\in\mathcal O,\ r_\ell(u)=0,\quad i=1,\ldots,N.
        \end{equation*}
    The estimate \eqref{eq:theta-local-estimate-all-types} is obtained directly on a local branch.  Let $z_\ell(x)=\langle x,\alpha_\ell\rangle$ be the active root coordinate.  Locally, the corresponding boundary coordinate has the form
        \begin{equation*}
            r_\ell(w(x))=q_\ell(x)z_\ell(x)^2,
        \end{equation*}
    where $q_\ell$ is positive and continuous.  Applying It\^o's formula on the interior branch shows that the martingale coefficient of $R_\ell=r_\ell(U)$ contains the factor $z_\ell(x)$.  Hence, on compact time intervals and after shrinking the neighbourhood,
        \begin{equation*}
             \sum_{i=1}^N\theta_{\ell i}^2(t,u) \le C z_\ell(x)^2\le C' |r_\ell(u)|.
        \end{equation*}
    The same argument applies to the walls $x_i-x_j=0$, $x_i+x_j=0$, and, in type $\BN$, to the short-root wall $x_i=0$. This gives \eqref{eq:theta-local-estimate-all-types}.  The drift at the wall is
        \begin{equation}
            \label{eq:gamma-wall-value-all-types}
            \gamma_\ell(t,w(x_*)) = q_\ell(x_*) \left( \sum_{i=1}^N \alpha_{\ell,i}^2\sigma_i^2(t,x_*)+2|\alpha_\ell|^2 k_{\alpha_\ell}(t,x_*) \right)>0.
        \end{equation}
    The remaining singular-looking terms are harmless: inactive denominators stay bounded away from zero, while active roots different from $\alpha_\ell$ are orthogonal to $\alpha_\ell$.  Assumption~\ref{ass:sigma} guarantees the required continuity at long-root walls.  By Assumption~\ref{ass:k:positive} and continuity, \eqref{eq:gamma-local-estimate-all-types} follows after shrinking $\mathcal O$ and localizing in time.

    \medskip
    
    We now apply Tanaka's formula.  Let $s<\tau_{\mathcal K}$ be such that $U(s)\in\mathcal O$ and stop the process at the first time after $s$ when it leaves $\mathcal O$, and also at the fixed time $T$.  Since $R_\ell(s)\ge0$, Tanaka's formula for $R_\ell^-:=\max\{-R_\ell,0\}$ gives on the localized interval
        \begin{align*}
            R_\ell^-(t) &=-\sum_{i=1}^N\int_s^t \mathbf 1_{\{R_\ell(r)<0\}} \theta_{\ell i}(r,U(r))\,dB_i(r) \\
                &\quad -\int_s^t\mathbf 1_{\{R_\ell(r)<0\}}\gamma_\ell(r,U(r))\,dr+\frac12\left(L_t^0(R_\ell)-L_s^0(R_\ell)\right).
        \end{align*}
    By \eqref{eq:theta-local-estimate-all-types},
        \begin{equation*}
            d\langle R_\ell\rangle_t = \sum_{i=1}^N\theta_{\ell i}^2(t,U(t))\,dt \le C|R_\ell(t)|\,dt.
        \end{equation*}
    Moreover,
        \begin{equation*}
            \int_s^t \mathbf 1_{\{R_\ell(r)\ne0\}} \frac{1}{|R_\ell(r)|}\, d\langle R_\ell\rangle_r \le C(t-s)<\infty.
        \end{equation*}
    By the occupation-time formula, this implies
        \begin{equation*}
            L_t^0(R_\ell)-L_s^0(R_\ell)=0.
    \end{equation*}
     The stochastic integral is a true martingale after the localization.  Taking expectations and using \eqref{eq:gamma-local-estimate-all-types}, we obtain
        \begin{equation*}
            \ex R_\ell^-(t) \le -c\,\ex\int_s^t\mathbf 1_{\{R_\ell(r)<0\}}\,dr \le0.
        \end{equation*}
    Thus $R_\ell^-(t)=0$ throughout the localized interval.  No local defining function $r_\ell$ can become negative while the path remains in $\mathcal O$.

    Returning to the supposed first exit, choose $s<\tau_{\mathcal K}$ so close to $\tau_{\mathcal K}$ that $U(s)\in\mathcal O$.  On $[s,\tau_{\mathcal K})$ all $r_\ell(U(t))$ are non-negative.  The previous paragraph shows that they remain non-negative as long as the path stays in $\mathcal O$.  But by \eqref{eq:local-K-r-all-types}, leaving $\mathcal K$ inside $\mathcal O$ is exactly the event that at least one of the functions $r_\ell$ becomes negative.  This contradicts the definition of $\tau_{\mathcal K}$.  Hence $\tau_{\mathcal K}=\life{U}$ almost surely.
\end{proof}

The next step is to generalize the result we have just obtained to all possible starting points including boundary points in $\Cb$. This can be easily achieved by limiting procedure. 

\begin{proposition}
    \label{prop:boundary-starts-by-approximation}
    Assume \ref{ass:cont}, \ref{ass:sigma}, and \ref{ass:k:positive}.  Then, for every $x_0\in\Cc$, there exists a weak solution $U$ of the invariant-coordinate equation \eqref{eq:u:SDE}, started from $U(0)=w(x_0)$, such that
        \begin{equation*}
            U(t)\in\mathcal K:=\overline{w[\C]}, \qquad 0\le t<\life{U},
        \end{equation*}
    almost surely.  Consequently, for
        \begin{equation*}
            X(t):=f(U(t)),\qquad 0\le t<\life{X},
        \end{equation*}
    the process $X$ is well-defined and takes values in $\Cc$.
\end{proposition}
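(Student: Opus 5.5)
We approximate the boundary start by interior starts. Pick $x_0^{(n)}\in\C$ with $x_0^{(n)}\to x_0$, and put $u_0^{(n)}=w(x_0^{(n)})\to w(x_0)=u_0$. For each $n$, Proposition~\ref{prop:no-exit-K-classical} gives a weak solution $U^{(n)}$ of \eqref{eq:u:SDE} started from $u_0^{(n)}$ with $U^{(n)}(t)\in\mathcal K$ for all $t<\life{U^{(n)}}$. The coefficients of \eqref{eq:u:SDE} are continuous on $[0,\infty)\times\R^N$, because $\tilde f$ is continuous and \ref{ass:cont} holds. They are therefore bounded on $[0,T]\times\{|u|\le \rho\}$ for every $T,\rho$.

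To deal with possible explosion, we localize. For fixed $\rho>|u_0|+1$, let $\zeta_\rho^{(n)}$ be the exit time of $U^{(n)}$ from the ball $\{|u|<\rho\}$. The stopped processes $U^{(n)}(\cdot\wedge\zeta^{(n)}_\rho)$ have coefficients bounded uniformly in $n$. Kolmogorov's criterion, via BDG moment bounds on increments, then gives tightness of the laws of $(U^{(n)}(\cdot\wedge\zeta^{(n)}_\rho),B)$ in $C([0,T],\R^{2N})$. Equivalently, one can replace the coefficients outside the ball $\{|u|\le\rho\}$ by bounded continuous truncations, so that the truncated equation has global solutions.

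Pass to a subsequence converging in law and use Skorokhod's representation. The limit is then a weak solution of the truncated equation started from $u_0$. This follows from the standard martingale-problem identification: continuous coefficients together with uniform convergence on compacts let one pass to the limit in $\varphi(U(t))-\varphi(U(s))-\int_s^t L\varphi\,dr$, and this identifies the limit.

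The key point is that $\mathcal K=\overline{w[\C]}$ is closed. Hence
\begin{equation*}
\{\omega\in C([0,T],\R^N):\omega(t)\in\mathcal K\ \text{for all } t\}
\end{equation*}
is closed, and by the portmanteau theorem the limit process stays in $\mathcal K$ almost surely. The same argument works for each $\rho\in\N$ and $T\in\N$.

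The main obstacle is consistency across the levels $\rho$, so that the limits glue into a single solution up to its lifetime $\life{U}$. We will handle this by working with the truncated coefficients, whose equation agrees with \eqref{eq:u:SDE} until exit from $\{|u|<\rho\}$. Using a diagonal subsequence over $\rho,T\in\N$, we obtain a solution of the martingale problem for \eqref{eq:u:SDE} localized along the exit times from the balls. Standard results then give a weak solution up to the explosion time $\life{U}=\lim_\rho\zeta_\rho$. This solution is $\mathcal K$-valued before every $\zeta_\rho$, hence on $[0,\life{U})$.

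A subtlety is that exit times are not continuous functionals of the path. We circumvent this by the truncation device above: the truncated equation has global solutions, and we construct the exit times only after taking the limit. The two constructions, for consecutive values of $\rho$, agree in law up to the earlier exit time; this uses uniqueness of the localized martingale problem structure only through consistency of laws, which can be arranged by taking the diagonal limit jointly. Once $U(t)\in\mathcal K$ is established, $\tilde f=f$ on $\mathcal K$, so $X=f(U)$ is well-defined, continuous, and $\Cc$-valued.
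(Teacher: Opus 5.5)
Your proposal is correct and follows essentially the same route as the paper: approximate the starting point from the open chamber (the paper shifts $x_0$ by a fixed interior vector scaled by $1/n$), localize by exit times from balls, get Kolmogorov tightness of the stopped processes, use closedness of $\mathcal K$ to see that the limits are $\mathcal K$-valued, and remove the localization by a diagonal extraction. The truncated-coefficient detour is unnecessary and slightly muddles your portmanteau step: only the stopped processes $U^{(n)}(\cdot\wedge\zeta^{(n)}_\rho)$ are $\mathcal K$-valued on all of $[0,T]$, whereas truncated solutions are known to be so only before their exit time, and for them you would additionally need $\liminf_n\zeta^{(n)}_\rho\ge\zeta_\rho$ for open-ball exit times. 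Your joint diagonal limit over integer $\rho$ is exactly what makes the gluing consistent, a point the paper leaves implicit.
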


\begin{proof}
    If $x_0\in\C$, the assertion follows from Proposition~\ref{prop:no-exit-K-classical}.  Let now $x_0\in\Cb$. Choose a fixed vector $\rho\in\C$, and put
        \begin{equation*}
            x_0^{(n)}:=x_0+\frac1n\rho, \qquad u_0^{(n)}:=w(x_0^{(n)}).
        \end{equation*}
    Then $x_0^{(n)}\in\C$ and $u_0^{(n)}\to u_0:=w(x_0)$. For every $n$, let $U^{(n)}$ be the solution of \eqref{eq:u:SDE} started from $u_0^{(n)}$.  By Proposition~\ref{prop:no-exit-K-classical},
        \begin{equation*}
            U^{(n)}(t)\in\mathcal K, \qquad 0\le t<\zeta^{(n)},
        \end{equation*}
    almost surely. We now use a standard localization and compactness argument.  Fix $T,r>0$ and define
        \begin{equation*}
            \zeta_r^{(n)} := \inf\{t\ge0: |U^{(n)}(t)|\ge r\}.
        \end{equation*}
    The coefficients of \eqref{eq:u:SDE} are continuous, hence locally bounded. Therefore the laws of the stopped processes
        \begin{equation*}
            \bigl(U^{(n)}(t\wedge\zeta_r^{(n)})\bigr)_{0\le t\le T}
        \end{equation*}
    are tight in $C([0,T],\R^N)$.  Along a subsequence they converge weakly to a continuous process $U^r$.  Since $\mathcal K$ is closed and $U^{(n)}(t\wedge\zeta_r^{(n)})\in\mathcal K$ for all $t\le T$, we also have
        \begin{equation*}
            U^r(t)\in\mathcal K, \qquad 0\le t\le T,
        \end{equation*}
    almost surely. The usual stability of the localized martingale problem under locally uniform convergence of the coefficients shows that $U^r$ solves the stopped invariant-coordinate equation up to the exit time from the ball of radius $r$. Letting $r\uparrow\infty$ and using a diagonal argument gives a weak solution $U$ of \eqref{eq:u:SDE}, started from $u_0=w(x_0)$, up to its explosion lifetime $\life{U}$.  Since the stopped limits take values in the closed set $\mathcal K$, the unstopped solution satisfies
        \begin{equation*}
            U(t)\in\mathcal K, \qquad 0\le t<\zeta,
        \end{equation*}
    almost surely. Finally, because $U(t)\in\mathcal K=\overline{w[\C]}$, the inverse map $f:\mathcal K\to\Cc$ is well-defined, and therefore
        \begin{equation*}
            X(t):=f(U(t))
        \end{equation*}
    is a continuous $\Cc$-valued process.
\end{proof}

\subsection{Instant diffraction}
We have collected all ingredients to show that positive repulsion \ref{ass:k:positive} not only keeps the process away from the multiple collisions points after the start, but also immediately push the process away from the boundary. 
\begin{proposition}[Instantaneous diffraction]
    \label{prop:diffraction}
    Assume \ref{ass:cont}, \ref{ass:sigma}, and \ref{ass:k:positive}.  Let $U$ be the $\mathcal K$-valued solution constructed in Proposition~\ref{prop:boundary-starts-by-approximation}, and put $X=f(U)$.  Then the instant diffraction of the particles appears, i.e.
        \begin{equation*}
            \inf\{t>0:X(t)\in\C\}=0 \qquad\text{a.s.}
        \end{equation*}
\end{proposition}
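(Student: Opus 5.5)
The plan is to prove something stronger: on every compact subinterval of $[0,\life{X})$ the process spends zero Lebesgue time on $\Cb$, that is,
\begin{equation*}
    \int_0^t \mathbf 1_{\{X(s)\in\Cb\}}\,ds=0, \qquad t<\life{X}, \quad \text{a.s.}
\end{equation*}
This implies the statement. If $X(t)\in\Cb$ for all $t\in[0,\varepsilon]$ on an event of positive probability, the integral above would be at least $\varepsilon$ there. Since $\Cb$ is the finite disjoint union of the faces $F_S$, $S\in\mathfrak S$, it suffices to show that $\int_0^t\mathbf 1_{\{X(s)\in F_S\}}\,ds=0$ for each fixed $S$. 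All Itô computations are done on the invariant side, through $\widehat p(U)$ for $W$-invariant polynomials $p$, as in Remark~\ref{rem:direct:Ito}. This is legitimate because $U$ stays in $\mathcal K$ by Proposition~\ref{prop:boundary-starts-by-approximation}.

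For fixed $S$, let $R_+(S)$ be the positive part of the reflection-closed subsystem of roots vanishing on $F_S$, and define the face detector
\begin{equation*}
    \rho_S(x)=\sum_{\alpha\in R_+(S)}\langle x,\alpha\rangle^2 .
\end{equation*}
It is a polynomial invariant under the parabolic subgroup $W_S$. Near any point $y\in F_S$ it vanishes exactly on $F_S$, because roots outside $R_+(S)$ stay bounded away from zero near $y$. Its martingale part has density
\begin{equation*}
    4\sum_k\Bigl(\sum_{\alpha\in R_+(S)}\alpha_k\langle x,\alpha\rangle\Bigr)^2\sigma_k^2\le C\rho_S(x)
\end{equation*}
locally. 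Its drift equals
\begin{equation*}
    \sum_k\sum_{\alpha\in R_+(S)}\alpha_k^2\sigma_k^2+2\sum_k h_k b_k+2\sum_{\delta\in R_+}\frac{k_\delta\,h^{\delta}(x)}{\langle x,\delta\rangle},
\end{equation*}
where $h_k(x)=\sum_{\alpha\in R_+(S)}\alpha_k\langle x,\alpha\rangle$ and $h^{\delta}(x)=\sum_{\alpha\in R_+(S)}\langle\alpha,\delta\rangle\langle x,\alpha\rangle$. I will treat its terms as follows.
\begin{itemize}
    \item The $\sigma$-term is non-negative.
    \item The $b$-term is $O(\sqrt{\rho_S})$ locally.
    \item For $\delta\notin R_+(S)$ the denominator stays away from zero, so these terms are $O(\sqrt{\rho_S})$.
    \item For $\delta\in R_+(S)$, the reflection $s_\delta$ permutes $R_+(S)$ up to sign. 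The identity \eqref{eq:inv:master:2} then gives $h^{\delta}(x)=\frac{\langle x,\delta\rangle}{|\delta|^2}\sum_{\alpha\in R_+(S)}\langle\alpha,\delta\rangle^2$, exactly as in the computation leading to \eqref{eq:h3:final}. So the singular term becomes the regular quantity $\frac{2k_\delta}{|\delta|^2}\sum_{\alpha}\langle\alpha,\delta\rangle^2\ge0$.
\end{itemize}
By \ref{ass:k:positive} and continuity, this yields a neighbourhood $\mathcal O_y$ of $y$ and a constant $c>0$ with $\gamma_S\ge c$ on $\mathcal O_y\times[0,T]$.

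Next I cover $F_S$ by countably many such neighbourhoods $\mathcal O_{y_i}$ and argue on each excursion of $X$ inside one of them, localized in time and space. Let $P=\rho_S(X)$. Since $d\langle P\rangle\le CP\,dt$, the occupation-time formula gives $\int\mathbf 1_{\{P>0\}}P^{-1}\,d\langle P\rangle<\infty$, which forces $L^0(P)=0$, as in the proof of Proposition~\ref{prop:no-exit-K-classical}. Because $P\ge0$, Tanaka's formula gives $P=P^+$, and hence $\int\mathbf 1_{\{P=0\}}\,dP=\tfrac12 L^0(P)=0$. But the martingale part vanishes on $\{P=0\}$, so
\begin{equation*}
    \int\mathbf 1_{\{P=0\}}\,dP=\int\mathbf 1_{\{P=0\}}\gamma_S\,dt\ge c\int\mathbf 1_{\{P=0\}}\,dt .
\end{equation*}
Therefore the time spent in $F_S\cap\mathcal O_{y_i}$ has Lebesgue measure zero. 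Summing over $i$, over $S$, and over the localizations gives the claim.

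The main obstacle is the drift identity for $\delta\in R_+(S)$ at faces where non-orthogonal roots vanish simultaneously, in particular at $x_0$ itself at time zero. There the terms $k_\delta/\langle x,\delta\rangle$ are genuinely singular. Since they enter the drift of $P$ only through $\rho_S$, one has to check that the pairing $\alpha\leftrightarrow\prf{\alpha}{\delta}$ really preserves $R_+(S)$ for every classical face type (the $A$-, mixed-$D$-, zero $B$- and $D$-clusters listed in the proof of Proposition~\ref{prop:no_multi_collisions:first}). A second point to verify is that the formal Itô computation in $x$ is justified on $\mathcal K$ including boundary times. I would do this by writing $\rho_S$ locally as an analytic function of $U$ via the local factorization of $P_u^R$, as in the construction of the $r_\ell$, so that Itô's formula is applied to $U$ directly.
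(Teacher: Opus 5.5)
Your argument is correct, but it takes a different route from the paper's. The paper never localizes at a face. It works with the globally $W$-invariant polynomials $e_n(\mathcal A)$, $\mathcal A=\{\langle x,\alpha\rangle^2:\alpha\in R_+\}$, so It\^o's formula on $U$ is available at every time directly from Remark~\ref{rem:direct:Ito}. After removing the apparent singularity with \eqref{eq:inv:master}, it runs a downward induction: if $e_n(\mathcal A)\equiv0$ on a random interval $[0,T_n]$, then nonnegativity of the remaining drift together with \ref{ass:k:positive} forces $e_{n-1}(\mathcal A)\equiv0$ there, and the induction ends at $e_0=1$. That route needs no charts, but it only excludes staying on the boundary throughout an initial interval.

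Your detector $\rho_S$ is the face version of the cluster function used in Proposition~\ref{prop:no_multi_collisions:first}. Combined with the zero-local-time and Tanaka step of Propositions~\ref{prop:no-exit-K-classical} and~\ref{prop:deg:one-face}, it yields the stronger fact that the boundary has zero Lebesgue occupation. The paper only obtains that later, from integrability of the singular drift.

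The cost is your second point. $\rho_S$ is only $W_S$-invariant, so you need $\rho_S\circ f$ to coincide near $w(y)$ with a smooth function of $u$, for an arbitrary face, including multiple-collision faces at $x_0$. The pair factors behind the $r_\ell$ do not suffice for this. However, the implicit-function argument of Proposition~\ref{prop:root:smooth}\,(ii) extends verbatim to a coprime local factor of any degree, and $\rho_S$ is a symmetric analytic function of the roots of each such factor. Once $\rho_S(X)=\phi(U)$ with $\phi$ smooth, the drift of $\phi(U)$ at boundary points is the continuous extension of your interior drift, because $w[C]$ is dense in $\mathcal K$.

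The first obstacle you flag is not one. Since $S=\{\alpha\in R_+:\langle y,\alpha\rangle=0\}$ and $s_\delta y=y$ for $\delta\in S$, the map $\alpha\mapsto\pm s_\delta(\alpha)$ is automatically an involution of $S$, so no case analysis is needed. The resulting identity for $h^{\delta}$ is pointwise algebraic, so time zero plays no special role.
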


\begin{proof}
    Let us consider the basic symmetric polynomials $\eA{n}$, where 
        \begin{equation*}
            \Aa = \left\{\xa^2: \alpha\in R_+\right\}\/,\qquad n = 1, \ldots, \mo\/,
        \end{equation*}
    where $\mo$ is the size of $\Aa$, i.e. $\mo = |R_+|$. In particular, in the $\AN$ case, we have $\mo = \no(\no-1)/2$, with $\mo = \no(\no-1)$ for $\DN$ and finally, $\mo = \no^2$, when we consider $\BN$. In this part, we will slightly break with the standard notation. When we write $\eA{n}$, we will not explicitly indicate the point $x$ for which we are calculating the value, but in fact $\Aa$ depends on $x$ and consequently $\eA{n}$ also does. Consequently, we will use the same notation to refer to both a function of the variable x and the stochastic process that we get by applying this function to $X = (X_1,\ldots, X_{\no})$. However, it will always be clear from the context which function we are referring to.

    Following the approach presented in Remark \ref{rem:direct:Ito} together with the fact that $U$ is a $\mathcal K$-valued solution, we get the stochastic description of $\eA{n}$ in the following form
    \begin{align*}
        d\eA{n} &= 
            2\sum_{k=1}^N \sum_{\aR} \alpha_k \Xa \eaA{n-1}\sigX{k}dB_k
            +2\sum_{k=1}^N \sum_{\aR} \alpha_k \Xa \eaA{n-1} b_k(t, X)dt\\
            &+2\sum_{\aR}   \eaA{n-1}|\alpha|^2 \kaX\,dt + 2\sum_{\beta\in R_+} \frac{\kbX}{\Xb} \mathop{\sum_{\aR}}_{\alpha \neq \beta , \ab\neq 0}\ab \Xa \eaA{n-1} dt\\
        &  +2 \sum_{k=1}^N\anbR\alpha_k\beta_k\Xa\Xb\eabA{n-2}\siggX{k}dt \\
        &  + \sum_{k=1}^N \sum_{\aR}\alpha_k^2 \eaA{n-1}\siggX{k}dt\/.
    \end{align*}
    Using \eqref{eq:inv:master} and the fact that if $\ab\neq 0$ then $ \ainvbb = \pm \ab \neq 0$, we can remove the apparent singularity in the following way, where to shorten the notation we write $\gamma = s_\beta^+(\alpha)$. Then, by using the definition of $s_\beta^+(\alpha) = \pm(\alpha - 2\ab\beta/|\beta|^2)$ and noticing that the given below equalities are invariant under the choice of sign in $s_\beta^+(\alpha)$, we obtain
    	\begin{equation}
        \label{eq:ac:form1}
            \ab\xa+\bc\xc = \frac{2\xb}{|\beta|^2}
    	\end{equation}
    and
        \begin{equation}
        \label{eq:ac:form2}
            \ab\xc+\bc\xa = \frac{2\ab\bc\xb}{|\beta|^2}\/.
	\end{equation}
    This follows also from the fact that for the classical root systems considered here, if $\alpha\neq\beta$ and $\langle\alpha,\beta\rangle\neq0$, then $\langle\alpha,\beta\rangle^2=1$. Using those we get
    \begin{align*}
        2 \mathop{\sum_{\aR}}_{\alpha \neq \beta , \ab\neq 0}&\ab \xa \eaA{n-1} =  \sum_{(\alpha,\gamma)\in R_+(\beta)}(\ab\xa+\cb\xc)\eacA{n-1}\\
        &+\sum_{(\alpha,\gamma)\in R_+(\beta)}\xa\xc(\ab\xc+\cb\xa)\eacA{n-2}\\
        &\underset{\eqref{eq:ac:form2}}{\overset{\eqref{eq:ac:form1}}{=}} \frac{2\xb}{|\beta|^2}\sum_{(\alpha,\gamma)\in R_+(\beta)} (\eacA{n-1}+\ab\bc\xa\xc \eacA{n-2}).
    \end{align*}
    As usual, elementary symmetric polynomials of negative degree are understood to be zero. Let us assume that one of the processes $\eA{n}$, with $n=1,\ldots, \mo$, stays to be zero for $t\in [0,T_n]$, where $T_n$ is positive with positive probability. It means that the drift part of $\eA{n}=0$ on this time interval as well as all the products of $\Xa$ of length $n$ are zero there. In particular $\Xa \eaA{n-1} = 0$, $\alpha_k\beta_k\Xa\Xb\eabA{n-2} = 0$ and $\Xa\Xc \eacA{n-2} = 0$. Using the non-negativity of the remain components of the drift term we arrive at
        \begin{equation*}
            \sum_{\aR}   \eaA{n-1} |\alpha|^2 \kaX = 0\/,\quad t\in [0,T_n]\/.
        \end{equation*}
    Now \ref{ass:k:positive} implies that $\eaA{n-1} = 0$ for every $\alpha\in R_+$, which is equivalent to $\eA{n-1}=0$ on $[0,T_n]$. Inductively we obtain 
        \begin{equation*}
            \eA{\mo} = \ldots = \eA{1} = \eA{0} = 0
        \end{equation*}
    for every $t\in [0,T_n]$ with positive probability. Since $\eA{0}\equiv 1$ we get a contradiction, which ends the proof. 
    \end{proof}

\subsection{Properties of $X$}
Using the results from the previous section we can finally state the general no multiple collision result. 

\begin{proposition}[No multiple collisions]
    \label{prop:no_multi_collisions}
     Assume \ref{ass:cont}, \ref{ass:sigma}, and \ref{ass:k:positive}.  Let $U$ be the $\mathcal K$-valued solution constructed in Proposition~\ref{prop:boundary-starts-by-approximation}, and put $X=f(U)$. Then for every $X(0)=x_0\in \Cc$, the process $X$ does not visit multiple collision points after the start. 
\end{proposition}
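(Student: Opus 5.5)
The plan is to combine instantaneous diffraction (Proposition~\ref{prop:diffraction}) with the stopped no-multiple-collision result (Proposition~\ref{prop:no_multi_collisions:first}) through the strong Markov property at rational times. By Proposition~\ref{prop:boundary-starts-by-approximation}, $U$ is $\mathcal K$-valued, so $\tau_{\mathcal K}=\life{U}$. Hence the restriction $\tau_0^m\le\tau_{\mathcal K}$ in Proposition~\ref{prop:no_multi_collisions:first} becomes vacuous for any restart from an interior point. What remains is to remove the requirement that the starting point lies in $\C$, which we do by restarting the process at interior times arbitrarily close to zero.

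Fix $m\in\{2,\ldots,\SimpleNo\}$ and $\varepsilon>0$. By Proposition~\ref{prop:diffraction}, a.s.\ there is a time $s\in(0,\varepsilon)$ with $X(s)\in\C$. Since $\C$ is open and $X$ is continuous, we can take $s$ rational. It therefore suffices to show, for each fixed rational $s>0$, that
\begin{equation*}
\pr\bigl(X(s)\in\C,\ s<\life{X},\ \exists\, t\in(s,\life{X}):\ X(t)\in\Cbn{m}\bigr)=0 .
\end{equation*}
A countable union over $s\in\mathbb Q_+$ and $m$, followed by letting $\varepsilon\downarrow0$, then gives that $X(t)\notin\bigcup_{m\ge2}\Cbn{m}$ for all $0<t<\life{X}$.

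To prove the displayed claim we condition on $\mathcal F_s$. On $\{X(s)\in\C\}$ the shifted process $(U(s+\cdot))$ solves the invariant-coordinate equation \eqref{eq:u:SDE}, with time-shifted coefficients, driven by the shifted Brownian motion $B(s+\cdot)-B(s)$ and started from $w(X(s))\in w[\C]$. It stays in $\mathcal K$, so $X(s+\cdot)=f(U(s+\cdot))$ satisfies the chamber equation up to the first multiple-collision time. Because the hypotheses \ref{ass:cont}, \ref{ass:sigma}, \ref{ass:k:positive} are invariant under time shift, the argument of Proposition~\ref{prop:no_multi_collisions:first} applies pathwise to the shifted process. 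That argument uses only It\^o's formula for $\Qm$ on $[s,\tau_0^m)$, the local upper bound on the drift near $y\in\Cbn{m}$, and the time-changed Brownian martingale part. It yields that $\Qm(X(s+\cdot))$ does not explode in finite time, so $\tau_0^m=\tau_0^{m+1}=\life{X}$ after $s$.

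The main obstacle is conceptual rather than computational. We have no uniqueness for \eqref{eq:u:SDE}, so we cannot cite the strong Markov property of a Markov family. Instead we re-run the proof of Proposition~\ref{prop:no_multi_collisions:first} directly on $[s,\life{X})$ for the given process. That proof never used the starting point beyond $\qm(X(s))>0$ and finiteness of $\Qm(s)$, both of which hold on $\{X(s)\in\C\}$. The proof also uses localization to compact time intervals and the countable-cover device, and both apply verbatim with random initial time $s$. The one point to check is that the drift bound in that proof is deterministic and local in $(t,x)$, so it holds uniformly regardless of the conditioning.
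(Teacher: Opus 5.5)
Your proof is correct and follows the paper's argument. The interior-start case comes from Proposition~\ref{prop:no_multi_collisions:first} once $\tau_{\mathcal K}$ equals the lifetime, and boundary starts are reduced to it by restarting at interior times arbitrarily close to $0$ supplied by Proposition~\ref{prop:diffraction}; your deterministic rational restart times with a countable union are a slightly cleaner version of the paper's random times $t_n\downarrow0$, which need not be stopping times.

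One caution: drop the aside that the shifted process already satisfies the chamber equation. That is only established later, in Proposition~\ref{prop:positive-construction-solves-sde}, which itself relies on the present result. Rerunning Proposition~\ref{prop:no_multi_collisions:first} needs only It\^o's formula for the logarithmic cluster process, computed through polynomials in $U$ as in Remark~\ref{rem:direct:Ito}, and that is what you actually invoke.
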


\begin{proof}
If $x_0\in\C$, then Proposition~\ref{prop:no_multi_collisions:first} excludes visits to $\Cbn m$, $m\ge2$, up to $\tau_{\mathcal K}$. By Proposition~\ref{prop:no-exit-K-classical}, $\tau_{\mathcal K}=\life X$, and hence no multiple collision occurs before the lifetime. Let now $x_0\in\Cc$. By Proposition~\ref{prop:diffraction}, there exist times $t_n\downarrow0$ such that $X(t_n)\in \C$ a.s. Applying the preceding interior-start argument to the shifted process $(X(t_n+s))_{s\ge0}$ gives absence of multiple collisions on $[t_n,\life X)$ for each $n$. Since $t_n\downarrow0$, the assertion follows for every positive time. 
\end{proof}

    \begin{remark}
        Notice that the proof of the above Proposition shows a slightly more general fact. Namely, we obtain that when starting from a point $X(0)=x_0\in \Cc$, which is not a multiple collision point, the process never visits multiple collision boundary points $y\in \Cb$ for which we have $\ka(t,y)>0$.
    \end{remark}

\medskip

Finally, we end this section with the proof of the non-explosion result stated in Theorem~\ref{thm:non-explosion}. The argument is standard, but we include the details for completeness. Note that 

\begin{proof}[Proof of Theorem~\ref{thm:non-explosion}]
Let $X$ be a solution to \eqref{eq:thm:existence-sde} or the degenerate interior equation \eqref{eq:degenerate-existence-indicator} and define
    \begin{align*}
        V(t) := 1+ |X(t)|^2, \qquad 0\le t<\life{X}.
    \end{align*}
For $\alpha\in R_+$, set
    \begin{align*}
        \vartheta_\alpha(s) &:=
            \begin{cases}
                1, &\text{for the equations without the indicator,}\\[2mm]
                \mathbf 1_{\{\langle X(s),\alpha\rangle>0\}}, &\text{for the equation with the indicator.}
            \end{cases}
    \end{align*}
In particular, in all cases, $0\le \vartheta_\alpha(s)\le 1$. Applying It\^o's formula gives, for every $t<\life{X}$,
    \begin{align}
        V(t) &= V(0) + M(t) + \int_0^t \left( 2\sum_{i=1}^N X_i(s)b_i(s,X(s))+\sum_{i=1}^N \sigma_i^2(s,X(s))\right)\,ds \nonumber \\
            &\quad + 2\sum_{\alpha\in R_+} \int_0^t k_\alpha(s,X(s))\vartheta_\alpha(s)\,ds,
        \label{eq:radial-nonexplosion-sde}
    \end{align}
where
    \begin{align*}
        M(t) &:= 2\sum_{i=1}^N \int_0^t X_i(s)\sigma_i(s,X(s))\,dB_i(s).
    \end{align*}
Fix $T>0$ and define
    \begin{align*}
        \tau_r &:= \inf\{0\le t<\life{X}:\ V(t)\ge r\}, \qquad r>1,
    \end{align*}
with the convention that the infimum of the empty set is $\life{X}$.
By the definition of the explosion lifetime, $\tau_r \uparrow \life{X}$ as $r\to\infty$.  Let
    \begin{align*}
        K_r := \{x\in\Cc:\ 1+|x|^2\le r\}.
    \end{align*}
On the stochastic interval $[0,T\wedge\tau_r]$, apart from the trivial case $\tau_r=0$, the process remains in the compact set $K_r$. Since the coefficients are continuous, the functions $x_i\sigma_i(t,x)$ are bounded on $[0,T]\times K_r$. Hence the stopped martingale $M(t\wedge\tau_r)$ is square-integrable on $[0,T]$. Let
    \begin{align*}
        A_m &:= \{V(0)\le m\}, \qquad m\in\N.
    \end{align*}
Since $A_m\in\mathcal F_0$ and $M(t\wedge\tau_r)$ is a square-integrable martingale, we have
    \begin{align*}
        \E\left[\mathbf 1_{A_m}M(t\wedge\tau_r)\right] &= 0, \qquad 0\le t\le T.
    \end{align*}
Moreover, since $k_\alpha\ge0$ and $0\le\vartheta_\alpha(s)\le1$, Assumption~\ref{ass:growth:p2} yields, for $0\le s\le T$ and $s<\life{X}$,
\begin{align}
    &2\sum_{i=1}^N X_i(s)b_i(s,X(s))
    +
    \sum_{i=1}^N \sigma_i^2(s,X(s))
    +
    2\sum_{\alpha\in R_+}k_\alpha(s,X(s))\vartheta_\alpha(s)
    \nonumber
    \\
    &\qquad
    \le
    2\left(
        \sum_{i=1}^N X_i(s)b_i(s,X(s))
        +
        \frac12\sum_{i=1}^N \sigma_i^2(s,X(s))
        +
        \sum_{\alpha\in R_+}k_\alpha(s,X(s))
    \right)
    \nonumber
    \\
    &\qquad
    \le
    2C_T\bigl(1+|X(s)|^2\bigr)
    =
    2C_T V(s).
    \label{eq:radial-nonexplosion-drift-bound}
\end{align}
Stopping the identity for $V$ at $\tau_r$, multiplying by $\mathbf 1_{A_m}$, taking expectations, and using
\eqref{eq:radial-nonexplosion-drift-bound}, we obtain, for $0\le t\le T$,
    \begin{align*}
        \E\left[\mathbf 1_{A_m}V(t\wedge\tau_r)\right] &\le \E\left[\mathbf 1_{A_m}V(0)\right] + 2C_T \int_0^t \E\left[\mathbf 1_{A_m}V(s\wedge\tau_r)\right]\,ds \\
             &\le  m + 2C_T \int_0^t \E\left[\mathbf 1_{A_m}V(s\wedge\tau_r)\right]\,ds .
    \end{align*}
Gronwall's lemma gives
    \begin{align}
        \E\left[\mathbf 1_{A_m}V(t\wedge\tau_r)\right] &\le m e^{2C_T t} \le m e^{2C_TT}, \qquad 0\le t\le T.
        \label{eq:radial-nonexplosion-gronwall-after}
    \end{align}
On the event $A_m\cap\{\tau_r\le t\}$ we have $V(t\wedge\tau_r) = V(\tau_r) \ge r$. Therefore \eqref{eq:radial-nonexplosion-gronwall-after} implies
    \begin{align}
        r\, \pr\bigl(A_m\cap\{\tau_r\le t\}\bigr) &\le m e^{2C_TT}, \qquad 0\le t\le T.
        \label{eq:radial-nonexplosion-prob-bound}
    \end{align}
Since $\tau_r\uparrow\life{X}$ as $r\to\infty$, we have $\{\life{X}\le t\} \subseteq \{\tau_r\le t\}$, $r>0$. Hence, by \eqref{eq:radial-nonexplosion-prob-bound},
    \begin{align*}
        \pr\bigl(A_m\cap\{\life{X}\le t\}\bigr) &\le \pr\bigl(A_m\cap\{\tau_r\le t\}\bigr) \le \frac{m e^{2C_TT}}{r}.
    \end{align*}
Letting $r\to\infty$ gives
    \begin{align}
        \pr\bigl(A_m\cap\{\life{X}\le t\}\bigr) &= 0, \qquad 0\le t\le T.
        \label{eq:radial-nonexplosion-fixed-m}
    \end{align}
Since $V(0)<\infty$ almost surely, $\pr\left(\bigcup_{m=1}^\infty A_m\right)=1$. Letting $m\to\infty$ in \eqref{eq:radial-nonexplosion-fixed-m}, we obtain
    \begin{align}
        \pr(\life{X}\le t) &= 0, \qquad 0\le t\le T.
    \end{align}
Finally, since $T>0$ was arbitrary, $\life{X}=\infty$ almost surely.
\end{proof}


\section{Integrability of the singular drift part}
The most delicate part of the proof is related to show that for $x=(x_1,\ldots,x_N)$ defined in \eqref{eq:x:defn}, the singular part of the drift from \eqref{eq:main:SDE} is integrable, i.e. for every $i=1,\ldots,N$ we have
\begin{equation}
    \label{eq:singular:integr}
        \int_0^t \dfrac{k_{\alpha}(s, X(s))}{\left<X_s,\alpha\right>}ds < \infty\qquad \alpha\in R_+,\quad  t<\life{x}\/,
\end{equation}
almost surely. Essentially, the above condition is almost sufficient for $x$ to be a solution, as we demonstrate in the following proposition.

Recall that for every $i=1,2,\ldots,N$ we introduced $\Ri = \{\aR: \alpha_i\neq 0\}$ and consider the corresponding sets 
\begin{equation*}
	\Ai=\left\{\xa^2:\aRi\right\}\/,\quad \Bi=\left\{\xa:\aRi\right\}\/,
\end{equation*}	
where we collect all the expressions $\xa^2$ (or $\xa$) in which $x_i$ appears. Note that for fixed $N$ the number of elements in $\Ri$ is the same for every $i$ and let us denote it by $M=\#\Ri$. In the $A$ case we simply have $M=N-1$.

\medskip

Consider $e_n(\Ai)$, $n=1,\ldots, M$, which control the collisions between $x_i$ and other particles (or the wall of the Weyl chamber). These are definitely continuous processes. We can also apply the It\^o formula, for every starting point in the interior of the Weyl chamber, to get (at least up to the first hitting time of $\partial C_+$) that
\begin{align}
	\label{eq:eAi:SDE2}
	\nonumber
	d\eAi{n} &= 2\sum_{k=1}^N \sigX{k} \sum_{\aRi}\alpha_k\Xa \eaAi{n-1} dB_k + 2\sum_{k=1}^N \bX{k} \sum_{\aRi}\alpha_k\Xa \eaAi{n-1}dt\\ \nonumber
	&+\sum_{k=1}^N \siggX{k}\sum_{\aRi}\alpha_k^2 \eaAi{n-1}dt+2\sum_{\aRi}\sum_{\bR}\dfrac{\ab\Xa}{\Xb} \eaAi{n-1}\kbX \\
  &+2\sum_{k=1}^N \siggX{k}\anbRi\Xa\Xb\alpha_k\beta_k\eabAi{n-2}dt\/.
\end{align}

At the first glance the last term above is singular and it is in fact true for every $n=1,\ldots,M-1$. However, the first crucial observation in this construction is that it is not the case for $n=M$, i.e. the function appearing in this expression can be extended to a continuous function on $\overline{C_+}$. Indeed, note that for $\alpha,\beta\in\Ri$ such that $\alpha\neq \beta$ we have $\eaAi{M-1}=\xb^2\eabAi{M-2}$, which is exceptional in the case $n=M$, since in general we have $\eaAi{M-1}=\xb^2\eabAi{M-2}+\eabAi{M-1}$. 

For $\alpha\in\Ri$ and $\beta\notin \Ri$ such that $\ab\neq 0$ let us denote $\gamma=\prf{\beta}{\alpha}$, where $\prf{\beta}{\alpha}$ was  defined in Section~\ref{sec:preliminaries}. Thus
\begin{align*}
 \sum_{\aRi}\sum_{\bR}&\dfrac{\ab\xa}{\xb} \eaAi{M-1}k_\beta(x) = \anbRi\ab\xa\xb\eabAi{M-2}k_\beta(x)\\
&+ \sum_{\aRi}|\alpha|^2\eaAi{M-1}\ka(x)+\sum_{\beta\notin\Ri}k_\beta(x) \sum_{\aRi}\frac{\ab\xa\xc^2}{\xb}\eacAi{M-2}\/.
\end{align*}
To deal with the last expression observe that $\cb=-\ab$ and consequently, $\gamma(\gamma(\alpha,\beta),\beta)=\alpha$. Since
\begin{align*}
\ab\left<x,\gamma(\alpha,\beta)\right>+\left<\gamma(\alpha,\beta),\beta\right>\xa &= -\frac{2\ab\xb}{|\beta|^2}\/,
\end{align*}
we obtain the following simplification
\begin{align*}
\sum_{\aRi}\frac{\ab\xa\xc^2}{\xb}\eacAi{M-2} &=\frac{1}{2}\sum_{\aRi}\frac{\xa\xc(\ab\xc+\cb\xa)}{\xb}\eacAi{M-2}\\
&=-\sum_{\aRi}\frac{\ab}{|\beta|^2}\xa\xc\eacAi{M-2}\/.
\end{align*}
In the next proposition we in fact show that $\eAi{M}$ are semimartingales and the above-given description holds without any additional assumption on the starting point and with no restriction on time.


\begin{proposition}
    \label{prop:vandermond:i}
	For every $i=1,\ldots,N$ the processes $\eAi{M}$ are semi-martingales described by
	\begin{equation}
		\label{eq:eAi:SDE}
		d\eAi{M} = 2\eBi{M}\sum_{k=1}^N\sigX{k}\sum_{\aRi}\alpha_k \eaBi{M-1} dB_k + (V^{(1)}_{M}(X)+\eBi{M}V_{M}^{(2)}(X))dt\/,
	\end{equation}
	where
\begin{align*}
	V^{(1)}_M(X) &= \sum_{k=1}^N \siggX{k}\left(\sum_{\aRi}\alpha_k \eaBi{M-1}\right)^2+2\sum_{\aRi}|\alpha|^2\eaAi{M-1}\kaX
\end{align*}
and
\begin{align*}	
	V^{(2)}_M(X) &= 2\sum_{k=1}^N\siggX{k}\anbRi\alpha_k\beta_k\eabBi{M-2}+2\sum_{k=1}^N \bX{k}\sum_{\aRi}\alpha_k \eaBi{M-1}\\
  &+2\anbRi\ab\eabBi{M-2}k_\beta(x)-2\sum_{\beta\notin\Ri}\sum_{\aRi}\frac{\left<\alpha,\sigma_\beta(\alpha)\right>}{|\beta|^2}\easBi{M-2}{\sigma_\beta(\alpha)}k_\beta(x)\/.
\end{align*}
\end{proposition}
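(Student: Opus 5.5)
The plan is to identify $\eAi{M}$ as a polynomial in the invariant coordinates and then apply It\^o's formula to that polynomial. Once this is done, the displayed decomposition only has to be checked in the interior, where the symbolic particle computation is legitimate, and then extended by continuity.

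First, $\eAi{M}=\prod_{\alpha\in\Ri}\langle x,\alpha\rangle^2$ is not Weyl-invariant on its own. However, the products $\prod_{\alpha\in R_+}\langle x,\alpha\rangle^2$ and the symmetrization over $i$ are invariant. More usefully, $\eAi{M}=(\eBi{M})^2$ is a polynomial in $x$. By Proposition~\ref{prop:boundary-starts-by-approximation}, $X=f(U)$ with $U$ a $\mathcal K$-valued continuous semimartingale. Following Remark~\ref{rem:direct:Ito}, I will express $\eAi{M}(X)$ through the basic invariants. One route is to show that $x_i$ is a continuous function of $U$ which is smooth wherever $x_i$ is a simple root of the relevant characteristic polynomial. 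The cleaner route is to write $\eAi{M}(x)=\widehat p_i(U)$ for a polynomial $\widehat p_i$ after grouping: the map $x\mapsto\eAi{M}$ summed over indices with the same value gives invariants. Concretely, I would take the invariant polynomial $P(\lambda,x)=\prod_{i}(\lambda-\eAi{M}(x))$. Its coefficients are polynomials in $U$. The semimartingale property of each $\eAi{M}$ then follows because $\eAi{M}$ is a continuous function of $U$ which coincides locally with a $C^2$ branch. Where branches cross, which are the collision points, I will check that the expression $\eAi{M}=(\eBi{M})^2$ still has the product structure needed for It\^o's formula.

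Second, in the interior $\C$, and hence for $t<T_C$ when $x_0\in\C$, I will verify \eqref{eq:eAi:SDE} directly from \eqref{eq:eAi:SDE2} with $n=M$:
\begin{itemize}
\item The martingale term factorizes as $2\eBi{M}\sum_k\sigma_k\sum_\alpha\alpha_k\eaBi{M-1}dB_k$, because $\langle X,\alpha\rangle\eaAi{M-1}=\eBi{M}\eaBi{M-1}$.
\item The It\^o terms combine into the square $\bigl(\sum_\alpha\alpha_k\eaBi{M-1}\bigr)^2$ plus $\eBi{M}$ times the $\alpha\neq\beta$ cross sum, again using $\eaAi{M-1}=\langle x,\beta\rangle^2\eabAi{M-2}$.
\item The singular drift is reorganized exactly by the computation preceding the proposition, using \eqref{eq:inv:master}. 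The terms with $\beta\in\Ri$ and $\beta\ne\alpha$ become $\eBi{M}\langle\alpha,\beta\rangle\eabBi{M-2}k_\beta$. The diagonal $\beta=\alpha$ gives $2|\alpha|^2\eaAi{M-1}k_\alpha$. The terms with $\beta\notin\Ri$ are desingularized via the reflection pairing $\alpha\leftrightarrow\prf{\beta}{\alpha}$ into the last term of $V^{(2)}_M$.
\end{itemize}
All resulting coefficients are polynomials times continuous functions, hence continuous on $[0,\infty)\times\Cc$.

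Third, I will extend this identity to all times and all starting points. Both sides of \eqref{eq:eAi:SDE} are continuous functionals of the path, and the coefficients are continuous on $\Cc$. Moreover $\eAi{M}(X)=\widehat\Psi(U)$ for a function $\widehat\Psi$ that is $C^2$ on a neighbourhood of $\mathcal K$, namely a symmetric expression in the local roots, as in the proof of Proposition~\ref{prop:no-exit-K-classical}. It\^o's formula for $\widehat\Psi(U)$ therefore gives a semimartingale whose coefficients agree with the interior computation on $w[\C]$, and hence, by continuity, on $\mathcal K$. Since the boundary of $\mathcal K$ need not be avoided, this continuity identification handles it without any Lebesgue-null-time argument.

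The main obstacle is precisely this last step: showing that $\eAi{M}$, for a fixed label $i$, is a $C^2$ function of $U$ near boundary points where $x_i$ collides with another particle, since individual roots are not smooth functions of coefficients there. I would first try to avoid the issue by working with the symmetric functions of $\{\eAi{M}\}_i$, which are polynomials in $U$. I would then recover each $\eAi{M}$ locally as a smooth function of those symmetric functions wherever the values are distinct. At coincidences, I would use the fact that colliding labels $i,j$ with $\alpha=e_i-e_j$ give $\eAi{M}=\eAj{M}=0$ in type $A$ (in types $B$ and $D$, a similar vanishing holds), so the Tanaka/occupation argument shows that no local-time term arises.
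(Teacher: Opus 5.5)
Your interior verification in the second step matches the computation the paper does just before the proposition. The gap is in the third step, on which everything else rests: the claim that $Y:=\eAi{M}$ equals $\widehat\Psi(U)$ for some $\widehat\Psi$ of class $C^2$ near $\mathcal K$. This is false exactly where it matters. Unlike the discriminants $r_\ell$ in Proposition~\ref{prop:no-exit-K-classical}, $Y$ is not symmetric in the two roots that merge on a wall of $\Ri$, and there it is only $C^1$.

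Take type $A_2$, $i=1$, near $x_1=x_2>x_3$. By the implicit-function argument of Proposition~\ref{prop:root:smooth}(\ref{prop:xip:item_sum}), $s=x_1+x_2$, $D=(x_1-x_2)^2$ and $x_3$ are smooth local coordinates in $u$. Locally $\mathcal K=\{D\ge0\}$ and $x_1=(s+\sqrt D)/2$, so
\[ Y=(x_1-x_2)^2(x_1-x_3)^2=\tfrac14(s-2x_3)^2D+\tfrac12(s-2x_3)D^{3/2}+\tfrac14D^2 . \]
Hence $\partial_D^2Y=\tfrac38(s-2x_3)D^{-1/2}+\tfrac12\to\infty$, because $s-2x_3\to2(x_1-x_3)>0$. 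No such $\widehat\Psi$ exists, so It\^o's formula is unavailable on $\{Y=0\}$. Neither continuity nor the detour through symmetric functions of $\{\eAi{M}\}_i$ can repair this, since the obstruction is intrinsic; that auxiliary polynomial even has double roots at interior points such as $x=(1,0,-1)$. What is true, and what the paper extracts from Proposition~\ref{prop:xipk}, is that $Y=(\eBi{M})^2$ is $C^1$ in $u$: the factor $\eBi{M}$ cancels the denominator of $\partial x_i/\partial p_k$.

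The missing idea is the paper's regularization in the $Y$-variable. Set $\psi_\varepsilon(z)=z\varphi(z/\varepsilon)$ with $\varphi$ vanishing near $0$. Then $\psi_\varepsilon(Y)$ is a smooth function of $U$, and It\^o's formula applies to it. The limit $\varepsilon\to0$ is controlled by the squared-Bessel structure $d\langle Y\rangle_t=4Y_t\,a(t)\,dt$, with $a(t)=\sum_k\sigma_k^2\bigl(\sum_{\alpha\in\Ri}\alpha_k\prod_{\beta\in\Ri,\,\beta\ne\alpha}\langle X_t,\beta\rangle\bigr)^2$ locally bounded. Because $\sup_{z,\varepsilon}|z\psi_\varepsilon''(z)|<\infty$, the correction $\tfrac12\psi_\varepsilon''(Y)\,d\langle Y\rangle$ is bounded by $C\,a(t)\,dt$ uniformly in $\varepsilon$, and it tends to $0$.

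This shows $Y$ is a semimartingale, but only with drift $\mathbf 1_{\{Y\ne0\}}\bigl(V^{(1)}_M+\eBi{M}V^{(2)}_M\bigr)\,dt$. So, contrary to your closing remark in the third step, a statement about the zero set cannot be avoided. Since $\eBi{M}=0$ on $\{Y=0\}$, the asserted drift is correct if and only if
\[ \int_0^t\mathbf 1_{\{Y_s=0\}}V^{(1)}_M(X_s)\,ds=0 . \]
The occupation formula gives $L^0(Y)=0$, because $d\langle Y\rangle/Y$ is bounded. That alone does not decide the question, since it holds for the indicator drift too. The same caveat applies to the last line of the paper's proof: the identity $L^0-L^{0-}=2\int\mathbf 1_{\{Y=0\}}\,dV$ must be used with the actual finite-variation part of $Y$.

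The needed input has to come from the walls. Wherever two roots of $\Ri$ vanish, $V^{(1)}_M=0$. Where exactly one $\alpha\in\Ri$ vanishes, with $a_\alpha=\sum_k\alpha_k^2\sigma_k^2$,
\[ V^{(1)}_M=\prod_{\beta\in\Ri,\,\beta\ne\alpha}\langle X,\beta\rangle^2\,\bigl(a_\alpha+2|\alpha|^2k_\alpha\bigr), \]
which is a positive multiple of the wall drift \eqref{eq:gamma-wall-value-all-types} of the smooth local discriminant $r_\ell(U)$. Since $r_\ell(U)$ is a genuine $C^2$ function of $U$ with zero local time at $0$, that drift gives no mass to $\{r_\ell(U)=0\}$.
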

\begin{proof}
	We start with the following calculation in $A_{N-1}$ case, where we exploit the fact that $\eBi{M}$ is just characteristic polynomial with roots $\{x_j:j\neq i\}$ evaluated at point $x_i$, which immediately gives
	\begin{align*}
		\eBi{M}  & = \prod_{\aRi}\xa = (-1)^{i+1}\sum_{k=0}^{N-1} x_i^{N-1-k}(-1)^{k}e_{k}^{\overline{x_i}}(x)\\
        & = (-1)^{i+1}\sum_{k=0}^{N-1} (N-k)\,x_i^{N-1-k}(-1)^{k}e_{k}(x)\/.
	\end{align*}
	It is now easy to see that for $R=B_N$ we have
	\begin{equation*}
		\eBi{M} = x_i\sum_{k=0}^{N-1} (N-k)\,(x_i^2)^{N-1-k}(-1)^{k}e_{k}(x^2)\/,
	\end{equation*}
	where $x^2=(x_1^2,\ldots,x_N^2)$ and removing the first factor $x_i$ we get the representation for $R=D_N$. We can thus conclude that in general we have
	\begin{equation*}
		\eBi{M} = \sum_{k=0}^{N-1}g_k^R(x_i)H_k^R(u_1,\ldots,u_N)\/,
	\end{equation*}
	where $g_k^R$ is suitable power function and, which is more important, $H_k^R$ is smooth (polynomial in fact) function of $u=(u_1,\ldots,u_N)$. This together with Proposition \ref{prop:xipk} shows that $\eAi{M}$ is a smooth function of $u=(u_1,\ldots,u_N)$ whenever we are away from this part of the boundary which corresponds to collisions involving $x_i$ with its neighbors. Indeed, for $R=A_{N-1}$ we have
	\begin{eqnarray*}
		\dfrac{\partial}{\partial p_k}(\eAi{M}) &=& 2\eBi{M} \dfrac{\partial}{\partial p_k}\sum_{l=0}^{N-1}g_l^{A}(x_i)H_l^{A}(p_1,\ldots,p_N)
	\end{eqnarray*}
		which by \eqref{eq:xipk} is equal to
	$$	
	 2\sum_{l=0}^{N-1}(g_l^{A})'(x_i)(-1)^{i+k}{e_{N-k+1}^{\overline{x_i}}(x)}H_l^{A}(p_1,\ldots,p_N)+2\eBi{M}\sum_{l=0}^{N-1}g_l^{A}(x_i)\dfrac{\partial}{\partial p_k}H_l^{A}(p_1,\ldots,p_N)\/.
	$$
  Obviously it is a smooth function. Moreover, we can exploit the relation ${e_{l}^{\overline{x_i}}(x)}=e_{l}(x)-x_ie_{l-1}^{\overline{x_i}}(x)$ to represents each of ${e_{N-k+1}^{\overline{x_i}}(x)}$ as a polynomial in $x_i$ with coefficients being smooth functions of $(p_1,\ldots,p_N)$. Another usage of \eqref{eq:xipk} will show that the second derivative with respect to $p_k$ and $p_l$ of $\eAi{M}$ will be a smooth function whenever $\eAi{M}>0$ (More precisely on the corresponding image of $\{x:\eAi{M}>0\}$ by $p=(p_1,\ldots,p_N)$). It is because the singular expression $1/\eBi{M}$ will appear and we do not have the factor $\eBi{M}$ anymore to remove this singularity as it has happened in the first derivative. Note that in the $B_N$ case we consider polynomials $(p_1,\ldots,p_N)$ of $(x_1^2,\ldots,x_N^2)$ and $\eAi{M}$ is just the same function as in $A_{N-1}$ case with $(x_1,\ldots,x_N)$ replaced by $(x_1^2,\ldots,x_N^2)$ and multiplied by $x_i^2$. Consequently, the similar arguments lead to the corresponding claim. Finally, in $D_N$ case, in $\eAi{M}$ we just replace particles by its squares and the result follows directly from the consideration in $A_{N-1}$ case.

	\medskip
	
	Now let us consider the family of functions $\psi_\varepsilon:\R\to\R$ for $\varepsilon>0$ defined by $\psi_\varepsilon(z)=z\varphi(z/\varepsilon)$, where $\varphi$ is a fix smooth function on $\R$ with values in $[0,1]$ such that $\varphi(z)=0$ for $|z|<1$ and $\varphi(z)=1$ for $|z|\geq 2$. It is then clear that we have the following point wise convergence
	\begin{eqnarray}
        \label{eq:psi:limit}
		\psi_\varepsilon(z) & \stackrel{\varepsilon\to 0}{\longrightarrow}& z\/,\quad z\in\R\/,\\
        \label{eq:psi:limit:first}
		\psi'_\varepsilon(z)  = \varphi(z/\varepsilon)+\frac{z}{\varepsilon}\varphi'(z/\varepsilon) & \stackrel{\varepsilon\to 0}{\longrightarrow}& \ind_{\R\setminus\{0\}}(z)\/,\quad z\in\R\/,\\
        \label{eq:psi:limit:double}
		\psi''_\varepsilon(z) = \frac{2}{\varepsilon}\varphi'(z/\varepsilon) + \frac{z}{\varepsilon^2}\varphi''(z/\varepsilon) & \stackrel{\varepsilon\to 0}{\longrightarrow}& 0\/,\quad z\in\R\/,
	\end{eqnarray}
	since we have $\varphi'(z/\varepsilon)=\varphi''(z/\varepsilon)=0$ for every $|z|>2\varepsilon$ as well as for $z=0$. Moreover, we have the following uniform estimates
	\begin{align}
		\label{eq:psiderv:bounds}
		|\psi'_\varepsilon(z)| \leq ||\varphi||_\infty+||\varphi'||_\infty\/,\quad |z \psi''(z)|\leq 2||\varphi'||_\infty+||\varphi''||_\infty\/,
	\end{align}
	which hold for every $z\in\R$ and $\varepsilon>0$. Using $\psi_\varepsilon$ we consider the family of diffusions $\psi_\varepsilon(\eAi{M})$, which are well-defined semi-martingales for every $t<T$. Indeed, as we have already seen, the function 
	\begin{equation*}
		u\longrightarrow \psi_\varepsilon\left(\left(\sum_{k=0}^{N-1}g_k^R(f_i(u))H_k^R(u_1,\ldots,u_N)\right)^2\right)
	\end{equation*}
	is smooth, because the function $\psi_\varepsilon$ removes the problem with singularity of the derivatives of $f_i(u)$, which appears when $\xa=0$ for $\aRi$ or equivalently $\eAi{M}=0$. The It\^o formula gives that
	\begin{align}
			\label{eq:eAi:first}
			d\psi_\varepsilon(\eAi{M}) 
			& = 2\psi_\varepsilon'(\eAi{M}) \eBi{M}\sum_{k=1}^N\sigma_k(x)\sum_{\aRi}\alpha_k \eaBi{M-1} dB_k\\
			&+ \psi_\varepsilon'(\eAi{M})(V_M^{(1)}(X)+V_M^{(2)}(X)\eBi{M})dt\\
			\label{eq:eAi:last}
			&+2\psi_\varepsilon''(\eAi{M}) \eAi{M}\sum_{k=1}^N\sigma_k^2(x)\left(\sum_{\aRi}\alpha_k \eaBi{M-1}\right)^2dt
	\end{align}
	Standard localization argument, i.e. considering our processes for $t<T_K=\inf\{x: |x_i|\leq K, i=1,\ldots,N\}$ and then taking $K\to \infty$, which gives $T_K\to T$, allows us to consider all the above-given coefficients to by uniformly (for $t<T_K$) bounded. 
	Then using the bounds in \eqref{eq:psiderv:bounds} we can easily show that \eqref{eq:eAi:last} vanishes when $\varepsilon\to 0$. Since $\psi_\varepsilon'(z) \to  \ind_{\R\setminus\{0\}}(z)$ and $\eBi{M}$ vanishes iff $\eAi{M}$ vanishes we get that 
	$$
		\psi_\varepsilon'(\eAi{M}) \eBi{M} \longrightarrow \eBi{M}
	$$
	and consequently the right-hand side of \eqref{eq:eAi:first} goes to the 
	$$
		2\eBi{M}\sum_{k=1}^N\sigma_k(x)\sum_{\aRi}\alpha_k \eaBi{M-1} dB_k\/,
	$$
	which follows from the $L_2$ property of the Ito integral. Finally, 
	$$
		\psi_\varepsilon'(\eAi{M})(V_M^{(1)}+V_M^{(2)}\eBi{M})dt \longrightarrow \ind_{\R\setminus\{0\}}(\eAi{M})(V_M^{(1)}(X)+V_M^{(2)}(X)\eBi{M})dt\/.
	$$
	Since $\psi_\varepsilon(\eAi{M}) \to \eAi{M}$ we have just proved that $\eAi{M}$ is a semi-martingale for every $t<T$ given by \eqref{eq:eAi:SDE} but with additional indicator $\ind_{\R\setminus\{0\}}(\eAi{M})$ in the drift part. To finish the proof it is enough to show that we can replace this indicator simply by $1$. Indeed, taking the semi-martingale $Y_t=\eAi{M}_t$, since the process is non-negative, $L_t^{0-}[Y]_t\equiv 0$, where $L_t^a[Y]$ denotes the local time of $Y$. From the other-side, the occupation times formula gives
	\begin{align*}
		\int_{[0,\infty)}\ind_{\R\setminus \{0\}}(a)\frac{1}{a}L_t^{a}[Y]da &= \int_0^t \ind_{\R\setminus \{0\}}(Y_s)\frac{1}{Y_s}dY_sdY_s \\
		& = 4\int_0^t \ind_{\R\setminus \{0\}}(\eAi{M}_s) \sum_{k=1}^N \sigma_k^2(s, X_s)\left(\sum_{\aRi}\alpha_k \eaBi{M-1}_s\right)^2ds\/,
	\end{align*}
	where the last expression is always finite. It implies that $L_t^{0}[Y] = \lim_{a\to 0^+}L_t^{a}[Y] =0$. This together with the formula (see \cite{bib:RevuzYor:1999} Theorem 1.7 page 225)
	$$
		0=L_t^{0}[Y]-L_t^{0-}[Y] = \int_0^t \ind_{\{0\}}(Y_s)dV_s
	$$
	with $V = V_M^{(1)}+V_M^{(2)}\eBi{M}$ implies the desired result.
\end{proof}

\begin{proposition}
        \label{prop:singular:integrability}
        Assume that $X=(X_1,\ldots,X_N)$ is a diffusion process defined in \eqref{defn:x} such that $X(0)\in \C$. If the continuity assumption \ref{ass:cont}, \ref{ass:sigma} together with \ref{ass:k:positive} hold, then for every $\alpha\in R_+$
            \begin{equation}
        \label{eq:singular:integrability}
        \int_0^t \frac{k_\alpha(s,X(s))}{\left<X(s),\alpha\right>}ds<\infty\/,\quad 
    \end{equation}
    for every $t<\life{X}$.
\end{proposition}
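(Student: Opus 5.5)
We derive the integrability of the singular drift from the semimartingale description of $e_M(A_i)$ in Proposition~\ref{prop:vandermond:i}, used as a Bessel-type Lyapunov quantity, combined with the no-multiple-collision results (Propositions~\ref{prop:no_multi_collisions:first}, \ref{prop:no-exit-K-classical}, \ref{prop:no_multi_collisions}). Since $X(0)\in\C$, Proposition~\ref{prop:no-exit-K-classical} gives $X=f(U)$ on $[0,\life{X})$, and only pairwise orthogonal walls can be active simultaneously. By a localization on $[0,T\wedge\tau_K]$ with $\tau_K$ the exit time of a large ball, all continuous coefficients are bounded, and the task reduces to showing $\int_0^{t}\ind_{\{\langle X(s),\alpha\rangle<\varepsilon\}}\,k_\alpha/\langle X(s),\alpha\rangle\,ds<\infty$ near each wall. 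Away from the walls the integrand is bounded.

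The key step is to fix $\alpha\in R_+$ and an index $i$ with $\alpha_i\neq0$, and to consider $Y:=\log e_M(A_i)$ or, better, $Z:=\sqrt{e_M(A_i)}=|e_M(B_i)|$. By Proposition~\ref{prop:vandermond:i}, on $\{e_M(A_i)>0\}$,
\begin{equation*}
d\log e_M(A_i)=\frac{dM_t+V_M^{(1)}dt}{e_M(A_i)}+\frac{V_M^{(2)}}{e_M(B_i)}dt-\frac{d\langle e_M(A_i)\rangle}{2e_M(A_i)^2}.
\end{equation*}
The dominant singular positive contribution is $2\sum_{\beta\in R_i}|\beta|^2 e_{M-1}(A_i^{\beta})k_\beta/e_M(A_i)=2\sum_{\beta\in R_i}|\beta|^2k_\beta/\langle X,\beta\rangle^2$. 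Instead of the log we therefore use $\int_0^t$ of the drift of $Z=\prod_{\beta\in R_i}\langle X,\beta\rangle$ (signed appropriately). By It\^o, its drift contains $Z\sum_{\beta\in R_i}|\beta|^2 k_\beta/\langle X,\beta\rangle^2$, which is comparable to $\frac{Z}{\langle X,\alpha\rangle}\cdot\frac{k_\alpha}{\langle X,\alpha\rangle}$. A cleaner route is to take directly, for each $\alpha$, the semimartingale $\langle X,\alpha\rangle^2=r_\alpha$, which is a smooth function of $U$ near single-collision points (the local discriminants $r_\ell$ from the proof of Proposition~\ref{prop:no-exit-K-classical}). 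We then apply Itô to $\sqrt{r_\alpha+\eta}$ and let $\eta\downarrow0$. Its drift equals
\begin{equation*}
\frac{\gamma_\alpha}{2\sqrt{r_\alpha+\eta}}-\frac{d\langle r_\alpha\rangle}{8(r_\alpha+\eta)^{3/2}},
\end{equation*}
where, by \eqref{eq:gamma-wall-value-all-types} and \eqref{eq:theta-local-estimate-all-types}, $\gamma_\alpha\approx q(a_\alpha+2|\alpha|^2k_\alpha)$ and $d\langle r_\alpha\rangle\approx 4q\,a_\alpha r_\alpha\,dt$. The resulting drift is then $\gtrsim \frac{|\alpha|^2k_\alpha}{\sqrt{r_\alpha}}=|\alpha|^2\frac{k_\alpha}{\langle X,\alpha\rangle}$, up to bounded terms. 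Since $\sqrt{r_\alpha+\eta}$ stays bounded and its martingale part has bounded quadratic variation, monotone convergence (Fatou as $\eta\to0$) yields the finiteness of $\int_0^t k_\alpha/\langle X,\alpha\rangle\,ds$ on each localized interval near single-collision points.

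The remaining issues are twofold. First, covering $[0,t]$: by Proposition~\ref{prop:no_multi_collisions}, the path on compact time intervals stays at positive distance from multiple-collision strata. We cover the compact path by finitely many neighborhoods $\mathcal O$ in which only orthogonal active walls appear and the local functions $r_\ell$ are smooth, and then chain the estimates along successive entrance and exit times, of which there are finitely many crossings between $\varepsilon$ and $2\varepsilon$ levels. Second, the other active orthogonal walls contribute to the drift of $r_\alpha$ only through terms $\langle\alpha,\beta\rangle=0$ or bounded quotients, exactly as in the proof of Proposition~\ref{prop:no-exit-K-classical}. We expect the main obstacle to be controlling the cross term $d\langle r_\alpha\rangle/(r_\alpha)^{3/2}$ against the positive $a_\alpha$ contribution with constants uniform in $\mathcal O$. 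Here Assumption~\ref{ass:sigma} is needed to ensure that the It\^o correction produces exactly $a_\alpha/(2\sqrt{r_\alpha})-a_\alpha/(2\sqrt{r_\alpha})+o(1/\sqrt{r_\alpha})$ rather than a non-integrable negative singular term. If this cancellation is delicate, we would instead bound the $o(1/\sqrt{r_\alpha})$ remainder by $\frac12|\alpha|^2k_\alpha/\sqrt{r_\alpha}$ using Assumption~\ref{ass:k:positive} and continuity after shrinking $\mathcal O$.
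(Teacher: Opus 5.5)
Your final argument, applying It\^o's formula to $\sqrt{r_\alpha+\eta}$, is correct. It differs genuinely from the paper's proof.

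\textbf{The paper's route.} The paper never works root by root. It uses the globally defined semimartingale $e_M(A_i)=\prod_{\beta\in R_+,\,\beta_i\neq0}\langle X,\beta\rangle^2$ from Proposition~\ref{prop:vandermond:i}. It localizes so that $e_{M-1}(A_i)$ stays away from zero, which is where no multiple collisions enters, and splits time by $\varepsilon/2\varepsilon$ crossings. On the near-zero intervals it uses \ref{ass:k:positive} quantitatively: a comparison theorem bounds $e_M(A_i)$ from below by a time-changed $\mathrm{BESQ}^{(1+\gamma)}$ process. Finiteness then follows from local integrability of the reciprocal of a Bessel process of dimension $1+\gamma>1$, after an extra argument that the clock increases at the zeros.

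\textbf{Your route.} You use that $r_\alpha=\langle X,\alpha\rangle^2$ is itself a smooth function of $U$ near single-collision points. In types $B_N$ and $D_N$ this holds because, e.g., $(x_i-x_j)^2=s-2\sqrt p$, where $s,p$ are the elementary symmetric functions of the local pair of squared roots. You then apply It\^o to $\sqrt{r_\alpha+\eta}$.

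\textbf{What each buys.} Your estimate needs only $k_\alpha\ge0$ and avoids the comparison theorem and the time change. Assumption \ref{ass:k:positive} then enters only through Propositions~\ref{prop:no-exit-K-classical} and \ref{prop:no_multi_collisions}, and, via Fatou, to see that the wall has zero occupation time. The bound of Remark~\ref{remark:integrability:sigma} follows at once, since $a_\alpha\le Ck_\alpha$ near the wall on compacts. The price is that $r_\alpha$ is only locally smooth in $U$. So you do need the covering/chaining you sketch, or a partition of unity on a neighbourhood of the compact region at distance $\ge 1/m$ from the multiple-collision set, combined with stopping times, since that distance along the path is random.

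\textbf{The anticipated obstacle does not exist.} Assumption \ref{ass:sigma} plays no role at this step. Work with $r_\alpha=\langle X,\alpha\rangle^2$ exactly, with no factor $q_\ell$. On such a neighbourhood, roots non-orthogonal to $\alpha$ are inactive and orthogonal ones contribute zero. Hence the drift is $\gamma_\alpha=a_\alpha+2|\alpha|^2k_\alpha+2\langle X,\alpha\rangle c$ with $c$ bounded, and $d\langle r_\alpha\rangle=4r_\alpha a_\alpha\,dt$. The drift of $\sqrt{r_\alpha+\eta}$ is therefore
\[
\frac{\eta\,a_\alpha}{2(r_\alpha+\eta)^{3/2}}+\frac{|\alpha|^2k_\alpha}{\sqrt{r_\alpha+\eta}}+\frac{\langle X,\alpha\rangle\,c}{\sqrt{r_\alpha+\eta}},
\]
where the first term is nonnegative and the last is bounded by $|c|$. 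The cancellation is exact, so no fallback is needed.

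The martingale part has bracket density at most $a_\alpha$. Taking expectations between the crossing stopping times and letting $\eta\downarrow0$ by monotone convergence gives the claim. Your preliminary discussion of $\log e_M(A_i)$ and $\prod_\beta\langle X,\beta\rangle$ can simply be dropped.
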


\begin{proof}
    We start by localizing our consideration and define $T_r = \inf\{t: \norm{X_t}>r\}\wedge \inf\{t>0: \eAi{M-1}<1/r\}\wedge r$ for given and fixed $r>0$ such that $\norm{X(0)}<r$. We will consider $t<T_r$. We begin with the following simple computation
    \begin{equation*}
         \int_{0}^{t} \sum_{\aRi}\dfrac{k_\alpha(s,X(s)) |\alpha|^2}{\left<X(s),\alpha\right>}ds = \int_{0}^{t} \sum_{\aRi}|\alpha|^2\eaBi{M-1} \frac{k_\alpha(s,X(s))}{\eBi{M}}ds\/.
    \end{equation*}
    Recall that Proposition \ref{prop:no_multi_collisions} implies the existence of the positive $c_3=c_3(\omega)>0$ such that $\eBi{M-1}_t>c_3$ for every $t<T_r$, which gives that the last integral can be bounded from above by
    \begin{eqnarray*}
        \frac{1}{c_3} \int_{0}^{t} \sum_{\aRi}|\alpha|^2\eaBi{M-1}\eBi{M-1} \frac{ k_\alpha(s,X(s))}{\eBi{M}_s}ds\/,
    \end{eqnarray*}
    where we also have
    \begin{eqnarray*}
        \sum_{\aRi}|\alpha|^2\eaBi{M-1}\eBi{M-1} &=& \sum_{\aRi}|\alpha|^2\eaAi{M-1} +\anbRi|\alpha|^2\eaBi{M-1}\ebBi{M-1}\\
        &=&\sum_{\aRi}|\alpha|^2\eaAi{M-1} +\anbRi|\alpha|^2\eBi{M}\eabBi{M-2}\/.
    \end{eqnarray*}
    Since the factor $\eBi{M}$ in the last sum cancels the singularity in the integral, we can just reduce our consideration to show the finiteness of the integral
    \begin{equation*}
         \int_{0}^{t}  \sum_{\aRi}|\alpha|^2\eaAi{M-1} \frac{k_\alpha(s,X(s))}{\eBi{M}_s}ds\/.
    \end{equation*}

    Note that since $\eAi{M-1}$ stays strictly positive, we will get $\lim_{r\to \infty} T_r = \life{X}$. Due to the continuity of the coefficient and the lack of singularities in \eqref{eq:eAi:SDE}, we can find a constant $c_1=c_1(r)>0$ such that $\norm{V_M^{(2)}(x)}\leq c_1(r)$ for every $x\in\R^N$ such that $\norm{x}\leq r$. Moreover, by  \ref{ass:k:positive}, the functions $(t, x) \to \sigma_k(t,x)$ and $(t, x) \to k_\alpha(t,x)$ are continuous, and the latter is additionally strictly positive on the compact set $\{x\in \R^N: \norm{x}\leq r\}\cap \Cb$. It means that there exist $\varepsilon_0  =\varepsilon_0(r) > 0$, $\gamma = \gamma(r)>0$, and $c_2 = c_2(r)>0$ such that for every $t<T_r$ and $x$, for which $\prod_{\alpha\in R_+^i}\xa^2\leq \varepsilon_0$, we have that
    \begin{equation}
        \sum_{\aRi} \eaAi{M-1} \sum_{k=1}^N  \alpha_k^2\siggX{k} \leq \frac{1}{M\gamma} \sum_{\aRi}|\alpha|^2\eaAi{M-1}\kaX
    \end{equation}
    and 
    \begin{equation}
        c_2 \sum_{\aRi} |\alpha|^2 \eaAi{M-1} \leq \sum_{\aRi} |\alpha|^2 \eaAi{M-1} \kaX
    \end{equation}
    Indeed, if we take $\alpha\in R_+$ such that $\xa$ is sufficiently small \ref{ass:k:positive} gives
    \begin{equation}
        \label{eq:sigma_k:ineq}
        \sum_{k=1}^N \alpha_k^2 \siggx{k}\leq \frac{|\alpha|^2}{2M\gamma} k_\alpha(t,x)\quad \textrm{ and } \quad k_\alpha(t,x)\geq c_2/2\/,
    \end{equation}
    For those $\alpha\in R_+$ which are not making $\xa$ small, the small factors $\xb$ appear in $\eaAi{M-1}$ and those elements of the sums can be consumed by the previous ones again due to the positivity of $k_\alpha$. We set $\varepsilon = \varepsilon_0/2 \wedge c_2^2/(2r^2c_1^2) \wedge 1 > 0$ and define the following recurrent sequence of stopping times starting with $\tau_0^{\uparrow} = 0$ together with
    \begin{eqnarray*}
        \tau_{k}^{\downarrow} &=&  \inf\{t\geq  \tau_{k-1}^{\uparrow}: \eAi{M}_t\leq \varepsilon\}\/,\quad k=1, 2,3,\ldots\\
        \tau_{k}^{\uparrow} &=& \inf\{t\geq  \tau_{k}^{\downarrow}: \eAi{M}_t\geq 2\varepsilon\}\/,\quad k= 1,2,3,\ldots
    \end{eqnarray*}
    By definition, we have $0\leq \tau_1^{\downarrow}< \tau_1^{\uparrow}<\tau_2^{\downarrow}<\tau_2^{\uparrow}<\ldots$. It is also obvious that there exists a finite number $n=n(\omega)$ such that only $\tau_n^{\uparrow} < T_r \leq \tau_{n+1}^{\downarrow}$. Otherwise, these moments would accumulate over a bounded period of time, which would contradict the continuity of the process. In general, the idea is to notice that in every time interval of the form $[\tau_k^{\uparrow}, \tau_{k+1}^{\downarrow}]$, $k=0,1, \ldots, n$, the process $ \eAi{M}$ stays above $\varepsilon$ (away from zero), and for $[\tau_k^{\downarrow}, \tau_{k}^{\uparrow}]$ with $k=1,\ldots, n$ we have $ \eAi{M}\leq 2\varepsilon$ (near zero).  As a consequence, we get that 
    \begin{equation*}
        \int_{\tau_k^{\uparrow}}^{\tau_{k+1}^{\downarrow}}  \sum_{\aRi}|\alpha|^2\eaAi{M-1} \frac{k_\alpha(s,X(s))}{\eBi{M}_s}ds \leq \frac{1}{\sqrt{\varepsilon}} \int_{\tau_k^{\uparrow}}^{\tau_{k+1}^{\downarrow}}  \sum_{\aRi}|\alpha|^2\eaAi{M-1} k_\alpha(s,X(s)) ds<\infty\/,
    \end{equation*}
    and it implies that it is enough to show the finiteness of every integral of the form, 
    \begin{equation}
        \label{eq:integral:nearzero}
        \int_{\tau_{k}^{\downarrow}}^{\tau_{k}^{\uparrow}} \sum_{\aRi}|\alpha|^2\eaAi{M-1} \frac{k_\alpha(s,X(s))}{\eBi{M}_s}ds \/.
    \end{equation}
    We also recall that the SDE for $\eAi{M}$ can be rewritten as
    \begin{equation*}
        \label{eq:eAi:SDE:rewrite}
		d\eAi{M} = 2\sqrt{\eAi{M}}\sqrt{a(t)} dW + (V^{(1)}_{M}(X)+\eBi{M}V_{M}^{(2)}(X))dt,
    \end{equation*}
    where $W$ is a Wiener process and 
    \begin{equation*}
        A(t) = \int_0^t a(s)ds\,\qquad  a(t) = \sum_{k=1}^N\sigma_{k}^2(t, X)\left(\sum_{\aRi}\alpha_k \eaBi{M-1}\right)^2 \/.
    \end{equation*}
    For $t\in [\tau_k^{\downarrow},\tau_k^{\uparrow}]$ we have by H\"older's inequality that
    \begin{eqnarray*}
        \sum_{k=1}^N \siggX{k}\left(\sum_{\aRi}\alpha_k \eaBi{M-1}\right)^2 &\leq& M\sum_{\aRi} \eaAi{M-1} \sum_{k=1}^N  \alpha_k^2\siggX{k}\\
        &\leq& \frac{1}{\gamma} \sum_{\aRi}|\alpha|^2\eaAi{M-1}\kaX\/.
    \end{eqnarray*}
    Moreover, we also have the following
    \begin{eqnarray*}
        |\eBi{M}V_{M}^{(2)}(X)| &\leq& \sqrt{2\varepsilon} c_1 \leq \sqrt{2\varepsilon} c_1 r \, \eAi{M-1}\\
         &\leq&  c_2 \sum_{\aRi} |\alpha|^2 \eaAi{M-1} \leq \sum_{\aRi} |\alpha|^2 \eaAi{M-1} \kaX \/.
    \end{eqnarray*}
    Combining all together, we arrive at the following estimates of the drift part of $\eAi{M}$,
    \begin{eqnarray*}
        (V^{(1)}_{M}(X)+\eBi{M}V_{M}^{(2)}(X)) &\geq& (1+\gamma) \sum_{k=1}^N \siggX{k}\left(\sum_{\aRi}\alpha_k \eaBi{M-1}\right)^2 = (1+\gamma)a(t)\/,
    \end{eqnarray*}
    which is valid on the near-zero interval $[\tau_k^{\downarrow},\tau_k^{\uparrow}]$. In particular, by the comparison theorem (see Revuz Yor, Corollary 3.4 p.390 and Theorem 3.7 p.394), it implies that $\eAi{M}$ is bounded from below by process $Y$, which is a solution to 
    \begin{equation}
        \label{eq:QBES:SDE}
        dY = 2\sqrt{Y}\sqrt{a(t)}dW + (1+\gamma)a(t)dt\/,
    \end{equation}
    which is a squared Bessel process with dimension $(1+\gamma)$ with time changed by $A(t)$. This is a crucial observation in this proof. 
    
    First, we use it to show that for every $t\in [\tau_k^\downarrow, \tau_k^\uparrow]$ such that  $\eAi{M}_t = 0$ we have
    \begin{eqnarray}
        \label{eq:time_change_positivity}
        \sum_{k=1}^N \sigma_k^2(t,X_t)\sum_{\aRi} \alpha_k^2 \eaAi{M-1}>0\/,\quad a.s.
    \end{eqnarray}
    Indeed, if with positive probability the process $X$ hits boundary ($\eAi{M}_t=0$) in a point where $\sum_{k=1}^N \sigma_k^2(t,X_t)\sum_{\aRi} \alpha_k^2 \eaAi{M-1}=0$, then using continuity of the coefficients and the fact that $ \sum_{\aRi}|\alpha|^2\eaAi{M-1}\kaX$ is strictly positive (due to positivity of $k_\alpha$ and the result of Proposition \ref{prop:no_multi_collisions}) we can find (with positive probability) the small random time sub-interval of $[\tau_k^\downarrow, \tau_k^\uparrow]$, where we can make $\gamma$ in \eqref{eq:sigma_k:ineq} as big as we want. In particular, we see that there exists a time interval, where the process $\eAi{M}$ is bounded from below by the squared Bessel process of dimension $2$ with time change, but this process never visits $0$, which gives a contradiction. 
    
    Next, we define the random set
    \begin{equation*}
        Z_0 = \{t\in [\tau_k^\downarrow, \tau_k^\uparrow]: \eAi{M}_t = 0\}
    \end{equation*}
    It is definitely compact set as it is closed due to continuity of $t\to  \eAi{M}_t$ and bounded (finiteness of $\tau_k^\downarrow$ and $\tau_k^\uparrow$). As we have previously seen, for each $t\in Z_0$ we have \eqref{eq:time_change_positivity}. It means that for every $t^*\in Z_0$ there exists $\delta=\delta(t,\omega)>0$ such that for every $t\in (t^*-\delta, t^*+\delta)$ the inequality \eqref{eq:time_change_positivity} still holds and consequently there exists $L^* = L^*(t^*,\delta, \omega)>0$ such that 
    \begin{equation}
        \label{eq:sigma_ka:L}
        \sum_{\aRi}|\alpha|^2\eaAi{M-1}\kaX \leq L^* \sum_{k=1}^N \sigma_k^2(t,X_t)\sum_{\aRi} \alpha_k^2 \eaAi{M-1}
    \end{equation}
    for every $t\in (t^*-\delta, t^*+\delta)$. It provides an open (random) cover of the set $Z_0$ and compactness of $Z_0$ implies that we can find finite sub-cover. Then taking $\mathcal{U}$ as a sum of all elements from this finite sub-cover we end up with an open set such that $Z_0\subset \mathcal{U}$ and there exists $L>0$ (which is obviously a maximum of all $L^*$ found for each element of the finite sub-cover) such that \eqref{eq:sigma_ka:L} holds with $L^*$ replaced by $L$ for every $t\in \mathcal{U}$.

    Writing $1=\ind_\mathcal{U}(s)+\ind_{\mathcal{U}^c}(s)$, we split the near-zero integral
\eqref{eq:integral:nearzero} on the interval
$[\tau_k^\downarrow,\tau_k^\uparrow]$. Namely,
\begin{eqnarray*}
    I_1
    &=&
    \int_{\tau_k^\downarrow}^{\tau_k^\uparrow}
    \sum_{\aRi}|\alpha|^2\eaAi{M-1}
    \frac{k_\alpha(s,X_s)}{\eBi{M}_s}
    \ind_\mathcal{U}(s)\,ds \\
    &\leq&
    L
    \int_{\tau_k^\downarrow}^{\tau_k^\uparrow}
    \sum_{j=1}^N\sigma_j^2(s,X_s)
    \sum_{\aRi}\alpha_j^2\eaAi{M-1}
    \ind_\mathcal{U}(s)\frac{ds}{\eBi{M}_s} \\
    &\leq&
    L
    \int_{\tau_k^\downarrow}^{\tau_k^\uparrow}
    \sum_{j=1}^N\sigma_j^2(s,X_s)
    \sum_{\aRi}\alpha_j^2\eaAi{M-1}
    \frac{ds}{\eBi{M}_s}.
\end{eqnarray*}
Moreover,
\begin{eqnarray*}
    I_2
    &=&
    \int_{\tau_k^\downarrow}^{\tau_k^\uparrow}
    \sum_{\aRi}|\alpha|^2\eaAi{M-1}
    \frac{k_\alpha(s,X_s)}{\eBi{M}_s}
    \ind_{\mathcal{U}^c}(s)\,ds.
\end{eqnarray*}

    Note that $\mathcal{U}^c\cap [\tau_k^\downarrow, \tau_k^\uparrow]$ is again a compact set on which the continuous function $t \to \eBi{M}_t$ is positive, which means that there exists $c_4=c_4(\omega)>0$ such that $\eBi{M}_t>c_4$ on  $\mathcal{U}^c\cap [\tau_k^\downarrow, \tau_k^\uparrow]$ and consequently $I_2$ is just finite. Consequently, we can focus our attention on showing the finiteness of 
    \begin{equation}
         \int_{\tau_k^\downarrow}^{\tau_k^\uparrow}{\sum_{j=1}^N\sigma_{j}^2(s, X)\left(\sum_{\aRi}\alpha_j \eaBi{M-1}\right)^2} \frac{ds}{\eBi{M}_s} =  \int_{\tau_k^\downarrow}^{\tau_k^\uparrow} \frac{A'(s) ds}{\eBi{M}_s}\/,
    \end{equation}
    as again we have
    \begin{eqnarray*}
       \left(\sum_{\aRi}\alpha_j \eaBi{M-1}\right)^2 &=& \sum_{\aRi} \alpha_j^2 \eaAi{M-1}  +\anbRi\alpha_j \beta_j \eBi{M}\eabBi{M-2} 
    \end{eqnarray*}
    and we see as before a similar cancellation of the singularity for the second double sum. 

    Finally, we return to the fact that $\eBi{M}_s\geq \sqrt{Y(s)}$ for $s\in [{\tau_k^\downarrow}, {\tau_k^\uparrow}]$, where $Y$ is a squared Bessel process with dimension $1+\gamma>1$, starting from $0$, with time-changed by $A(t)$, i.e. we have $Y(s) = Z\bigl(A(s)-A(\tau_k^\downarrow)\bigr)$, where $Z$ is $\textrm{BESQ}^{(1+\gamma)}_0$. Using a similar approach as before, one can show that in fact $t\to A(t)$ is strictly increasing in the neighborhood of the boundary as we have \eqref{eq:time_change_positivity}. However, we can also provide the final claim noticing that since $H = \{s\in [{\tau_k^\downarrow}, {\tau_k^\uparrow}]:A'(s) = a(s)>0\}$ is relatively open in $[\tau_k^\downarrow,\tau_k^\uparrow]$, it can be obtained as countable union of disjoined intervals, i.e. $H = \bigcup_{i=1}^\infty (l_i,r_i)$ and use this random set representation in the following
    \begin{eqnarray*}
        \int_{\tau_k^\downarrow}^{\tau_k^\uparrow} \frac{a(s) ds}{\eBi{M}_s} &\leq& \int_{\tau_k^\downarrow}^{\tau_k^\uparrow} \frac{a(s) ds}{ \sqrt{Y(s)}} = \int_{\tau_k^\downarrow}^{\tau_k^\uparrow} \frac{a(s)}{ \sqrt{Y(s)}}\ind_{\{a(s)>0\}}ds = \sum_{i=1}^\infty \int_{l_i}^{r_i} \frac{a(s) ds}{ \sqrt{Z(A(s)-A(\tau_k^\downarrow))}}\\
        &=& \sum_{i=1}^\infty \int_{A(l_i)-A(\tau_k^\downarrow)}^{A(r_i)-A(\tau_k^\downarrow)} \frac{du}{ \sqrt{Z(u)}} \leq \int_0^{A(\tau_k^\uparrow)-A(\tau_k^\downarrow)}
    \frac{du}{\sqrt{Z(u)}}<\infty.
    \end{eqnarray*}
    The last integral is finite because $\sqrt Z$ is a Bessel process of dimension $1+\gamma>1$, and therefore its reciprocal is locally integrable in time.

\end{proof}

\begin{remark}
    \label{remark:integrability:sigma}
    Note that the proof given above shows in particular that whenever \ref{ass:k:positive} holds, then for every $\alpha\in R_+$ we have for every $t<\life{X}$ that 
        \begin{equation}
            \label{eq:integrability:sigma}
            \int_0^t \dfrac{\sum_{k=1}^N \alpha_k^2\sigma_k^2(s,X(s))}{\inner{X(s)}{\alpha}}ds <\infty\/,\quad a.s.
        \end{equation}
    This fact will be important in the final stage of the solution construction. 
\end{remark}


\section{Construction of a solution in non-degenerate case}

All the propositions provided in the previous sections lead to this point, where we are finally able to prove that our candidate process $X=(X_1,\ldots,X_N)$ is a semi-martingale, which solve \eqref{eq:main:SDE}. 

\begin{proposition}
    \label{prop:positive-construction-solves-sde}
    Assume that $X=(X_1,\ldots,X_N)$ is a diffusion process defined in \eqref{defn:x} such that $X(0)\in \Cc$. If \ref{ass:k:positive} holds, then $X$ is a semi-martingale solving \eqref{eq:main:SDE}.
\end{proposition}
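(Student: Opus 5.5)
We need to show that $X=f(U)$ is a semimartingale satisfying \eqref{eq:main:SDE}. We do it in two stages. First we treat the open set where $X$ lies in $\C$ and identify the equation there. Then we show that the boundary contributes nothing, using the integrability results of the previous section.

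\emph{Interior identification.} By Proposition~\ref{prop:boundary-starts-by-approximation}, $U$ stays in $\mathcal K$, so $X=f(U)$ is a continuous $\Cc$-valued process. On the random open time set $\{t: X(t)\in\C\}$ the map $f$ is smooth near $U(t)$, and we apply It\^o's formula to $f(U)$ there. Because $w:\C\to w[\C]$ is a diffeomorphism and $U$ solves \eqref{eq:u:SDE}, a chain-rule computation inverts the It\^o calculations of Section~4 that produced \eqref{eq:pk:SDE} and \eqref{eq:eN:SDE}. This yields
\[
dX_k=\sigma_k\,dB_k+b_k\,dt+\sum_{\alpha\in R_+}\frac{k_\alpha\alpha_k}{\langle X,\alpha\rangle}\,dt
\]
on every excursion interval of $X$ inside $\C$. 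This is the content of Remark~\ref{rem:direct:Ito}, and it only needs to be checked locally.

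\emph{Passing through the boundary.} To get a global semimartingale decomposition, we mimic the proof of Proposition~\ref{prop:vandermond:i}.

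1. Approximate the coordinate functions by $\phi_\varepsilon(U):=f_k(U)\,\varphi\bigl(\eA{\mo}(X)/\varepsilon\bigr)$, or use the $\psi_\varepsilon$ cut-off applied to the product $\prod_{\alpha}\langle X,\alpha\rangle^2$. These are smooth functions of $U$, so It\^o's formula applies to them.

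2. Let $\varepsilon\to0$. The martingale parts converge in $L^2$ after localization, and the drift terms $b_k$ and $\frac{k_\alpha\alpha_k}{\langle X,\alpha\rangle}$ converge by dominated convergence. Domination comes from Proposition~\ref{prop:singular:integrability}, extended to $X(0)\in\Cc$ by applying it after the times $t_n\downarrow0$ given by Proposition~\ref{prop:diffraction}.

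3. The correction terms involve $\varphi'$ and $\varphi''$ multiplied by derivatives of $f$, which blow up like $\langle X,\alpha\rangle^{-1}$ near a wall. We must show these terms vanish in the limit. For this we use Remark~\ref{remark:integrability:sigma}, i.e. the bound \eqref{eq:integrability:sigma}, together with the occupation-time/local-time argument from Proposition~\ref{prop:vandermond:i}. The local time at $0$ of the normal coordinates vanishes because the quadratic variation divided by the distance is integrable.

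4. Proposition~\ref{prop:no_multi_collisions} guarantees that after time $0$ only pairwise orthogonal walls are active. So near any boundary point, each active wall $\langle x,\alpha\rangle=0$ can be treated one at a time, using the local coordinates $r_\ell$ from Proposition~\ref{prop:no-exit-K-classical}. In these coordinates $\langle X,\alpha\rangle=\sqrt{R_\ell/q_\ell}$, which is a Bessel-type function of a semimartingale.

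An alternative route to the same conclusion is to write $\langle X,\alpha\rangle=\sqrt{R_\ell/q_\ell}$ and apply It\^o--Tanaka to the square root. The zero set of $R_\ell$ has Lebesgue measure zero, since $k_\alpha>0$ and the singular drift is integrable. The singular term $\frac{1}{\sqrt{R}}\,dR$ then produces exactly $\frac{k_\alpha}{\langle X,\alpha\rangle}$, with no extra local-time term, because $L^0(\sqrt R)=0$ under the drift lower bound \eqref{eq:gamma-wall-value-all-types}. This is the classical BESQ$\to$BES argument.

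\emph{Main obstacle.} The hardest step is proving that no singular finite-variation or local-time measure is supported on the boundary times $\{X\in\Cb\}$. This is where \eqref{eq:integrability:sigma}, zero Lebesgue occupation of the walls, and the exclusion of multiple collisions must all be combined. The zero-occupation property follows from finiteness of the singular integral together with \ref{ass:k:positive}. Once the boundary contribution is shown to vanish, summing the local wall-by-wall identifications gives \eqref{eq:thm:existence-sde}.
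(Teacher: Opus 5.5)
\textbf{There is a genuine gap in Steps 1--3.}

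In your main option, $\phi_\varepsilon=f_k(U)\,\varphi(\cdot/\varepsilon)$, the cut-off multiplies $X_k$, which does not vanish on the wall. The troublesome terms are therefore not the derivatives of $f$ you point to, but $X_k\,d\varphi_\varepsilon$. Near one active wall the argument of $\varphi$ is comparable to $\langle X,\alpha\rangle^2$. So these terms have size $\varepsilon^{-1}\asymp\langle X,\alpha\rangle^{-2}$ on the shell $\{\langle X,\alpha\rangle^2\asymp\varepsilon\}$, while \eqref{eq:integrability:sigma} only controls $a_\alpha/\langle X,\alpha\rangle$.

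Concretely, take type $A$ with $N=2$, $\sigma\equiv1$, $b\equiv0$, $k_\alpha\equiv k\in(0,\tfrac12)$. The gap $\langle X,\alpha\rangle/\sqrt2$ is a Bessel process of dimension $\delta=2k+1\in(1,2)$, so collisions occur. The time spent in the shell is of order $\varepsilon^{\delta/2}$. The martingale part of $X_k\,d\varphi_\varepsilon$ then has bracket of order $\varepsilon^{\delta/2-1}\to\infty$, unless $X_k$ vanishes at the collision. Hence the martingale parts do not converge in $L^2$, and the corrections cannot be shown to vanish one by one; only the full differential is controlled.

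Your second option, $\psi_\varepsilon$ applied to $\prod_\alpha\langle X,\alpha\rangle^2$, only gives an equation for that product, as in Proposition~\ref{prop:vandermond:i}, not for $X_k$.

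The missing idea is the paper's normal/tangential splitting combined with a time localization.
\begin{itemize}
\item Take the times $T_n$ at which two non-orthogonal roots have both vanished since $T_{n-1}$. By Proposition~\ref{prop:no_multi_collisions} they do not accumulate before the lifetime.
\item On $[T_n,T_{n+1})$ the active walls are isolated and pairwise orthogonal. By Proposition~\ref{prop:root:smooth}, every uninvolved $X_i$ and every tangential sum $X_j+X_{j+1}$ is a $C^\infty$ function of the invariant coordinates, so plain It\^o applies.
\item Only the normal coordinate $z=X_j-X_{j+1}$ is regularized, by $\psi_\varepsilon(z)=z\varphi(z/\varepsilon)$. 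This function vanishes on the wall, and $|\psi'_\varepsilon|$ and $|z\psi''_\varepsilon(z)|$ are uniformly bounded.
\item Hence the It\^o correction $\tfrac12 z\psi''_\varepsilon(z)\,a_\alpha/z$ is dominated via \eqref{eq:integrability:sigma}, and the drift via \eqref{eq:singular:integrability}. The short wall in type $B$ is treated through $\sqrt{X_N^2+\varepsilon}$.
\end{itemize}
``Summing the local wall-by-wall identifications'' does not supply the tangential coordinates. Likewise, working ``near any boundary point'' is a spatial localization, whereas what is needed is this covering of $[0,\text{lifetime})$ by random time intervals.

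\textbf{Your square-root alternative} is close to what the paper does at the short wall, but two points need correcting.
\begin{itemize}
\item It\^o--Tanaka does not apply to $\sqrt{\cdot}$, whose derivative is unbounded at $0$.
\item $L^0(\sqrt{R_\ell})=0$ cannot serve as an input. Before the argument, $\sqrt{R_\ell}$ is not known to be a semimartingale, and its vanishing local time is a consequence of the equation being proved.
\end{itemize}
What works is It\^o's formula for $\sqrt{R_\ell+\varepsilon}$, dominated convergence via \eqref{eq:singular:integrability} and \eqref{eq:integrability:sigma}, and Lebesgue-nullity of $\{R_\ell=0\}$ to kill the $\varepsilon$-terms there. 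The lower bound \eqref{eq:gamma-wall-value-all-types} is not what removes the boundary contribution. Even then you obtain only the equation for $\langle X,\alpha\rangle$, so the tangential coordinates above are still needed.

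\textbf{Boundary starts.} For $x_0\in\Cb$, applying Proposition~\ref{prop:singular:integrability} after the diffraction times $t_n$ gives finiteness of $\int_{t_n}^t k_\alpha(s,X(s))/\langle X(s),\alpha\rangle\,ds$ only. Nothing in your plan keeps these integrals bounded as $t_n\downarrow0$, which is needed to write the equation from time $0$. The paper closes this with a one-sided argument: in the equation for $X_1$, and inductively for $X_i$, the not-yet-controlled singular terms are nonnegative, while all other terms converge. Equivalently, you can pair the identity on $[t_n,t]$ with a fixed $\rho\in\C$ and use $\langle\rho,\alpha\rangle>0$ for every $\alpha\in R_+$.
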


\begin{proof}
    \newcommand{\st}{T}
    First, we assume that the starting point $X(0)$ is in the interior $\C$ and focus our attention to the case $\AN$. Then we define the following sequence of random times starting from $T_0 = 0$. Then for $n\geq 0$, if $T_n<\life{X}$, we define 
        \begin{align*}
            T_{n+1} &= \inf\{t\geq T_n: \textrm{there exists two different }\alpha,\beta \in\mathcal{H}_n(t)\textrm{ such that }\ab\neq 0\}\wedge \life{X}
        \end{align*}
    with the convention that the infimum of the empty set is $\life{X}$. Here
        \begin{equation*}
            \mathcal{H}_n(t) = \{\alpha\in R_+: \textrm{there exists } u\in [T_n,t] \textrm{ such that }\inner{X(u)}{\alpha}=  0\}.
        \end{equation*}
        
    In other words, after time $T_n$ we collect all new roots $\alpha$ for which $\inner{X(u)}{\alpha}$ vanishes for some $u\geq T_n$ and we stop this procedure whenever the constructed set $\mathcal{H}_n(t)$ contains two non-orthogonal roots. Continuity of $X$ and the lack of multiple collisions implies that 
        \begin{equation*}
            \st_0<\st_1<\ldots<\st_n<\ldots\/.
        \end{equation*}
    By Proposition~\ref{prop:no_multi_collisions}, we have $T_{n+1}>T_n$ on $\{T_n<\life{X}\}$. Indeed, if $T_{n+1}=T_n$, then two non-orthogonal roots would vanish at the same time $T_n$, which is excluded by absence of multiple collisions.

    Moreover, the sequence $(T_n)_n$ has no accumulation point before $\life{X}$. Suppose, to the contrary, that $T_n\uparrow T<\zeta$. By the definition of $T_{n+1}$, for each $n$ there exist non-orthogonal roots $\alpha_n,\beta_n\in R_+$ and times $s_n,u_n\in[T_n,T_{n+1}]$ such that
        \begin{equation*}
            \langle X(s_n),\alpha_n\rangle=0, \qquad \langle X(u_n),\beta_n\rangle=0, \qquad \langle\alpha_n,\beta_n\rangle\neq0.
        \end{equation*}
    Since $R_+$ is finite, passing to a subsequence if necessary, we may assume that $\alpha_n=\alpha$ and $\beta_n=\beta$ for all $n$, with $\langle\alpha,\beta\rangle\neq0$. Since $s_n,u_n\to T$, continuity of $X$ gives
        \begin{equation*}
            \langle X(T),\alpha\rangle = \langle X(T),\beta\rangle = 0,
        \end{equation*}
    which contradicts Proposition~\ref{prop:no_multi_collisions}. Hence $(T_n)_n$ is locally finite on $[0,\life{X})$, and therefore $T_n\uparrow\life{X}$.

    Let us now fix $n=0,1,2,\ldots$ and $t<T_{n+1}$. By construction, $\mathcal H_n(t)$ is a pairwise orthogonal subset of $R_+$, and every root which vanishes somewhere on $[T_n,t]$ belongs to $\mathcal H_n(t)$. In the type $\AN$ case, the roots in $\mathcal H_n(t)$ correspond to disjoint adjacent collision pairs. If $i$ is such that for every $\alpha\in \mathcal H_n(t)$ $\alpha_i=0$, then $X_i$ does not hit any of its neighbors on the interval $[T_n,t]$. Therefore, using Proposition~\ref{prop:root:smooth}(\ref{prop:xip:item_single}) and It\^o's formula, we obtain
        \begin{equation*}
            X_i(s) = X_i(\st_n)+\int_{\st_n}^s \sigsX{i}dB_u + \int_{\st_n}^s\bXs{i}du+\int_{\st_n}^s\sum_{\aR}\dfrac{\kasX \alpha_i}{\Xa}du\/,
        \end{equation*}
    for every $s\in[T_n,t]$. To deal with the remaining case, i.e. there exists $\alpha\in \mathcal H_n(t)$ such that $\alpha_i\neq 0$, then on the interval $[T_n,t]$, the particles $X_j$ and $X_{j+1}$ can collide only with each other, where $i\in \{j,j+1\}$. Thus, by Proposition~\ref{prop:root:smooth}(\ref{prop:xip:item_sum}), the function $(x_j+x_{j+1})(p_1,\ldots,p_N)$ is smooth on this localized branch, and It\^o's formula gives
        \begin{eqnarray}
            \nonumber
            X_j(s)+X_{j+1}(s) &=& X_j(\st_{n})+X_{j+1}(\st_{n})+\int_{\st_n}^t \sigsX{j}dB_j +\int_{\st_n}^t\sigsX{j+1}dB_{j+1}\\
            \nonumber
            &&+ \int_{\st_n}^t(\bXs{j}+\bXs{j+1})ds\\
            \label{eq:xjxj1:plus}
            &&+\int_{\st_n}^t\sum_{\aR}\dfrac{\kasX \alpha_j}{\Xa}ds+\int_{\st_n}^t\sum_{\aR}\dfrac{\kasX \alpha_{j+1}}{\Xa}ds\/.
        \end{eqnarray}
     To go from SDE for the sum $X_j+X_{j+1}$ to the SDEs for $X_j$ and $X_{j+1}$ separately we will explore the difference $X_j-X_{j+1}$. Here we can not claim the the mapping from the polynomials to the difference is smooth as we definitely might see collisions between $X_j$ and $X_{j+1}$. To deal with this we use again the smoothing function, which we have used before in Proposition~\ref{prop:vandermond:i}. Recall that we defined $\psi_\varepsilon:\R\to\R$ for $\varepsilon>0$ by $\psi_\varepsilon(z)=z\varphi(z/\varepsilon)$, where $\varphi$ is a fix smooth function on $\R$ with values in $[0,1]$ such that $\varphi(z)=0$ for $|z|<1$ and $\varphi(z)=1$ for $|z|\geq 2$. Proposition~\ref{prop:root:smooth} shows that $Z_j = \psi_\varepsilon(X_j-X_{j+1})$ is a semimartingle and the It\^o  formula implies the following representation on $[\st_n, \st_{n+1})$
        \begin{eqnarray*}
            d\psi_\varepsilon(X_j-X_{j+1}) &=& \psi'_\varepsilon(X_j-X_{j+1})(dX_j-dX_{j+1})\\
            &&+\dfrac12 \psi''_\varepsilon (X_j-X_{j+1})(dX_jdX_j+dX_{j+1}dX_{j+1})\/.
        \end{eqnarray*}
    for every $\varepsilon>0$. To deal with the limit as $\varepsilon$ goes to $0$, let us focus on the last part, which is given by 
        \begin{eqnarray*}
            \frac12\int_{\st_n}^t (X_j-X_{j+1})\psi''_\varepsilon(X_j-X_{j+1}) \dfrac{\sigma^2_j(s,X(s))+\sigma^2_{j+1}(s,X(s))}{X_j-X_{j+1}}ds \stackrel{\varepsilon \to 0}{\longrightarrow} 0 
        \end{eqnarray*}
    by \eqref{eq:psi:limit:double} and the Dominated Convergence Theorem (by \eqref{eq:psiderv:bounds} and integrability given in \eqref{eq:integrability:sigma}). Similarly, we proceed with the drift part in $\psi'_\varepsilon(X_j-X_{j+1})(dX_j-dX_{j+1})$, where we use the bounds for the first derivative of $\psi_\varepsilon$ given in \eqref{eq:psiderv:bounds} and the integrability of the drift terms given in \eqref{eq:singular:integrability}. Additionally, the integrability given in \eqref{eq:singular:integrability} implies that the set $\{t: X_j(t)=X_{j+1}(t)\}$ has Lebesgue measure zero a.s. Consequently, $\psi'_\varepsilon(X_j(t)-X_{j+1}(t)) \to 1$ as $\varepsilon \to 0$. Finally, the local-martingale part, after localization, can be treated in the same way using the $L_2$ property of the Ito integral. It allows us to arrive at
        \begin{eqnarray}
            \nonumber
            X_j(t)-X_{j+1}(t) &=& X_j(\st_n)-X_{j+1}(\st_n)+\int_{\st_n}^t \left(\sigma_j(s,X(s))dB_j-\sigma_{j+1}(s,X(s))dB_{j+1}\right) \\
            \nonumber
            &&+ \int_{\st_n}^t\left(b_j(s,X(s))-b_{j+1}(s,X(s))\right)ds\\
            \label{eq:xjxj1:minus}
            && +\int_{\st_n}^t\sum_{\aR}\dfrac{\kasX \alpha_j}{\Xa}ds-\int_{\st_n}^t\sum_{\aR}\dfrac{\kasX \alpha_{j+1}}{\Xa}ds
        \end{eqnarray}

    Obviously, using \eqref{eq:xjxj1:plus} and \eqref{eq:xjxj1:minus} we obtain the desired SDEs for $X_j$ and $X_{j+1}$. Gluing these local representations gives the desired equation in type $\AN$ for an initial point in the interior $\C$.

    
        \medskip

    Let us now briefly describe the changes in the above-given proof, which are necessary to deal with the remaining cases$\BN$ and $\DN$.  The argument is local and uses the same approximation procedure as above.  For $x\in\Cc$ put
        \begin{equation*}
            \mathcal A(x):=\{\alpha\in R_+:\langle x,\alpha\rangle=0\}.
        \end{equation*}
    Fix a positive time and localize to a sufficiently small random time interval on which no root outside the active set can vanish.  By Proposition~\ref{prop:no_multi_collisions}, all roots in the active set are pairwise orthogonal.  Hence, if $\beta$ is an active root and $\gamma$ is another active root, then the singular term generated by $\gamma$ does not contribute to the equation for $\langle X,\beta\rangle$, because $\langle \beta,\gamma\rangle=0$. Therefore the active walls can be treated one by one.  All quantities which are smooth functions of the invariant coordinates are handled by It\^o's formula exactly as in the type $\AN$ case.  Only the corresponding root distances $Z_\alpha(t):=\langle X(t),\alpha\rangle$, $\alpha\in R_+$ require the smoothing argument with $\psi_\varepsilon$.

    In type $\BN$, the positive roots are $e_i$, $e_i-e_j$ and $e_i+e_j$. First observe that $e_i+e_j$ does not produce any additional problems as $x_i+x_j=0$ forces $x_i=x_j=0$ and hence a non-orthogonal multiple collision, which is excluded by Proposition~\ref{prop:no_multi_collisions}. Moreover, the roots $\alpha = e_i-e_j$ can be treated exactly the same way as in the $\AN$ case with the smoothing functions $\psi_\varepsilon$, the It\^o formula application to $\psi_\varepsilon(Z_\alpha)$  and passing to the limit $\varepsilon\downarrow0$. Finally, the only remaining case to deal with is the last coordinate, namely for the root $\alpha=e_N$ and the wall $X_N=0$.  The square $Y_N:=X_N^2$
    is locally a smooth function of the invariant coordinates.  On the interior, It\^o's formula gives
        \begin{align*}
            dY_N(t) &=2X_N(t)\sigma_N(t,X(t))\,dB_N(t)\\
            &\quad+ \left[ \sigma_N^2(t,X(t)) +2X_N(t)b_N(t,X(t)) +2X_N(t)\sum_{\delta\in R_+} \frac{k_\delta(t,X(t))\delta_N}{\langle X(t),\delta\rangle}\right]dt.
        \end{align*}
    We then apply It\^o's formula to $h_\varepsilon(Y_N):=\sqrt{Y_N+\varepsilon}$. The martingale part converges in quadratic variation to
    $\int_a^t\sigma_N(s,X(s))\,dB_N(s)$, while the drift terms converge to the desired drift by dominated convergence, using \eqref{eq:integrability:sigma} and \eqref{eq:singular:integrability}. The only additional It\^o correction is
        \begin{equation*}
            \frac{\varepsilon\,\sigma_N^2(t,X(t))}
                 {2\left(X_N^2(t)+\varepsilon\right)^{3/2}}\,dt,
        \end{equation*}
    which converges to zero by dominated convergence, using again \eqref{eq:integrability:sigma}, which reads here as
        \begin{equation*}
            \int_0^t
            \frac{\sigma_N^2(s,X(s))}{X_N(s)}
            \,ds<\infty.
        \end{equation*}

    \medskip

    Finally, in the case $\DN$ there are no short roots.  The walls corresponding to roots $e_i-e_j$ and $e_i+e_j$ are treated by the same smoothing argument.  More precisely, for a root $\alpha=e_i-e_j$ we apply the argument to $Z_\alpha=X_i-X_j$,
    while $X_i+X_j$ stays locally separated from zero and is therefore a smooth function of the invariant coordinates on the localized branch. Similarly, for a root $\alpha=e_i+e_j$ we apply the argument to $Z_\alpha=X_i+X_j$, while $X_i-X_j$ stays locally separated from zero. There is no remaining zero-pair case to consider.  Indeed, if both $e_i-e_j$ and $e_i+e_j$ vanished at a positive time, then $X_i=X_j=0$, and the two roots would form a multiple cluster.  This is excluded by Proposition~\ref{prop:no_multi_collisions}.  Hence, after localization, each active type $\DN$ wall is treated as a single root wall, exactly as in the preceding smoothing argument.

    Since the preceding arguments are local, the resulting semimartingale representations are glued over the same type of localized intervals as in the type $\AN$ proof.  Thus, for an initial point in the interior $\C$, the process $X$ satisfies \eqref{eq:main:SDE} up to its lifetime in all three classical cases $\AN$, $\BN$, and $\DN$.


    \medskip

     Last part of the proof relates to the general starting point $X(0)\in\Cc$, which, in particular, might be on the boundary of the Weyl chamber and has multiple collisions.  Then, by Proposition~\ref{prop:diffraction}, there exists a sequence of times $t_n\downarrow0$ such that $X(t_n)\in\C$.  For every fixed $n$, applying the preceding interior-start argument to the shifted process gives the expected semi-martingale representation on $[t_n,t]$.  It remains only to justify that the singular integrals remain finite when $t_n\downarrow0$. This follows by the standard one-sided argument used for ordered particle systems.  With our convention for the Weyl chamber, $X_1$ is the largest particle.  After putting the singular part on one side and all non-singular terms on the other side, the right-hand side has a finite limit as $t_n\downarrow0$ by continuity of $X$, continuity of the stochastic integral, and local integrability of the regular drift.  Since the singular terms in the $X_1$-equation are non-negative, no cancellation is possible, and each of these integrals is finite on $[0,t]$. We then proceed inductively.  Suppose that all singular integrals involving particles with indices smaller than $i$ have already been shown to be finite. In the equation for $X_i$, we move those already controlled terms to the non-singular side.  The remaining singular terms, namely those involving $X_i$ and particles with larger indices, and in type $\BN$ also the term corresponding to the wall $X_i=0$, again have the same sign.  The same one-sided argument therefore proves their finiteness.  Repeating this for $i=1,\ldots,N$ gives the finiteness of all singular integrals on $[0,t]$. Consequently, taking the limit $t_n\downarrow0$ in the shifted semi-martingale representations yields the semi-martingale representation given in the main SDE~\eqref{eq:main:SDE}.
\end{proof}

Collecting all together we have completed the proof of Theorem~\ref{thm:existence} in the following way.

\begin{proof}[Proof of Theorem~\ref{thm:existence}]
By Proposition~\ref{prop:boundary-starts-by-approximation}, there exists a $\mathcal K$-valued solution $U$ of the invariant-coordinate equation started from $w(x_0)$. Put $X=f(U)$. Proposition~\ref{prop:positive-construction-solves-sde} shows that $X$ is a semimartingale satisfying \eqref{eq:main:SDE}. For $x_0\in\C$, the finiteness of the singular integrals follows from Proposition~\ref{prop:singular:integrability}.  For $x_0\in\Cc\setminus\C$, it follows from the one-sided limiting argument at the end of Proposition~\ref{prop:positive-construction-solves-sde}. This proves the theorem.
\end{proof}


\section{Construction of a solution in the degenerate case}
\label{sec:degenerate:weak}

In this section we prove the existence result in the degenerate boundary regime stated
in Theorem~\ref{thm:degenerate-existence}.  In contrast to the strictly positive
case, the coefficient $k_\alpha$ is now allowed to vanish on the wall
$\{\langle x,\alpha\rangle=0\}$.  The construction is based on approximation by
systems with strictly positive boundary repulsion.  Namely, for $n\in\N$ we replace $k_\alpha$ by
    \begin{equation}
        k_\alpha^{(n)}(t,x) := k_\alpha(t,x)+\frac1n, \qquad \alpha\in R_+.
        \label{eq:kn:defn}
    \end{equation}
For each fixed $n$, the coefficients $k_\alpha^{(n)}$ satisfy the strict positivity
condition \ref{ass:k:positive}.  Hence the corresponding
non-degenerate existence theorem yields a chamber-valued solution $X^{(n)}$.

The main difficulty is to identify the limit of the singular drift terms.  Along a
weakly convergent subsequence, the finite-variation terms
\begin{equation*}
    \int_0^t
    \frac{k_\alpha^{(n)}(s,X^{(n)}(s))}
         {\langle X^{(n)}(s),\alpha\rangle}
    \,ds
\end{equation*}
may converge to a measure which, a priori, contains an additional component supported on the set of times $\{t:\langle X(t),\alpha\rangle=0\}$.  The purpose of the argument below is to show that, under Assumptions~\ref{ass:k:dom} and~\ref{ass:face:sign}, such hidden boundary measures are in fact absent.

\medskip

We first recall the two degenerate assumptions introduced in Section~\ref{sec:assumptions-results}. They replace the strict positivity condition \ref{ass:k:positive}.  The first one is the normal dominance condition \ref{ass:k:dom}.  For $\alpha\in R_+$, set
    \begin{equation*}   
        a_\alpha(t,x) := \sum_{j=1}^N \alpha_j^2\,\sigma_j^2(t,x).
    \end{equation*}
This is the quadratic variation density of the martingale part of $\langle X,\alpha\rangle$. Assumption~\ref{ass:k:dom} says that, locally in time and space, for points close to the wall $\{\langle x,\alpha\rangle=0\}$, the repulsion coefficient $k_\alpha(t,x)$ dominates the normal martingale variation $a_\alpha(t,x)$.  Thus $k_\alpha$ may vanish at the wall, but only in a way compatible with the degeneration of the diffusion in the same normal direction. More precisely, for every $T,R>0$ there exist constants $\varepsilon=\varepsilon(T,R)>0$ and $\gamma=\gamma(T,R)>0$ such that, for every $t\in[0,T]$, every $x\in\Cc$ with $|x|\le R$, and every $\alpha\in R_+$,
\begin{equation*}
    0\le \langle x,\alpha\rangle\le \varepsilon
    \quad\Longrightarrow\quad
    k_\alpha(t,x)\ge \gamma\,a_\alpha(t,x).
\end{equation*}
Since $R_+$ is finite, the constants may be chosen uniformly over all positive roots
on each compact time-space set.  This condition is weaker than strict positivity of
$k_\alpha$ on the wall.  Indeed, if $k_\alpha$ is strictly positive on
\begin{equation*}
    \{0\le t\le T,\ |x|\le R,\ \langle x,\alpha\rangle=0\},
\end{equation*}
then, by continuity and compactness, $k_\alpha$ remains bounded from below in a
neighborhood of the wall, whereas $a_\alpha$ is bounded above on the same compact set.
Hence Assumption~\ref{ass:k:dom} follows after possibly shrinking the neighborhood.

The second assumption is the face sign condition \ref{ass:face:sign}.  Recall the notation introduced in Section~\ref{sec:assumptions-results} in this context. In particular, by our convention, the detector family $\mathfrak D(S)$
contains the canonical detector
\begin{equation*}
    u_S:=\sum_{\beta\in S}\beta
\end{equation*}
and, for $u\in\mathfrak D(S)$, the regular drift of the detector $\tau_u$ is
    \begin{equation*}
        B_{S,u}(t,x) = \langle u,b(t,x)\rangle + \sum_{\beta\in R_+\setminus S} \frac{k_\beta(t,x)\langle u,\beta\rangle}{\langle x,\beta\rangle}.
    \end{equation*}
This is the same quantity as in \eqref{eq:def:B-S-u-degenerate}.  On localized neighborhoods of $F_S$, all denominators with $\beta\in R_+\setminus S$ are bounded away from zero.  Consequently, $B_{S,u}$ admits a continuous trace on $F_S$. Indeed, while the process is localized near $F_S$, applying the linear functional $\tau_u$ to the SDE formally gives
    \begin{align*}
        d\tau_u(X(t)) &= dM^u(t) + B_{S,u}(t,X(t))\,dt + \sum_{\alpha\in S} \frac{k_\alpha(t,X(t))\langle u,\alpha\rangle}{\langle X(t),\alpha\rangle} \,dt,
    \end{align*}
where
    \begin{equation*}
        M^u(t) := \sum_{j=1}^N u_j\int_0^t\sigma_j(s,X(s))\,dB_j(s).
    \end{equation*}
Thus $B_{S,u}$ is precisely the regular part of the detector drift, after the singular terms corresponding to the roots vanishing on the face have been separated. Assumption~\ref{ass:face:sign} requires that, for every collision face $F_S$ and every admissible detector $u\in\mathfrak D(S)$, the continuous trace of $B_{S,u}$ on $F_S$ satisfies
    \begin{equation*}   
        B_{S,u}(t,x)\ge0, \qquad t\geq 0\/, x\in F_S.
    \end{equation*}
This is an inward-pointing condition for the regular part of the detector drift. Since $\tau_u$ is nonnegative on the chamber and vanishes on $F_S$, the regular drift should not push the detector into negative values at the boundary.  The condition is not needed for tightness or for the construction of a generalized weak limit.  It is used only in the final local-time argument, where the possible hidden boundary measures are eliminated. 


\subsection{Approximation and localized compactness estimates}
\label{subsec:deg:approx-compactness}

We continue with the approximating solutions $X^{(n)}$ associated with \eqref{eq:kn:defn}. Thus $X^{(n)}$ is a $\Cc$-valued weak solution, started from $x_0$, to \eqref{eq:main:SDE} with $k_\alpha$ replaced by $k_\alpha^{(n)}$. We denote its lifetime by $\zeta^{(n)}$. At this compactness stage only Assumptions~\ref{ass:cont}-\ref{ass:sigma} are used. The degenerate boundary assumptions \ref{ass:k:dom} and \ref{ass:face:sign} enter only later, after a generalized weak limit has been constructed, in the argument excluding boundary measures.

For $\alpha\in R_+$ and $0\le t<\zeta^{(n)}$, define the accumulated
$\alpha$-singular drift by
\begin{equation}
    A^{(n),\alpha}(t)
    :=
    \int_0^t
    \frac{k_\alpha^{(n)}(s,X^{(n)}(s))}
         {\langle X^{(n)}(s),\alpha\rangle}
    \,ds.
    \label{eq:def:Analpha}
\end{equation}
Since the approximating coefficients $k_\alpha^{(n)}$ satisfy
Assumption~\ref{ass:k:positive}, Theorem~\ref{thm:existence} ensures that these
increasing processes are well defined and finite on compact subintervals of
$[0,\zeta^{(n)})$.

The next proposition collects the localized estimates needed for the limiting
argument: tightness of the stopped paths, uniform control of the singular drift
masses, and compactness of the associated stopped drift measures.

\begin{proposition}[Localized compactness estimates]
    \label{prop:deg:localized-compactness}
    Assume \ref{ass:cont}-\ref{ass:sigma}. Let $X^{(n)}$ be the approximating solutions associated with \eqref{eq:kn:defn}. For every $T,r>0$, define
        \begin{equation}
            \chi_r^{(n)} := T\wedge \inf\{t\in[0,\zeta^{(n)}):\ |X^{(n)}(t)|\ge r\},
            \label{eq:def:chi:stoptime}
        \end{equation}
with the convention $\inf\varnothing=\infty$. Then the following hold.

\begin{enumerate}[label=\textup{(\roman*)}]
    \item The laws of the stopped processes
        \begin{equation*}
            \bigl(X^{(n)}(t\wedge\chi_r^{(n)})\bigr)_{0\le t\le T}, \qquad n\in\N,
        \end{equation*}
    form a tight family in $\mathcal C([0,T];\Cc)$.

    \item There exists a constant $C_{r,T}<\infty$, independent of $n$, such
    that
        \begin{equation*}
            \sup_{n\in\N}  \mathbb E \sum_{\alpha\in R_+} A^{(n),\alpha}(\chi_r^{(n)}) \le C_{r,T}.
        \end{equation*}
    In particular, for every $\alpha\in R_+$there exists $C_{\alpha,r,T}<\infty$, independent of $n$, such that
        \begin{equation*}
            \sup_{n\in\N} \mathbb E \int_0^{\chi_r^{(n)}} \frac{k_\alpha^{(n)}(s,X^{(n)}(s))}{\langle X^{(n)}(s),\alpha\rangle}\,ds\le C_{\alpha,r,T}.
        \end{equation*}

    \item For every $\alpha\in R_+$, define the stopped drift measure on $[0,T]$ by
        \begin{equation}
            \mu_{r,T}^{(n),\alpha}(dt) := \mathbf 1_{\{t\le \chi_r^{(n)}\}} \frac{k_\alpha^{(n)}(t,X^{(n)}(t))}{\langle X^{(n)}(t),\alpha\rangle}\,dt.
            \label{eq:def:aprox:measure}
        \end{equation}
    Then the joint laws of
        \begin{equation*}
            \left(X^{(n)}(\cdot\wedge\chi_r^{(n)}),\bigl(\mu_{r,T}^{(n),\alpha}\bigr)_{\alpha\in R_+}\right), \qquad n\in\N,
        \end{equation*}
    are tight in
        \begin{equation*}
            \mathcal C([0,T];\Cc) \times \prod_{\alpha\in R_+}\mathcal M_+([0,T]),
        \end{equation*}
    where $\mathcal M_+([0,T])$ is equipped with the weak topology.
\end{enumerate}

    Moreover, for every $n\in\N$,
        \begin{equation*}
             \chi_r^{(n)} \uparrow T\wedge\zeta^{(n)} \qquad\text{as } r\to\infty.
        \end{equation*}
\end{proposition}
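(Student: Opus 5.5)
The key object is the canonical Lyapunov-type functional $\langle X,\rho\rangle$, where $\rho:=\sum_{\alpha\in R_+}\alpha$ (or, more robustly, $|X|^2$). Applied to the approximating equation, it turns the singular drift into a non-negative quantity whose expectation we can bound. I would treat (ii) first, then (i), then (iii), and finally the statement $\chi_r^{(n)}\uparrow T\wedge\zeta^{(n)}$.

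\textbf{Step (ii): uniform bound on the singular masses.} Apply It\^o's formula to $V(t)=|X^{(n)}(t)|^2$ on $[0,\chi_r^{(n)}]$, exactly as in the proof of Theorem~\ref{thm:non-explosion}. The singular drift contributes
\begin{equation*}
    2\sum_{\alpha}\langle X,\alpha\rangle\,k^{(n)}_\alpha/\langle X,\alpha\rangle=2\sum_\alpha k_\alpha^{(n)},
\end{equation*}
which is bounded but does not control $A^{(n),\alpha}$. So instead I use a linear functional. Fix $\rho\in\C$ and take the $\rho$-component of the equation:
\begin{equation*}
    \langle X^{(n)}(t),\rho\rangle=\langle x_0,\rho\rangle+M^\rho(t)+\int_0^t\langle\rho,b\rangle\,ds+\sum_{\alpha}\langle\rho,\alpha\rangle A^{(n),\alpha}(t).
\end{equation*}
Since $\langle\rho,\alpha\rangle>0$ for every $\alpha\in R_+$, we get
\begin{equation*}
    \min_\alpha\langle\rho,\alpha\rangle\sum_\alpha A^{(n),\alpha}(\chi_r^{(n)})\le 2|\rho|r+\bigl|M^\rho(\chi_r^{(n)})\bigr|+T|\rho|\sup_{[0,T]\times B_r}|b|.
\end{equation*}
On $[0,\chi_r^{(n)}]$ the process stays in $B_r$. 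The stopped martingale therefore has quadratic variation bounded by $T|\rho|^2\sup|\sigma|^2$, so its first moment is bounded by BDG/Cauchy--Schwarz uniformly in $n$. Taking expectations gives $C_{r,T}$, and the per-root bound follows. Note that $1/n\le1$, so $k^{(n)}_\alpha\le k_\alpha+1$ is bounded on compacts uniformly in $n$. This matters only for (i), since here the singular terms are handled through the linear identity.

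\textbf{Step (i): tightness of the stopped paths.} The martingale and regular drift parts have uniformly bounded coefficients on $B_r\times[0,T]$. They are therefore tight by Kolmogorov/Aldous, using BDG to get fourth-moment increment bounds. The singular part is the obstacle, because it has no modulus-of-continuity bound. I would use the decomposition
\begin{equation*}
    \sum_\alpha \alpha A^{(n),\alpha}=X-x_0-M-\int b.
\end{equation*}
The increasing processes $\langle\rho,\cdot\rangle$-weighted are monotone, with bounded expected total mass. Monotone processes are tight in the weak (measure) sense, but not automatically in $\mathcal C$. For a continuous-limit statement I would argue via Aldous's criterion on $X$ itself, applied to the $\ell$-th coordinates using the one-sided ordering trick from the end of Proposition~\ref{prop:positive-construction-solves-sde}. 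The one-sidedness works as follows: $X_1$ gets only non-negative singular pushes, while its path stays in $B_r$. Hence an increment of $X_1$ over a short interval of size $\delta$ is bounded below by the martingale-plus-drift increment and above by $2r$.

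This is not quite enough, and it is the main obstacle. The fix I would try first is a reflection-type argument. The singular push on $\langle X,\alpha\rangle$ acts only when $\langle X,\alpha\rangle$ is small, because $k_\alpha/\langle X,\alpha\rangle\le C/\eta$ when $\langle X,\alpha\rangle\ge\eta$. So over an interval where the increment of $X$ is large, the pushing root walls must be near zero. The Skorokhod-type estimate then shows that the oscillation of $X$ is dominated by a constant times the oscillation of $M+\int b$ plus $C\delta/\eta+\eta$. Optimizing over $\eta$ yields a uniform modulus of continuity in probability, hence tightness in $\mathcal C([0,T];\Cc)$. The closedness of $\Cc$ keeps the limits in $\Cc$.

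\textbf{Step (iii) and the final claim.} By (ii), the total masses $\mu^{(n),\alpha}_{r,T}([0,T])$ are tight in $\R_+$. Since $[0,T]$ is compact, Prokhorov's theorem on $\mathcal M_+([0,T])$ gives tightness of the measures. Joint tightness follows from tightness of the marginals. The final claim holds because $X^{(n)}$ is continuous on $[0,\zeta^{(n)})$, so $\sup_{s\le t}|X^{(n)}(s)|<\infty$ for each $t<\zeta^{(n)}$. This gives $\chi_r^{(n)}\ge T\wedge t$ for large $r$, and monotonicity in $r$ completes the argument.
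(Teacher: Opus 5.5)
Your arguments for (ii), (iii) and the monotonicity $\chi_r^{(n)}\uparrow T\wedge\zeta^{(n)}$ are essentially the paper's: you test the equation against a fixed $\rho\in\C$, then use Markov's inequality together with compactness of $\{\mu:\mu([0,T])\le L\}$. Bounding $\E|M^\rho(\chi_r^{(n)})|$ directly by BDG is a harmless simplification of the paper's extra localization at $\eta_m^{(n)}$.

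The gap is in (i). You correctly observe that the singular drift has no modulus of continuity in particle coordinates, but your proposed fix is only asserted. It requires an oscillation inequality of the form $w_X(\delta)\le C\,w_{M+\int b}(\delta)+C\delta/\eta+C\eta$ for an approximate Skorokhod problem in $\Cc$. The pushes there act along all positive roots, including non-simple ones such as $e_1-e_3$, whose hyperplanes meet $\Cc$ only in codimension-two faces and are not facet normals. They are also only approximately supported near the walls. In rank one this is an easy last-exit argument. In the Weyl chamber, however, you would have to control cancellations between large pushes along non-orthogonal roots while the set of nearly active walls changes over the time interval, in the spirit of oscillation inequalities for reflected processes in polyhedral domains. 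Nothing in your sketch supplies this, so as written (i) is not proved.

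The missing idea is that the paper never controls the singular drift for path tightness: it passes to the invariant coordinates $U^{(n)}=w(X^{(n)})$.
\begin{itemize}
\item By the invariant-polynomial construction, $U^{(n)}$ solves the non-singular system of Definition~\ref{defn:u} with $k_\alpha$ replaced by $k_\alpha^{(n)}$. Its coefficients are continuous functions of $(t,X^{(n)}(t))$.
\item Since $0\le k_\alpha^{(n)}\le k_\alpha+1$, these coefficients are bounded on $[0,T]\times\{|x|\le r\vee|x_0|\}$ uniformly in $n$. This is exactly where the bound you noted enters.
\item BDG then gives $\E|Y^{(n)}(t)-Y^{(n)}(s)|^4\le C_{r,T}|t-s|^2$ for $Y^{(n)}=U^{(n)}(\cdot\wedge\chi_r^{(n)})$. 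Kolmogorov's criterion yields tightness in $\mathcal C([0,T];\R^N)$.
\item Finally, $X^{(n)}(\cdot\wedge\chi_r^{(n)})=\tilde f(Y^{(n)})$ with $\tilde f$ continuous on $\R^N$, so the continuous mapping theorem transfers tightness to the stopped particle paths.
\end{itemize}
The desingularization by basic invariant polynomials is what replaces your Skorokhod-type estimate.
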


\begin{proof}
Let $w=(w_1,\ldots,w_N)$ be the basic invariant-coordinate map introduced in Subsection~\ref{subsec:basic_polynomials}. For $t<\zeta^{(n)}$, set
    \begin{equation*}
        U^{(n)}(t):=w(X^{(n)}(t)).
    \end{equation*}
Then $U^{(n)}$ solves the invariant-coordinate equations from Definition~\ref{defn:u}, with $k_\alpha$ replaced by $k_\alpha^{(n)}$. We first prove the path tightness in $\mathcal C([0,T];\Cc)$. Set
    \begin{equation*}
        Y^{(n)}(t) := U^{(n)}(t\wedge\chi_r^{(n)}), \qquad 0\le t\le T.
    \end{equation*}
Let us denote by $K_{r_*}$ the closed ball with center at $0$ and radius $r^* = r\vee |x_0|$. For $0\le t\le\chi_r^{(n)}$, the process $X^{(n)}(t)$ stays in $K_{r_*}$. Since the invariant-coordinate equations are non-singular, their coefficients are continuous functions of $(t,X^{(n)}(t))$. Moreover,
    \begin{equation*}
        0\le k_\alpha^{(n)}(t,x) = k_\alpha(t,x)+\frac1n \le k_\alpha(t,x)+1, \qquad n\ge1.
    \end{equation*}
Thus, on $[0,T]\times K_{r_*}$, the stopped invariant-coordinate coefficients are bounded uniformly in $n$. Consequently, for each $k=1,\ldots,N$, the stopped process $Y_k^{(n)}$ admits a decomposition
    \begin{align*}
        Y_k^{(n)}(t) &= Y_k^{(n)}(0) + \sum_{i=1}^N \int_0^t  a_{i,k}^{(n)}(s)\,dB_i^{(n)}(s) + \int_0^t c_k^{(n)}(s)\,ds,
    \end{align*}
where the progressively measurable stopped coefficients satisfy
    \begin{equation*}
        | a_{i,k}^{(n)}(s)|+|c_k^{(n)}(s)| \le C_{r,T}, \qquad 0\le s\le T,
    \end{equation*}
with a constant independent of $n$. In what follows, $C_{r,T}$ may change from line to line, as long as it depends only on $r$ and $T$.

Let $0\le s\le t\le T$. By the Burkholder-Davis-Gundy inequality,
    \begin{align*}
        \E \left| \sum_{i=1}^N \int_s^t \bar a_{i,k}^{(n)}(u)\,dB_i^{(n)}(u) \right|^4 &\le C\, \E \left( \int_s^t \sum_{i=1}^N|\bar a_{i,k}^{(n)}(u)|^2\,du \right)^2  \le C_{r,T}|t-s|^2.
    \end{align*}
For the drift part,
    \begin{equation*}
        \E \left| \int_s^t \bar c_k^{(n)}(u)\,du \right|^4 \le C_{r,T}|t-s|^4.
    \end{equation*}
Hence
    \begin{equation*}
        \E |Y_k^{(n)}(t)-Y_k^{(n)}(s)|^4 \le C_{r,T}|t-s|^2.
    \end{equation*}
Summing over $k=1,\ldots,N$, and increasing the constant if necessary, we get
    \begin{equation*}
        \E |Y^{(n)}(t)-Y^{(n)}(s)|^4 \le C_{r,T}|t-s|^2, \qquad 0\le s\le t\le T,
    \end{equation*}
with $C_{r,T}$ independent of $n$. Since the initial values $Y^{(n)}(0)=w(x_0)$ are deterministic, Kolmogorov's tightness criterion implies that the laws of $Y^{(n)}$ are tight in $\mathcal C([0,T];\R^N)$. Finally, $X^{(n)}=\tilde f(U^{(n)})$, where $\tilde f:\R^N\to\Cc$ is the continuous extension introduced together with the invariant-coordinate map. Therefore
    \begin{equation*}
        X^{(n)}(t\wedge\chi_r^{(n)}) = \tilde f\bigl(Y^{(n)}(t)\bigr), \qquad 0\le t\le T.
    \end{equation*}
The map $\mathcal C([0,T];\R^N)\ni y \longmapsto \tilde f\circ y\in\mathcal C([0,T];\Cc)$ is continuous for the uniform topology. Hence the continuous mapping theorem implies the tightness of the laws of
    \begin{equation*}
        \bigl(X^{(n)}(t\wedge\chi_r^{(n)})\bigr)_{0\le t\le T}, \qquad n\in\N,
    \end{equation*}
in $\mathcal C([0,T];\Cc)$. This proves \textup{(i)}.

\medskip

We now prove the uniform singular-drift estimate. The idea is to test the approximating equation against a single vector $\rho\in\C$. Since $\rho$ lies in the open chamber, every positive root has strictly positive projection on $\rho$. Fix such a $\rho$, and set $c_\beta:=\langle\rho,\beta\rangle$, $\beta\in R_+$. Then $c_\beta>0$ for every $\beta\in R_+$. Applying the scalar product $\langle\rho,\cdot\rangle$ to the approximating SDE gives, for $t<\zeta^{(n)}$,
    \begin{align*}
        \langle\rho,X^{(n)}(t)\rangle &= \langle\rho,x_0\rangle + \sum_{j=1}^N \rho_j \int_0^t \sigma_j(s,X^{(n)}(s))\,dB_j^{(n)}(s)        \\
        &\quad + \int_0^t \langle\rho,b(s,X^{(n)}(s))\rangle\,ds + \sum_{\beta\in R_+} c_\beta A^{(n),\beta}(t).
    \end{align*}

We now localize once more in order to justify taking expectations.  Fix $m\in\N$ and define
    \begin{equation*}
       \eta_m^{(n)} := \inf\left\{ t\in[0,\zeta^{(n)}): \sum_{\beta\in R_+} c_\beta A^{(n),\beta}(t)\ge m \right\},
    \end{equation*}
We then set $\tau_{r,m}^{(n)} = \chi_r^{(n)}\wedge \eta_m^{(n)}$. The role of $\eta_m^{(n)}$ is only to cap the accumulated singular drift and thus make all terms manifestly integrable. Recall that by $K_{r_*}$ we have denoted the closed centered ball with radius $r_*:=r\vee |x_0|$. For $0\le t\le \chi_r^{(n)}$, the process $X^{(n)}(t)$ stays in $K_{r_*}$. Since $\sigma$ is continuous, it is bounded on $[0,T]\times K_{r_*}$. Hence the quadratic variation of the martingale part in the semi-martingale decomposition of $\langle \rho,X^{(n)}(t)\rangle$ stopped at $\tau_{r,m}^{(n)}$ is bounded by a deterministic constant depending only on $r$, $T$, and $\rho$. In particular, the stopped martingale is square-integrable, and therefore
    \begin{equation*}
        \E \left[\sum_{j=1}^N \rho_j\int_0^{\tau_{r,m}^{(n)}} \sigma_j(s,X^{(n)}(s))\,dB_j^{(n)}(s)\right]=0.
    \end{equation*}
Evaluating the scalar identity at $\tau_{r,m}^{(n)}$ and taking expectations, we obtain
    \begin{align*}
        \E\sum_{\beta\in R_+} c_\beta A^{(n),\beta}({\tau_{r,m}^{(n)}}) &= \E\langle \rho,X^{(n)}({\tau_{r,m}^{(n)}})\rangle - \langle \rho,x_0\rangle - \E\int_0^{\tau_{r,m}^{(n)}} \langle \rho,b(s,X^{(n)}(s))\rangle\,ds.
    \end{align*}
We now estimate the right-hand side term by term. Since $|X^{(n)}({\tau_{r,m}^{(n)}})|\le r_*$ and $b$ is continuous, we have
    \begin{equation*}
        \E\langle \rho,X^{(n)}({\tau_{r,m}^{(n)}})\rangle \le |\rho|\,r_*.
    \end{equation*}
and 
    \begin{equation*}
        \left| \E\int_0^{\tau_{r,m}^{(n)}} \langle \rho,b(s,X^{(n)}(s))\rangle\,ds \right| \le C_{\rho,r,T}.
\end{equation*}
Combining these bounds yields
    \begin{equation*}
        \E\sum_{\beta\in R_+} c_\beta A^{(n),\beta}({\tau_{r,m}^{(n)}}) \le |\rho|\,r_* + |\rho|\,|x_0| + C_{\rho,r,T}.
    \end{equation*}
We now let $m\to\infty$. Since each $A^{(n),\beta}$ is increasing,
    \begin{equation*}
        A^{(n),\beta} ({\tau_{r,m}^{(n)}}) \uparrow A^{(n),\beta}({\chi_r^{(n)}}) \qquad\text{as } m\to\infty,
    \end{equation*}
and monotone convergence gives
    \begin{equation*}
        \E\sum_{\beta\in R_+} c_\beta A^{(n),\beta}({\chi_r^{(n)}}) \le |\rho|\,r_* + |\rho|\,|x_0| + C_{\rho,r,T}.
    \end{equation*}
At this point the positivity of the coefficients $c_\beta$ becomes decisive: for the fixed root $\alpha\in R_+$,
    \begin{equation*}
        \E A^{(n),\alpha}({\chi_r^{(n)}}) \le \frac{|\rho|\,r_* + |\rho|\,|x_0| + C_{\rho,r,T}}{\langle \rho,\alpha\rangle}.
    \end{equation*}
The right-hand side does not depend on $n$. Since
    \begin{equation*}
        A^{(n),\alpha}({\chi_r^{(n)}}) = \int_0^{\chi_r^{(n)}} \frac{k_\alpha^{(n)}(s,X^{(n)}(s))}{\langle X^{(n)}(s),\alpha\rangle}\,ds,
    \end{equation*}
this proves the required uniform estimate. Finally, the monotonicity
\begin{equation*}
    \chi_r^{(n)} \uparrow T\wedge \zeta^{(n)}
    \qquad\text{as } r\to\infty
\end{equation*}
follows directly from the definition of the explosion lifetime through exit times from
compact balls.


\medskip

It remains to prove the compactness of the stopped drift measures. For
$\alpha\in R_+$, by \textup{(ii)},
\begin{equation*}
    \sup_{n\in\N}
    \E\mu_{r,T}^{(n),\alpha}([0,T])
    =
    \sup_{n\in\N}
    \E A^{(n),\alpha}(\chi_r^{(n)})
    \le
    C_{\alpha,r,T}.
\end{equation*}
Hence, by Markov's inequality, for every $L>0$,
\begin{equation*}
    \sup_{n\in\N}
    \mathbb P\bigl(\mu_{r,T}^{(n),\alpha}([0,T])>L\bigr)
    \le
    \frac{C_{\alpha,r,T}}{L}.
\end{equation*}
Since $[0,T]$ is compact, the set
\begin{equation*}
    \mathcal K_L
    :=
    \{\mu\in\mathcal M_+([0,T]):\ \mu([0,T])\le L\}
\end{equation*}
is compact in the weak topology on finite measures. Therefore the laws of
$\mu_{r,T}^{(n),\alpha}$, $n\in\N$, are tight in
$\mathcal M_+([0,T])$. Since $R_+$ is finite, the laws of
\begin{equation*}
    \bigl(\mu_{r,T}^{(n),\alpha}\bigr)_{\alpha\in R_+}
\end{equation*}
are tight in
\begin{equation*}
    \prod_{\alpha\in R_+}\mathcal M_+([0,T]).
\end{equation*}
Combining this measure tightness with the path tightness proved in \textup{(i)}
gives the tightness of the joint laws of
\begin{equation*}
    \left(
        X^{(n)}(\cdot\wedge\chi_r^{(n)}),
        \bigl(\mu_{r,T}^{(n),\alpha}\bigr)_{\alpha\in R_+}
    \right)
\end{equation*}
in
\begin{equation*}
    \mathcal C([0,T];\Cc)
    \times
    \prod_{\alpha\in R_+}\mathcal M_+([0,T]).
\end{equation*}
This proves \textup{(iii)} and completes the proof.
\end{proof}

We now pass to the limit. At this point it is convenient to keep track not only of the
stopped paths and the stopped drift measures, but also of the corresponding regular
drift, martingale part, and quadratic variation. This will allow us to identify the
structure of the limiting equation before we analyze the support of the limiting drift
measures.

For fixed $T,r>0$, define
\begin{align*}
    X^{(n),r}(t)
    &:=
    X^{(n)}(t\wedge \chi_r^{(n)}), \\
    D^{(n),r}(t)
    &:=
    \int_0^{t\wedge \chi_r^{(n)}}
        b(s,X^{(n)}(s))\,ds, \\
    M^{(n),r}(t)
    &:=
    \int_0^{t\wedge \chi_r^{(n)}}
        \sigma(s,X^{(n)}(s))\,dB^{(n)}(s), \\
    Q_i^{(n),r}(t)
    &:=
    \int_0^{t\wedge \chi_r^{(n)}}
        \sigma_i^2(s,X^{(n)}(s))\,ds,
    \qquad i=1,\ldots,N.
\end{align*}
Then, for every $0\le t\le T$,
\begin{equation*}
    X^{(n),r}(t)
    =
    x_0
    +
    M^{(n),r}(t)
    +
    D^{(n),r}(t)
    +
    \sum_{\alpha\in R_+}
        \alpha\,\mu_{r,T}^{(n),\alpha}([0,t]).
\end{equation*}

\begin{proposition}[Skorokhod representation and generalized weak limit]
\label{prop:deg:skorokhod}
Assume \ref{ass:cont}-\ref{ass:sigma}, and let $X^{(n)}$ be the approximating
sequence. Then, after passing to a subsequence, there exist a probability space, a
continuous $\Cc$-valued process $X$, a continuous local martingale
$M=(M_1,\ldots,M_N)$, and, for every $\alpha\in R_+$, a continuous increasing
process $A^\alpha$ on $[0,\zeta)$ with $A^\alpha(0)=0$, such that, for
\begin{equation*}
    \zeta
    :=
    \lim_{r\to\infty}
    \inf\{t\ge0:\ |X(t)|\ge r\},
\end{equation*}
we have, for every $t<\zeta$,
\begin{equation}
    \label{eq:deg:generalized-limit}
    X(t)
    =
    x_0
    +
    M(t)
    +
    \int_0^t b(s,X(s))\,ds
    +
    \sum_{\alpha\in R_+}\alpha\,A^\alpha(t),
\end{equation}
and
\begin{equation}
    \label{eq:deg:limit-bracket}
    \langle M_i,M_j\rangle(t)
    =
    \delta_{ij}
    \int_0^t \sigma_i^2(s,X(s))\,ds,
    \qquad i,j=1,\ldots,N.
\end{equation}
Consequently, possibly after enlarging the probability space, there exists an
$N$-dimensional Brownian motion $B$ such that
\begin{equation*}
    M(t)
    =
    \int_0^t \sigma(s,X(s))\,dB(s),
    \qquad t<\zeta.
\end{equation*}
In particular, $X$ is a generalized weak limit of the approximating sequence.
\end{proposition}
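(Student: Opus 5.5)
The plan is to pass to the limit in the stopped decomposition displayed just before the proposition, jointly for all the pieces. Fix $T,r\in\N$ and consider the random tuple
\begin{equation*}
    \Xi^{(n)}_{r,T}:=\Bigl(X^{(n),r},\,M^{(n),r},\,D^{(n),r},\,\bigl(Q_i^{(n),r}\bigr)_{i\le N},\,\bigl(\mu_{r,T}^{(n),\alpha}\bigr)_{\alpha\in R_+}\Bigr).
\end{equation*}
Tightness of the $X^{(n),r}$ and $\mu^{(n),\alpha}_{r,T}$ components is Proposition~\ref{prop:deg:localized-compactness}(i),(iii). On $[0,\chi_r^{(n)}]$ the coefficients $b$ and $\sigma$ are bounded by a constant $C_{r,T}$ independent of $n$, because $X^{(n)}$ stays in the ball $K_{r_*}$. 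Hence $D^{(n),r}$ and $Q_i^{(n),r}$ are uniformly Lipschitz, and by Burkholder--Davis--Gundy $\E|M^{(n),r}(t)-M^{(n),r}(s)|^4\le C_{r,T}|t-s|^2$. So all components are tight, and so is the countable family indexed by $(r,T)\in\N^2$ on the product space. Passing to a diagonal subsequence and applying Skorokhod's representation theorem, I obtain a single probability space on which all these objects converge almost surely: uniformly on $[0,T]$ for path components, and weakly for the measures. The limits are denoted $X^r,M^r,D^r,Q^r,\mu^{\alpha}_{r,T}$.

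The identification step is next. Because $\chi^{(n)}_r$ is not a continuous functional of the path, I will work with the exit time of the limit, $\theta_{r'}:=T\wedge\inf\{t:|X^{r}(t)|\ge r'\}$ for $r'<r$. On $[0,\theta_{r'}]$ uniform convergence gives $|X^{(n)}|<r$ for large $n$, so $\chi_r^{(n)}\ge\theta_{r'}$ eventually. The same argument yields consistency of the limits for different $r$ on $[0,\theta_{r'}]$, which lets me glue them into one process $X$ on $[0,\zeta)$. On this interval, continuity of $b$ and $\sigma$ together with uniform convergence gives
\begin{equation*}
    D^r(t)=\int_0^t b(s,X(s))\,ds,\qquad Q^r_i(t)=\int_0^t\sigma_i^2(s,X(s))\,ds.
\end{equation*}
For the martingale part, I will use that for each $n$ the processes $M_i^{(n),r}$ and $M_i^{(n),r}M_j^{(n),r}-\delta_{ij}Q_i^{(n),r}$ are martingales with respect to the filtration generated by the whole tuple, with moments bounded uniformly in $n$. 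Testing against bounded continuous functionals of the tuple up to time $s$, uniform integrability transfers the martingale property to the limit stopped at $\theta_{r'}$. This gives \eqref{eq:deg:limit-bracket}.

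The drift measures need care, since a weak limit of absolutely continuous measures may have atoms, in which case $t\mapsto\mu^\alpha([0,t])$ need not be continuous or converge at every $t$. To handle this I will pair the stopped identity with a fixed $\rho\in\C$, exactly as in the proof of Proposition~\ref{prop:deg:localized-compactness}(ii). Doing so expresses $\sum_{\alpha}\langle\rho,\alpha\rangle\mu^{(n),\alpha}_{r,T}([0,t])$ as a combination of components that converge uniformly to continuous limits. Hence the weighted limit measure $\sum_\alpha\langle\rho,\alpha\rangle\mu^\alpha_{r,T}$ has no atoms, and since every weight $\langle\rho,\alpha\rangle$ is positive, no single $\mu^\alpha_{r,T}$ has atoms. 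Then $\mu^{(n),\alpha}([0,t])\to\mu^\alpha([0,t])$ for every $t$. Setting $A^\alpha(t):=\mu^\alpha_{r,T}([0,t])$ on $[0,\theta_{r'}]$ gives continuous increasing processes, consistent in $r$ by the same gluing, and \eqref{eq:deg:generalized-limit} follows by passing to the limit in the stopped identity.

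Finally, since $\sigma$ may degenerate, I will invoke the martingale representation theorem for continuous local martingales with absolutely continuous brackets (Ikeda--Watanabe, or Revuz--Yor, Chapter V). On an enlarged space it provides a Brownian motion $B$ with $M=\int\sigma(s,X(s))\,dB(s)$, localised by $\theta_{r'}$ and glued up to $\zeta$. I expect the main obstacle to be the bookkeeping with the discontinuous stopping times $\chi_r^{(n)}$, namely the consistency of the limits across $r$, together with the no-atom argument for the $\mu^\alpha$. The other steps are standard.
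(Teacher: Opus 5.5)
Your proposal is correct and follows the paper's proof essentially step for step: the same joint tightness estimates and Skorokhod representation, identification of $D$ and $Q$ before the exit time of the limit, and transfer of the martingale and bracket properties by uniform integrability. It also uses the same positivity argument with a vector $\rho$ of the open chamber to rule out atoms in the individual $A^\alpha$, and the standard Lévy/representation construction of $B$. Your one refinement, a single Skorokhod representation on the countable product over $(r,T)\in\mathbb N^2$, makes explicit the common probability space that the paper's diagonal patching of separately obtained localized limits tacitly requires.
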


\begin{proof}
We begin with a fixed time horizon $T$ and a fixed localization radius $r$. The
idea is first to identify all possible limits on the compact interval $[0,T]$, and
only afterwards to remove the localization by a diagonal argument.

On the stopped interval $[0,\chi_r^{(n)}]$, the process $X^{(n)}$ remains in
$K_{r_*} = \{x\in\Cc:\ |x|\le r_*\}$, $r_*:=r\vee |x_0|$. Hence, by continuity of $b$ and $\sigma$, there exists a constant
$C_{r,T}<\infty$ such that
\begin{equation*}
    |b(t,x)|+\sum_{i=1}^N |\sigma_i(t,x)|^2
    \le
    C_{r,T},
\end{equation*}
for all $0\le t\le T$ and all $x\in K_{r_*}$.

As a consequence, the stopped drift processes $D^{(n),r}$ are uniformly Lipschitz:
\begin{equation*}
    |D^{(n),r}(t)-D^{(n),r}(s)|
    \le
    C_{r,T}|t-s|,
    \qquad 0\le s\le t\le T.
\end{equation*}
Similarly, each stopped quadratic variation process $Q_i^{(n),r}$ satisfies
\begin{equation*}
    |Q_i^{(n),r}(t)-Q_i^{(n),r}(s)|
    \le
    C_{r,T}|t-s|,
    \qquad 0\le s\le t\le T.
\end{equation*}
Thus the families $\{D^{(n),r}\}_{n\ge1}$ and $\{Q^{(n),r}\}_{n\ge1}$ are tight
in the corresponding spaces of continuous paths.

For the martingale part, we again use Kolmogorov's criterion. Let $0\le s\le t\le T$.
By the Burkholder-Davis-Gundy inequality,
\begin{align*}
    \E\left[
        |M^{(n),r}(t)-M^{(n),r}(s)|^4
    \right]
    &\le
    C
    \E\left[
        \left(
            \int_s^t
            \mathbf 1_{\{u\le \chi_r^{(n)}\}}
            \sum_{i=1}^N \sigma_i^2(u,X^{(n)}(u))
            \,du
        \right)^2
    \right] \\
    &\le
    C_{r,T}|t-s|^2,
\end{align*}
where $C$ is the Burkholder-Davis-Gundy constant for $p=4$, and the value of
$C_{r,T}$ may change from line to line. Therefore $\{M^{(n),r}\}_{n\ge1}$ is
tight in $\mathcal C([0,T];\R^N)$.

Combining these bounds with the path tightness and measure compactness from Proposition~\ref{prop:deg:localized-compactness}, we obtain tightness of the joint laws of
\begin{equation*}
    \left(
        X^{(n),r},
        D^{(n),r},
        M^{(n),r},
        Q^{(n),r},
        \bigl(\mu_{r,T}^{(n),\alpha}\bigr)_{\alpha\in R_+}
    \right)
\end{equation*}
in
\begin{equation*}
    \mathcal C([0,T];\Cc)
    \times
    \mathcal C([0,T];\R^N)
    \times
    \mathcal C([0,T];\R^N)
    \times
    \mathcal C([0,T];\R_+^N)
    \times
    \prod_{\alpha\in R_+}\mathcal M_+([0,T]).
\end{equation*}

By Prokhorov's theorem and the Skorokhod representation theorem, after passing to a
subsequence we may realize these variables on a common probability space so that they
converge almost surely. We denote the corresponding limits by
\begin{equation*}
    \left(
        X^r,
        D^r,
        M^r,
        Q^r,
        \bigl(\mu^{\alpha,r}\bigr)_{\alpha\in R_+}
    \right).
\end{equation*}
Thus $X^{(n),r} \longrightarrow X^r$, $D^{(n),r} \longrightarrow D^r$, $M^{(n),r} \longrightarrow M^r$, $Q^{(n),r} \longrightarrow Q^r$ and all of those convergence are uniform on $[0,T]$. Moreover, for every $\alpha\in R_+$,
\begin{equation*}
    \mu_{r,T}^{(n),\alpha}
    \Longrightarrow
    \mu^{\alpha,r}
\end{equation*}
weakly as finite measures on $[0,T]$.

The next point is to recover the martingale structure in the limit. For each $n$,
$M^{(n),r}$ is a continuous square-integrable martingale, and for every
$i,j\in\{1,\ldots,N\}$ the process
\begin{equation*}
    M_i^{(n),r}(t)M_j^{(n),r}(t)-\delta_{ij}Q_i^{(n),r}(t)
\end{equation*}
is a martingale. Moreover,
\begin{equation*}
    \sup_{n\in\N}\E Q_i^{(n),r}(T)\le C_{r,T},
\end{equation*}
so these martingales are uniformly integrable. Passing to the limit in the usual way
against bounded continuous functionals of the paths up to time $s$, we conclude
that $M^r$ is a continuous martingale and that
\begin{equation*}
    M_i^r(t)M_j^r(t)-\delta_{ij}Q_i^r(t)
\end{equation*}
is a martingale as well. Hence
\begin{equation*}
    \langle M_i^r,M_j^r\rangle(t)
    =
    \delta_{ij}Q_i^r(t),
    \qquad 0\le t\le T.
\end{equation*}

We now identify the deterministic finite-variation terms. Set
\begin{equation*}
    \chi_r
    :=
    T\wedge \inf\{t\ge0:\ |X^r(t)|\ge r\}.
\end{equation*}
Fix $t<\chi_r$. By continuity of $X^r$, the path stays a positive distance away
from the sphere $\{|x|=r\}$ on $[0,t]$. It follows that, for all sufficiently
large $n$, one has $\chi_r^{(n)}>t$. On this interval the approximating stopped
processes are not yet affected by localization, and the uniform convergence together
with continuity of $b$ and $\sigma$ gives
\begin{align*}
    D^r(t)
    &=
    \int_0^t b(s,X^r(s))\,ds, \\
    Q_i^r(t)
    &=
    \int_0^t \sigma_i^2(s,X^r(s))\,ds,
    \qquad i=1,\ldots,N.
\end{align*}

For every $\alpha\in R_+$, let
\begin{equation*}
    A^{\alpha,r}(t):=\mu^{\alpha,r}([0,t]),
    \qquad 0\le t\le T.
\end{equation*}
At every common continuity point $t$ of the finite measures
$\mu^{\alpha,r}$, $\alpha\in R_+$, the identity
\begin{equation*}
    X^{(n),r}(t)
    =
    x_0
    +
    M^{(n),r}(t)
    +
    D^{(n),r}(t)
    +
    \sum_{\alpha\in R_+}
        \alpha\,\mu_{r,T}^{(n),\alpha}([0,t])
\end{equation*}
passes to the limit and yields
\begin{equation*}
    X^r(t)
    =
    x_0
    +
    M^r(t)
    +
    D^r(t)
    +
    \sum_{\alpha\in R_+}
        \alpha\,A^{\alpha,r}(t).
\end{equation*}
The set of common continuity points is dense in $[0,T]$. Since $X^r$, $M^r$,
and $D^r$ are continuous, the vector finite-variation process
\begin{equation*}
    V^r(t)
    :=
    X^r(t)-x_0-M^r(t)-D^r(t)
\end{equation*}
is continuous. Thus $V^r$ is a continuous version of
$\sum_{\alpha\in R_+}\alpha\,A^{\alpha,r}$.

At this stage only the vector combination is known to be continuous. To conclude that
each individual process $A^{\alpha,r}$ is continuous, we use again a vector
$\rho\in\C$. Since $\langle \rho,\alpha\rangle>0$ for all $\alpha\in R_+$,
every jump of $V^r$ would satisfy
\begin{equation*}
    \langle \rho,\Delta V^r(t)\rangle
    =
    \sum_{\alpha\in R_+}
    \langle \rho,\alpha\rangle\,\Delta A^{\alpha,r}(t).
\end{equation*}
The left-hand side is zero because $V^r$ is continuous, whereas every term on the
right-hand side is nonnegative. Hence
\begin{equation*}
    \Delta A^{\alpha,r}(t)=0,
    \qquad \alpha\in R_+,
    \qquad 0\le t\le T.
\end{equation*}
So each $A^{\alpha,r}$ is continuous and increasing, and the localized identity
holds for every $t\in[0,T]$.

We now remove the localization. Choose increasing sequences
\begin{equation*}
    T_m\uparrow\infty,
    \qquad
    r_m\uparrow\infty,
    \qquad
    r_m>|x_0|.
\end{equation*}
Applying the preceding construction successively to the pairs $(T_m,r_m)$ and then
using a diagonal extraction, we obtain localized limits which are consistent on
overlapping time intervals. The consistency comes from the fact that the corresponding
stopped approximating objects agree up to the smaller stopping time, and this
agreement is preserved in the limit. We may therefore patch the localized limits
together and obtain a continuous $\Cc$-valued process $X$, a continuous local
martingale $M$, and, for every $\alpha\in R_+$, a continuous increasing process
$A^\alpha$ defined on $[0,\zeta)$, where
\begin{equation*}
    \zeta
    :=
    \lim_{m\to\infty}
    \inf\{t\ge0:\ |X(t)|\ge r_m\}
    =
    \lim_{r\to\infty}
    \inf\{t\ge0:\ |X(t)|\ge r\}.
\end{equation*}
Since $X(0)=x_0$ and $X$ is continuous, we have $\zeta>0$ almost surely.

For every $t<\zeta$, choosing $m$ so large that $t<T_m$ and
$|X(s)|<r_m$ for $0\le s\le t$, the localized equation gives
\begin{equation*}
    X(t)
    =
    x_0
    +
    M(t)
    +
    \int_0^t b(s,X(s))\,ds
    +
    \sum_{\alpha\in R_+}\alpha\,A^\alpha(t),
\end{equation*}
and similarly
\begin{equation*}
    \langle M_i,M_j\rangle(t)
    =
    \delta_{ij}
    \int_0^t \sigma_i^2(s,X(s))\,ds,
    \qquad i,j=1,\ldots,N.
\end{equation*}

It remains to rewrite the martingale $M$ as a stochastic integral with respect to a
Brownian motion. This is standard. Possibly after enlarging the probability space, one
may construct independent Brownian motions on the sets where the coefficients
$\sigma_i(s,X(s))$ vanish and divide the martingale part by $\sigma_i(s,X(s))$
elsewhere. Lévy's characterization then yields an $N$-dimensional Brownian motion
$B=(B_1,\ldots,B_N)$ such that
\begin{equation*}
    M(t)
    =
    \int_0^t \sigma(s,X(s))\,dB(s),
    \qquad t<\zeta.
\end{equation*}
This proves \eqref{eq:deg:generalized-limit} and
\eqref{eq:deg:limit-bracket}, and completes the construction of the generalized weak
limit.
\end{proof}


\subsection{Identification of the limiting singular drift}
\label{subsec:deg:interior-identification}

We now identify the part of the limiting drift which is already visible in the approximating equations. Away from the wall
$\{\langle x,\alpha\rangle=0\}$, the singular coefficient ${k_\alpha(t,x)}/{\langle x,\alpha\rangle}$ is an ordinary continuous function. Hence no loss of information can occur in the limit on regions where $\langle X,\alpha\rangle$ stays strictly positive. Any possible ambiguity in the limiting increasing process $A^\alpha$ can therefore only be supported on the set of times at which the limiting path lies on the corresponding wall. The next proposition makes this precise. It decomposes $A^\alpha$ into the absolutely continuous interior singular drift and a possible residual boundary measure. It also records that the localized uniform estimates from Proposition~\ref{prop:deg:localized-compactness} pass to the limit and give local integrability of the interior singular drift.

\begin{proposition}[Interior identification and local integrability of the limiting drift]
    \label{prop:deg:interior-ident-integrability}
    Let
    \begin{equation*}
        \left(
            X,M,\bigl(A^\alpha\bigr)_{\alpha\in R_+}
        \right)
    \end{equation*}
    be a generalized weak limit obtained in Proposition~\ref{prop:deg:skorokhod}. Then, for every $\alpha\in R_+$, there exists a continuous increasing process $\Lambda^\alpha$ on $[0,\zeta)$, with $\Lambda^\alpha(0)=0$, such that, almost surely, for every $t<\zeta$,
    \begin{equation}
        \label{eq:deg:A-decomposition-integrated}
        A^\alpha(t) = \int_0^t \frac{k_\alpha(s,X(s))}{\langle X(s),\alpha\rangle} \mathbf 1_{\{\langle X(s),\alpha\rangle>0\}}\,ds + \Lambda^\alpha(t).
    \end{equation}
    Moreover, the Stieltjes measure $d\Lambda^\alpha$ is supported on the wall $\{\langle X,\alpha\rangle=0\}$, in the sense that, almost surely, for every $t<\zeta$,
    \begin{equation}
        \label{eq:deg:Lambda-support}
        \int_0^t \mathbf 1_{\{\langle X(s),\alpha\rangle>0\}} \,d\Lambda^\alpha(s) = 0.
    \end{equation}
    Equivalently, on $[0,\zeta)$,
    \begin{equation}
        \label{eq:deg:A-decomposition} 
        dA^\alpha(t) = \frac{k_\alpha(t,X(t))}{\langle X(t),\alpha\rangle} \mathbf 1_{\{\langle X(t),\alpha\rangle>0\}}\,dt + d\Lambda^\alpha(t).
    \end{equation}
    Furthermore, for every $T,r>0$, define
    \begin{equation}
        \label{eq:def:chi-limit}
        \chi_r := T\wedge\inf\{t\in[0,\zeta):\ |X(t)|\ge r\},
    \end{equation}
    with the convention $\inf\varnothing=\infty$. Then, for every $\alpha\in R_+$,
    \begin{equation}
        \label{eq:deg:limit-int-stopped}
        \E \int_0^{\chi_r} \frac{k_\alpha(s,X(s))}{\langle X(s),\alpha\rangle} \mathbf 1_{\{\langle X(s),\alpha\rangle>0\}}\,ds \le C_{\alpha,r,T},
    \end{equation}
    where $C_{\alpha,r,T}$ is the constant from Proposition~\ref{prop:deg:localized-compactness}\textup{(ii)}. Consequently, almost surely,
    \begin{equation}
        \label{eq:deg:limit-int-pathwise}
        \int_0^t \frac{k_\alpha(s,X(s))}{\langle X(s),\alpha\rangle}\mathbf 1_{\{\langle X(s),\alpha\rangle>0\}}\,ds < \infty
    \end{equation}
    for every $t<\zeta$.
\end{proposition}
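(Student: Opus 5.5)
The plan is to work at a fixed localization $(T,r)$ on the Skorokhod space of Proposition~\ref{prop:deg:skorokhod}, where $X^{(n),r}\to X^r$ uniformly on $[0,T]$ and $\mu_{r,T}^{(n),\alpha}\Rightarrow\mu^{\alpha,r}$ weakly almost surely. We then identify the restriction of $\mu^{\alpha,r}$ to the open set of times where the limit path is off the wall. Fix $\alpha\in R_+$ and $\delta>0$, and let $\phi_\delta:[0,\infty)\to[0,1]$ be continuous with $\phi_\delta(z)=0$ for $z\le\delta$ and $\phi_\delta(z)=1$ for $z\ge2\delta$. For a continuous test function $g\ge0$ on $[0,T]$ with support in $[0,\chi_r)$, we consider
\begin{equation*}
    \int_0^T g(t)\,\phi_\delta\bigl(\langle X^{(n),r}(t),\alpha\rangle\bigr)\,\mu_{r,T}^{(n),\alpha}(dt)
    =\int_0^T g(t)\,\phi_\delta\bigl(\langle X^{(n)}(t),\alpha\rangle\bigr)\frac{k_\alpha(t,X^{(n)}(t))+\frac1n}{\langle X^{(n)}(t),\alpha\rangle}\,dt ,
\end{equation*}
where we use that $\chi_r^{(n)}>\operatorname{supp}g$ for large $n$, as noted in the proof of Proposition~\ref{prop:deg:skorokhod}. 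On $\{\langle x,\alpha\rangle\ge\delta,\ |x|\le r_*\}$ the integrand is a bounded continuous function of $(t,x)$. Uniform convergence of the paths and uniform convergence of $k^{(n)}_\alpha\to k_\alpha$ then give convergence of the right-hand side to $\int g\,\phi_\delta(\langle X^r,\alpha\rangle)\,k_\alpha/\langle X^r,\alpha\rangle\,dt$. On the left-hand side, the integrands $g\,\phi_\delta(\langle X^{(n),r},\alpha\rangle)$ converge uniformly to $g\,\phi_\delta(\langle X^r,\alpha\rangle)$. Together with weak convergence of measures and bounded total mass (which holds a.s.\ along the Skorokhod subsequence), the left-hand side tends to $\int g\,\phi_\delta(\langle X^r,\alpha\rangle)\,d\mu^{\alpha,r}$.

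This yields, as measures on $[0,\chi_r)$, the identity $\phi_\delta(\langle X^r,\alpha\rangle)\,d A^{\alpha,r}=\phi_\delta(\langle X^r,\alpha\rangle)\,\tfrac{k_\alpha}{\langle X^r,\alpha\rangle}\,dt$. Letting $\delta\downarrow0$ by monotone convergence ($\phi_\delta$ may be chosen increasing to $\mathbf 1_{(0,\infty)}$), we get $\mathbf 1_{\{\langle X^r,\alpha\rangle>0\}}\,dA^{\alpha,r}=\tfrac{k_\alpha}{\langle X^r,\alpha\rangle}\mathbf 1_{\{\langle X^r,\alpha\rangle>0\}}\,dt$. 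Setting $\Lambda^\alpha:=A^\alpha-\int_0^\cdot \tfrac{k_\alpha}{\langle X,\alpha\rangle}\mathbf 1_{\{\cdot>0\}}\,ds$, i.e.\ $d\Lambda^\alpha=\mathbf 1_{\{\langle X,\alpha\rangle=0\}}\,dA^\alpha$, we see that $\Lambda^\alpha$ is increasing, starts at zero, and satisfies \eqref{eq:deg:Lambda-support}. It is continuous because it is the restriction of the continuous measure $dA^\alpha$, and the absolutely continuous part is finite on $[0,\chi_r)$ since it is dominated by $A^\alpha$. The decomposition is consistent across $(T_m,r_m)$, so patching as in Proposition~\ref{prop:deg:skorokhod} gives \eqref{eq:deg:A-decomposition-integrated} on $[0,\zeta)$.

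For \eqref{eq:deg:limit-int-stopped}, we apply Fatou's lemma. For $t<\chi_r$ and $\delta>0$, the same weak-convergence argument with $g\equiv1$ on $[0,t]$ shows that $\int_0^t \phi_\delta\,k_\alpha/\langle X,\alpha\rangle\,ds\le\liminf_n A^{(n),\alpha}(\chi_r^{(n)})$. Fatou's lemma and Proposition~\ref{prop:deg:localized-compactness}(ii) then bound its expectation by $C_{\alpha,r,T}$. Monotone convergence in $\delta\downarrow0$ and $t\uparrow\chi_r$ gives \eqref{eq:deg:limit-int-stopped}, and \eqref{eq:deg:limit-int-pathwise} follows because $\chi_r\uparrow T\wedge\zeta$ and $T$ is arbitrary.

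The main obstacle is the passage to the limit in the weak convergence of measures on $[0,T]$ when the test functions themselves depend on $n$ and involve the stopping times $\chi^{(n)}_r$. The boundary at $\chi_r$ has to be handled carefully, since a limit measure could carry mass at $\chi_r$ itself. I would first try test functions supported strictly inside $[0,\chi_r)$ together with the uniform total-mass bound, which gives the identification on $[0,\chi_r)$. That suffices because $\chi_r\uparrow T\wedge\zeta$.
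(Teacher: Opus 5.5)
Your proposal is correct and follows essentially the paper's proof. You localize at $(T,r)$ on the Skorokhod space, test the approximating drift measures against functions vanishing near the wall and before the exit time, pass to the limit on both sides to identify the density of $dA^\alpha$ off the wall, set $d\Lambda^\alpha=\mathbf 1_{\{\langle X,\alpha\rangle=0\}}\,dA^\alpha$, and obtain the expectation bound from Fatou and Proposition~\ref{prop:deg:localized-compactness}(ii). The differences are cosmetic: a time cutoff $g$ supported in $[0,\chi_r)$ and monotone limits $\phi_\delta\uparrow\mathbf 1_{(0,\infty)}$ instead of the paper's spatial cutoff and compact subsets of $O_{\alpha,r}$, and Fatou applied directly to the interior integral rather than via the identity $\mu^{\alpha,r}([0,T])=A^\alpha(\chi_r)$, which is slightly cleaner since only an inequality is needed there.
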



\begin{proof}
We first work on a fixed compact time interval and with a fixed localization radius. The point is to test the limiting measure only in regions where the path stays away both from the wall and from the stopping sphere. On such sets the singular density is regular, and the limit can be identified directly.

Fix $T,r>0$ and $\alpha\in R_+$. Recall the localized stopped paths and stopped drift measures \eqref{eq:def:aprox:measure}
    \begin{align*}
        X^{(n),r}(t) &:= X^{(n)}(t\wedge \chi_r^{(n)}), \\
        \mu_{r,T}^{(n),\alpha}(dt) &:= \mathbf 1_{\{t\le \chi_r^{(n)}\}} \frac{k_\alpha^{(n)}(t,X^{(n)}(t))}{\langle X^{(n)}(t),\alpha\rangle}\,dt,
    \end{align*}
together with the stopping time 
    \begin{equation*}
        \chi_r^{(n)} := T\wedge \inf\{t\in[0,\zeta^{(n)}):\ |X^{(n)}(t)|\ge r\}.
    \end{equation*}
Along the Skorokhod subsequence from Proposition~\ref{prop:deg:skorokhod}, we may assume that
    \begin{align*}
        X^{(n),r} &\longrightarrow X^r \quad \text{uniformly on } [0,T], \\
        \mu_{r,T}^{(n),\alpha} &\Longrightarrow \mu^{\alpha,r} \quad \text{weakly as finite measures on } [0,T].
    \end{align*}
Here $\mu^{\alpha,r}$ is the localized limiting measure corresponding to the root $\alpha$. We first identify $\mu^{\alpha,r}$ away from the wall and away from the stopping sphere. Let $\varphi\in C_b([0,T]\times\Cc)$ be such that, for some $\delta>0$,
    \begin{align*}
        \varphi(t,x)&=0, \qquad \text{whenever } \langle x,\alpha\rangle\le \delta \textrm{ or }|x|\ge r. 
    \end{align*}
Thus $\varphi$ only probes those portions of the paths where the denominator $\langle x,\alpha\rangle$ stays safely away from zero and where the localization has not yet reached the stopping sphere. Define
    \begin{equation*}
        F_n(t,x) := \varphi(t,x) \frac{k_\alpha^{(n)}(t,x)}{\langle x,\alpha\rangle},\quad F(t,x) := \varphi(t,x) \frac{k_\alpha(t,x)}{\langle x,\alpha\rangle}\/,
    \end{equation*}
with the obvious convention $F_n(t,x)=0$ and $F(t,x)=0$, when $\langle x,\alpha\rangle=0$. Since $\varphi$ is supported away from the wall, both $F_n$ and $F$ are bounded and continuous on the compact region relevant for the stopped paths. Moreover, since $k_\alpha^{(n)}(t,x) \to k_\alpha(t,x)$, we have $F_n\longrightarrow F$ uniformly on compact subsets of $[0,T]\times\Cc$.

For each $n$, the definition of $\mu_{r,T}^{(n),\alpha}$ gives
\begin{equation}
    \label{eq:deg:test-ident-n}
    \int_0^T
        \varphi\bigl(t,X^{(n),r}(t)\bigr)
        \,\mu_{r,T}^{(n),\alpha}(dt)
    =
    \int_0^T
        F_n\bigl(t,X^{(n),r}(t)\bigr)\,dt.
\end{equation}
Indeed, before the stopping time $\chi_r^{(n)}$ we have
$X^{(n),r}(t)=X^{(n)}(t)$, while after the stopping time the stopped path stays at a
point of norm $r$, where $\varphi$ vanishes.

We now pass to the limit in both sides of \eqref{eq:deg:test-ident-n}. Since
$X^{(n),r}\to X^r$ uniformly and $\varphi$ is bounded and continuous, we have
\begin{equation*}
    \varphi\bigl(\cdot,X^{(n),r}(\cdot)\bigr)
    \longrightarrow
    \varphi\bigl(\cdot,X^r(\cdot)\bigr)
\end{equation*}
uniformly on $[0,T]$. Therefore,
\begin{align*}
    &\left|
        \int_0^T
        \varphi\bigl(t,X^{(n),r}(t)\bigr)
        \,\mu_{r,T}^{(n),\alpha}(dt)
        -
        \int_0^T
        \varphi\bigl(t,X^r(t)\bigr)
        \,\mu^{\alpha,r}(dt)
    \right| \\
    &\le
    \sup_{t\in[0,T]}
    \left|
        \varphi\bigl(t,X^{(n),r}(t)\bigr)
        -
        \varphi\bigl(t,X^r(t)\bigr)
    \right|
    \mu_{r,T}^{(n),\alpha}([0,T]) \\
    &\quad
    +
    \left|
        \int_0^T
        \varphi\bigl(t,X^r(t)\bigr)\,\mu_{r,T}^{(n),\alpha}(dt)
        -
        \int_0^T
        \varphi\bigl(t,X^r(t)\bigr)\,\mu^{\alpha,r}(dt)
    \right|.
\end{align*}
The first term tends to zero because the integrands converge uniformly and the total
masses $\mu_{r,T}^{(n),\alpha}([0,T])$ remain bounded along the convergent
subsequence; the second tends to zero by the weak convergence of the measures. Hence
\begin{equation}
    \label{eq:deg:left-limit-measure}
    \int_0^T
        \varphi\bigl(t,X^{(n),r}(t)\bigr)
        \,\mu_{r,T}^{(n),\alpha}(dt)
    \longrightarrow
    \int_0^T
        \varphi\bigl(t,X^r(t)\bigr)
        \,\mu^{\alpha,r}(dt).
\end{equation}

On the other hand, the uniform convergence of $X^{(n),r}$ together with the uniform
convergence of $F_n$ to $F$ on compact sets yields
\begin{equation}
    \label{eq:deg:right-limit-density}
    \int_0^T
        F_n\bigl(t,X^{(n),r}(t)\bigr)\,dt
    \longrightarrow
    \int_0^T
        F\bigl(t,X^r(t)\bigr)\,dt.
\end{equation}
Combining \eqref{eq:deg:test-ident-n}, \eqref{eq:deg:left-limit-measure}, and
\eqref{eq:deg:right-limit-density}, we obtain
\begin{equation}
    \label{eq:deg:test-identity-limit}
    \int_0^T
        \varphi\bigl(t,X^r(t)\bigr)
        \,\mu^{\alpha,r}(dt)
    =
    \int_0^T
        \varphi\bigl(t,X^r(t)\bigr)
        \frac{k_\alpha(t,X^r(t))}
             {\langle X^r(t),\alpha\rangle}
        \mathbf 1_{\{\langle X^r(t),\alpha\rangle>0\}}
        \,dt.
\end{equation}
This identity already shows that the limiting measure has the expected density
wherever the path stays away from the wall and away from the stopping sphere.

We now remove the auxiliary cutoff in the space variable. Define
\begin{equation*}
    \chi_r := T\wedge \inf\{t\in[0,\zeta):\ |X(t)|\ge r\}.
\end{equation*}
and
\begin{equation*}
    O_{\alpha,r}
    :=
    \{t\in[0,T]:\ t<\chi_r,\ \langle X^r(t),\alpha\rangle>0\}.
\end{equation*}
The set $O_{\alpha,r}$ is open. Let $K\subset O_{\alpha,r}$ be compact. By
continuity of $X^r$, there exist constants $\delta>0$ and $r_0<r$ such that
\begin{align*}
    \langle X^r(t),\alpha\rangle&\ge 2\delta,\qquad |X^r(t)|\le r_0\/,
    \qquad t\in K.
\end{align*}
Choose continuous cutoff functions $\theta$ and $\eta$ satisfying
\begin{align*}
    \theta(y)&=0
    \quad \text{for } y\le \delta,
    &
    \theta(y)&=1
    \quad \text{for } y\ge 2\delta, \\
    \eta(x)&=1
    \quad \text{for } |x|\le r_0,
    &
    \eta(x)&=0
    \quad \text{for } |x|\ge r.
\end{align*}
Now let $\psi\in C_c(O_{\alpha,r})$ have support contained in $K$, and define
\begin{equation*}
    \varphi(t,x)
    :=
    \psi(t)\,\theta(\langle x,\alpha\rangle)\,\eta(x).
\end{equation*}
We extend $\psi$ by zero to all of $[0,T]$. By construction,
\begin{equation*}
    \varphi\bigl(t,X^r(t)\bigr)=\psi(t)
    \qquad \text{for } t\in \operatorname{supp}\psi.
\end{equation*}
Substituting this choice of $\varphi$ into
\eqref{eq:deg:test-identity-limit}, we obtain
\begin{equation*}
    \int_0^T \psi(t)\,\mu^{\alpha,r}(dt)
    =
    \int_0^T
        \psi(t)
        \frac{k_\alpha(t,X^r(t))}
             {\langle X^r(t),\alpha\rangle}
        \,dt.
\end{equation*}
Since continuous compactly supported functions determine finite measures on the open
set $O_{\alpha,r}$, it follows that
\begin{equation}
    \label{eq:deg:localized-interior-ident}
    \mu^{\alpha,r}(dt)
    =
    \frac{k_\alpha(t,X^r(t))}
         {\langle X^r(t),\alpha\rangle}
    \,dt
    \qquad \text{on } O_{\alpha,r}.
\end{equation}

We now return to the global limit constructed in
Proposition~\ref{prop:deg:skorokhod}. The localized limits there were chosen
consistently as $T$ and $r$ vary, so the preceding identity passes through the
diagonal construction and yields, on the random interval $[0,\zeta)$,
\begin{equation}
    \label{eq:deg:interior-restriction}
    dA^\alpha(t)
    =
    \frac{k_\alpha(t,X(t))}
         {\langle X(t),\alpha\rangle}
    \,dt
    \qquad \text{on }
    \{t<\zeta:\ \langle X(t),\alpha\rangle>0\}.
\end{equation}
At this point the decomposition is immediate. Let
    \begin{equation*}
        Z_\alpha := \{t<\zeta:\ \langle X(t),\alpha\rangle=0\}.
    \end{equation*}
Let $\lambda^\alpha$ be the measure obtained by restricting $dA^\alpha$
to $Z_\alpha$, that is,
    \begin{equation*}
         \lambda^\alpha(B):=dA^\alpha(B\cap Z_\alpha), \qquad B\subset [0,\zeta)\ \text{Borel}.
    \end{equation*}
Define
    \begin{equation*}
        \Lambda^\alpha(t):=\lambda^\alpha([0,t]).
    \end{equation*}
Then $\Lambda^\alpha$ is continuous and increasing, and $d\Lambda^\alpha$ is supported on $Z_\alpha$. Moreover,
    \begin{equation*}
        dA^\alpha = \mathbf 1_{Z_\alpha^c}\,dA^\alpha + d\Lambda^\alpha .
    \end{equation*}

By construction, the measure $dA^\alpha$ coincides with the absolutely continuous measure in \eqref{eq:deg:interior-restriction} away from the wall, and the remaining part is exactly $d\Lambda^\alpha$. This gives \eqref{eq:deg:A-decomposition}. The support property is built into the definition of $d\Lambda^\alpha$, and the integrated form \eqref{eq:deg:A-decomposition-integrated} follows by integrating \eqref{eq:deg:A-decomposition} on $[0,t]$.

\medskip
We now prove the localized expectation bound. Fix $r,T>0$ and $\alpha\in R_+$.  Recall again the stopped approximating measures \eqref{eq:def:aprox:measure}. By Proposition~\ref{prop:deg:localized-compactness}\textup{(ii)}
    \begin{equation}
        \label{eq:deg:uniform-mass-before-limit}
        \sup_{n\in\N} \E \mu_{r,T}^{(n),\alpha}([0,T]) = \sup_{n\in\N} \E A^{(n),\alpha}(\chi_r^{(n)})
    \le
    C_{\alpha,r,T}.
\end{equation}

We now pass this bound to the limit. Along the Skorokhod subsequence constructed in
Proposition~\ref{prop:deg:skorokhod}, the measures
$\mu_{r,T}^{(n),\alpha}$ converge weakly to the localized limiting measure
$\mu^{\alpha,r}$ on $[0,T]$. Since the constant function $1$ is continuous on
the compact interval $[0,T]$, weak convergence yields
\begin{equation*}
    \mu_{r,T}^{(n),\alpha}([0,T])
    \longrightarrow
    \mu^{\alpha,r}([0,T])
\end{equation*}
almost surely. Therefore Fatou's lemma together with
\eqref{eq:deg:uniform-mass-before-limit} gives
\begin{equation}
    \label{eq:deg:limit-mass-bound}
    \E \mu^{\alpha,r}([0,T])
    \le
    \liminf_{n\to\infty}
    \E \mu_{r,T}^{(n),\alpha}([0,T])
    \le
    C_{\alpha,r,T}.
\end{equation}

We next relate this localized limiting measure to the global increasing process
$A^\alpha$. By the consistency built into the diagonal construction from
Proposition~\ref{prop:deg:skorokhod}, the localized cumulative process
\begin{equation*}
    A^{\alpha,r}(t):=\mu^{\alpha,r}([0,t]),
    \qquad 0\le t\le T,
\end{equation*}
coincides with $A^\alpha(t\wedge \chi_r)$. In particular, $\mu^{\alpha,r}([0,T]) = A^\alpha(\chi_r)$. By the decomposition already proved above, we have
\begin{equation*}
    A^\alpha(t)
    =
    \int_0^t
    \frac{k_\alpha(s,X(s))}
         {\langle X(s),\alpha\rangle}
    \mathbf 1_{\{\langle X(s),\alpha\rangle>0\}}
    \,ds
    +
    \Lambda^\alpha(t),
    \qquad t<\zeta,
\end{equation*}
where $\Lambda^\alpha$ is continuous, increasing, and nonnegative. Evaluating this
identity at $t=\chi_r$, we obtain
    \begin{align*}
        \int_0^{\chi_r} \frac{k_\alpha(s,X(s))}{\langle X(s),\alpha\rangle} \mathbf 1_{\{\langle X(s),\alpha\rangle>0\}}\,ds
        &\le A^\alpha(\chi_r)  = \mu^{\alpha,r}([0,T]).
    \end{align*}
Taking expectations and using \eqref{eq:deg:limit-mass-bound} proves
\eqref{eq:deg:limit-int-stopped}.

It remains to deduce the pathwise local statement. For each integer $m>|x_0|$ and
each integer $q\ge1$, the already proved estimate with $r=m$ and $T=q$ implies
\begin{equation*}
    \int_0^{q\wedge \tau_m}
    \frac{k_\alpha(s,X(s))}
         {\langle X(s),\alpha\rangle}
    \mathbf 1_{\{\langle X(s),\alpha\rangle>0\}}
    \,ds
    <\infty
\end{equation*}
almost surely, where $\tau_m:=\inf\{t\in[0,\zeta):\ |X(t)|\ge m\}$. Since the family of pairs $(m,q)$ is countable, we may intersect these events and obtain a single event of probability one on which all such localized integrals are finite. Fix now a sample point in this event and let $t<\zeta$. Since $X$ is continuous on $[0,t]$, there exists an integer $m>|x_0|$ such that
    \begin{equation*}
        \sup_{0\le s\le t}|X(s)|<m.
    \end{equation*}
Hence $\tau_m>t$. Choosing an integer $q>t$, we get
\begin{align*}
    \int_0^t
    \frac{k_\alpha(s,X(s))}
         {\langle X(s),\alpha\rangle}
    \mathbf 1_{\{\langle X(s),\alpha\rangle>0\}}
    \,ds
    &\le
    \int_0^{q\wedge \tau_m}
    \frac{k_\alpha(s,X(s))}
         {\langle X(s),\alpha\rangle}
    \mathbf 1_{\{\langle X(s),\alpha\rangle>0\}}
    \,ds <\infty.
\end{align*}
This proves \eqref{eq:deg:limit-int-pathwise}.
\end{proof}


\subsection{Face detectors and zero local time}
\label{subsec:deg:detectors-local-time}

We now introduce a canonical detector attached to an arbitrary collision face. Its purpose is to produce a scalar quantity which is nonnegative on the chamber and vanishes exactly on the chosen face after localization. When applied to the generalized weak equation, this detector yields a one-dimensional identity in which the roots vanishing on the face appear explicitly, while all other roots are absorbed into the regular drift. This will be the basic tool in the next subsection, where we eliminate the possible boundary measures.

Fix $S\in\mathfrak S$. Recall that $F_S$ denotes the corresponding relatively open face and that, for $\eta,r>0$,
    \begin{equation*}
        \mathcal U_{S,\eta,r} := \{x\in\Cc:\ |x|\le r,\  \langle x,\beta\rangle\ge \eta \text{ for every } \beta\in R_+\setminus S\}.
    \end{equation*}
Thus $\mathcal U_{S,\eta,r}$ is a localized neighborhood of $F_S$ in which the roots outside $S$ are kept uniformly away from zero. We define the canonical face detector by
    \begin{equation*}
        u_S := \sum_{\beta\in S}\beta, \qquad \tau_S(x) := \langle u_S,x\rangle = \sum_{\beta\in S}\langle x,\beta\rangle.
    \end{equation*}
By our convention in the definition of the detector family, $u_S\in\mathfrak D(S)$. In particular, $B_S=B_{S,u_S}$ is one of the regular detector drifts to which Assumption~\ref{ass:face:sign} applies. Thus, on $\mathcal U_{S,\eta,r}$,
    \begin{equation*}
        B_S(t,x) = \langle u_S,b(t,x)\rangle + \sum_{\beta\in R_+\setminus S} \frac{k_\beta(t,x)\langle u_S,\beta\rangle}{\langle x,\beta\rangle}.
    \end{equation*}
The detector $\tau_S$ has two useful features. First, it is nonnegative on the whole
chamber and vanishes exactly when all roots in $S$ vanish. Second, it sees all roots
in $S$ simultaneously, in the sense that every such root appears with a strictly
positive coefficient in the detector equation. We also set
\begin{equation}
    \label{eq:def:q-S}
    q_S(t,x) := \sum_{j=1}^N u_{S,j}^2\,\sigma_j^2(t,x).
\end{equation}

We collect these elementary geometric facts, together with the localized detector equation needed later, in a single lemma.

\begin{lemma}[Canonical detector and its localized equation]
\label{lem:deg:canonical-detector}
Let $S\in\mathfrak S$, and let $u_S$ and $\tau_S$ be defined as above. Let $\left(X,M,\bigl(A^\alpha\bigr)_{\alpha\in R_+}\right)$ be a generalized weak limit obtained in Proposition~\ref{prop:deg:skorokhod}, and let the decomposition from Proposition~\ref{prop:deg:interior-ident-integrability} hold. Then the following statements hold.

\begin{enumerate}[label=\textup{(\roman*)}]
    \item For every $x\in\Cc$,
    \begin{equation*}
        \tau_S(x)\ge0.
    \end{equation*}
    Moreover,
    \begin{equation*}
        \tau_S(x)=0
        \qquad\Longleftrightarrow\qquad
        \langle x,\beta\rangle=0
        \quad \text{for every } \beta\in S.
    \end{equation*}
    In particular, $\tau_S$ vanishes on $F_S$, and on
    $\mathcal U_{S,\eta,r}$ one has
    \begin{equation*}
        \tau_S(x)=0
        \qquad\Longleftrightarrow\qquad
        x\in F_S.
    \end{equation*}

    \item For every $x\in\Cc$ and every $\beta\in S$,
    \begin{equation*}
        0\le \langle x,\beta\rangle\le \tau_S(x).
    \end{equation*}

    \item For every $\beta\in S$,
    \begin{equation*}
        \langle u_S,\beta\rangle>0.
    \end{equation*}
    Hence $u_S$ is an admissible detector for the face $F_S$, and $\Gamma(S,u_S)=S$. In particular, the constant $c_S:=\min_{\beta\in S}\langle u_S,\beta\rangle$ is strictly positive.

    \item We have 
    \begin{equation*}
        q_S(t,x)
        \le
        |S|\sum_{\beta\in S} a_\beta(t,x),
        \qquad
        (t,x)\in[0,\infty)\times\Cc.
    \end{equation*}
    \item Let $T,r,\eta>0$, and let $\theta$ be a stopping time such that $\theta\le T$, $X(s)\in\mathcal U_{S,\eta,r}$, $0\le s\le\theta$. Then, for every $0\le t\le T$,
    \begin{align}
        \label{eq:detector-equation}
        \tau_S\bigl(X(t\wedge\theta)\bigr) &= \tau_S(x_0) + M_S(t\wedge\theta) + \int_0^{t\wedge\theta}B_S(s,X(s))\,ds                                      \nonumber\\
    &\quad
    +
    \sum_{\beta\in S}
    \langle u_S,\beta\rangle
    \int_0^{t\wedge\theta}
    \frac{k_\beta(s,X(s))}
         {\langle X(s),\beta\rangle}
    \mathbf 1_{\{\langle X(s),\beta\rangle>0\}}\,ds       \nonumber\\
    &\quad
    +
    \sum_{\beta\in S}
    \langle u_S,\beta\rangle
    \Lambda^\beta(t\wedge\theta),
\end{align}
where $M_S(t):=\langle u_S,M(t)\rangle$.
    \item The process $M_S$ is a continuous local martingale and
\begin{equation}
    \label{eq:detector-bracket}
    \langle M_S\rangle(t)
    =
    \int_0^t q_S(s,X(s))\,ds.
\end{equation}
Consequently,
\begin{equation}
    \label{eq:detector-bracket-stopped}
    \langle M_S(\cdot\wedge\theta)\rangle(t)
    =
    \int_0^{t\wedge\theta} q_S(s,X(s))\,ds.
\end{equation}

\end{enumerate}
\end{lemma}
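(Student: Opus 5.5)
The plan is to handle the geometric items (i)--(iv) by elementary linear algebra and items (v)--(vi) by applying the linear functional $\langle u_S,\cdot\rangle$ to the generalized weak equation \eqref{eq:deg:generalized-limit}, combined with the decomposition of Proposition~\ref{prop:deg:interior-ident-integrability}.

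For (i) and (ii): on $\Cc$ every $\langle x,\beta\rangle$ with $\beta\in R_+$ is non-negative. Hence
\[
\tau_S(x)=\sum_{\beta\in S}\langle x,\beta\rangle
\]
is a sum of non-negative terms. It follows that $\tau_S\ge0$, that $\tau_S$ vanishes iff every summand vanishes, and that each summand is bounded by the whole sum, which is (ii). On $\mathcal U_{S,\eta,r}$ all roots outside $S$ are $\ge\eta>0$, so vanishing of all roots in $S$ means exactly $x\in F_S$.

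For (iii), write $\langle u_S,\beta\rangle=|\beta|^2+\sum_{\gamma\in S\setminus\{\beta\}}\langle\gamma,\beta\rangle$. This is the step I expect to be the main obstacle, since cross terms between positive roots can be negative. I would try the following face-geometric argument first. Since $S=S(y)$ for some $y\in F_S$, $S$ is the set of positive roots of the parabolic subsystem $R_S:=R\cap\operatorname{span}S$ (the stabilizer subsystem of $y$), with the positive system induced from $R_+$. Then $u_S$ equals twice the half-sum $\rho_S$ of the positive roots of $R_S$. The standard fact $\langle\rho_S,\alpha^\vee\rangle=1$ for simple roots $\alpha$ of $R_S$, together with the fact that $\rho_S$ is strictly dominant for $R_S$, gives $\langle u_S,\beta\rangle>0$ for every $\beta\in S$. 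As a backup, one can check directly in the three classical types, using the cluster descriptions of Section~\ref{sec:preliminaries}: a face is a product of $A$-type blocks of equal coordinates, together with possibly one zero block of type $B$ or $D$ and the mixed $D$ blocks. For instance, in an $A$-block $\{i_1,\dots,i_q\}$,
\[
u_S=\sum_{a<b}(e_{i_a}-e_{i_b})=\sum_a(q+1-2a)e_{i_a},
\]
so $\langle u_S,e_{i_a}-e_{i_b}\rangle=2(b-a)>0$. Admissibility then follows from (i) together with positivity, and $\Gamma(S,u_S)=S$. Finally $c_S>0$ because $S$ is finite.

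For (iv): $q_S=\sum_j\bigl(\sum_{\beta\in S}\beta_j\bigr)^2\sigma_j^2$. By Cauchy--Schwarz, $\bigl(\sum_{\beta\in S}\beta_j\bigr)^2\le|S|\sum_{\beta\in S}\beta_j^2$, and summing against $\sigma_j^2$ gives the bound by $|S|\sum_{\beta\in S}a_\beta$.

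For (v): pair \eqref{eq:deg:generalized-limit} with $u_S$ and stop at $\theta$, obtaining
\[
\tau_S(X)=\tau_S(x_0)+M_S+\int\langle u_S,b\rangle\,ds+\sum_{\alpha\in R_+}\langle u_S,\alpha\rangle A^\alpha .
\]
Insert the decomposition \eqref{eq:deg:A-decomposition-integrated}. For $\beta\notin S$, the path stays in $\mathcal U_{S,\eta,r}$ up to $\theta$, so $\langle X,\beta\rangle\ge\eta$ there. Consequently the indicator equals $1$, and by the support property \eqref{eq:deg:Lambda-support} the measure $d\Lambda^\beta$ gives zero mass to $[0,\theta]$. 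These terms therefore combine with $\langle u_S,b\rangle$ into $\int B_S\,ds$, with the integrals finite because the integrands are bounded on $[0,T]\times\mathcal U_{S,\eta,r}$. The roots in $S$ remain as displayed in \eqref{eq:detector-equation}.

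For (vi): $M_S=\sum_j u_{S,j}M_j$ is a continuous local martingale, and by \eqref{eq:deg:limit-bracket} its bracket is $\sum_j u_{S,j}^2\int\sigma_j^2\,ds=\int q_S\,ds$. The stopped version \eqref{eq:detector-bracket-stopped} follows from $\langle M^\theta\rangle=\langle M\rangle^\theta$.
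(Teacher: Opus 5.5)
Your proposal is correct and follows the paper's proof essentially step by step. The matching steps are: nonnegativity of the summands of $\tau_S$ for (i)--(ii); the identification $u_S=2\rho_S$ for the subsystem $R_S=R\cap\operatorname{span}S$, with $\langle 2\rho_S,\alpha\rangle=|\alpha|^2$ on simple roots, for (iii); Cauchy--Schwarz for (iv); and pairing the generalized weak equation with $u_S$, using the decomposition and support property of $\Lambda^\beta$ for roots outside $S$, for (v)--(vi). The only differences are that the paper derives the simple-root identity directly from the fact that $s_\alpha$ permutes $S\setminus\{\alpha\}$ instead of quoting it, and that your classical-type backup check is not needed.
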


\begin{proof}
Since every $\beta\in S\subset R_+$ is nonnegative on $\Cc$, we have
\begin{equation*}
    \tau_S(x)
    =
    \sum_{\beta\in S}\langle x,\beta\rangle
    \ge0,
    \qquad x\in\Cc.
\end{equation*}
This proves the first assertion in \textup{(i)}. Moreover, all summands in the last display are nonnegative, so $\tau_S(x)=0$ holds if and only if every term $\langle x,\beta\rangle$, $\beta\in S$, is zero. Since every point $x\in F_S$ satisfies $\langle x,\beta\rangle=0$ for $\beta\in S$, the detector vanishes on $F_S$. On the localized set
$\mathcal U_{S,\eta,r}$, the roots outside $S$ are strictly positive, so $\tau_S(x)=0$ is equivalent to $x\in F_S$. This proves \textup{(i)}. Assertion \textup{(ii)} is immediate from the definition $\tau_S(x) = \sum_{\gamma\in S}\langle x,\gamma\rangle$ and every term in this sum is nonnegative on $\Cc$.

For \textup{(iii)}, let $R_S:=R\cap \operatorname{span}S$. Since $S=S(y)$ for some $y\in\Cb$, the point $y$ is orthogonal to every root in $S$, hence to the whole space $\operatorname{span}S$. Therefore $S=R_S\cap R_+$ so $S$ is precisely the set of positive roots of the subsystem $R_S$. Let $\Delta_S=\{\alpha_1,\dots,\alpha_m\}$ be the corresponding simple system, and set
    \begin{equation*}
        \rho_S:=\frac12\sum_{\gamma\in S}\gamma.
    \end{equation*}
Then $u_S=2\rho_S$. We first show that $\langle u_S,\alpha\rangle>0$, $\alpha\in\Delta_S$. To see that let us fix $\alpha\in\Delta_S$. The reflection $s_\alpha$ preserves the set $S\setminus\{\alpha\}$, and $s_\alpha(\alpha)=-\alpha$. Hence
    \begin{equation*}
        s_\alpha(2\rho_S) = -\alpha+\sum_{\gamma\in S\setminus\{\alpha\}}s_\alpha(\gamma) = -\alpha+\sum_{\gamma\in S\setminus\{\alpha\}}\gamma = 2\rho_S-2\alpha.
    \end{equation*}
On the other hand,
    \begin{equation*}
        s_\alpha(v)=v-\frac{2\langle v,\alpha\rangle}{\langle \alpha,\alpha\rangle}\alpha,\qquad \frac{2\langle 2\rho_S,\alpha\rangle}{\langle \alpha,\alpha\rangle}=2.
    \end{equation*}
Therefore
    \begin{equation*}
        \langle u_S,\alpha\rangle = \langle 2\rho_S,\alpha\rangle = \langle \alpha,\alpha\rangle > 0.
    \end{equation*}
Now let $\beta\in S$. Since $\beta$ is a positive root of the subsystem $R_S$, it can be written as
    \begin{equation*}
        \beta=\sum_{j=1}^m c_j\alpha_j, \qquad c_j\ge0,
    \end{equation*}
with at least one coefficient $c_j$ strictly positive. Consequently,
    \begin{equation*}
        \langle u_S,\beta\rangle = \sum_{j=1}^m c_j\langle u_S,\alpha_j\rangle > 0.
    \end{equation*}
This proves \textup{(iii)}. Finally, to prove \textup{(iv)}, write
    \begin{equation*}
        u_{S,j} = \sum_{\beta\in S}\beta_j.
    \end{equation*}
By the Cauchy-Schwarz inequality,
    \begin{equation*}
        u_{S,j}^2 = \left(\sum_{\beta\in S}\beta_j\right)^2 \le |S|\sum_{\beta\in S}\beta_j^2.
    \end{equation*}
Multiplying by $\sigma_j^2(t,x)$ and summing over $j=1,\ldots,N$, we obtain
    \begin{align*}
        q_S(t,x) &= \sum_{j=1}^N u_{S,j}^2\,\sigma_j^2(t,x)  \le |S| \sum_{j=1}^N \sum_{\beta\in S}\beta_j^2\,\sigma_j^2(t,x)  = |S| \sum_{\beta\in S} a_\beta(t,x).
    \end{align*}
This proves \textup{(iv)}.

\medskip

To deal with the remaining part note that $\tau_S$ is linear, applying it to the generalized weak equation for $X$ produces no It\^o correction. We obtain
    \begin{align*}
        \tau_S(X(t)) &= \tau_S(x_0) + M_S(t) + \int_0^t \langle u_S,b(s,X(s))\rangle\,ds + \sum_{\beta\in R_+}\langle u_S,\beta\rangle A^\beta(t).
    \end{align*}
Stopping this identity at $\theta$, with the properties stated before,  gives
    \begin{align*}
        \tau_S\bigl(X(t\wedge\theta)\bigr) &= \tau_S(x_0) + M_S(t\wedge\theta) + \int_0^{t\wedge\theta}\langle u_S,b(s,X(s))\rangle\,ds + \sum_{\beta\in R_+}\langle u_S,\beta\rangle A^\beta(t\wedge\theta).
    \end{align*}
We now split the roots into those in $S$ and those outside $S$. On the interval $[0,\theta]$, every root $\beta\in R_+\setminus S$ satisfies $\langle X(s),\beta\rangle\ge\eta$. Hence, by Proposition~\ref{prop:deg:interior-ident-integrability}, the boundary measure $d\Lambda^\beta$ carries no mass on $[0,\theta]$, and the indicator $\mathbf 1_{\{\langle X(s),\beta\rangle>0\}}$ is identically equal to one there. Therefore
\begin{equation*}
    A^\beta(t\wedge\theta)
    =
    \int_0^{t\wedge\theta}
    \frac{k_\beta(s,X(s))}
         {\langle X(s),\beta\rangle}\,ds,
    \qquad \beta\in R_+\setminus S.
\end{equation*}
Consequently, the contribution of the roots outside $S$ is
    \begin{equation*}
        \int_0^{t\wedge\theta} \sum_{\beta\in R_+\setminus S} \frac{k_\beta(s,X(s))\langle u_S,\beta\rangle}{\langle X(s),\beta\rangle}\,ds,
    \end{equation*}
which, together with the ordinary drift $\langle u_S,b(s,X(s))\rangle\,ds$, is exactly $B_S(s,X(s))\,ds$. The remaining contribution comes from the roots in $S$. For the roots $\beta\in S$, we use the decomposition from
Proposition~\ref{prop:deg:interior-ident-integrability}. Thus
\begin{align*}
    A^\beta(t\wedge\theta)
    &=
    \int_0^{t\wedge\theta}
    \frac{k_\beta(s,X(s))}
         {\langle X(s),\beta\rangle}
    \mathbf 1_{\{\langle X(s),\beta\rangle>0\}}\,ds
    +
    \Lambda^\beta(t\wedge\theta).
\end{align*}
Therefore the contribution of the roots in $S$ is
\begin{align*}
    &\sum_{\beta\in S}
    \langle u_S,\beta\rangle
    \int_0^{t\wedge\theta}
    \frac{k_\beta(s,X(s))}
         {\langle X(s),\beta\rangle}
    \mathbf 1_{\{\langle X(s),\beta\rangle>0\}}\,ds        
    +
    \sum_{\beta\in S}
    \langle u_S,\beta\rangle
    \Lambda^\beta(t\wedge\theta).
\end{align*}
Combining the contribution of the roots in $R_+\setminus S$ with the ordinary drift gives the term
\begin{equation*}
    \int_0^{t\wedge\theta} B_S(s,X(s))\,ds.
\end{equation*}
Combining this with the preceding decomposition of the roots in $S$ proves \eqref{eq:detector-equation}. Finally,
    \begin{equation*}
        M_S(t)=\sum_{j=1}^N u_{S,j}M_j(t).
    \end{equation*}
Using the diagonal bracket identity from Proposition~\ref{prop:deg:skorokhod},
    \begin{equation*}
        \langle M_i,M_j\rangle(t) = \delta_{ij}\int_0^t \sigma_i^2(s,X(s))\,ds,
    \end{equation*}
we obtain
    \begin{align*}
        \langle M_S\rangle(t) &= \sum_{j=1}^N u_{S,j}^2\,\langle M_j\rangle(t) = \int_0^t \sum_{j=1}^N u_{S,j}^2\,\sigma_j^2(s,X(s))\,ds,
    \end{align*}
which proves \eqref{eq:detector-bracket}.
\end{proof}

The next step is to show that the canonical detector has zero local time at the level zero. This is the analytic input needed for the Tanaka argument in the next subsection. The proof rests on two ingredients: first, the detector bracket is controlled by the singular drifts corresponding to roots in the face; second, individual root walls carry no quadratic-variation mass. This consequence will be proved as the first step in the proof below. Let again 
\begin{equation*}
    \left(
        X,M,\bigl(A^\alpha\bigr)_{\alpha\in R_+}
    \right)
\end{equation*}
be the generalized weak limit from Proposition~\ref{prop:deg:skorokhod}, with the decomposition from Proposition~\ref{prop:deg:interior-ident-integrability}. For later use we also introduce non-canonical detector directions. If $v\in\operatorname{span}S$, set
    \begin{equation*}
        \tau_v(x):=\langle v,x\rangle, \qquad q_{S,v}(t,x):=\sum_{j=1}^N v_j^2\sigma_j^2(t,x),
    \end{equation*}
and, on localized neighborhoods of $F_S$,
    \begin{equation*}
        B_{S,v}(t,x) := \langle v,b(t,x)\rangle + \sum_{\beta\in R_+\setminus S} \frac{k_\beta(t,x)\langle v,\beta\rangle}{\langle x,\beta\rangle}.
    \end{equation*}
Thus $q_{S,u_S}=q_S$ and $B_{S,u_S}=B_S$. Moreover, if $v=u+u_S$, with $u\in\mathfrak D(S)$, then $B_{S,v}=B_{S,u}+B_S$.

\begin{proposition}[Vanishing local time of face detectors]
    \label{prop:deg:detector-local-time}
    Assume \ref{ass:k:dom}. Let $S\in\mathfrak S$, and let $T,r,\eta>0$. Let $\theta$ be a stopping time such that
        \begin{equation*}
            \theta\le T, \qquad \theta<\zeta \quad \text{a.s.}, \qquad X(s)\in\mathcal U_{S,\eta,r}, \quad 0\le s\le\theta .
        \end{equation*}
    Let
        \begin{equation*}
            v\in \{u_S\}\cup\{u+u_S:\ u\in\mathfrak D(S)\}.
        \end{equation*}
    Then, almost surely,
        \begin{equation}
            \label{eq:face-detector-local-time-zero}
            L_{t\wedge\theta}^0\bigl(\tau_v(X)\bigr)=0, \qquad 0\le t\le T .
        \end{equation}
In particular, the conclusion holds for the canonical detector $v=u_S$.
\end{proposition}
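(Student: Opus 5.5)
The strategy is the standard occupation-time criterion for the vanishing of local time at zero. For a continuous nonnegative semimartingale $Y$, if
\begin{equation*}
    \int_0^t \mathbf 1_{\{Y(s)>0\}}\frac{d\langle Y\rangle_s}{Y(s)}<\infty
\end{equation*}
almost surely, then $L^0_t(Y)=0$. Indeed, the occupation-time formula gives $\int_0^\infty a^{-1}L^a_t(Y)\,da<\infty$, and right-continuity of $a\mapsto L^a_t$ forces $L^0_t=0$. This is the same argument used in Proposition~\ref{prop:vandermond:i} and in the proof of Proposition~\ref{prop:no-exit-K-classical}.

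We apply this to $Y=\tau_v(X(\cdot\wedge\theta))$, which is nonnegative because $v$ is an admissible-type vector. Both $u_S$ and $u+u_S$ are nonnegative on $\Cc$ and satisfy $\langle v,\beta\rangle\ge\langle u_S,\beta\rangle\ge c_S>0$ for every $\beta\in S$. The bracket of $Y$ is $\int q_{S,v}(s,X(s))\,ds$, by the computation of Lemma~\ref{lem:deg:canonical-detector}(vi) with $u_S$ replaced by $v$. Since $v\in\operatorname{span}S$ is a fixed finite combination of the roots in $S$, Cauchy--Schwarz as in Lemma~\ref{lem:deg:canonical-detector}(iv) gives
\begin{equation*}
    q_{S,v}(t,x)\le C_v\sum_{\beta\in S}a_\beta(t,x).
\end{equation*}
It therefore suffices to show, for each $\beta\in S$,
\begin{equation*}
    \int_0^{\theta}\mathbf 1_{\{\tau_v(X(s))>0\}}\frac{a_\beta(s,X(s))}{\tau_v(X(s))}\,ds<\infty .
\end{equation*}

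We split the time set according to $\beta$.

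On $\{\langle X(s),\beta\rangle>0\}$ we have $\tau_v(X)\ge c_S\langle X,\beta\rangle$, because $\tau_v=\sum_{\gamma\in S}\langle v,\gamma\rangle$-type expansion in $\operatorname{span}S$ is not needed: more simply, for $v=u_S$ Lemma~\ref{lem:deg:canonical-detector}(ii) gives $\tau_{u_S}\ge\langle X,\beta\rangle$, and $\tau_{u+u_S}\ge\tau_{u_S}$ since $\tau_u\ge0$. The integrand is then bounded by $a_\beta/\langle X,\beta\rangle$. Split this further by the size of $\langle X,\beta\rangle$, with $\varepsilon=\varepsilon(T,r)$ from \ref{ass:k:dom}.
\begin{itemize}
    \item Where $\langle X,\beta\rangle\le\varepsilon$, assumption \ref{ass:k:dom} gives $a_\beta\le\gamma^{-1}k_\beta$, so the integral is bounded by $\gamma^{-1}$ times the interior singular integral. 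That integral is finite by \eqref{eq:deg:limit-int-pathwise} of Proposition~\ref{prop:deg:interior-ident-integrability}.
    \item Where $\langle X,\beta\rangle>\varepsilon$, the integrand is bounded because $a_\beta$ is bounded on $[0,T]\times\{|x|\le r\}$.
\end{itemize}

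On $\{\langle X(s),\beta\rangle=0\}$ we need a separate claim: the wall carries no $a_\beta$-mass, i.e.
\begin{equation*}
    \int_0^\theta\mathbf 1_{\{\langle X(s),\beta\rangle=0\}}\,a_\beta(s,X(s))\,ds=0 .
\end{equation*}
To prove this, consider the one-dimensional semimartingale $Z=\langle X,\beta\rangle$. By \eqref{eq:deg:generalized-limit}, its martingale part has bracket
\begin{equation*}
    \int a_\beta(s,X(s))\,ds\cdot\frac{1}{|\beta|^{0}}
\end{equation*}
up to the normalisation $\sum_j\beta_j^2\sigma_j^2=a_\beta$. The occupation-time formula then gives
\begin{equation*}
    \int\mathbf 1_{\{Z=0\}}\,d\langle Z\rangle=\int\mathbf 1_{\{0\}}(a)\,L^a\,da=0,
\end{equation*}
since $\{0\}$ is Lebesgue-null. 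This is exactly the ``first step'' announced before the proposition.

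Combining the two regions for each $\beta\in S$ and summing over $\beta$ yields the finiteness of the occupation-time integral, hence \eqref{eq:face-detector-local-time-zero}. The localisation $X\in\mathcal U_{S,\eta,r}$ is used only to keep the coefficients bounded. The main obstacle is the comparison $\tau_v\ge c\langle X,\beta\rangle$ for the non-canonical directions $v=u+u_S$, which we resolve through $\tau_u\ge0$ as above. Some care is also needed because the pathwise integrability \eqref{eq:deg:limit-int-pathwise} holds for $t<\zeta$, and this is why the hypothesis $\theta<\zeta$ is imposed.
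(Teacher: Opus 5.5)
Your proposal is correct and follows essentially the same route as the paper: Cauchy--Schwarz control of $q_{S,v}$ by $C_v\sum_{\beta\in S}a_\beta$, the occupation-time argument showing that $a_\beta(s,X(s))\,ds$ does not charge $\{\langle X(s),\beta\rangle=0\}$, the comparison $\tau_v\ge\tau_S\ge\langle X,\beta\rangle$, the small/large-gap split using \ref{ass:k:dom} and the pathwise interior integrability, and finally the occupation-time criterion with right-continuity of $a\mapsto L^a$ to conclude $L^0=0$. The garbled remark about $c_S$ and the stray factor $1/|\beta|^0$ are harmless, since in the end you only use $\tau_{u+u_S}\ge\tau_{u_S}\ge\langle X,\beta\rangle$.
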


\begin{proof}
We first record two elementary facts which will be used for the chosen direction $v$. Since $v\in\operatorname{span}S$, there exist constants $c_\beta$, $\beta\in S$, such that
    \begin{equation*}
        v=\sum_{\beta\in S}c_\beta\beta .
    \end{equation*}
Hence, by the Cauchy-Schwarz inequality, there exists a finite constant
$C_v$, depending only on $v$ and $S$, such that
    \begin{equation}
        \label{eq:q-v-control-by-root-brackets}
        q_{S,v}(t,x) \le C_v\sum_{\beta\in S}a_\beta(t,x), \qquad (t,x)\in[0,\infty)\times\Cc .
    \end{equation}
Indeed, writing $v_j=\sum_{\beta\in S}c_\beta\beta_j$, we get
    \begin{equation*}
        v_j^2 \le C_v\sum_{\beta\in S}\beta_j^2,
    \end{equation*}
and multiplication by $\sigma_j^2(t,x)$, followed by summation over $j$, gives \eqref{eq:q-v-control-by-root-brackets}. Second, $\tau_v$ is nonnegative on $\Cc$. If $v=u_S$, this follows from Lemma~\ref{lem:deg:canonical-detector}. If $v=u+u_S$, with $u\in\mathfrak D(S)$, then
    \begin{equation*}
        \tau_v=\tau_u+\tau_S,
    \end{equation*}
and both terms are nonnegative on $\Cc$. Moreover, in both cases,
    \begin{equation}
        \label{eq:tau-v-dominates-root-gaps}
        \tau_v(x)\ge \tau_S(x)\ge \langle x,\beta\rangle, \qquad x\in\Cc,\quad \beta\in S .
    \end{equation}
We next show that, for every $\beta\in R_+$,
    \begin{equation}
        \label{eq:no-bracket-mass-root-wall-general}
        \int_0^{t\wedge\theta} \mathbf 1_{\{\langle X(s),\beta\rangle=0\}} a_\beta(s,X(s))\,ds = 0 \qquad\text{a.s.}
    \end{equation}
Set $Z^\beta(t):=\langle X(t\wedge\theta),\beta\rangle$. This is a continuous semimartingale whose martingale part has quadratic variation
    \begin{equation*}
        \int_0^{t\wedge\theta}a_\beta(s,X(s))\,ds .
    \end{equation*}
The occupation-time formula applied to $Z^\beta$ gives
    \begin{equation*}
        \int_0^{t\wedge\theta} \mathbf 1_{\{Z^\beta(s)=0\}} a_\beta(s,X(s))\,ds = \int_{\mathbb R}\mathbf 1_{\{y=0\}}L_t^y(Z^\beta)\,dy = 0,
    \end{equation*}
which proves \eqref{eq:no-bracket-mass-root-wall-general}. Let $M_v(t):=\langle v,M(t)\rangle$. Using the diagonal bracket identity from Proposition~\ref{prop:deg:skorokhod}, we have
    \begin{equation*}
        \langle M_v\rangle(t) = \int_0^t q_{S,v}(s,X(s))\,ds .
    \end{equation*}
Therefore, by \eqref{eq:q-v-control-by-root-brackets},
    \begin{align*}
        \int_0^{t\wedge\theta} \mathbf 1_{\{\tau_v(X(s))>0\}} \frac{d\langle M_v\rangle(s)}{\tau_v(X(s))} &\le C_v \sum_{\beta\in S} \int_0^{t\wedge\theta} \mathbf 1_{\{\tau_v(X(s))>0\}} \frac{a_\beta(s,X(s))}{\tau_v(X(s))}\,ds .
    \end{align*}
By \eqref{eq:tau-v-dominates-root-gaps}, on the set $\{\tau_v(X(s))>0,\ \langle X(s),\beta\rangle>0\}$, we have
    \begin{equation*}
        \frac1{\tau_v(X(s))} \le \frac1{\langle X(s),\beta\rangle}.
    \end{equation*}
Moreover, by \eqref{eq:no-bracket-mass-root-wall-general}, the measure $a_\beta(s,X(s))\,ds$ does not charge the set
$\{\langle X(s),\beta\rangle=0\}$. Hence
    \begin{align}
       \label{eq:detector-energy-general}
        \int_0^{t\wedge\theta} \mathbf 1_{\{\tau_v(X(s))>0\}} \frac{d\langle M_v\rangle(s)}{\tau_v(X(s))} &\le C_v \sum_{\beta\in S}\int_0^{t\wedge\theta}\frac{a_\beta(s,X(s))}{\langle X(s),\beta\rangle} \mathbf 1_{\{\langle X(s),\beta\rangle>0\}}\,ds .
    \end{align}
We claim that the right-hand side is finite almost surely. Since $0\le s\le\theta\le T$ and $|X(s)|\le r$ on the localized interval, Assumption~\ref{ass:k:dom} gives constants $\varepsilon=\varepsilon(T,r)>0$ and $\gamma=\gamma(T,r)>0$ such that
    \begin{equation*}
        0\le \langle x,\beta\rangle\le\varepsilon \quad\Longrightarrow\quad a_\beta(s,x)\le \gamma^{-1}k_\beta(s,x),
    \end{equation*}
for $0\le s\le T$, $|x|\le r$, and $\beta\in R_+$. For fixed $\beta\in S$, split
    \begin{equation*}
        \int_0^{t\wedge\theta} \frac{a_\beta(s,X(s))}{\langle X(s),\beta\rangle} \mathbf 1_{\{\langle X(s),\beta\rangle>0\}}\,ds = I_\beta^{\mathrm{small}}+I_\beta^{\mathrm{large}},
    \end{equation*}
where $I_\beta^{\mathrm{small}}$ is the contribution of $0<\langle X(s),\beta\rangle\le\varepsilon$, and $I_\beta^{\mathrm{large}}$ is the contribution of $\langle X(s),\beta\rangle>\varepsilon$.

On the small-gap region,
    \begin{equation*}
        I_\beta^{\mathrm{small}} \le \gamma^{-1} \int_0^{t\wedge\theta} \frac{k_\beta(s,X(s))}{\langle X(s),\beta\rangle} \mathbf 1_{\{\langle X(s),\beta\rangle>0\}}\,ds .
    \end{equation*}
Let $\chi_{r+1} := T\wedge\inf\{u\in[0,\zeta): |X(u)|\ge r+1\}$ Since $X(s)\in\mathcal U_{S,\eta,r}$ for $0\le s\le\theta$, we have $t\wedge\theta\le\chi_{r+1}$. Hence
    \begin{equation*}
        I_\beta^{\mathrm{small}} \le \gamma^{-1} \int_0^{\chi_{r+1}} \frac{k_\beta(s,X(s))}{\langle X(s),\beta\rangle} \mathbf 1_{\{\langle X(s),\beta\rangle>0\}}\,ds <\infty \qquad\text{a.s.}
    \end{equation*}
by Proposition~\ref{prop:deg:interior-ident-integrability}. On the large-gap region, continuity of $\sigma$ and compactness give
    \begin{equation*}
        I_\beta^{\mathrm{large}} \le \frac{T}{\varepsilon} \sup_{\substack{0\le s\le T\\ |x|\le r}} a_\beta(s,x) <\infty .
    \end{equation*}
Thus the right-hand side of \eqref{eq:detector-energy-general} is finite almost surely, and consequently
    \begin{equation}
        \label{eq:finite-energy-general-detector}
        \int_0^{t\wedge\theta} \mathbf 1_{\{\tau_v(X(s))>0\}} \frac{d\langle M_v\rangle(s)}{\tau_v(X(s))} < \infty \qquad\text{a.s.}
    \end{equation}
Set
\begin{equation*}
    Y(s):=\tau_v(X(s)),
    \qquad 0\le s\le t\wedge\theta .
\end{equation*}
Then $Y$ is a nonnegative continuous semimartingale and its martingale part
is $M_v$. For $n\ge1$, the occupation-time formula applied to
$a\mapsto \mathbf 1_{\{a>1/n\}}/a$ gives
\begin{equation*}
    \int_{(0,\infty)}
    \frac{\mathbf 1_{\{a>1/n\}}}{a}
    L_{t\wedge\theta}^a(Y)\,da
    =
    \int_0^{t\wedge\theta}
    \mathbf 1_{\{Y(s)>1/n\}}
    \frac{d\langle M_v\rangle(s)}{Y(s)}.
\end{equation*}
Letting $n\to\infty$ and using monotone convergence yields
\begin{equation*}
    \int_{(0,\infty)}
    \frac{L_{t\wedge\theta}^a(Y)}{a}\,da
    =
    \int_0^{t\wedge\theta}
    \mathbf 1_{\{Y(s)>0\}}
    \frac{d\langle M_v\rangle(s)}{Y(s)}
    <
    \infty
    \qquad\text{a.s.}
\end{equation*}
By right-continuity of $a\mapsto L_{t\wedge\theta}^a(Y)$ at $a=0$, this
forces
\begin{equation*}
    L_{t\wedge\theta}^0(Y)=0 .
\end{equation*}
Since $Y=\tau_v(X)$, this proves \eqref{eq:face-detector-local-time-zero}
for fixed $t$. Applying the argument to rational $t\in[0,T]$ and using
monotonicity in $t$, we obtain the assertion simultaneously for all
$0\le t\le T$ on one probability-one event.
\end{proof}


\subsection{Elimination of hidden boundary measures}
We now use the canonical face detectors to eliminate the boundary measures $d\Lambda^\alpha$. The local detector argument on a fixed face combines the vanishing local time from Proposition~\ref{prop:deg:detector-local-time} with Tanaka's formula. It shows that $d\Lambda^\beta$ gives no mass to the portion of the path lying in that face. Since every boundary point belongs to one of the relatively open faces $F_S$, a covering argument then yields the global conclusion.

\begin{proposition}[Elimination and zero occupation on one face]
\label{prop:deg:one-face}
Assume \ref{ass:k:dom} and \ref{ass:face:sign}. Let $S\in\mathfrak S$, and let
$T,r,\eta>0$. Let $\theta$ be a stopping time such that
\begin{equation*}
    \theta\le T,
    \qquad
    \theta<\zeta \quad \text{a.s.},
    \qquad
    X(s)\in\mathcal U_{S,\eta,r},
    \quad 0\le s\le\theta .
\end{equation*}
Then, almost surely, for every $\beta\in S$ and every $t\ge0$,
\begin{equation}
    \label{eq:hidden-zero-on-one-face}
    \int_0^{t\wedge\theta}
    \mathbf 1_{\{\tau_S(X(s))=0\}}\,d\Lambda^\beta(s)
    =
    0 .
\end{equation}

If, in addition, Assumption~\ref{ass:nonsticky} holds, then, almost surely,
for every $t\ge0$,
\begin{equation}
    \label{eq:zero-occupation-one-face}
    \int_0^{t\wedge\theta}
    \mathbf 1_{\{\tau_S(X(s))=0\}}\,ds
    =
    0 .
\end{equation}
\end{proposition}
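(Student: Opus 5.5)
We will work with the canonical detector $Y(t):=\tau_S(X(t\wedge\theta))$ and with the detector equation \eqref{eq:detector-equation} from Lemma~\ref{lem:deg:canonical-detector}(v). The plan is to write Tanaka's formula for $Y=Y^+$ at level zero and compare the two resulting semimartingale decompositions. Since $Y\ge0$, we have $Y^+=Y$. Tanaka gives
\begin{equation*}
    Y(t)=Y(0)+\int_0^{t\wedge\theta}\mathbf 1_{\{Y>0\}}\,dY+\tfrac12 L^0_{t\wedge\theta}(Y).
\end{equation*}
By Proposition~\ref{prop:deg:detector-local-time} with $v=u_S$, the local time term vanishes. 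Subtracting this identity from the original decomposition of $Y$ yields
\begin{equation*}
    \int_0^{t\wedge\theta}\mathbf 1_{\{Y(s)=0\}}\,dY(s)=0 .
\end{equation*}
We then expand $dY$ on the zero set term by term:
\begin{itemize}
    \item The martingale part $\int \mathbf 1_{\{Y=0\}}dM_S$ has bracket $\int\mathbf 1_{\{Y=0\}}q_S\,ds$. This bracket is zero, because $q_S\le|S|\sum_{\beta\in S}a_\beta$ by Lemma~\ref{lem:deg:canonical-detector}(iv), and \eqref{eq:no-bracket-mass-root-wall-general} shows that $a_\beta\,ds$ does not charge the wall $\{\langle X,\beta\rangle=0\}$, which contains $\{Y=0\}$. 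Hence the martingale term vanishes.
    \item The interior singular integrals for $\beta\in S$ carry the indicator $\mathbf 1_{\{\langle X,\beta\rangle>0\}}$. On $\{Y=0\}$ all $\langle X,\beta\rangle$ with $\beta\in S$ vanish, so these terms vanish.
\end{itemize}
What remains is
\begin{equation*}
    \int_0^{t\wedge\theta}\mathbf 1_{\{Y=0\}}B_S(s,X(s))\,ds+\sum_{\beta\in S}\langle u_S,\beta\rangle\int_0^{t\wedge\theta}\mathbf 1_{\{Y=0\}}\,d\Lambda^\beta=0 .
\end{equation*}
On $\{Y=0\}\cap\mathcal U_{S,\eta,r}$ the point $X(s)$ lies in $F_S$, by Lemma~\ref{lem:deg:canonical-detector}(i). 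Assumption~\ref{ass:face:sign} then gives $B_S\ge0$ there. The weights $\langle u_S,\beta\rangle\ge c_S>0$ by Lemma~\ref{lem:deg:canonical-detector}(iii), and the measures $d\Lambda^\beta$ are nonnegative. A sum of nonnegative terms equal to zero forces each term to vanish, which gives \eqref{eq:hidden-zero-on-one-face}. As a by-product we also get
\begin{equation*}
    \int\mathbf 1_{\{Y=0\}}B_S\,ds=0 .
\end{equation*}
Working on rational $t$ and using continuity makes the statement hold simultaneously for all $t$.

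For \eqref{eq:zero-occupation-one-face}, we repeat the argument with each admissible detector. Fix $u\in\mathfrak D(S)$ and use $v=u+u_S$, whose local time also vanishes by Proposition~\ref{prop:deg:detector-local-time}. Since $\tau_v\ge\tau_S$ and $\tau_v$ vanishes on $F_S$, we have $\{\tau_v(X)=0\}=\{Y=0\}$ inside $\mathcal U_{S,\eta,r}$. Applying the linear functional $\tau_v$ to \eqref{eq:deg:generalized-limit} and repeating the Tanaka comparison, the martingale term again dies by \eqref{eq:q-v-control-by-root-brackets}. The interior singular terms die as before, and the $\Lambda^\beta$ terms now vanish by the first part. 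Hence
\begin{equation*}
    \int\mathbf 1_{\{Y=0\}}B_{S,v}\,ds=0 .
\end{equation*}
Since $B_{S,v}=B_{S,u}+B_S$, combining this with $\int\mathbf 1_{\{Y=0\}}B_S\,ds=0$ gives $\int\mathbf 1_{\{Y=0\}}B_{S,u}\,ds=0$, with $B_{S,u}\ge0$ on $F_S$.

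Next we use that the martingale of each detector has no bracket mass on the zero set. This is the same occupation-density argument, since $\tau_u(X)\ge0$ and vanishes there:
\begin{equation*}
    \int\mathbf 1_{\{Y=0\}}\sum_j u_j^2\sigma_j^2\,ds=0 .
\end{equation*}
Summing over the finitely many $u\in\mathfrak D(S)$, on the Lebesgue set $\{s:Y(s)=0\}$ we have almost everywhere
\begin{equation*}
    \max_{u}\sum_j u_j^2\sigma_j^2(s,X(s))=0\quad\text{and}\quad\max_u B_{S,u}(s,X(s))=0 .
\end{equation*}
Assumption~\ref{ass:nonsticky} forbids these two conditions holding together at any point of $F_S$. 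Hence $\{Y=0\}$ has Lebesgue measure zero on $[0,\theta]$.

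The step that needs the most care is the first bullet above: making rigorous that $\int\mathbf 1_{\{Y=0\}}dM$ vanishes, that is, that $\int\mathbf 1_{\{Y=0\}}\,d\langle M_v\rangle=0$, for the non-canonical directions too. For the canonical detector this is immediate from \eqref{eq:no-bracket-mass-root-wall-general} via \eqref{eq:q-v-control-by-root-brackets}. For general $u\in\mathfrak D(S)$ we also need the trace of $B_{S,u}$ on $F_S$ to be evaluated at $X(s)$. This is legitimate because $X(s)\in F_S$ exactly on $\{Y=0\}$ inside $\mathcal U_{S,\eta,r}$.
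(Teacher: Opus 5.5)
Your proposal is correct and follows essentially the same route as the paper: the Tanaka comparison for the canonical detector $\tau_S$ (zero local time from Proposition~\ref{prop:deg:detector-local-time}) together with the face sign condition gives the first claim, and the shifted detectors $v=u+u_S$, $u\in\mathfrak D(S)$, combined with the occupation-time formula and Assumption~\ref{ass:nonsticky}, give the second. The differences are cosmetic. In the $v$-identity you remove the $\Lambda^\beta$ and $B_S$ terms using the first part, where the paper argues that all terms are nonnegative. You then finish with a pointwise contradiction with Assumption~\ref{ass:nonsticky} on the zero set, instead of splitting that set into the part where some detector bracket density is positive and the fully degenerate part.
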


\begin{proof}
We first prove \eqref{eq:hidden-zero-on-one-face}. Set
\begin{equation*}
    Y(t):=\tau_S(X(t\wedge\theta)).
\end{equation*}
By Lemma~\ref{lem:deg:canonical-detector}, $Y$ is a nonnegative continuous
semimartingale. Its martingale part is $M_S(\cdot\wedge\theta)$, and its
finite-variation part is
\begin{align*}
    V(t)
    &=
    \int_0^{t\wedge\theta} B_S(s,X(s))\,ds \\
    &\quad
    +
    \sum_{\beta\in S}
    \langle u_S,\beta\rangle
    \int_0^{t\wedge\theta}
    \frac{k_\beta(s,X(s))}
         {\langle X(s),\beta\rangle}
    \mathbf 1_{\{\langle X(s),\beta\rangle>0\}}\,ds \\
    &\quad
    +
    \sum_{\beta\in S}
    \langle u_S,\beta\rangle
    \Lambda^\beta(t\wedge\theta).
\end{align*}
By Proposition~\ref{prop:deg:detector-local-time}, applied with $v=u_S$,
\begin{equation*}
    L_t^0(Y)=0,
    \qquad t\ge0 .
\end{equation*}
Since $Y\ge0$, Tanaka's formula gives
\begin{equation*}
    Y(t)
    =
    Y(0)
    +
    \int_0^t \mathbf 1_{\{Y(s)>0\}}\,dY(s)
    +
    \frac12 L_t^0(Y).
\end{equation*}
Subtracting this from $Y(t)=Y(0)+\int_0^t dY(s)$, and using
$L_t^0(Y)=0$, we get
\begin{equation*}
    \int_0^t \mathbf 1_{\{Y(s)=0\}}\,dY(s)=0 .
\end{equation*}
Writing $Y=Y(0)+N+V$, where $N=M_S(\cdot\wedge\theta)$, gives
\begin{equation*}
    \int_0^t \mathbf 1_{\{Y(s)=0\}}\,dN(s)
    +
    \int_0^t \mathbf 1_{\{Y(s)=0\}}\,dV(s)
    =
    0 .
\end{equation*}
The first term is a continuous local martingale with quadratic variation
\begin{equation*}
    \int_0^t \mathbf 1_{\{Y(s)=0\}}\,d\langle N\rangle(s).
\end{equation*}
By the occupation-time formula this quadratic variation is zero. Hence the
martingale term is identically zero, and therefore
\begin{equation*}
    \int_0^t \mathbf 1_{\{Y(s)=0\}}\,dV(s)=0 .
\end{equation*}
Since $dV$ is supported on $[0,\theta]$, we obtain
\begin{align}
    \label{eq:zero-level-canonical-detector}
    0
    &=
    \int_0^{t\wedge\theta}
    \mathbf 1_{\{\tau_S(X(s))=0\}}
    B_S(s,X(s))\,ds
    \nonumber\\
    &\quad
    +
    \sum_{\beta\in S}
    \langle u_S,\beta\rangle
    \int_0^{t\wedge\theta}
    \mathbf 1_{\{\tau_S(X(s))=0\}}
    \frac{k_\beta(s,X(s))}
         {\langle X(s),\beta\rangle}
    \mathbf 1_{\{\langle X(s),\beta\rangle>0\}}\,ds
    \nonumber\\
    &\quad
    +
    \sum_{\beta\in S}
    \langle u_S,\beta\rangle
    \int_0^{t\wedge\theta}
    \mathbf 1_{\{\tau_S(X(s))=0\}}\,d\Lambda^\beta(s).
\end{align}
On the set $\{\tau_S(X(s))=0\}$, since the path is localized in
$\mathcal U_{S,\eta,r}$, Lemma~\ref{lem:deg:canonical-detector}\textup{(i)}
implies $X(s)\in F_S$. Hence
\begin{equation*}
    \langle X(s),\beta\rangle=0,
    \qquad \beta\in S,
\end{equation*}
and the interior singular terms in
\eqref{eq:zero-level-canonical-detector} vanish. Therefore
\begin{align}
    \label{eq:zero-level-canonical-reduced}
    0
    &=
    \int_0^{t\wedge\theta}
    \mathbf 1_{\{\tau_S(X(s))=0\}}
    B_S(s,X(s))\,ds
    \nonumber\\
    &\quad
    +
    \sum_{\beta\in S}
    \langle u_S,\beta\rangle
    \int_0^{t\wedge\theta}
    \mathbf 1_{\{\tau_S(X(s))=0\}}\,d\Lambda^\beta(s).
\end{align}
By Assumption~\ref{ass:face:sign},
\begin{equation*}
    B_S(s,X(s))\ge0
    \qquad\text{on }\{\tau_S(X(s))=0\},
\end{equation*}
because this set lies in $F_S$. The measures $d\Lambda^\beta$ are
nonnegative, and
\begin{equation*}
    \langle u_S,\beta\rangle>0,
    \qquad \beta\in S,
\end{equation*}
by Lemma~\ref{lem:deg:canonical-detector}\textup{(iii)}. Thus all terms in
\eqref{eq:zero-level-canonical-reduced} are nonnegative. Since their sum is
zero, each term is zero. This proves \eqref{eq:hidden-zero-on-one-face}.

We now assume additionally \ref{ass:nonsticky} and prove
\eqref{eq:zero-occupation-one-face}. Define
\begin{equation*}
    Q_S(t,x)
    :=
    \max_{u\in\mathfrak D(S)}q_{S,u}(t,x),
    \qquad
    H_S(t,x)
    :=
    \max_{u\in\mathfrak D(S)}B_{S,u}(t,x).
\end{equation*}
On $F_S$, Assumption~\ref{ass:nonsticky} says that
\begin{equation*}
    Q_S(t,x)=0
    \quad\Longrightarrow\quad
    H_S(t,x)>0 .
\end{equation*}

First we show that the face has zero occupation on the set where
$Q_S>0$. Fix $u\in\mathfrak D(S)$. The process
$\tau_u(X(\cdot\wedge\theta))$ is a continuous semimartingale whose
martingale bracket density is $q_{S,u}(s,X(s))$. The occupation-time formula
gives
\begin{equation*}
    \int_0^{t\wedge\theta}
    \mathbf 1_{\{\tau_u(X(s))=0\}}
    q_{S,u}(s,X(s))\,ds
    =
    0 .
\end{equation*}
Since $X(s)\in F_S$ implies $\tau_u(X(s))=0$, and since on
$\mathcal U_{S,\eta,r}$ the condition $X(s)\in F_S$ is equivalent to
$\tau_S(X(s))=0$, we get
\begin{equation*}
    \int_0^{t\wedge\theta}
    \mathbf 1_{\{\tau_S(X(s))=0\}}
    q_{S,u}(s,X(s))\,ds
    =
    0 .
\end{equation*}
Therefore, for every $m\in\mathbb N$,
\begin{equation*}
    \int_0^{t\wedge\theta}
    \mathbf 1_{\{\tau_S(X(s))=0\}}
    \mathbf 1_{\{q_{S,u}(s,X(s))\ge 1/m\}}\,ds
    =
    0 .
\end{equation*}
Since $\mathfrak D(S)$ is finite, it follows that
\begin{equation}
    \label{eq:zero-occupation-q-positive}
    \int_0^{t\wedge\theta}
    \mathbf 1_{\{\tau_S(X(s))=0\}}
    \mathbf 1_{\{Q_S(s,X(s))>0\}}\,ds
    =
    0 .
\end{equation}

It remains to treat the fully degenerate part of the face:
\begin{equation*}
    \{\tau_S(X(s))=0,\ Q_S(s,X(s))=0\}.
\end{equation*}
By Assumption~\ref{ass:nonsticky}, this set is contained in
\begin{equation*}
    \bigcup_{u\in\mathfrak D(S)}
    \{ \tau_S(X(s))=0,\ B_{S,u}(s,X(s))>0 \}.
\end{equation*}
Since $\mathfrak D(S)$ is finite, it suffices to prove that for every
$u\in\mathfrak D(S)$,
\begin{equation}
    \label{eq:zero-occupation-positive-detector-drift}
    \int_0^{t\wedge\theta}
    \mathbf 1_{\{\tau_S(X(s))=0\}}
    \mathbf 1_{\{B_{S,u}(s,X(s))>0\}}\,ds
    =
    0 .
\end{equation}

Fix $u\in\mathfrak D(S)$, and set
\begin{equation*}
    v:=u+u_S .
\end{equation*}
Then
\begin{equation*}
    \tau_v=\tau_u+\tau_S .
\end{equation*}
Since both $\tau_u$ and $\tau_S$ are nonnegative on $\Cc$, and since
$\tau_S(X(s))=0$ is equivalent to $X(s)\in F_S$ on the localized interval,
we have
\begin{equation*}
    \tau_v(X(s))=0
    \qquad\Longleftrightarrow\qquad
    \tau_S(X(s))=0,
    \qquad 0\le s\le\theta .
\end{equation*}
By Proposition~\ref{prop:deg:detector-local-time}, applied to this $v$,
\begin{equation*}
    L_t^0\bigl(\tau_v(X(\cdot\wedge\theta))\bigr)=0 .
\end{equation*}
Repeating the Tanaka argument above for
\begin{equation*}
    Y_v(t):=\tau_v(X(t\wedge\theta)),
\end{equation*}
we obtain
\begin{equation}
    \label{eq:zero-level-v-detector}
    \int_0^{t\wedge\theta}
    \mathbf 1_{\{\tau_S(X(s))=0\}}
    B_{S,v}(s,X(s))\,ds
    +
    \sum_{\beta\in S}
    \langle v,\beta\rangle
    \int_0^{t\wedge\theta}
    \mathbf 1_{\{\tau_S(X(s))=0\}}\,d\Lambda^\beta(s)
    =
    0 .
\end{equation}
Indeed, on the set $\{\tau_S(X(s))=0\}$, the interior singular terms with
$\beta\in S$ vanish because
$\mathbf 1_{\{\langle X(s),\beta\rangle>0\}}=0$.

Now
\begin{equation*}
    B_{S,v}=B_{S,u}+B_S .
\end{equation*}
Both $u$ and $u_S$ belong to $\mathfrak D(S)$, and therefore
Assumption~\ref{ass:face:sign} gives
\begin{equation*}
    B_{S,u}\ge0,
    \qquad
    B_S\ge0,
    \qquad
    \text{on }F_S .
\end{equation*}
Thus
\begin{equation*}
    B_{S,v}\ge B_{S,u}\ge0
    \qquad\text{on }F_S .
\end{equation*}
Also, for $\beta\in S$,
\begin{equation*}
    \langle v,\beta\rangle
    =
    \langle u,\beta\rangle+\langle u_S,\beta\rangle
    >
    0,
\end{equation*}
because $\langle u,\beta\rangle\ge0$ by admissibility and
$\langle u_S,\beta\rangle>0$ by Lemma~\ref{lem:deg:canonical-detector}.
Therefore all terms in \eqref{eq:zero-level-v-detector} are nonnegative. Since
their sum is zero,
\begin{equation*}
    \int_0^{t\wedge\theta}
    \mathbf 1_{\{\tau_S(X(s))=0\}}
    B_{S,v}(s,X(s))\,ds
    =
    0 .
\end{equation*}
As $B_{S,v}\ge B_{S,u}\ge0$ on the face, we get
\begin{equation*}
    \int_0^{t\wedge\theta}
    \mathbf 1_{\{\tau_S(X(s))=0\}}
    B_{S,u}(s,X(s))\,ds
    =
    0 .
\end{equation*}
Consequently, for every $m\in\mathbb N$,
\begin{equation*}
    \frac1m
    \int_0^{t\wedge\theta}
    \mathbf 1_{\{\tau_S(X(s))=0\}}
    \mathbf 1_{\{B_{S,u}(s,X(s))\ge 1/m\}}\,ds
    \le
    0 .
\end{equation*}
Letting $m$ range over $\mathbb N$, we obtain
\eqref{eq:zero-occupation-positive-detector-drift}.

Combining \eqref{eq:zero-occupation-q-positive} with the preceding treatment
of the fully degenerate part yields
\begin{equation*}
    \int_0^{t\wedge\theta}
    \mathbf 1_{\{\tau_S(X(s))=0\}}\,ds
    =
    0 .
\end{equation*}
This proves \eqref{eq:zero-occupation-one-face} for fixed $t$. Applying the
argument to rational $t$ and using monotonicity gives the statement
simultaneously for all $t\ge0$ on one probability-one event.
\end{proof}

The same conclusions holds on any deterministic interval $[a,b]\subset[0,\zeta)$ on which $X(s)\in\mathcal U_{S,\eta,r}$. Indeed, the proof above is local in time and applies verbatim to the stopped shifted semimartingale $s\mapsto X((a+s)\wedge b)$. 

We can now eliminate the hidden boundary measures globally and complete the proof of the degenerate existence theorem.

\begin{proof}[Proof of Theorem~\ref{thm:degenerate-existence}]
Let
\begin{equation*}
    \left(
        X,M,\bigl(A^\alpha\bigr)_{\alpha\in R_+}
    \right)
\end{equation*}
be the generalized weak limit constructed in
Proposition~\ref{prop:deg:skorokhod}, and let
$(\Lambda^\alpha)_{\alpha\in R_+}$ be the continuous increasing processes
from Proposition~\ref{prop:deg:interior-ident-integrability}. Thus, for every
$\alpha\in R_+$,
\begin{equation*}
    A^\alpha(t)
    =
    \int_0^t
    \frac{k_\alpha(s,X(s))}
         {\langle X(s),\alpha\rangle}
    \mathbf 1_{\{\langle X(s),\alpha\rangle>0\}}\,ds
    +
    \Lambda^\alpha(t),
    \qquad t<\zeta,
\end{equation*}
and the Stieltjes measure $d\Lambda^\alpha$ is supported on the wall
\begin{equation*}
    Z_\alpha
    :=
    \{t\in[0,\zeta):\langle X(t),\alpha\rangle=0\}.
\end{equation*}
It remains to prove that
\begin{equation*}
    \Lambda^\alpha(t)=0,
    \qquad \alpha\in R_+,\quad 0\le t<\zeta .
\end{equation*}

Fix $\alpha\in R_+$. For every $S\in\mathfrak S$ such that
$\alpha\in S$, set
\begin{equation*}
    E_S
    :=
    \{t\in[0,\zeta):X(t)\in F_S\}.
\end{equation*}
Then
\begin{equation*}
    Z_\alpha
    =
    \bigcup_{\substack{S\in\mathfrak S\\ \alpha\in S}}E_S .
\end{equation*}
Indeed, if $\langle X(t),\alpha\rangle=0$, then
$S(X(t))\in\mathfrak S$, $\alpha\in S(X(t))$, and
$X(t)\in F_{S(X(t))}$. Since $\mathfrak S$ is finite, it is enough to show
that
\begin{equation*}
    d\Lambda^\alpha(E_S)=0
\end{equation*}
for every $S\in\mathfrak S$ with $\alpha\in S$.

Fix such an $S$, and let $t_0\in E_S$. Then $X(t_0)\in F_S$. Since the
roots in $R_+\setminus S$ are strictly positive at $X(t_0)$, continuity of
$X$ implies that there exist $r,\eta>0$ and an open interval
$I\subset[0,\zeta)$ containing $t_0$ such that
\begin{equation*}
    X(s)\in\mathcal U_{S,\eta,r},
    \qquad s\in I .
\end{equation*}
Choose rational numbers $a<b$ such that
\begin{equation*}
    t_0\in(a,b)\subset I .
\end{equation*}
After enlarging $r$ and shrinking $\eta$, we may take $r,\eta$ rational.
On $[a,b]$, the path stays in $\mathcal U_{S,\eta,r}$. By the interval
version of Proposition~\ref{prop:deg:one-face}, applied to the hidden-measure
conclusion \eqref{eq:hidden-zero-on-one-face},
\begin{equation*}
    \int_a^b
    \mathbf 1_{\{\tau_S(X(s))=0\}}\,d\Lambda^\alpha(s)
    =
    0 .
\end{equation*}
Since $X(s)\in\mathcal U_{S,\eta,r}$ on $[a,b]$,
Lemma~\ref{lem:deg:canonical-detector}\textup{(i)} gives
\begin{equation*}
    \tau_S(X(s))=0
    \qquad\Longleftrightarrow\qquad
    X(s)\in F_S .
\end{equation*}
Therefore
\begin{equation*}
    \int_a^b \mathbf 1_{E_S}(s)\,d\Lambda^\alpha(s)=0 .
\end{equation*}
The countable family of rational intervals $(a,b)$ and rational parameters
$(r,\eta)$ for which $X(s)\in\mathcal U_{S,\eta,r}$ on $[a,b]$ covers
$E_S$. Hence
\begin{equation*}
    d\Lambda^\alpha(E_S)=0 .
\end{equation*}
Since this holds for every $S\in\mathfrak S$ with $\alpha\in S$, we obtain
\begin{equation*}
    d\Lambda^\alpha(Z_\alpha)=0 .
\end{equation*}
But $d\Lambda^\alpha$ is supported on $Z_\alpha$. Hence
\begin{equation*}
    d\Lambda^\alpha=0
    \qquad\text{on }[0,\zeta),
\end{equation*}
and therefore
\begin{equation*}
    \Lambda^\alpha(t)=0,
    \qquad 0\le t<\zeta .
\end{equation*}

Thus, for every $\alpha\in R_+$,
\begin{equation*}
    A^\alpha(t)
    =
    \int_0^t
    \frac{k_\alpha(s,X(s))}
         {\langle X(s),\alpha\rangle}
    \mathbf 1_{\{\langle X(s),\alpha\rangle>0\}}\,ds,
    \qquad t<\zeta .
\end{equation*}
Substituting this identity into the generalized weak equation from
Proposition~\ref{prop:deg:skorokhod}, we get
\begin{equation*}
    X(t)
    =
    x_0
    +
    M(t)
    +
    \int_0^t b(s,X(s))\,ds
    +
    \sum_{\alpha\in R_+}\alpha
    \int_0^t
    \frac{k_\alpha(s,X(s))}
         {\langle X(s),\alpha\rangle}
    \mathbf 1_{\{\langle X(s),\alpha\rangle>0\}}\,ds,
    \qquad t<\zeta .
\end{equation*}
By Proposition~\ref{prop:deg:skorokhod}, possibly after enlarging the
probability space, there exists an $N$-dimensional Brownian motion $B$ such
that
\begin{equation*}
    M(t)
    =
    \int_0^t \sigma(s,X(s))\,dB(s),
    \qquad t<\zeta .
\end{equation*}
Therefore $X$ satisfies \eqref{eq:degenerate-existence-indicator}. Finally,
the singular integrals are finite on compact subintervals of $[0,\zeta)$ by
Proposition~\ref{prop:deg:interior-ident-integrability}. This completes the
proof.
\end{proof}

\begin{proof}[Proof of Theorem~\ref{thm:degenerate-remove-indicator}]
By Theorem~\ref{thm:degenerate-existence}, there exist a filtered probability
space, an $N$-dimensional Brownian motion $B$, and a continuous
$\Cc$-valued semimartingale $X$, defined up to its lifetime
$\zeta:=\life{X}>0$, such that $X(0)=x_0$ and
\begin{equation*}
    X(t)
    =
    x_0
    +
    \int_0^t \sigma(s,X(s))\,dB(s)
    +
    \int_0^t b(s,X(s))\,ds
    +
    \sum_{\alpha\in R_+}\alpha
    \int_0^t
    \frac{k_\alpha(s,X(s))}
         {\langle X(s),\alpha\rangle}
    \mathbf 1_{\{\langle X(s),\alpha\rangle>0\}}\,ds
\end{equation*}
for every $t<\zeta$. Moreover, all singular integrals in this indicator
formulation are finite on compact subintervals of $[0,\zeta)$.

It remains to prove that $X$ spends zero Lebesgue time on every root wall.
First we prove the corresponding statement for each relatively open face
$F_S$. Fix $S\in\mathfrak S$. We claim that
\begin{equation}
    \label{eq:zero-occupation-face-global}
    \int_0^t \mathbf 1_{\{X(s)\in F_S\}}\,ds
    =
    0,
    \qquad t<\zeta,
    \qquad \text{a.s.}
\end{equation}
Let $t_0<\zeta$ be such that $X(t_0)\in F_S$. Since the roots in
$R_+\setminus S$ are strictly positive at $X(t_0)$, continuity of $X$
implies that there exist $r,\eta>0$ and an open interval
$I\subset[0,\zeta)$ containing $t_0$ such that
\begin{equation*}
    X(s)\in\mathcal U_{S,\eta,r},
    \qquad s\in I .
\end{equation*}
Choose rational numbers $a<b$ such that
\begin{equation*}
    t_0\in(a,b)\subset I .
\end{equation*}
After enlarging $r$ and shrinking $\eta$, we may assume that $r,\eta$ are
rational. On $[a,b]$, the path stays in $\mathcal U_{S,\eta,r}$. By the
interval version of Proposition~\ref{prop:deg:one-face}, applied to the
zero-occupation conclusion \eqref{eq:zero-occupation-one-face},
\begin{equation*}
    \int_a^b
    \mathbf 1_{\{\tau_S(X(s))=0\}}\,ds
    =
    0 .
\end{equation*}
Since $X(s)\in\mathcal U_{S,\eta,r}$ on $[a,b]$,
Lemma~\ref{lem:deg:canonical-detector}\textup{(i)} gives
\begin{equation*}
    \tau_S(X(s))=0
    \qquad\Longleftrightarrow\qquad
    X(s)\in F_S .
\end{equation*}
Thus
\begin{equation*}
    \int_a^b \mathbf 1_{\{X(s)\in F_S\}}\,ds=0 .
\end{equation*}
The countable family of such rational intervals covers
\begin{equation*}
    \{s<\zeta:X(s)\in F_S\}.
\end{equation*}
Therefore \eqref{eq:zero-occupation-face-global} holds.

Now fix $\alpha\in R_+$. The root wall inside the closed chamber is the finite
union of relatively open faces:
\begin{equation*}
    \{x\in\Cc:\langle x,\alpha\rangle=0\}
    =
    \bigcup_{\substack{S\in\mathfrak S\\ \alpha\in S}}F_S .
\end{equation*}
Using \eqref{eq:zero-occupation-face-global} and the finiteness of
$\mathfrak S$, we obtain
\begin{equation}
    \label{eq:zero-occupation-root-wall-final}
    \int_0^t
    \mathbf 1_{\{\langle X(s),\alpha\rangle=0\}}\,ds
    =
    0,
    \qquad
    \alpha\in R_+,\quad t<\zeta,
    \qquad \text{a.s.}
\end{equation}
Hence $X$ is non-sticky in the sense of \eqref{eq:zero:leb}.

It remains only to remove the indicators from the singular drift. By
\eqref{eq:zero-occupation-root-wall-final}, for every $\alpha\in R_+$ and
every $t<\zeta$,
\begin{equation*}
    \int_0^t
    \frac{k_\alpha(s,X(s))}
         {\langle X(s),\alpha\rangle}\,ds
    =
    \int_0^t
    \frac{k_\alpha(s,X(s))}
         {\langle X(s),\alpha\rangle}
    \mathbf 1_{\{\langle X(s),\alpha\rangle>0\}}\,ds,
\end{equation*}
where the value of the quotient on the wall is immaterial, because the wall is
visited only on a Lebesgue-null set of times. The right-hand side is finite on
compact subintervals of $[0,\zeta)$ by
Theorem~\ref{thm:degenerate-existence}. Therefore the singular integrals in
the equation without indicators are well defined and locally finite.

Substituting the preceding identity into the indicator equation gives
\begin{equation*}
    X(t)
    =
    x_0
    +
    \int_0^t \sigma(s,X(s))\,dB(s)
    +
    \int_0^t b(s,X(s))\,ds
    +
    \sum_{\alpha\in R_+}\alpha
    \int_0^t
    \frac{k_\alpha(s,X(s))}
         {\langle X(s),\alpha\rangle}\,ds,
    \qquad t<\zeta .
\end{equation*}
Thus $X$ satisfies \eqref{eq:degenerate-existence-without-indicator}. The
zero-occupation property \eqref{eq:zero-occupation-root-wall-final} is exactly
the asserted non-sticking boundary behavior. This completes the proof.
\end{proof}



\section{Uniqueness}
\label{sec:uniqueness}

This section proves Theorems~\ref{thm:pathwise-uniq-indicator-nonsticky} and \ref{thm:strong:existence:uniqueness}.  Throughout the section, non-sticky solutions are understood in the sense of \eqref{eq:zero:leb}. We use the convention
    \begin{align*}
        \operatorname{sgn}(u) &=
            \begin{cases}
                1, & u>0,\\
                0, & u=0,\\
                -1, & u<0.
            \end{cases}
    \end{align*}
For $\alpha\in R_+$ and $x\in\Cc$, define
    \begin{align*}
        q_\alpha(t,x) &:=
            \begin{cases}
                \dfrac{k_\alpha(t,x)}{\langle x,\alpha\rangle}, & \langle x,\alpha\rangle>0,\\ \\[1.2ex] 
                0, & \langle x,\alpha\rangle=0.
            \end{cases}
    \end{align*}
We then define the singular force on $\Cc$ by
    \begin{align*}
        G(t,x) &:= \sum_{\alpha\in R_+} \alpha\,q_\alpha(t,x).
    \end{align*}
For non-sticky solutions this agrees, as a Lebesgue-time integrand, with the singular force in \eqref{eq:main:SDE}. 

\begin{proof}[Proof of Theorem~\ref{thm:pathwise-uniq-indicator-nonsticky}]
Let $(X,B)$ and $(\widetilde X,B)$ be two non-sticky solutions as in the statement,
driven by the same Brownian motion and with the same initial condition.  Put $Z(t):=X(t)-\widetilde X(t)$. Fix $T,r>0$ and define
    \begin{align*}
        \tau_r &:= T\wedge\life{X}\wedge\life{\widetilde X} \wedge \inf\left\{t\ge0:\ |X(t)|\vee |\widetilde X(t)|\ge r\right\}.
    \end{align*}
We also localize the finite-variation terms.  Set
    \begin{align*}
        A^r(t) &:= \int_0^{t\wedge\tau_r} \sum_{i=1}^N \bigl(|b_i(s,X(s))|+|b_i(s,\widetilde X(s))|\bigr)\,ds\\ 
        &\quad + \sum_{\alpha\in R_+} \int_0^{t\wedge\tau_r}\bigl(|q_\alpha(s,X(s))|+|q_\alpha(s,\widetilde X(s))|\bigr)\,ds,
    \end{align*}
and, for $m\in\N$, $\tau_{r,m}=\tau_r\wedge\inf\left\{t\ge0:\ A^r(t)\ge m\right\}$. The singular integrals are finite on compact subintervals of the lifetimes, and therefore $\tau_{r,m}\uparrow\tau_r$, as $m\to\infty$. If some singular integral is interpreted improperly at the initial time, the arguments below are first applied on $[\varepsilon,t\wedge\tau_{r,m}]$ and then $\varepsilon\downarrow0$.

We prove the estimate separately in the two uniqueness regimes.

\medskip

\noindent
\emph{Step 1: the Yamada-Watanabe uniqueness setting.}
Assume that alternative~\textup{(a)} in Assumption~\ref{ass:uniq:sigma} and Assumption~\ref{ass:uniq:b} hold.  On $[0,\tau_r]$, for every $i=1,\ldots,N$,
    \begin{align*}
        d\langle Z_i\rangle_t &= |\sigma_i(t,X(t))-\sigma_i(t,\widetilde X(t))|^2\,dt \le \rho_{T,r}(|Z_i(t)|)\,dt.
    \end{align*}
By the one-dimensional Yamada-Watanabe local-time criterion,
    \begin{align}
        L_{t\wedge\tau_{r,m}}^0(Z_i) &= 0, \qquad 0\le t\le T, \qquad i=1,\ldots,N.
        \label{eq:uniq:local-time-zero}
    \end{align}
Applying Tanaka's formula to $Z_i$, summing over $i$, using \eqref{eq:uniq:local-time-zero}, and taking expectations, we obtain
    \begin{align}
        \E\sum_{i=1}^N |Z_i(t\wedge\tau_{r,m})| &= \E\int_0^{t\wedge\tau_{r,m}} \sum_{i=1}^N \operatorname{sgn}(Z_i(s)) \bigl(b_i(s,X(s))-b_i(s,\widetilde X(s))\bigr)\,ds \nonumber\\
            &\quad+ \E\int_0^{t\wedge\tau_{r,m}} \sum_{i=1}^N\operatorname{sgn}(Z_i(s))\bigl(G_i(s,X(s))-G_i(s,\widetilde X(s))\bigr)\,ds.
        \label{eq:uniq:tanaka}
    \end{align}
The stochastic integral has expectation zero because the martingale integrand is bounded after stopping at $\tau_r$. By Assumption~\ref{ass:uniq:b}, the first integral in \eqref{eq:uniq:tanaka} is bounded above by
    \begin{align}
        L_{T,r} \int_0^t \E\sum_{i=1}^N |Z_i(s\wedge\tau_{r,m})|\,ds.
        \label{eq:uniq:b-bound}
    \end{align}
We next show that the singular contribution is non-positive.  Since both processes
satisfy \eqref{eq:zero:leb} and $R_+$ is finite, for Lebesgue-a.e. $s$ before
$\tau_{r,m}$ both $X(s)$ and $\widetilde X(s)$ belong to $\C$.  At such times the
indicators in the singular terms are equal to one, and we may write the singular
drift as
\begin{equation*}
    G(s,x)
    =
    \sum_{\alpha\in R_+}
    \alpha\,
    \frac{k_\alpha(s,x)}{\langle x,\alpha\rangle}.
\end{equation*}
Therefore, by Assumption~\ref{ass:uniq:k}, applied with
$x=X(s)$ and $y=\widetilde X(s)$,
\begin{align}
    \sum_{i=1}^N
    \operatorname{sgn}(Z_i(s))
    \bigl(G_i(s,X(s))-G_i(s,\widetilde X(s))\bigr)
    \le 0 .
    \label{eq:uniq:singular-l1-nonpositive}
\end{align}
Thus the second integral in \eqref{eq:uniq:tanaka} is non-positive. Combining this with \eqref{eq:uniq:b-bound}, we obtain
    \begin{align*}
        \E\sum_{i=1}^N |Z_i(t\wedge\tau_{r,m})| &\le L_{T,r} \int_0^t\E\sum_{i=1}^N |Z_i(s\wedge\tau_{r,m})|\,ds.
    \end{align*}
Gronwall's lemma gives
\begin{align}
    \E\sum_{i=1}^N |Z_i(t\wedge\tau_{r,m})|
        &=
        0,
        \qquad
        0\le t\le T.
    \label{eq:uniq:yw-zero}
\end{align}

\medskip

\noindent
\emph{Step 2: the Lipschitz uniqueness setting.}
Assume now that alternative~\textup{(b)} in Assumption~\ref{ass:uniq:sigma} holds, together with Assumptions~\ref{ass:uniq:b:euclidean} and \ref{ass:uniq:k:dissipative}. It\^o's formula gives
    \begin{align*}
        \E |Z(t\wedge\tau_{r,m})|^2 &= \E\int_0^{t\wedge\tau_{r,m}} \sum_{i=1}^N|\sigma_i(s,X(s))-\sigma_i(s,\widetilde X(s))|^2\,ds \nonumber\\
            &\quad+ 2\E\int_0^{t\wedge\tau_{r,m}} \inner{Z(s)}{b(s,X(s))-b(s,\widetilde X(s))}\,ds \nonumber\\
            &\quad+ 2\E\int_0^{t\wedge\tau_{r,m}} \inner{Z(s)}{G(s,X(s))-G(s,\widetilde X(s))}\,ds.
    \end{align*}
The stochastic integral has expectation zero because the martingale integrand is bounded after stopping at $\tau_r$. For Lebesgue-a.e. $s$ before $\tau_{r,m}$, both $X(s)$ and $\widetilde X(s)$ belong to $\C$, because both solutions satisfy \eqref{eq:zero:leb} and $R_+$ is finite.  Hence Assumption~\ref{ass:uniq:k:dissipative} gives
    \begin{equation*}
        \inner{Z(s)}{G(s,X(s))-G(s,\widetilde X(s))} \le0.
    \end{equation*}
The local Lipschitz condition on $\sigma$ and Assumption~\ref{ass:uniq:b:euclidean} therefore imply that, for a finite constant $C_{T,r}$,
    \begin{align}
        \E |Z(t\wedge\tau_{r,m})|^2 &\le C_{T,r} \int_0^t \E |Z(s\wedge\tau_{r,m})|^2\,ds. 
        \label{eq:uniq:lip-gronwall}
    \end{align}
By Gronwall's lemma,
\begin{align}
    \E |Z(t\wedge\tau_{r,m})|^2
        &=
        0,
        \qquad
        0\le t\le T.
    \label{eq:uniq:lip-zero}
\end{align}

\medskip

In either uniqueness setting, \eqref{eq:uniq:yw-zero} or \eqref{eq:uniq:lip-zero}
implies that
\begin{align}
    X(t\wedge\tau_{r,m})
        &=
        \widetilde X(t\wedge\tau_{r,m})
        \qquad\text{a.s.}
    \label{eq:uniq:stopped-equality}
\end{align}
for every fixed $t\in[0,T]$.  Intersecting the corresponding full-probability
events over rational $t\in[0,T]$ and using continuity of the paths, the equality
holds simultaneously for all $t\in[0,T]$.

Letting $m\to\infty$ gives equality up to $\tau_r$.  Letting $r\to\infty$ gives
\begin{align}
    X(t)
        &=
        \widetilde X(t),
        \qquad
        0\le t<T\wedge\life{X}\wedge\life{\widetilde X},
        \qquad
        \text{a.s.}
    \label{eq:uniq:equality-up-to-T}
\end{align}
Since $T>0$ is arbitrary, \eqref{eq:pathwise-uniq-conclusion} follows.

If the lifetimes are explosion lifetimes, equality of the paths before the minimum
lifetime implies equality of all compact exit times, and therefore
$\life{X}=\life{\widetilde X}$ a.s.  If a common deterministic lifetime is
prescribed, the equality of lifetimes is immediate.
\end{proof}

\begin{remark}
Theorem~\ref{thm:pathwise-uniq-indicator-nonsticky} does not assert that a solution exists.  It is an implication: whenever two non-sticky solutions of \eqref{eq:degenerate-existence-indicator} exist on the same filtered probability space, are driven by the same Brownian motion, and have the same initial condition, they coincide up to their common lifetime.  Since non-sticky solutions satisfy \eqref{eq:zero:leb}, they are equivalently solutions of \eqref{eq:main:SDE}, with the singular force interpreted through its interior values.  The theorem is therefore not a uniqueness statement for the broader indicator equation without the non-sticky restriction.
\end{remark}

\begin{remark}[Relation with condition \textup{(A1)} from \cite{bib:GraczykMalecki:2014}] Let us spell out what Assumption~\ref{ass:uniq:k} gives in type $\AN$. For $\alpha=e_i-e_j$, with $i<j$, the corresponding scalar force is
    \begin{align*}
        \frac{k_{e_i-e_j}(t,x)}{x_i-x_j},
    \end{align*}
where, in our chamber convention, $x_i>x_j$.  Suppose that this coefficient depends only on the two coordinates involved, namely
    \begin{align*}
        k_{e_i-e_j}(t,x) &= H_{ij}(t,x_i,x_j).
    \end{align*}
Then Assumption~\ref{ass:uniq:k} says that the force
    \begin{align*}
        \frac{H_{ij}(t,u,v)}{u-v}, \qquad u>v,
    \end{align*}
is non-increasing as the gap $u-v$ increases. Equivalently, in the chamber convention used here, for $w<x<y<z$, one obtains
    \begin{align}
        H_{ij}(t,z,w)(y-x) &\le H_{ij}(t,y,x)(z-w).
        \label{eq:uniq:ordered-particle-A1-descending}
    \end{align}
Indeed, the pair $(z,w)$ has the larger gap $z-w$, while the pair $(y,x)$ has the smaller gap $y-x$. If one rewrites the same statement in the increasing-order convention used in the ordered-particle formulation of \cite{bib:GraczykMalecki:2014}, then \eqref{eq:uniq:ordered-particle-A1-descending} becomes
    \begin{align*}
        H_{ij}(t,w,z)(y-x) &\le H_{ij}(t,x,y)(z-w), \qquad w<x<y<z,
    \end{align*}
which is precisely condition \textup{(A1)} from that setting.
\end{remark}

\begin{proof}[Proof of Theorem~\ref{thm:strong:existence:uniqueness}]
Under existence alternative~\ref{ass:exist:positive}, weak existence of a solution of \eqref{eq:main:SDE} follows from Theorem~\ref{thm:existence}.  Moreover, the singular integrals are finite on compact subintervals of the lifetime.  Since
$k_\alpha$ is strictly positive on the corresponding wall, this finiteness implies the zero-occupation property \eqref{eq:zero:leb}.  Thus the weak solution belongs to the non-sticky class.

Under existence alternative~\ref{ass:exist:degenerate}, weak existence of a solution of \eqref{eq:main:SDE} in the non-sticky class follows from Theorem~\ref{thm:degenerate-remove-indicator}.  In that case the indicator formulation is first used to construct a weak solution, and the non-sticking conclusion established in Theorem~\ref{thm:degenerate-remove-indicator} removes the indicator from the singular drift.

Pathwise uniqueness in the non-sticky class follows from Theorem~\ref{thm:pathwise-uniq-indicator-nonsticky}.  It remains to pass from weak existence and pathwise uniqueness to strong existence.  We use the Yamada-Watanabe principle in localized form.  For $r>0$ define the exit time
    \begin{align*}
        \tau_r^X &:= \inf\left\{t \ge0:\ |X(t)|\ge r\right\}.
    \end{align*}
Weak existence and pathwise uniqueness hold for the equation stopped at $\tau_r^X$.  Hence, by the Yamada-Watanabe principle, the stopped equation admits a strong solution and has uniqueness in law.  These stopped strong solutions are consistent as $r$ varies.  Since, for the explosion lifetime,
    \begin{align*}
        \life{X} &= \lim_{r\to\infty}\tau_r^X,
    \end{align*}
letting $r\to\infty$ gives a strong solution up to its lifetime, adapted to the completed filtration generated by the initial condition and the Brownian motion. The same localization argument gives uniqueness in law for weak solutions of \eqref{eq:main:SDE} in the non-sticky class with initial law $\nu$.
\end{proof}


\section{Mean-field convergence theorem}
\label{sec:mean-field-proof}

We use the notation of Section~\ref{subsec:mean-field-convergence}. We additionally define, for the short-root contribution in type $B$, the following
    \begin{align*}
        H_f(t,x) &:=
        \begin{cases}
            k(t,x)\dfrac{f'(x)}{x}, & x>0,\\[1.2ex]
            k(t,0)f''(0), & x=0.
        \end{cases}
    \end{align*}
In types $\AN$ and $\DN$ we put $H_f\equiv0$. Moreover, in the $\AN$ and $\BN$ cases we work directly with the empirical measure $\mu_t^{(N)}$ defined there.
In the $\DN$ case, it is convenient in the proof to introduce the auxiliary empirical
measure
\begin{align*}
    \widehat\mu_t^{(N)}
        &:=
        \frac1N
        \sum_{i=1}^{N}
        \delta_{|X_i^{(N)}(t)|}.
\end{align*}
This auxiliary measure differs from the empirical measure used in the statement only
by a negligible term. Indeed, for every bounded test function $f$,
\begin{align*}
    \sup_{0\le t\le T}
    \left|
        \left\langle\widehat\mu_t^{(N)},f\right\rangle
        -0                                                                 
        \left\langle\mu_t^{(N)},f\right\rangle
    \right|                                   
    &\le
    \frac{2\|f\|_\infty}{N}.
\end{align*}
Moreover, since $|X_N^{(N)}(t)|\le X_{N-1}^{(N)}(t)$ in the type $\DN$ chamber, the
moment assumption for $\mu_0^{(N)}$ implies the corresponding moment bound for
$\widehat\mu_0^{(N)}$. Hence tightness and convergence for
$\widehat\mu^{(N)}$ imply the same conclusions for $\mu^{(N)}$. Thus, throughout the proof we may write
\begin{align*}
    Y_i^{(N)}(t)
        &:=
        \begin{cases}
            X_i^{(N)}(t), & \mathfrak r=A,B,\\
            |X_i^{(N)}(t)|, & \mathfrak r=D,
        \end{cases}
\end{align*}
and then, in all three cases,
\begin{align*}
    \mu_t^{(N)}
        =
        \frac1N
        \sum_{i=1}^N
        \delta_{Y_i^{(N)}(t)}.
\end{align*}
For deterministic chamber points $x$, we similarly write $y_i=x_i$ in types $A$ and
$B$, and $y_i=|x_i|$ in type $D$.

In root-wise notation, the classical mean-field scaling is
\begin{align*}
    k_{e_i-e_j}^{(N)}(t,x)
        &:=
        \frac1N k(t,y_i,y_j),
        \qquad 1\le i<j\le N,
        \\
    k_{e_i+e_j}^{(N)}(t,x)
        &:=
        \frac1N k(t,y_i,y_j),
        \qquad 1\le i<j\le N,
        \qquad \mathfrak r=B,D,
        \\
    k_{e_i}^{(N)}(t,x)
        &:=
        k(t,y_i),
        \qquad i=1,\ldots,N,
        \qquad \mathfrak r=B.
\end{align*}
where $y_i=x_i$ in types $A$ and $B$, and $y_i=|x_i|$ in type $D$.

\begin{proof}[Proof of Theorem~\ref{thm:mf:classical}]
Fix $T>0$.  We first prove the uniform moment estimate.  Let
$p\in\{2,4,8\}$ and put
\begin{align}
    m_p^{(N)}(t)
        &:=
        \left\langle\mu_t^{(N)},|x|^p\right\rangle.
    \label{eq:mf:moment-def}
\end{align}
Applying It\^o's formula to $|x|^p$ in type $\AN$, to $x^p$ in type $\BN$, and to
$|x|^p$ in type $\DN$, localizing the singular drift if necessary, and then using the
symmetry of the pair kernel $k$, gives
\begin{align}
    \E m_p^{(N)}(t)
    &\le
    \E m_p^{(N)}(0)
    +
    C_{p,T}
    \int_0^t
    \left(1+\E m_p^{(N)}(s)\right)\,ds.
    \label{eq:mf:moment-gronwall-before}
\end{align}
Indeed, the one-particle drift and martingale terms are controlled by the growth
assumptions in \ref{ass:mf:coeff}.  For the pair terms one uses the bounds
\begin{align}
    \left|
        \frac{x^{p-1}-y^{p-1}}{x-y}
    \right|
    (1+|x|)(1+|y|)
    &\le
    C_p(1+|x|^p+|y|^p),
    \label{eq:mf:minus-moment-bound}
    \\
    \left|
        \frac{x^{p-1}+y^{p-1}}{x+y}
    \right|
    (1+x)(1+y)
    &\le
    C_p(1+x^p+y^p),
    \qquad x,y\ge0,
    \label{eq:mf:plus-moment-bound}
\end{align}
and for the short-root term in type $\BN$ one uses
\begin{align}
    x^{p-2}k(t,x)
    &\le
    C_{p,T}(1+x^p),
    \qquad x\ge0.
    \label{eq:mf:wall-moment-bound}
\end{align}
Letting the localization level tend to infinity and applying Gronwall's lemma yields
\begin{align}
    \sup_{N\ge1}
    \sup_{0\le t\le T}
    \E\left\langle\mu_t^{(N)},|x|^p\right\rangle
    &<
    \infty,
    \qquad p=2,4,8.
    \label{eq:mf:moment-bound}
\end{align}

We now derive the empirical-measure decomposition.  Fix
$f\in D$.  It\^o's formula, together with the symmetrization of
the pair roots, gives
\begin{align}
    \left\langle\mu_t^{(N)},f\right\rangle
    &=
    \left\langle\mu_0^{(N)},f\right\rangle
    +
    M_f^{(N)}(t)
    +
    A_{\sigma,f}^{(N)}(t)
    +
    \int_0^t
    \left\langle\mu_s^{(N)},b(s,\cdot)f'\right\rangle\,ds
    \notag
    \\
    &\quad+
    \eta_{\mathfrak r}^0
    \int_0^t
    \left\langle\mu_s^{(N)},H_f(s,\cdot)\right\rangle\,ds
    \notag
    \\
    &\quad+
    \frac12
    \int_0^t
    \int_{E^2}
    D_f(x,y)
    k(s,x,y)\,
    \mu_s^{(N)}(dx)\mu_s^{(N)}(dy)\,ds
    +
    R_f^{(N)}(t),
    \label{eq:mf:finite-semimartingale}
\end{align}
where
\begin{align}
    M_f^{(N)}(t)
    &:=
    \frac1{N\sqrt N}
    \sum_{i=1}^N
    \int_0^t
    f'\bigl(Y_i^{(N)}(s)\bigr)
    \sigma\bigl(s,Y_i^{(N)}(s)\bigr)\,d\widetilde B_i(s),
    \label{eq:mf:martingale-term}
    \\
    A_{\sigma,f}^{(N)}(t)
    &:=
    \frac1{2N}
    \int_0^t
    \left\langle
        \mu_s^{(N)},
        \sigma^2(s,\cdot)f''
    \right\rangle\,ds.
    \label{eq:mf:diffusion-finite-term}
\end{align}
Here $\widetilde B_i=B_i$ in types $\AN$ and $\BN$; in type $\DN$ the possible sign coming
from the map $x\mapsto |x|$ is absorbed into the Brownian motion.

The term $R_f^{(N)}$ contains the diagonal correction
\begin{align}
    R_{f,\mathrm{diag}}^{(N)}(t)
    &:=
    -
    \frac1{2N}
    \int_0^t
    \left\langle
        \mu_s^{(N)},
        D_f(x,x)k(s,x,x)
    \right\rangle\,ds.
    \label{eq:mf:diagonal-remainder}
\end{align}
This correction appears because the finite particle sums run over $i\neq j$, whereas
the product measure $\mu_s^{(N)}(dx)\mu_s^{(N)}(dy)$ contains the diagonal.  In type
$\DN$, $R_f^{(N)}$ may also contain the harmless one-coordinate correction produced by
passing from the possibly signed chamber coordinate to its absolute value in the
one-particle drift.  This additional term is of order $N^{-1}$ and is estimated in
the same way as the diagonal correction.

By the growth assumptions in \ref{ass:mf:coeff} and the moment bound
\eqref{eq:mf:moment-bound},
\begin{align}
    \E\left[
        \sup_{0\le t\le T}|M_f^{(N)}(t)|^2
    \right]
    &\le
    \frac{C_{f,T}}{N^2}
    \int_0^T
    \E\left\langle\mu_s^{(N)},1+|x|^2\right\rangle\,ds
    \longrightarrow0,
    \label{eq:mf:martingale-vanishes}
    \\
    \E\left[
        \sup_{0\le t\le T}|A_{\sigma,f}^{(N)}(t)|
    \right]
    &\le
    \frac{C_{f,T}}{N}
    \int_0^T
    \E\left\langle\mu_s^{(N)},1+|x|^2\right\rangle\,ds
    \longrightarrow0,
    \label{eq:mf:diffusion-vanishes}
    \\
    \E\left[
        \sup_{0\le t\le T}|R_f^{(N)}(t)|
    \right]
    &\le
    \frac{C_{f,T}}{N}
    \int_0^T
    \E\left\langle\mu_s^{(N)},1+|x|^2\right\rangle\,ds
    \longrightarrow0.
    \label{eq:mf:diagonal-vanishes}
\end{align}

The same decomposition, the Burkholder-Davis-Gundy inequality, H\"older's
inequality, and the eighth-moment estimate in \eqref{eq:mf:moment-bound} imply the
increment estimate
\begin{align}
    \E\left|
        \left\langle\mu_t^{(N)},f\right\rangle
        -
        \left\langle\mu_s^{(N)},f\right\rangle
    \right|^4
    &\le
    C_{f,T}|t-s|^2,
    \qquad 0\le s\le t\le T.
    \label{eq:mf:increment-estimate}
\end{align}
Together with \eqref{eq:mf:moment-bound}, this gives compact containment and tightness
of the laws of $(\mu_t^{(N)})_{0\le t\le T}$ in
$C([0,T],\mathcal P(E))$.  For instance, one may use a countable
convergence-determining family contained in $\mathcal D_{\mathfrak r}$ and apply the
standard tightness criterion for measure-valued continuous processes.

It remains to identify subsequential limits.  Let
$(\mu^{(N_m)})_{m\ge1}$ be a weakly convergent subsequence.  By Skorokhod's
representation theorem we may assume that
\begin{align}
    \mu^{(N_m)}
    &\longrightarrow
    \mu
    \qquad
    \text{a.s. in }C([0,T],\mathcal P(E)).
    \label{eq:mf:skorokhod}
\end{align}
The initial term converges by \ref{ass:mf:initial}.  The martingale term, the finite
diffusion term, and the remainder vanish by
\eqref{eq:mf:martingale-vanishes}-\eqref{eq:mf:diagonal-vanishes}.  Thus no
diffusion term survives in the limiting equation.

The ordinary drift term passes to the limit by continuity of $b$, truncation on
compact sets, and the uniform moment bound \eqref{eq:mf:moment-bound}.  The wall term
is handled in the same way.  Indeed, for $f\in\mathcal D_B$,
\begin{align}
    |H_f(t,x)|
    &\le
    C_{f,T}(1+x),
    \qquad x\ge0,\quad 0\le t\le T.
    \label{eq:mf:wall-growth}
\end{align}

For the pair interaction, note first that $D_f^{\mathfrak r}$ is bounded on
$E_{\mathfrak r}^2$.  The bound for $D_f^-$ follows from the mean-value theorem, and
for $D_f^+$ we use $f'(0)=0$:
\begin{align}
    |D_f^+(x,y)|
    &\le
    \|f''\|_\infty,
    \qquad x,y\ge0.
    \label{eq:mf:D-plus-bound}
\end{align}
Consequently, by the growth assumption on $k$,
\begin{align}
    \left|
        D_f(x,y)k(s,x,y)
    \right|
    &\le
    C_{f,T}(1+|x|)(1+|y|),
    \qquad 0\le s\le T.
    \label{eq:mf:interaction-uniform-integrability}
\end{align}
The second moments are uniformly bounded by \eqref{eq:mf:moment-bound}.  Hence
truncation and weak convergence imply convergence of the interaction terms.

Passing to the limit in \eqref{eq:mf:finite-semimartingale} gives
\eqref{eq:mf:limit-equation}.  Therefore every subsequential weak limit solves the
limiting equation.  If \eqref{eq:mf:limit-equation} has a unique solution in
$C([0,T],\mathcal P(E))$, then all subsequential limits coincide, and
therefore the full sequence converges weakly to that deterministic solution.  Since
the limit is deterministic, this convergence is equivalent to convergence in
probability.
\end{proof}


\section{Examples}
\label{sec:examples}
\subsection{Dyson-Cépa-Lépingle systems with logarithmic root potentials}
\label{subsec:examples-dyson-cepa-lepingle}

We begin with the non-degenerate logarithmic repulsion case.  This is the root-system analogue of the electrostatic particle systems of Cépa-Lépingle~\cite{bib:CepaLepingle:1997}.  Let $R$ be one of the classical root systems considered in this paper, let $R_+$ be the corresponding set of positive roots, and let $\Cc$ be the closed Weyl chamber.

Let
    \begin{equation*}
        m_\alpha:[0,\infty)\longrightarrow(0,\infty), \qquad \alpha\in R_+,
    \end{equation*}
be continuous functions.  We do not require the multiplicities $m_\alpha$ to be constant on Weyl-orbits, although this is the usual radial Dunkl specialization. Define, for $x\in\C$,
    \begin{equation*}
        h_t(x) := \prod_{\alpha\in R_+} \langle x,\alpha\rangle^{m_\alpha(t)} .
    \end{equation*}
Then
    \begin{equation*}
        \nabla\log h_t(x) = \sum_{\alpha\in R_+} m_\alpha(t) \frac{\alpha}{\langle x,\alpha\rangle}.
    \end{equation*}
We consider the chamber-valued equation
    \begin{equation}
        \label{eq:example:dyson-cepa-lepingle-root}
        dX(t) = \sigma(t,X(t))\,dB(t) + b(t,X(t))\,dt + \nabla\log h_t(X(t))\,dt, \qquad X(t)\in\Cc,
    \end{equation}
where $\sigma$ is diagonal, with entries $\sigma_1,\ldots,\sigma_N$. Equivalently, \eqref{eq:example:dyson-cepa-lepingle-root} is the root-system SDE
    \eqref{eq:main:SDE} with
    \begin{equation*}
        k_\alpha(t,x)=m_\alpha(t), \qquad \alpha\in R_+.
    \end{equation*}
Thus the singular coefficients are strictly positive on every root wall.

\begin{proposition}[Existence and uniqueness for logarithmic root-barrier systems]
\label{prop:example:dyson-cepa-lepingle}
Assume \ref{ass:cont} and \ref{ass:sigma}.  Then, for every initial condition
$x_0\in\Cc$, equation \eqref{eq:example:dyson-cepa-lepingle-root} has a weak non-sticky solution in $\Cc$, up to its lifetime. If, in addition, the growth condition \ref{ass:growth:p2} holds, then the solution is global.

Assume moreover that one of the following uniqueness regimes holds:
\begin{enumerate}[label=\textup{(\roman*)}]
    \item the Yamada-Watanabe alternative \textup{(a)} in
    Assumption~\ref{ass:uniq:sigma} holds, together with
    Assumption~\ref{ass:uniq:b};

    \item the locally Lipschitz alternative \textup{(b)} in
    Assumption~\ref{ass:uniq:sigma} holds, together with
    Assumption~\ref{ass:uniq:b:euclidean}.
\end{enumerate}
Then \eqref{eq:example:dyson-cepa-lepingle-root} has pathwise uniqueness in the non-sticky class.  Consequently, by the Yamada-Watanabe principle, it has a strong solution, unique in the non-sticky class, up to its lifetime.  Under \ref{ass:growth:p2}, the strong solution is global.
\end{proposition}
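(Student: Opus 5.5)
The proof is a direct specialization of the general theorems, so the plan is to verify the hypotheses of each one for $k_\alpha(t,x)=m_\alpha(t)$.

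\emph{Existence and non-sticking.} Since each $m_\alpha$ is continuous and takes values in $(0,\infty)$, Assumption~\ref{ass:cont} holds for $k_\alpha$. Assumption~\ref{ass:k:positive} holds trivially, because $k_\alpha(t,x)=m_\alpha(t)>0$ everywhere and not only on the walls. Theorem~\ref{thm:existence} therefore gives a weak $\Cc$-valued solution up to its lifetime, with locally finite singular integrals $\int_0^t m_\alpha(s)\,ds/\langle X(s),\alpha\rangle$.

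Non-stickiness is immediate. On $[0,t]$ the weight $m_\alpha$ is bounded below by a positive constant, so finiteness of this integral forces
\begin{equation*}
\int_0^t \mathbf 1_{\{\langle X(s),\alpha\rangle=0\}}\,ds=0.
\end{equation*}
This is \eqref{eq:zero:leb}. If \ref{ass:growth:p2} holds as well, Theorem~\ref{thm:non-explosion} gives $\life{X}=\infty$. Note that \ref{ass:growth:p2} already includes the constant term $\sum_\alpha m_\alpha(t)$, which is bounded on $[0,T]$.

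\emph{Uniqueness.} Regime (ii) needs only \ref{ass:uniq:k:dissipative}. By Remark~\ref{rem:uniq:logarithmic-potentials}, $G(t,\cdot)=\nabla\log h_t$ with
\begin{equation*}
\log h_t(x)=\sum_{\alpha\in R_+} m_\alpha(t)\log\langle x,\alpha\rangle,
\end{equation*}
which is concave on $\C$ as a positive combination of logarithms of positive linear forms. Hence $G$ is dissipative.

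Regime (i) needs the $\ell^1$ condition \ref{ass:uniq:k}, and I expect this to be the main obstacle. It does not follow from concavity alone, so I would check it root by root. For fixed $\alpha$, the contribution to the $\ell^1$ sum is
\begin{equation*}
m_\alpha(t)\sum_i \operatorname{sgn}(x_i-y_i)\,\alpha_i\Bigl(\frac1{\langle x,\alpha\rangle}-\frac1{\langle y,\alpha\rangle}\Bigr).
\end{equation*}
Since $u\mapsto 1/u$ is decreasing, it suffices to have $\sum_i\operatorname{sgn}(x_i-y_i)\alpha_i$ of the same sign as $\langle x-y,\alpha\rangle$ whenever the latter is nonzero. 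This is the analogue of Graczyk--Małecki's (A1) for a constant $H$.

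For $\alpha=e_i$ the sign condition is trivial. For $\alpha=e_i\pm e_j$ the expression is $\operatorname{sgn}(x_i-y_i)\pm\operatorname{sgn}(x_j-y_j)$, and a short case analysis over the signs of $x_i-y_i$ and $\pm(x_j-y_j)$ shows it is either $0$ or has the sign of $(x_i-y_i)\pm(x_j-y_j)$. Summing over $\alpha$ gives $\le0$, which is \ref{ass:uniq:k}.

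With either regime verified, Theorem~\ref{thm:pathwise-uniq-indicator-nonsticky} gives pathwise uniqueness among non-sticky solutions. Theorem~\ref{thm:strong:existence:uniqueness}, applied under \ref{ass:exist:positive}, then gives the strong solution. Under \ref{ass:growth:p2} this solution is global by Theorem~\ref{thm:non-explosion}.
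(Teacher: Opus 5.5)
Your proposal is correct and follows the paper's proof essentially step by step. You get \ref{ass:k:positive} from $m_\alpha>0$, then use Theorem~\ref{thm:existence} for existence and non-stickiness, concavity of $\log h_t$ for \ref{ass:uniq:k:dissipative}, and the root-by-root check $\langle s,\alpha\rangle\langle x-y,\alpha\rangle\ge0$ (with $s_i=\operatorname{sgn}(x_i-y_i)$) for \ref{ass:uniq:k}, before invoking Theorems~\ref{thm:pathwise-uniq-indicator-nonsticky}, \ref{thm:strong:existence:uniqueness} and \ref{thm:non-explosion}; your reduction of the $e_i-e_j$ case to the $e_i+e_j$ case by flipping the sign of $x_j-y_j$ is only a cosmetic variant of the paper's appeal to monotonicity of the sign function.
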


\begin{proof}
Since $k_\alpha(t,x)=m_\alpha(t)>0$, Assumption~\ref{ass:k:positive} holds.  The
weak existence statement and the integrability of the singular drifts therefore
follow from Theorem~\ref{thm:existence}.  Moreover, since the corresponding
singular coefficient is strictly positive on each wall, the finiteness of the
singular integrals implies the zero-occupation property
\begin{equation*}
    \int_0^t
    \mathbf 1_{\{\langle X(s),\alpha\rangle=0\}}\,ds
    =
    0,
    \qquad
    \alpha\in R_+,
    \qquad
    t<\life{X},
    \qquad
    \text{a.s.}
\end{equation*}
Thus the solution is non-sticky.  If Assumption~\ref{ass:growth:p2} holds, global
existence follows from Theorem~\ref{thm:non-explosion}.

It remains to check the uniqueness assumptions.  Let
\begin{equation*}
    G(t,x)
    :=
    \nabla\log h_t(x)
    =
    \sum_{\alpha\in R_+}
    m_\alpha(t)
    \frac{\alpha}{\langle x,\alpha\rangle},
    \qquad x\in\C.
\end{equation*}
We first verify Assumption~\ref{ass:uniq:k}.  Let $x,y\in\C$ and set
\begin{equation*}
    d_i:=x_i-y_i,
    \qquad
    s_i:=\operatorname{sgn}(d_i),
    \qquad i=1,\ldots,N.
\end{equation*}
For a positive root $\alpha$, the corresponding contribution to the left-hand side
of \eqref{eq:uniq:k:l1-dissipative} is
\begin{equation*}
    m_\alpha(t)
    \langle s,\alpha\rangle
    \left(
        \frac1{\langle x,\alpha\rangle}
        -
        \frac1{\langle y,\alpha\rangle}
    \right)
    =
    -
    m_\alpha(t)
    \frac{
        \langle s,\alpha\rangle\langle d,\alpha\rangle
    }
    {
        \langle x,\alpha\rangle\langle y,\alpha\rangle
    }.
\end{equation*}
The denominators are positive since $x,y\in\C$, and $m_\alpha(t)>0$.

For the classical root systems considered here, the positive roots have the forms
$e_i-e_j$, $e_i+e_j$, $e_i$, or $2e_i$, depending on the type.  For these roots,
\begin{equation*}
    \langle s,\alpha\rangle\langle d,\alpha\rangle\ge0.
\end{equation*}
Indeed,
\begin{equation*}
    (s_i-s_j)(d_i-d_j)\ge0
\end{equation*}
for $\alpha=e_i-e_j$, by monotonicity of the sign function,
\begin{equation*}
    (s_i+s_j)(d_i+d_j)\ge0
\end{equation*}
for $\alpha=e_i+e_j$, and
\begin{equation*}
    s_i d_i\ge0
\end{equation*}
for $\alpha=e_i$ or $2e_i$.  Hence each root contribution is nonpositive.  Summing
over $\alpha\in R_+$ gives
\begin{equation*}
    \sum_{i=1}^N
    \operatorname{sgn}(x_i-y_i)
    \bigl(G_i(t,x)-G_i(t,y)\bigr)
    \le0.
\end{equation*}
Thus Assumption~\ref{ass:uniq:k} holds.  Therefore the first uniqueness regime in
the statement gives pathwise uniqueness by
Theorem~\ref{thm:pathwise-uniq-indicator-nonsticky}.

For the second regime, note that $\log h_t$ is concave on $\C$, because it is a
positive linear combination of the concave functions
$x\mapsto\log\langle x,\alpha\rangle$.  Therefore its gradient is dissipative:
\begin{equation*}
    \bigl\langle x-y,\nabla\log h_t(x)-\nabla\log h_t(y)\bigr\rangle
    \le0,
    \qquad x,y\in\C.
\end{equation*}
Equivalently, Assumption~\ref{ass:uniq:k:dissipative} holds for the singular force.
The second uniqueness regime then follows again from
Theorem~\ref{thm:pathwise-uniq-indicator-nonsticky}.  Strong existence and uniqueness
follow from Theorem~\ref{thm:strong:existence:uniqueness}.
\end{proof}

\begin{remark}[More general logarithmic potentials]
The preceding proposition can be extended beyond the pure root-barrier product.
Suppose that, for each $t$, a positive differentiable function $h_t$ on $\C$
satisfies
\begin{equation}
    \label{eq:example:general-log-root-decomposition}
    \nabla\log h_t(x)
    =
    \sum_{\alpha\in R_+}
    \alpha\,
    \frac{k_\alpha(t,x)}{\langle x,\alpha\rangle},
    \qquad x\in\C,
\end{equation}
where each $k_\alpha$ extends continuously to $[0,\infty)\times\Cc$, is
nonnegative on $[0,\infty)\times\Cc$, and is strictly positive on the wall
$\{\langle x,\alpha\rangle=0\}$.  Then, under the same standing assumptions on
$\sigma$ and $b$, the existence conclusion of
Proposition~\ref{prop:example:dyson-cepa-lepingle} remains valid.

If, in addition, $\log h_t$ is concave for each $t$, then the singular force
$\nabla\log h_t$ satisfies Assumption~\ref{ass:uniq:k:dissipative}.  Hence, under
the locally Lipschitz diffusion alternative \textup{(b)} in
Assumption~\ref{ass:uniq:sigma} and the Euclidean one-sided Lipschitz condition
Assumption~\ref{ass:uniq:b:euclidean}, the same pathwise uniqueness and strong
existence conclusion holds.

For the coordinatewise Yamada-Watanabe regime, one needs instead the
$\ell^1$-dissipativity condition Assumption~\ref{ass:uniq:k}.  This condition holds
for the pure classical root-barrier product treated in
Proposition~\ref{prop:example:dyson-cepa-lepingle}, as shown above.  For a more
general logarithmic potential satisfying
\eqref{eq:example:general-log-root-decomposition}, concavity alone gives the
Euclidean dissipativity condition, but does not by itself imply the
$\ell^1$-dissipativity condition.
\end{remark}
In type $\AN$, we have
    \begin{equation*}
        R_+ = \{e_i-e_j:\ 1\le i<j\le N\}, \qquad \Cc = \{x_1\ge\cdots\ge x_N\}.
    \end{equation*}
If $m_{e_i-e_j}(t)\equiv \beta/2$, then
    \begin{equation*}
        h_t(x) = \prod_{1\le i<j\le N}(x_i-x_j)^{\beta/2},
    \end{equation*}
and \eqref{eq:example:dyson-cepa-lepingle-root} becomes
    \begin{equation*}
        dX_i(t) = \sigma_i(t,X(t))\,dB_i(t) + b_i(t,X(t))\,dt + \frac{\beta}{2} \sum_{j\ne i} \frac{dt}{X_i(t)-X_j(t)}, \qquad \beta>0.
    \end{equation*}
Thus the Dyson-Cépa-Lépingle electrostatic repulsion is recovered, but with the diagonal martingale distortion allowed by Assumption~\ref{ass:sigma}.  If $\sigma_i(t,x)=\sigma(t,x_i)$ and $b_i(t,x)=b(t,x_i)$, then the Yamada-Watanabe regime applies under the usual one-dimensional modulus condition on $\sigma$ and the one-sided Lipschitz condition on $b$, because the logarithmic Dyson force satisfies Assumption~\ref{ass:uniq:k}.  If the coefficients depend on the full configuration and are locally Lipschitz, the Euclidean regime applies instead, using the concavity of $\log h_t$.


\subsection{Generalized particle systems with weak repulsion}
\label{subsec:examples-graczyk-malecki-H}

We next consider type $\AN$ pair-repulsion systems of the form studied by Graczyk and Ma{\l}ecki~\cite{bib:GraczykMalecki:2014}, under the boundary-variance compatibility required by the present root-system construction.  The point of the present formulation is that we do not impose the additional assumptions which force particles never to collide.  Weak repulsion may allow collisions, but the singular drift is still integrable and, under the non-sticking condition below, the solutions spend zero Lebesgue time on collision hyperplanes.

Let $R_+ = \{e_i-e_j:\ 1\le i<j\le N\}$ and $\Cc  =  \{x_1\ge x_2\ge\cdots\ge x_N\}$. For $1\le i\le N$, let $\sigma_i,b_i:[0,\infty)\times\R\to\R$ be continuous.  For $1\le i<j\le N$, let $H_{ij}:[0,\infty)\times\R^2\to[0,\infty)$ be continuous.  We set
    \begin{equation*}
        H_{ji}(t,y,x):=H_{ij}(t,x,y), \qquad i<j,
    \end{equation*}
and define the type $\AN$ root coefficients by
    \begin{equation}
        \label{eq:example:GM-k}
        k_{e_i-e_j}(t,x) := H_{ij}(t,x_i,x_j), \qquad 1\le i<j\le N.
    \end{equation}
The corresponding ordered-particle equation is
    \begin{equation}
        \label{eq:example:GM-H-system}
        dX_i(t) = \sigma_i(t,X_i(t))\,dB_i(t) + b_i(t,X_i(t))\,dt + \sum_{j\ne i} \frac{H_{ij}(t,X_i(t),X_j(t))}{X_i(t)-X_j(t)}\,dt .
    \end{equation}
When collisions are possible, the degenerate construction first gives the indicator version
    \begin{equation}
        \label{eq:example:GM-H-indicator}
        dX_i(t) = \sigma_i(t,X_i(t))\,dB_i(t) + b_i(t,X_i(t))\,dt + \sum_{j\ne i} \frac{H_{ij}(t,X_i(t),X_j(t))}{X_i(t)-X_j(t)} \mathbf 1_{\{X_i(t)\ne X_j(t)\}}\,dt .
    \end{equation}
We impose the following compatibility condition on the diffusion coefficients:
    \begin{equation}
        \label{eq:example:GM-sigma-compatibility}
        \sigma_i^2(t,z)=\sigma_j^2(t,z), \qquad t\ge0,\quad z\in\R,\quad 1\le i,j\le N.
    \end{equation}

\begin{remark}[Diffusion compatibility at collision walls]

Condition~\eqref{eq:example:GM-sigma-compatibility} is not a non-collision assumption.  It is the type $\AN$ form of Assumption~\ref{ass:sigma}.  When two particles collide, the two rank positions become indistinguishable on the chamber boundary.  The invariant-polynomial construction therefore requires the martingale variances attached to these two positions to agree on the collision wall.  Equivalently, the diffusion coefficient must not distinguish between two particles which occupy the same spatial position. This condition is automatic in the exchangeable case
    \begin{equation*}
        \sigma_i(t,x)=\sigma(t,x), \qquad i=1,\ldots,N,
    \end{equation*}
and more generally whenever
    \begin{equation*}
        \sigma_i^2(t,z)=\sigma_j^2(t,z) \qquad \text{for all }i,j,t,z.
    \end{equation*}
It does exclude rank-dependent models for which two particles have different variances at the same collision point.  Such models belong to the broader Graczyk-Ma{\l}ecki framework when their assumptions preventing collisions are imposed, but they are not covered by the present root-system theorem from arbitrary boundary initial data. 
\end{remark}

The normal dominance condition becomes the following pairwise estimate.  For every $T,R>0$ there exist constants $\varepsilon=\varepsilon(T,R)>0$ and $\gamma=\gamma(T,R)>0$ such that, for all $1\le i<j\le N$,
    \begin{equation}
        \label{eq:example:GM-dominance}
        |x-y|\le\varepsilon,\quad |x|\vee |y|\le R,\quad 0\le t\le T \quad\Longrightarrow\quad H_{ij}(t,x,y) \ge \gamma\bigl(\sigma_i^2(t,x)+\sigma_j^2(t,y)\bigr).
    \end{equation}
Here $\gamma$ is only required to be strictly positive.  No threshold such as $\gamma\ge1$ is imposed.  Indeed, for the root $\alpha=e_i-e_j$,
    \begin{equation*}
        \sum_{\ell=1}^N\alpha_\ell^2\sigma_\ell^2(t,x) = \sigma_i^2(t,x_i)+\sigma_j^2(t,x_j),
    \end{equation*}
so \eqref{eq:example:GM-dominance} is exactly Assumption~\ref{ass:k:dom} near the collision wall $x_i=x_j$.

It remains to formulate the face conditions.  Let
    \begin{equation*}
        I=\{a,\ldots,a+k-1\}, \qquad k\ge2,
    \end{equation*}
be a maximal collision block on a relatively open face, and let $x_p=z$, $p\in I$. For a cut $r=1,\ldots,k-1$, put
    \begin{equation*}
        I_L:=\{a,\ldots,a+r-1\}, \qquad I_R:=\{a+r,\ldots,a+k-1\},
    \end{equation*}
and use the detector
    \begin{equation*}
        u_{I,r} := \frac1r\sum_{p\in I_L}e_p - \frac1{k-r}\sum_{p\in I_R}e_p.
    \end{equation*}
The quadratic-variation density of this detector is
    \begin{equation*}
        q_{I,r}(t,x) := \sum_{p\in I_L}\frac{\sigma_p^2(t,z)}{r^2} + \sum_{p\in I_R}\frac{\sigma_p^2(t,z)}{(k-r)^2}.
    \end{equation*}
The corresponding regular detector drift on the face is
    \begin{align*}
        B_{I,r}(t,x) ={}& \frac1r\sum_{p\in I_L}b_p(t,z) - \frac1{k-r}\sum_{p\in I_R}b_p(t,z) +
            \sum_{\ell<a} \frac{ \displaystyle \frac1{k-r}\sum_{p\in I_R} H_{\ell p}(t,x_\ell,z) - \frac1r\sum_{p\in I_L} H_{\ell p}(t,x_\ell,z)}{x_\ell-z} \\
            &+ \sum_{q>a+k-1} \frac{ \displaystyle\frac1r\sum_{p\in I_L} H_{p q}(t,z,x_q)-\frac1{k-r}\sum_{p\in I_R} H_{p q}(t,z,x_q)}{z-x_q}.
    \end{align*}
The denominators are strictly positive on the relatively open face, because the block $I$ is maximal.

We assume the following inward condition:
    \begin{equation}
        \label{eq:example:GM-face-sign}
        B_{I,r}(t,x)\ge0
    \end{equation}
for every collision block $I$, every cut $1\le r\le |I|-1$, every $t\ge0$, and every point $x$ of the corresponding face.  This is precisely Assumption~\ref{ass:face:sign}, written in block coordinates, when the detector family contains the cut detectors $u_{I,r}$.

Finally, to obtain the equation without indicators, we impose the block escape condition at fully degenerate collision points:
    \begin{equation}
        \label{eq:example:GM-block-escape}
        \max_{I,r} q_{I,r}(t,x)=0 \quad\Longrightarrow\quad \max_{I,r} B_{I,r}(t,x)>0
    \end{equation}
on every collision face.  The maxima are taken over all collision blocks of the face and all their cuts.  This is the concrete type $\AN$ form of Assumption~\ref{ass:nonsticky}.  It says that if all detector martingale variations vanish at a degenerate collision point, then some deterministic detector drift must push the process away from the face.

\begin{proposition}[Weak and strong solutions for Generalized particle systems]
    \label{prop:example:GM-H}
    Assume \eqref{eq:example:GM-sigma-compatibility}, \eqref{eq:example:GM-dominance}, and \eqref{eq:example:GM-face-sign}.  Then, for every initial condition $x_0\in\Cc$, there exists a weak solution of the indicator equation \eqref{eq:example:GM-H-indicator}, up to its lifetime.  Moreover, all singular integrals are finite on compact subintervals of the lifetime.

    If, in addition, the block escape condition \eqref{eq:example:GM-block-escape} holds, then there exists a weak solution of the genuine equation \eqref{eq:example:GM-H-system}, and this solution is non-sticky.

    Assume moreover that one of the two uniqueness regimes in Theorem~\ref{thm:pathwise-uniq-indicator-nonsticky} holds.  Then the solution is pathwise unique in the non-sticky class.  Consequently, by Theorem~\ref{thm:strong:existence:uniqueness}, the equation has a strong solution, unique in the non-sticky class, up to its lifetime.  If the radial growth condition \ref{ass:growth:p2} holds, then the solution is global.
\end{proposition}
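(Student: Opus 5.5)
The proof reduces Proposition~\ref{prop:example:GM-H} to the general theorems by checking their hypotheses for the type $\AN$ data \eqref{eq:example:GM-k}. Two structural assumptions come first. Assumption~\ref{ass:cont} holds because $\sigma_i(t,x_i)$, $b_i(t,x_i)$ and $k_{e_i-e_j}(t,x)=H_{ij}(t,x_i,x_j)$ are compositions of continuous maps with coordinate projections. Assumption~\ref{ass:sigma} concerns only the long roots $e_i-e_j$. As $x_i-x_j\downarrow0$ with $|x|$ bounded, the identity \eqref{eq:example:GM-sigma-compatibility} and uniform continuity of $\sigma_j^2$ on compacts give
\begin{equation*}
\sigma_i^2(t,x_i)-\sigma_j^2(t,x_j)=\sigma_j^2(t,x_i)-\sigma_j^2(t,x_j)\to0 .
\end{equation*}
Assumption~\ref{ass:k:dom} is literally \eqref{eq:example:GM-dominance}, since $\langle x,e_i-e_j\rangle=x_i-x_j$ and $a_{e_i-e_j}=\sigma_i^2(t,x_i)+\sigma_j^2(t,x_j)$. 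Here I restrict to $|x|\le R$ and take $x_i\ge x_j$ in the chamber.

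The main step is matching the face data. A set $S\in\mathfrak S$ in type $\AN$ is the full set of positive roots inside the maximal collision blocks $I_1,\dots,I_m$ of a face. I would choose $\mathfrak D(S)$ to consist of the canonical detector $u_S$ together with all cut detectors $u_{I,r}$ for the blocks $I=I_l$. Each $u_{I,r}$ must then be shown admissible:
\begin{itemize}
\item $u_{I,r}\in\operatorname{span}S$, because its coefficients sum to zero on $I$;
\item $\tau_{u_{I,r}}(x)=\operatorname{avg}_{I_L}x-\operatorname{avg}_{I_R}x\ge0$ on $\Cc$, since the coordinates are ordered;
\item $\tau_{u_{I,r}}$ vanishes on $F_S$;
\item $\langle u_{I,r},e_p-e_q\rangle\ge0$ for $p<q$ in the same block, by a direct case check;
\item cuts at $r=p$ show that every root of $S$ lies in some $\Gamma(S,u)$.
\end{itemize}
Next I compute $B_{S,u_{I,r}}$ from \eqref{eq:def:B-S-u-degenerate}. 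The only contributions are $\langle u,b\rangle$ and the roots $e_\ell-e_p$, $e_p-e_q$ linking the block to outside indices; roots inside other blocks are orthogonal to $u_{I,r}$. This gives exactly $B_{I,r}$. Hypothesis \eqref{eq:example:GM-face-sign} then yields Assumption~\ref{ass:face:sign} for the cut detectors.

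For the canonical detector $u_S=\sum_l u_{S_l}$, I would exploit the block structure of $u_{S_l}$: inside a block of size $k$ it has coefficient $k+1-2s$ at position $s$. This is a nonnegative combination of cut detectors,
\begin{equation*}
u_{S_l}=\sum_r r(k-r)\,u_{I_l,r}.
\end{equation*}
By linearity of $u\mapsto B_{S,u}$, the sign condition for $u_S$ follows from the one for the cuts. I expect verifying this identity and its compatibility with the outside-root terms to be the main technical obstacle; if it fails, one can still derive the sign for $u_S$ from the hypothesis case-by-case. With this, Theorem~\ref{thm:degenerate-existence} gives the indicator solution of \eqref{eq:example:GM-H-indicator}, with finite singular integrals.

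For the genuine equation \eqref{eq:example:GM-H-system} I must check Assumption~\ref{ass:nonsticky}. Since $u_S$ is a positive combination of cuts, the vanishing of all $q_{I,r}$ forces $q_{S,u_S}=0$. So the maximum over $\mathfrak D(S)$ of the quadratic variations is zero exactly when $\max_{I,r}q_{I,r}=0$. In that case \eqref{eq:example:GM-block-escape} provides a cut with $B_{I,r}>0$, which gives Assumption~\ref{ass:nonsticky}. Theorem~\ref{thm:degenerate-remove-indicator} then gives a non-sticky solution.

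Finally, pathwise uniqueness in the non-sticky class follows from Theorem~\ref{thm:pathwise-uniq-indicator-nonsticky} under the assumed regime. Strong existence follows from Theorem~\ref{thm:strong:existence:uniqueness} with alternative \ref{ass:exist:degenerate}, and globality under \ref{ass:growth:p2} from Theorem~\ref{thm:non-explosion}.
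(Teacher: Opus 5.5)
Your reduction is correct and matches the paper's proof, which verifies \ref{ass:cont}, \ref{ass:sigma}, \ref{ass:k:dom}, \ref{ass:face:sign} and \ref{ass:nonsticky} for the cut-detector family and then invokes Theorems~\ref{thm:degenerate-existence}, \ref{thm:degenerate-remove-indicator}, \ref{thm:pathwise-uniq-indicator-nonsticky}, \ref{thm:strong:existence:uniqueness} and \ref{thm:non-explosion}; you are more careful than the paper in noticing that its convention forces $u_S\in\mathfrak D(S)$. One small slip: the correct identity is $u_{S_l}=\frac{2}{k}\sum_{r=1}^{k-1}r(k-r)\,u_{I_l,r}$ (your version is off by a factor $k/2$ when $k\ge3$), but only the nonnegativity of the coefficients is used, so the linearity argument giving $B_{S,u_S}\ge0$ is unaffected.
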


\begin{proof}
The coefficient definitions \eqref{eq:example:GM-k} put \eqref{eq:example:GM-H-indicator} into the type $\AN$ root-system form. The compatibility condition \eqref{eq:example:GM-sigma-compatibility} gives Assumption~\ref{ass:sigma}, and \eqref{eq:example:GM-dominance} gives Assumption~\ref{ass:k:dom}.  The block condition \eqref{eq:example:GM-face-sign} is exactly Assumption~\ref{ass:face:sign} for the cut detector family.  Therefore Theorem~\ref{thm:degenerate-existence} gives a weak solution of the indicator equation.

If \eqref{eq:example:GM-block-escape} holds, then Assumption~\ref{ass:nonsticky} is satisfied.  Theorem~\ref{thm:degenerate-remove-indicator} therefore removes the indicator and gives a non-sticky weak solution of \eqref{eq:example:GM-H-system}.  Pathwise uniqueness and strong existence follow from Theorems~\ref{thm:pathwise-uniq-indicator-nonsticky} and \ref{thm:strong:existence:uniqueness}.  The final assertion follows from Theorem~\ref{thm:non-explosion}.
\end{proof}

\begin{remark}[Comparison with the non-collision assumptions]

The estimate \eqref{eq:example:GM-dominance} is much weaker than the usual conditions used to exclude collisions.  It only says that, close to the diagonal, the repulsion coefficient dominates the normal martingale variation by some positive constant.  Thus beta-Dyson type systems with small beta are included. Collisions may occur, but the singular drift remains integrable and the collision hyperplanes have zero Lebesgue occupation time once the non-sticking condition is verified.
\end{remark}

\begin{remark}[The exchangeable case]

A particularly important special case is
\begin{equation*}
    \sigma_i(t,x)=\sigma(t,x),
    \qquad
    b_i(t,x)=b(t,x),
    \qquad
    H_{ij}(t,x,y)=H(t,x,y),
\end{equation*}
where $H(t,x,y)=H(t,y,x)$.  Then the compatibility condition
\eqref{eq:example:GM-sigma-compatibility} is automatic, and the face sign condition
\eqref{eq:example:GM-face-sign} holds with equality:
\begin{equation*}
    B_{I,r}(t,x)=0
\end{equation*}
for every maximal block and every cut.  Hence
Theorem~\ref{thm:degenerate-existence} gives a weak solution of the indicator
equation under the dominance estimate
\begin{equation*}
    H(t,x,y)
    \ge
    \gamma\bigl(\sigma^2(t,x)+\sigma^2(t,y)\bigr)
\end{equation*}
near the diagonal, for any $\gamma>0$.

The block escape condition is more delicate.  For a block at level $z$,
\begin{equation*}
    q_{I,r}(t,x)
    =
    \sigma^2(t,z)
    \left(
        \frac1r+\frac1{k-r}
    \right),
\end{equation*}
whereas $B_{I,r}(t,x)=0$ by symmetry.  Thus the degenerate non-sticking condition is
automatic at collision levels with $\sigma(t,z)>0$, but it fails at symmetric
collision levels with $\sigma(t,z)=0$ unless an additional asymmetric inward drift
is present.

If $H(t,z,z)>0$ on the relevant diagonal, then this wall belongs to the strictly
positive repulsion regime and one should apply Theorem~\ref{thm:existence} rather
than the degenerate indicator theorem.  The genuinely delicate case is
\begin{equation*}
    \sigma(t,z)=0,
    \qquad
    H(t,z,z)=0.
\end{equation*}
At such a fully degenerate collision point the detector quadratic variation
vanishes, the singular repulsion coefficient vanishes, and the symmetric regular
detector drift is zero.  The indicator equation may therefore admit sticky block
solutions.
\end{remark}

\begin{remark}[Uniqueness conditions in pair coordinates]
In the Yamada-Watanabe regime, the rootwise monotonicity assumption
\ref{ass:uniq:k} becomes the following condition: for every pair $i<j$,
\begin{equation*}
    \bigl((x_i-x_j)-(y_i-y_j)\bigr)
    \left(
        \frac{H_{ij}(t,x_i,x_j)}{x_i-x_j}
        -
        \frac{H_{ij}(t,y_i,y_j)}{y_i-y_j}
    \right)
    \le0
\end{equation*}
whenever $x_i>x_j$ and $y_i>y_j$.  In the locally Lipschitz regime, this rootwise
condition may be replaced by the global dissipativity assumption
\ref{ass:uniq:k:dissipative} for the full singular force.
\end{remark}

\begin{proposition}[Mean-field limit for exchangeable pair-repulsion systems]
\label{prop:example:GM-mean-field}
Assume that
\begin{equation*}
    \sigma_i(t,x)=\sigma(t,x),
    \qquad
    b_i(t,x)=b(t,x),
    \qquad
    H_{ij}(t,x,y)=H(t,x,y)=H(t,y,x),
\end{equation*}
where $\sigma,b$ and $H$ are continuous.  For each $N$, consider
\begin{equation}
    \label{eq:example:GM-mean-field-system}
    dX_i^{(N)}(t)
    =
    \frac1{\sqrt N}\sigma\bigl(t,X_i^{(N)}(t)\bigr)\,dB_i(t)
    +
    b\bigl(t,X_i^{(N)}(t)\bigr)\,dt
    +
    \frac1N
    \sum_{j\ne i}
    \frac{H\bigl(t,X_i^{(N)}(t),X_j^{(N)}(t)\bigr)}
         {X_i^{(N)}(t)-X_j^{(N)}(t)}\,dt .
\end{equation}
Suppose that the finite systems satisfy the existence and integrability assumptions
of Theorem~\ref{thm:mf:classical}; for example, this follows from
Proposition~\ref{prop:example:GM-H} whenever the corresponding dominance, face sign,
and non-sticking conditions hold.  Assume also the coefficient growth and initial
moment hypotheses \ref{ass:mf:coeff}-\ref{ass:mf:initial}.  Then the empirical
measure processes
\begin{equation*}
    \mu_t^{(N)}
    =
    \frac1N\sum_{i=1}^N\delta_{X_i^{(N)}(t)}
\end{equation*}
are tight in $C([0,T],\mathcal P(\R))$ for every $T>0$.  Every subsequential limit
$\mu=(\mu_t)_{0\le t\le T}$ satisfies, for every $f\in C_b^2(\R)$,
\begin{align}
    \langle\mu_t,f\rangle
    &=
    \langle\mu_0,f\rangle
    +
    \int_0^t
    \langle\mu_s,b(s,\cdot)f'\rangle\,ds
     +
    \frac12
    \int_0^t
    \int_{\R^2}
    D_f^{-}(x,y)H(s,x,y)\,
    \mu_s(dx)\mu_s(dy)\,ds.
    \label{eq:example:GM-mean-field-limit}
\end{align}

If the limiting equation \eqref{eq:example:GM-mean-field-limit} has a unique solution
in $C([0,T],\mathcal P(\R))$, then the whole sequence $\mu^{(N)}$ converges to that
solution.
\end{proposition}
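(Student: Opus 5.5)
The plan is to reduce Proposition~\ref{prop:example:GM-mean-field} to Theorem~\ref{thm:mf:classical} in type $\AN$. Only the identification of coefficients and the check of hypotheses have to be done; no new probabilistic argument should be needed.

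First, I match \eqref{eq:example:GM-mean-field-system} with \eqref{eq:mf:root-sde} for $\mathfrak r=A$. The ingredients are $E=\R$, $\eta_A^0=0$, one-coordinate coefficients $\sigma(t,x_i)$ and $b(t,x_i)$, and the pair kernel $k(t,x,y):=H(t,x,y)$, which is symmetric. The root-wise repulsion is then $k^{(N)}_{e_i-e_j}(t,x)=\frac1N H(t,x_i,x_j)$. I check the sign bookkeeping directly. For $\alpha=e_i-e_j$ with $i<j$ one has $\alpha_i=1$, and the contribution to particle $i$ is $\frac1N H(t,x_i,x_j)/(x_i-x_j)$. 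For $\alpha=e_j-e_i$ with $j<i$ one has $\alpha_i=-1$, and the contribution is
\[
-\frac1N\frac{H(t,x_j,x_i)}{x_j-x_i}=\frac1N\frac{H(t,x_i,x_j)}{x_i-x_j},
\]
using $H(t,x_j,x_i)=H(t,x_i,x_j)$. Hence the singular drift in \eqref{eq:mf:root-sde} is exactly $\frac1N\sum_{j\neq i}H/(X_i-X_j)$. Because $k_{e_i-e_j}$ depends only on $x_i,x_j$, the system is of the form \eqref{eq:kalpha:special}.

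Second, I verify the hypotheses. \ref{ass:mf:exist} is assumed in the statement. As the statement indicates, one can also obtain it from Proposition~\ref{prop:example:GM-H} applied to the rescaled coefficients $\sigma/\sqrt N$ and $H/N$. The dominance estimate \eqref{eq:example:GM-dominance} is stable under this rescaling: $H/N\ge\gamma\,(\sigma^2/N)(\dots)$ with the same $\gamma$, and in the exchangeable case the face sign condition holds with equality. \ref{ass:mf:coeff} and \ref{ass:mf:initial} are assumed verbatim, with $k(t,x,y)=H$ and no one-variable $k$ because we are in type $A$. The test class is $D=C_b^2(\R)$, and $D_f=D_f^-$.

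Third, I apply Theorem~\ref{thm:mf:classical}. It gives tightness in $C([0,T],\mathcal P(\R))$. It also says every subsequential limit solves \eqref{eq:mf:limit-equation}, which here becomes \eqref{eq:example:GM-mean-field-limit}: the short-root term drops because $\eta_A^0=0$, and the interaction term is $\frac12\iint D_f^-H\,d\mu_s\,d\mu_s$. The final convergence claim under uniqueness of the limit equation is the last assertion of that theorem.

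I expect no real obstacle. The only point needing care is the sign bookkeeping for $j<i$ done in the first step, which is what requires the symmetry of $H$.
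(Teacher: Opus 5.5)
Your proposal is correct and is essentially the paper's proof, which simply observes that the statement is the type $A_{N-1}$ specialization of Theorem~\ref{thm:mf:classical} with $k(t,x,y)=H(t,x,y)$. Your explicit check that the symmetry of $H$ makes the $j<i$ root terms reproduce $\frac1N\sum_{j\ne i}H/(X_i-X_j)$ is the content behind that one line. One small caveat concerns your aside: obtaining \ref{ass:mf:exist} from Proposition~\ref{prop:example:GM-H} for the genuine, indicator-free system also needs the block escape condition \eqref{eq:example:GM-block-escape}, not only dominance and the face sign condition. Since \ref{ass:mf:exist} is assumed in the statement, this does not affect your argument.
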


\begin{proof}
This is the type $\AN$ specialization of Theorem~\ref{thm:mf:classical}, with
\begin{equation*}
    k(t,x,y)=H(t,x,y).
\end{equation*}
The factor $1/2$ appears because the unordered pair interaction is symmetrized in
the empirical-measure equation.
\end{proof}

\begin{remark}[Relation with random matrix flow limits]
Proposition~\ref{prop:example:GM-mean-field} is in the same spirit as the
empirical-measure limits for general random matrix flows studied by
Ma{\l}ecki-P{\'e}rez~\cite{bib:MaleckiPerez:2022}.  The present statement is
formulated directly at the particle level and allows a time-dependent pair kernel
$H(t,x,y)$, not necessarily coming from a matrix flow.  It also applies to weakly
repulsive finite systems constructed by the degenerate theorem, provided the
non-sticking condition is verified.  For Wishart-type high-dimensional limits, see
also Song-Yao-Yuan~\cite{bib:SongYaoYuan:2020}.
\end{remark}

\subsection{Squared Bessel and Wishart particle systems}
\label{subsec:examples-multivariate-squared-bessel}

We now discuss squared Bessel particle systems and beta-Wishart systems.  This is the main family of examples in which the repulsion coefficient vanishes at a boundary point together with the normal martingale variation.  The degenerate theory is therefore needed, and the formulation with indicators is the natural starting point.

\subsubsection*{The nonnegative $B_N$ formulation}

Let
\begin{equation*}
    \Cc = \{x_1\ge x_2\ge\cdots\ge x_N\ge0\}.
\end{equation*}
Fix parameters
\begin{equation*}
    \beta>0, \qquad \delta\in\R, \qquad \gamma\in\R.
\end{equation*}
The beta-Wishart particle system, written in our decreasing order convention, is
\begin{equation}
    \label{eq:example:MSB-system}
    d\lambda_i(t)
    =
    2\sqrt{\lambda_i(t)}\,dB_i(t)
    +
    \left(
        \delta
        -
        2\gamma\lambda_i(t)
        +
        \beta
        \sum_{j\ne i}
        \frac{\lambda_i(t)+\lambda_j(t)}
             {\lambda_i(t)-\lambda_j(t)}
    \right)dt,
    \qquad
    i=1,\ldots,N.
\end{equation}
For $\gamma=0$ this is the multivariate squared Bessel particle system.  For $\gamma>0$ it is the mean-reverting Laguerre, or beta-Wishart, version.  The sign convention for $\gamma$ is chosen so that positive $\gamma$ gives mean reversion.

Although \eqref{eq:example:MSB-system} contains only the singular differences $\lambda_i-\lambda_j$, it is useful to embed it into the root system $B_N$,
\begin{equation*}
    R_+
    =
    \{e_i:\ 1\le i\le N\}
    \cup
    \{e_i-e_j,\ e_i+e_j:\ 1\le i<j\le N\}.
\end{equation*}
The roots $e_i$ and $e_i+e_j$ are auxiliary: they rewrite continuous drift contributions in a form suitable for the degenerate root-system theorem.  Choose constants
\begin{equation*}
    \theta_0>0,
    \qquad
    \theta_+>0,
\end{equation*}
and define, for $x\in\Cc_B$,
\begin{align}
    \sigma_i(t,x)
    &:={2\sqrt{x_i}},
    \label{eq:example:MSB-sigma}
    \\
    k_{e_i-e_j}(t,x)
    &:=
    \beta(x_i+x_j),
    \qquad 1\le i<j\le N,
    \label{eq:example:MSB-k-minus}
    \\
    k_{e_i+e_j}(t,x)
    &:=
    \theta_+(x_i+x_j),
    \qquad 1\le i<j\le N,
    \label{eq:example:MSB-k-plus}
    \\
    k_{e_i}(t,x)
    &:=
    \theta_0 x_i,
    \qquad 1\le i\le N,
    \label{eq:example:MSB-k-zero}
    \\
    b_i(t,x)
    &:=
    \delta
    -
    2\gamma x_i
    -
    \theta_0
    -
    (N-1)\theta_+ .
    \label{eq:example:MSB-b}
\end{align}
At interior points of $\Cc_B$, the corresponding $B_N$ root-system equation is exactly \eqref{eq:example:MSB-system}.  Indeed,
\begin{equation*}
    \frac{k_{e_i}(t,x)}{x_i}=\theta_0,
    \qquad
    \frac{k_{e_i+e_j}(t,x)}{x_i+x_j}=\theta_+,
\end{equation*}
and these continuous contributions cancel the constants subtracted in \eqref{eq:example:MSB-b}.

Let us verify the degenerate assumptions.  Assumption~\ref{ass:sigma} holds.  For roots $e_i-e_j$,
\begin{equation*}
    \sigma_i^2(t,x)-\sigma_j^2(t,x)=4(x_i-x_j),
\end{equation*}
which vanishes on the collision wall $x_i=x_j$.  For roots $e_i+e_j$, the wall $x_i+x_j=0$ can occur in $\Cc_B$ only when $x_i=x_j=0$, and therefore $\sigma_i^2(t,x)-\sigma_j^2(t,x)$ again vanishes there. The normal dominance condition holds globally, since
\begin{align}
    \sum_{\ell=1}^N(e_i-e_j)_\ell^2\sigma_\ell^2(t,x)
    &=
    4x_i+4x_j,
    \label{eq:example:MSB-a-minus}
    \\
    \sum_{\ell=1}^N(e_i+e_j)_\ell^2\sigma_\ell^2(t,x)
    &=
    4x_i+4x_j,
    \label{eq:example:MSB-a-plus}
    \\
    \sum_{\ell=1}^N(e_i)_\ell^2\sigma_\ell^2(t,x)
    &=
    4x_i.
    \label{eq:example:MSB-a-zero}
\end{align}
Thus
\begin{equation*}
    k_\alpha(t,x)
    \ge
    c_*
    \sum_{\ell=1}^N\alpha_\ell^2\sigma_\ell^2(t,x),
    \qquad
    \alpha\in R_+,
    \qquad
    c_*=\frac{1}{4}\min\{\beta,\theta_+,\theta_0\}>0.
\end{equation*}

Positive collision blocks away from zero are locally symmetric in the colliding coordinates.  For cut detectors inside such a block the regular detector drift vanishes by symmetry, and the detector quadratic variation is strictly positive because the common collision level is positive.  Hence these faces do not cause sticking.

The only delicate faces are zero blocks.  Let
\begin{equation*}
    J_r:=\{N-r+1,\ldots,N\},
    \qquad
    1\le r\le N,
\end{equation*}
be a zero block, so that $x_j=0$ for $j\in J_r$ and, if $r<N$, $x_{N-r}>0$.  For the block-center detector
\begin{equation*}
    \bar u_{J_r}:=\frac1r\sum_{j\in J_r}e_j,
\end{equation*}
the detector quadratic variation is
\begin{equation}
    \label{eq:example:MSB-zero-block-q}
    q_{\bar u_{J_r}}(t,x)
    =
    \sum_{j\in J_r}\frac1{r^2}\sigma_j^2(t,x)
    =0
\end{equation}
on the zero-block face.  The regular detector drift is
\begin{equation}
    \label{eq:example:MSB-zero-block-B}
    B_{J_r}(t,x)
    :=
    B_{S,\bar u_{J_r}}(t,x)
    =
    \delta-\theta_0-(N-r)\beta-(r-1)\theta_+ .
\end{equation}
Indeed, the ordinary drift contribution equals $\delta-\theta_0-(N-1)\theta_+$, while each outside positive coordinate contributes $\theta_+-\beta$.  The cut detectors inside the zero block have zero regular detector drift by symmetry.  Thus the block-center detector determines whether the zero block is pushed away from the boundary.

Set
\begin{equation}
    \label{eq:example:MSB-Cr}
    C_r:=\delta-\theta_0-(N-r)\beta-(r-1)\theta_+,
    \qquad r=1,\ldots,N.
\end{equation}
The face sign condition at zero blocks is $C_r\ge0$ for $r=1,\ldots,N$, while the non-sticking condition is verified if
\begin{equation}
    \label{eq:example:MSB-nonsticky-condition}
    C_r>0,
    \qquad r=1,\ldots,N.
\end{equation}
A simple sufficient choice is
\begin{equation}
    \label{eq:example:MSB-parameter-condition}
    0<\theta_+\le\beta,
    \qquad
    0<\theta_0<\delta-\beta(N-1).
\end{equation}
Consequently, the condition
\begin{equation}
    \label{eq:example:MSB-delta-condition}
    \delta>\beta(N-1)
\end{equation}
allows us to choose $\theta_0,\theta_+$ so that all inequalities in \eqref{eq:example:MSB-nonsticky-condition} hold.

We also verify the uniqueness assumptions.  The diffusion coefficient satisfies the Yamada-Watanabe condition, because
\begin{equation*}
    |2\sqrt{u}-2\sqrt{v}|^2\le4|u-v|,
    \qquad u,v\ge0.
\end{equation*}
The regular drift satisfies Assumption~\ref{ass:uniq:b}, since
\begin{equation*}
    \sum_{i=1}^N\operatorname{sgn}(x_i-y_i)
    \bigl(b_i(t,x)-b_i(t,y)\bigr)
    =
    -2\gamma\sum_{i=1}^N|x_i-y_i|
    \le
    2|\gamma|\sum_{i=1}^N|x_i-y_i|.
\end{equation*}
It remains to check Assumption~\ref{ass:uniq:k}.  The auxiliary $e_i$ and $e_i+e_j$ terms give only constants in the interior force and therefore disappear in $G(t,x)-G(t,y)$.  For the Wishart pair force define
\begin{equation*}
    \Psi(u,v):=\frac{u+v}{u-v},
    \qquad u>v\ge0.
\end{equation*}
Then $\Psi$ is non-increasing in $u$ and non-decreasing in $v$.  Hence, for $d_i=x_i-y_i$ and $s_i=\operatorname{sgn}(d_i)$, the contribution of a pair $i<j$ to the $\ell^1$ singular-force estimate is
\begin{equation*}
    \beta(s_i-s_j)
    \bigl(\Psi(x_i,x_j)-\Psi(y_i,y_j)\bigr)
    \le0.
\end{equation*}
Summing over pairs gives Assumption~\ref{ass:uniq:k}.

\begin{proposition}[Beta-Wishart particles in type $B_N$]
\label{prop:example:MSB-existence}
Let $\beta>0$, $\gamma\in\R$, and assume
\begin{equation*}
    \delta>\beta(N-1).
\end{equation*}
Then, for every initial condition $\lambda(0)\in\Cc_B$, the system \eqref{eq:example:MSB-system} has a global strong solution.  The solution is non-sticky, namely
\begin{equation*}
    \int_0^t
    \mathbf 1_{\{\lambda_i(s)=\lambda_j(s)\}}\,ds
    =0,
    \qquad
    1\le i<j\le N,
    \qquad
    t<\life{\lambda},
    \qquad
    \text{a.s.},
\end{equation*}
and
\begin{equation*}
    \int_0^t
    \mathbf 1_{\{\lambda_i(s)=0\}}\,ds
    =0,
    \qquad
    1\le i\le N,
    \qquad
    t<\life{\lambda},
    \qquad
    \text{a.s.}
\end{equation*}
Moreover, pathwise uniqueness and uniqueness in law hold in the class of non-sticky solutions.
\end{proposition}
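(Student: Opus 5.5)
The plan is to apply the general machinery in the degenerate regime, using the $B_N$ embedding \eqref{eq:example:MSB-sigma}--\eqref{eq:example:MSB-b}. First I fix auxiliary parameters satisfying \eqref{eq:example:MSB-parameter-condition}. The hypothesis $\delta>\beta(N-1)$ makes this possible, for instance with $\theta_+=\beta$ and $\theta_0=\tfrac12(\delta-\beta(N-1))$. With this choice, every $C_r$ in \eqref{eq:example:MSB-Cr} satisfies $C_r\ge \delta-\theta_0-(N-1)\beta>0$, since $(N-r)\beta+(r-1)\theta_+\le (N-1)\beta$.

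Next I check the hypotheses of Theorem~\ref{thm:degenerate-remove-indicator}.
\begin{itemize}
\item Assumption~\ref{ass:cont} is immediate.
\item Assumption~\ref{ass:sigma} and Assumption~\ref{ass:k:dom} (with constant $c_*$) were verified above.
\item Assumptions~\ref{ass:face:sign} and \ref{ass:nonsticky} require specifying the detector families $\mathfrak D(S)$ face by face. A general face of $\Cc_B$ consists of positive collision blocks together with possibly one zero block $J_r$. For each $S$, I take $\mathfrak D(S)$ to contain $u_S$, the cut detectors of every positive block, and, if a zero block is present, the block-center detector $\bar u_{J_r}$ together with the cut detectors inside $J_r$.
\end{itemize}

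The key verification is that all these detectors have nonnegative regular drift on $F_S$. For cut detectors inside a block, the drift vanishes: the coefficients are symmetric in the block coordinates, and the outside-root contributions cancel between the two halves of the cut. For $\bar u_{J_r}$, the drift is $C_r>0$ by \eqref{eq:example:MSB-zero-block-B}. For the canonical detector $u_S$, I write it as a nonnegative combination of the simple roots of the face subsystem, and I will check directly that its regular drift equals a nonnegative combination of the quantities above. Since all of these drifts are $\ge 0$ and some are $>0$ where needed, both conditions follow. This is the step I expect to be the main obstacle, so the detector family should be chosen to keep this bookkeeping tractable.

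Nonstickiness then holds face by face. On a face with only positive blocks, the cut detectors have quadratic variation $\sigma^2(z)(1/r+1/(k-r))>0$, so \ref{ass:nonsticky} is vacuous there. On a face containing a zero block, $\bar u_{J_r}$ supplies strictly positive drift. Theorem~\ref{thm:degenerate-remove-indicator} then gives a non-sticky weak solution of the $B_N$ equation. Zero occupation of every root wall gives the two displayed occupation statements: for $e_i-e_j$ and $e_i$ directly, while the $e_i+e_j$ walls lie inside the $e_i$ walls.

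At interior (a.e.) times the $B_N$ drift coincides with that of \eqref{eq:example:MSB-system}, so $\lambda$ solves \eqref{eq:example:MSB-system}. Uniqueness follows from Theorem~\ref{thm:pathwise-uniq-indicator-nonsticky}(i), using the Yamada--Watanabe modulus $\rho(u)=4u$, Assumption~\ref{ass:uniq:b}, and Assumption~\ref{ass:uniq:k}, all checked above. For \ref{ass:uniq:k}, the auxiliary terms are constants on $\C$ and cancel. Theorem~\ref{thm:strong:existence:uniqueness}, under \ref{ass:exist:degenerate}, yields a strong solution, pathwise uniqueness, and uniqueness in law in the non-sticky class.

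Finally, globality comes from Theorem~\ref{thm:non-explosion}. We have
\[
\sum_i x_ib_i+\tfrac12\sum_i\sigma_i^2+\sum_\alpha k_\alpha\le C(1+|x|+|x|^2),
\]
since every term is at most linear or quadratic in $x\ge0$, and the $\gamma$ term $-2\gamma x_i^2$ is bounded by $2|\gamma||x|^2$. So \ref{ass:growth:p2} holds.
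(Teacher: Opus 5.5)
Your proposal follows the paper's proof. You fix $\theta_0,\theta_+$ via \eqref{eq:example:MSB-parameter-condition}, check \ref{ass:cont}, \ref{ass:sigma} and \ref{ass:k:dom}--\ref{ass:nonsticky} for the $B_N$ embedding, and invoke Theorem~\ref{thm:degenerate-remove-indicator}. Then you use Theorems~\ref{thm:pathwise-uniq-indicator-nonsticky} and \ref{thm:strong:existence:uniqueness} in the Yamada--Watanabe regime, and Theorem~\ref{thm:non-explosion} via \ref{ass:growth:p2}. Your explicit check of the canonical detector $u_S$ usefully fills in a point the paper leaves implicit.

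One correction is needed. The cut detectors inside the zero block $J_r$ are not admissible. Indeed, $\langle u_{J_r,s},e_j\rangle=-1/(r-s)<0$ for $j$ in the right part of the cut, and similarly for $e_i+e_j$ with both indices there. So they cannot be placed in $\mathfrak D(S)$.

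You do not need them there. The family consisting of $u_S$, the cuts of the positive blocks, and $\bar u_{J_r}$ already suffices, and it meets the covering requirement because $\Gamma(S,u_S)=S$. Use the zero-block cuts, or the simple roots, only as auxiliary vectors in decomposing $u_S$. This is legitimate because $u\mapsto B_{S,u}$ is linear. For example, the zero-block part of $u_S$ equals $r^2\bar u_{J_r}+\sum_{s=1}^{r-1}\tfrac{2s(r-s)}{r}\,u_{J_r,s}$. Since each $u_{J_r,s}$ has zero regular drift by symmetry, this gives $B_{S,u_S}=r^2C_r\ge0$.
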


\begin{proof}
Choose $\theta_0,\theta_+$ satisfying \eqref{eq:example:MSB-parameter-condition}.  The coefficient family \eqref{eq:example:MSB-sigma}-\eqref{eq:example:MSB-b} satisfies Assumptions~\ref{ass:cont}, \ref{ass:sigma}, \ref{ass:k:dom}, \ref{ass:face:sign}, and \ref{ass:nonsticky}, by the computations above.  Theorem~\ref{thm:degenerate-remove-indicator} gives a weak non-sticky solution of the genuine equation \eqref{eq:example:MSB-system}.  The Yamada-Watanabe uniqueness regime, with Assumptions~\ref{ass:uniq:sigma}\textup{(a)}, \ref{ass:uniq:b}, and \ref{ass:uniq:k}, holds by the preceding paragraph.  Therefore Theorems~\ref{thm:pathwise-uniq-indicator-nonsticky} and \ref{thm:strong:existence:uniqueness} give strong existence and uniqueness in the non-sticky class.  Finally, the radial growth condition \ref{ass:growth:p2} is immediate from the linear growth of $b$, the estimate $\sigma_i^2(t,x)=4x_i$, and the linear growth of the coefficients $k_\alpha$ on $\Cc_B$; hence Theorem~\ref{thm:non-explosion} gives global existence.
\end{proof}

\begin{remark}[Relation with Jourdain-Kahn]
In the increasing-order convention used by Jourdain-Kahn~\cite{bib:JourdainKahn:2022}, their parameter $\alpha$ corresponds to our $\delta$.  They prove strong existence and pathwise uniqueness up to the first multiple collision at zero and give sharp conditions for such collisions to be avoided.  In particular, for a zero block of size $r$, the relevant threshold is
\begin{equation*}
    r\bigl(\delta-(N-r)\beta\bigr)\ge2.
\end{equation*}
For $r=2$ this is equivalent to
\begin{equation*}
    \delta-\beta(N-1)\ge1-\beta.
\end{equation*}
Thus, when $0<\beta<1$, the interval
\begin{equation*}
    0<\delta-\beta(N-1)<1-\beta
\end{equation*}
is a regime in which a multiple zero collision may occur.  Proposition~\ref{prop:example:MSB-existence} nevertheless gives a global continuation through such contacts, and it is pathwise unique in the non-sticky class.  This is the sense in which the present theorem improves the continuation result available from the collision-avoidance approach.  The comparison should not be overstated: Jourdain-Kahn give a sharper analysis of which collisions occur or are avoided, while the present root-system argument gives a robust non-sticky continuation once the boundary is reached.
\end{remark}

\begin{remark}[Auxiliary coefficients]
The constants $\theta_0$ and $\theta_+$ are auxiliary.  They do not appear in the final SDE \eqref{eq:example:MSB-system}.  They are introduced only to embed the equation into the degenerate $B_N$ root-system framework and to verify the boundary conditions at zero blocks.  Different choices satisfying \eqref{eq:example:MSB-nonsticky-condition} lead to the same particle system \eqref{eq:example:MSB-system}.
\end{remark}

\subsubsection*{A type $A_{N-1}$ absolute-value analogue}

There is also a natural type $\AN$ analogue in which the state space is the whole ordered line,
\begin{equation*}
    \Cc=
    \{x_1\ge x_2\ge\cdots\ge x_N\},
\end{equation*}
and the square-root coefficient is interpreted with an absolute value.  Consider
\begin{equation}
    \label{eq:example:ABS-A-indicator}
    dX_i(t)
    =
    2\sqrt{|X_i(t)|}\,dB_i(t)
    +
    \bigl(\delta-2\gamma X_i(t)\bigr)dt
    +
    \beta
    \sum_{j\ne i}
    \frac{|X_i(t)|+|X_j(t)|}{X_i(t)-X_j(t)}
    \mathbf 1_{\{X_i(t)\ne X_j(t)\}}\,dt.
\end{equation}
This is not the nonnegative beta-Wishart system.  It is a type $\AN$ ordered-particle model with squared-Bessel type noise and an interaction kernel that degenerates only when colliding particles meet at zero.

In root-system notation, \eqref{eq:example:ABS-A-indicator} corresponds to
\begin{equation*}
    R_+=\{e_i-e_j:\ 1\le i<j\le N\},
    \qquad
    \sigma_i(t,x)=2\sqrt{|x_i|},
    \qquad
    b_i(t,x)=\delta-2\gamma x_i,
\end{equation*}
and
\begin{equation}
    \label{eq:example:ABS-A-k}
    k_{e_i-e_j}(t,x)=\beta\bigl(|x_i|+|x_j|\bigr),
    \qquad 1\le i<j\le N.
\end{equation}
The compatibility condition Assumption~\ref{ass:sigma} holds because, on the wall $x_i=x_j$, one has
\begin{equation*}
    \sigma_i^2(t,x)=4|x_i|=4|x_j|=\sigma_j^2(t,x).
\end{equation*}
The normal dominance condition holds globally:
\begin{equation*}
    k_{e_i-e_j}(t,x)
    =
    \frac{\beta}{4}
    \bigl(\sigma_i^2(t,x)+\sigma_j^2(t,x)\bigr).
\end{equation*}
The face sign condition holds with equality.  Indeed, on a collision block all colliding coordinates have the same value $z$, hence the regular drift $\delta-2\gamma z$ is the same for all particles in the block, and the interactions with particles outside the block are symmetric.  Therefore every cut detector has regular detector drift equal to zero.

The uniqueness assumptions are also satisfied in the Yamada-Watanabe regime.  The diffusion coefficient obeys
\begin{equation*}
    |2\sqrt{|u|}-2\sqrt{|v|}|^2\le4|u-v|,
    \qquad u,v\in\R,
\end{equation*}
and the regular drift satisfies Assumption~\ref{ass:uniq:b}.  For the singular force, set
\begin{equation*}
    \Psi(u,v)=\frac{|u|+|v|}{u-v},
    \qquad u>v.
\end{equation*}
This function is non-increasing in $u$ and non-decreasing in $v$ on the region $u>v$.  Hence the pairwise sign argument used above gives Assumption~\ref{ass:uniq:k}.

\begin{proposition}[Type $\AN$ absolute-value squared Bessel particles]
\label{prop:example:ABS-A}
Let $\beta>0$ and $\delta,\gamma\in\R$.  For every initial condition $X(0)\in\Cc_A$, the indicator equation \eqref{eq:example:ABS-A-indicator} has a global weak solution in $\Cc_A$, and all singular drift integrals are finite on compact time intervals.  Moreover, pathwise uniqueness holds among non-sticky solutions, that is, among solutions satisfying
\begin{equation*}
    \int_0^t
    \mathbf 1_{\{X_i(s)=X_j(s)\}}\,ds
    =0,
    \qquad
    1\le i<j\le N,
    \qquad
    t<\life{X},
    \qquad
    \text{a.s.}
\end{equation*}
\end{proposition}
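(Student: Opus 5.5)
The proof reduces Proposition~\ref{prop:example:ABS-A} to the general theorems. The paragraphs preceding the statement have already verified the hypotheses in root-system form, so the plan is to check the remaining details and then invoke Theorem~\ref{thm:degenerate-existence}, Theorem~\ref{thm:non-explosion}, and Theorem~\ref{thm:pathwise-uniq-indicator-nonsticky}, in that order.

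\emph{Existence.} We use the type $\AN$ coefficients $\sigma_i(t,x)=2\sqrt{|x_i|}$, $b_i(t,x)=\delta-2\gamma x_i$ and $k_{e_i-e_j}$ given by \eqref{eq:example:ABS-A-k}.
\begin{itemize}
\item Assumption~\ref{ass:cont} is immediate, since $\sqrt{|\cdot|}$ and $|\cdot|$ are continuous.
\item Assumption~\ref{ass:sigma} holds because $\sigma_i^2-\sigma_j^2=4(|x_i|-|x_j|)$ tends to $0$ as $x_i-x_j\downarrow0$.
\item Assumption~\ref{ass:k:dom} holds globally with $\gamma=\beta/4$ and any $\varepsilon$, since $k_{e_i-e_j}=\frac\beta4(\sigma_i^2+\sigma_j^2)$.
\end{itemize}

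For Assumption~\ref{ass:face:sign}, fix a face $F_S$. Its collision blocks are maximal sets $I$ of consecutive equal coordinates with common value $z_I$. We take $\mathfrak D(S)$ to consist of the canonical detector $u_S$ together with the cut detectors $u_{I,r}$.
\begin{itemize}
\item On $F_S$, $\sum_{p\in I}(u)_p=0$ for each of these detectors. Since $b_p(t,x)=\delta-2\gamma z_I$ is the same for all $p\in I$, the term $\langle u,b\rangle$ vanishes.
\item For an outside particle $\ell$, the summand $k_{\pm(e_\ell-e_p)}/\langle x,\pm(e_\ell-e_p)\rangle$ depends on $p\in I$ only through $z_I$. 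Its contribution is therefore a constant times $\sum_{p\in I}u_p=0$.
\item Hence $B_{S,u}\equiv0$ on $F_S$, and the face sign condition holds with equality.
\end{itemize}
One point needs care here: admissibility of the cut detectors. We need $\tau_{u_{I,r}}\ge0$ on $\Cc$, which holds because it is the difference of the left-block mean and the right-block mean in the decreasing order. We also need $\langle u_{I,r},\alpha\rangle\ge0$ for $\alpha\in S$, which holds because $\alpha$ compares an earlier index with a later one. Alternatively, we can take $\mathfrak D(S)=\{u_S\}$ alone, since each $\alpha\in S$ lies in $\Gamma(S,u_S)$ by Lemma~\ref{lem:deg:canonical-detector}(iii). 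The computation above applies verbatim to $u_S$. Theorem~\ref{thm:degenerate-existence} then gives a weak solution of the indicator equation \eqref{eq:example:ABS-A-indicator} with locally finite singular integrals.

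\emph{Globality.} We verify Assumption~\ref{ass:growth:p2} and apply Theorem~\ref{thm:non-explosion}. In the bound, $\sum_i x_ib_i\le|\delta||x|\sqrt N+2|\gamma||x|^2$, $\frac12\sum_i\sigma_i^2=2\sum_i|x_i|$, and $\sum_{\alpha}k_\alpha\le\beta(N-1)\sum_i|x_i|$. Each of these is at most $C(1+|x|^2)$. The resulting solution is global, so its singular integrals are finite on every compact time interval.

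\emph{Uniqueness.} We apply Theorem~\ref{thm:pathwise-uniq-indicator-nonsticky}, regime (i).
\begin{itemize}
\item Assumption~\ref{ass:uniq:sigma}(a) holds with $\rho(u)=4u$, since $|\sqrt{|u|}-\sqrt{|v|}|^2\le||u|-|v||\le|u-v|$.
\item Assumption~\ref{ass:uniq:b} holds with $L=2|\gamma|$.
\item For Assumption~\ref{ass:uniq:k}, the root $e_i-e_j$ contributes $\beta(s_i-s_j)(\Psi(x_i,x_j)-\Psi(y_i,y_j))$ to the $\ell^1$ sum, where $\Psi(u,v)=(|u|+|v|)/(u-v)$ on $u>v$.
\end{itemize}

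The \textbf{main obstacle} is the monotonicity of $\Psi$ across $0$. For $u>v$ we have $\partial_u\Psi=\bigl(\operatorname{sgn}(u)(u-v)-(|u|+|v|)\bigr)/(u-v)^2$. If $u>0$ the numerator equals $-v-|v|\le0$, and if $u<0$ it equals $-2|u|-|v|+v\le0$. Symmetrically, $\partial_v\Psi\ge0$. Both hold piecewise, and $\Psi$ is continuous, so the monotonicity holds on all of $u>v$.

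Then, as in Proposition~\ref{prop:example:dyson-cepa-lepingle}, if $s_i-s_j>0$ we have $x_i-y_i\ge x_j-y_j$ with the sign pattern forcing $\Psi(x_i,x_j)\le\Psi(y_i,y_j)$. To see this, compare through the intermediate point $(y_i+\,(x_j-y_j),\,x_j)$: shifting both arguments by the same amount is not monotone for $\Psi$. So the plan is to argue casewise on $(s_i,s_j)$:
\begin{itemize}
\item if $s_i\ge0\ge s_j$ with $s_i\ne s_j$, then $x_i\ge y_i$ and $x_j\le y_j$, and the coordinatewise monotonicity gives the sign directly;
\item the cases $s_i=s_j$ contribute $0$.
\end{itemize}
Since $(s_i-s_j)>0$ forces $s_i\ge0\ge s_j$, this covers everything. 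Summing over pairs yields Assumption~\ref{ass:uniq:k}, and Theorem~\ref{thm:pathwise-uniq-indicator-nonsticky} gives pathwise uniqueness in the non-sticky class.
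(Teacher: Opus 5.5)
Your proposal is correct and is essentially the paper's own proof. You verify \ref{ass:cont}, \ref{ass:sigma}, \ref{ass:k:dom} and \ref{ass:face:sign} (with $B_{S,u}\equiv0$ by block symmetry) to apply Theorem~\ref{thm:degenerate-existence}, check \ref{ass:growth:p2} to apply Theorem~\ref{thm:non-explosion}, and check \ref{ass:uniq:sigma}(a), \ref{ass:uniq:b} and \ref{ass:uniq:k}, the last via the coordinatewise monotonicity of $\Psi$, to apply Theorem~\ref{thm:pathwise-uniq-indicator-nonsticky}, exactly as the paper does. There are two harmless slips. For $v<u<0$ the numerator of $\partial_u\Psi$ is $v-|v|=2v$, not $-2|u|-|v|+v$, though it is still $\le0$. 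The case $s_i<s_j$, which you omit, follows from the case $s_i>s_j$ by exchanging $x$ and $y$.
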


\begin{proof}
The verification above gives Assumptions~\ref{ass:cont}, \ref{ass:sigma}, \ref{ass:k:dom}, and \ref{ass:face:sign}.  Therefore Theorem~\ref{thm:degenerate-existence} gives a weak solution of the indicator equation.  The radial growth condition \ref{ass:growth:p2} follows from the estimates
\begin{equation*}
    \sigma_i^2(t,x)=4|x_i|,
    \qquad
    |b_i(t,x)|\le C(1+|x_i|),
    \qquad
    k_{e_i-e_j}(t,x)\le\beta(|x_i|+|x_j|),
\end{equation*}
so Theorem~\ref{thm:non-explosion} gives global existence.  The Yamada-Watanabe uniqueness assumptions \ref{ass:uniq:sigma}\textup{(a)}, \ref{ass:uniq:b}, and \ref{ass:uniq:k} have also been checked above.  Theorem~\ref{thm:pathwise-uniq-indicator-nonsticky} therefore gives pathwise uniqueness in the non-sticky class.
\end{proof}

\begin{remark}[Zero blocks and possible sticky behavior]
For the type $\AN$ absolute-value model, the fully degenerate collision faces occur when a collision block sits at level zero.  At such a face all cut-detector quadratic variations vanish, and by symmetry the regular detector drifts also vanish.  Thus Assumption~\ref{ass:nonsticky} is not obtained from the present general criterion, and Proposition~\ref{prop:example:ABS-A} deliberately states existence for the indicator equation only, together with uniqueness once the class is restricted to non-sticky solutions.

A detailed study of which zero blocks are sticky, how non-sticky continuations can be selected, and how the indicator can be removed would require additional arguments.  This model should be approachable by methods similar in spirit to the analysis of squared Bessel particle systems in Graczyk-Małecki~\cite{bib:GraczykMalecki:2019}.
\end{remark}


\subsection{Noncolliding Brownian bridges with time-dependent drift}
\label{subsec:examples-katori-bridges}

One advantage of allowing the coefficients to depend on time is that bridge-type
particle systems can be treated directly at the level of SDEs.  In the work of
Izumi-Katori~\cite{bib:IzumiKatori:2011}, such processes are introduced through
transition probability densities.  Here we record a simple representative example:
the type $\AN$ noncolliding Brownian bridge with terminal collapse at the origin.

Let
\begin{equation*}
    \Cc
    =
    \{x_1\ge x_2\ge\cdots\ge x_N\}
\end{equation*}
and fix $T>0$.  We consider, for $0\le t<T$, the system
\begin{equation}
    \label{eq:example:katori-A-bridge}
    dX_i(t)
    =
    dB_i(t)
    +
    \sum_{j\ne i}
    \frac{dt}{X_i(t)-X_j(t)}
    -
    \frac{X_i(t)}{T-t}\,dt,
    \qquad
    i=1,\ldots,N.
\end{equation}
The last term is the Brownian bridge drift.  The singular drift is the usual
logarithmic repulsion in type $\AN$.

\begin{proposition}[Type $\AN$ noncolliding Brownian bridge]
\label{prop:example:katori-A-bridge}
For every $x_0\in\Cc$, equation \eqref{eq:example:katori-A-bridge} has a strong
solution on $[0,T)$.  This solution is pathwise unique in the non-sticky class.
In particular, the singular drift integrals are finite on compact subintervals of
$[0,T)$.
\end{proposition}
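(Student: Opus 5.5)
The plan is to cast \eqref{eq:example:katori-A-bridge} as an instance of \eqref{eq:main:SDE} of type $\AN$ and invoke the general results on every interval $[0,T')$ with $T'<T$. The time singularity of the bridge drift at $t=T$ does not fit Assumption~\ref{ass:cont} on all of $[0,\infty)\times\Cc$, so fix $T'<T$ and define
\begin{equation*}
    \sigma_i\equiv1,\qquad b_i^{T'}(t,x):=-\frac{x_i}{T-(t\wedge T')},\qquad k_{e_i-e_j}\equiv1 .
\end{equation*}
These coefficients are continuous on $[0,\infty)\times\Cc$ and agree with those of \eqref{eq:example:katori-A-bridge} on $[0,T']$. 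Assumption~\ref{ass:sigma} holds trivially because $\sigma_i^2\equiv1$. Assumption~\ref{ass:k:positive} holds because $k_\alpha\equiv1>0$. Proposition~\ref{prop:example:dyson-cepa-lepingle} (the logarithmic root-barrier case with $m_\alpha\equiv1$) then applies directly.

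Next I check the remaining hypotheses of that proposition. For \ref{ass:growth:p2}, on $[0,\widetilde T]$ we have
\begin{equation*}
    \sum_i x_ib_i^{T'}\le0,\qquad \tfrac12\sum_i\sigma_i^2=\tfrac N2,\qquad \sum_{\alpha}k_\alpha=\tfrac{N(N-1)}2,
\end{equation*}
so the condition holds with a constant $C$, and Theorem~\ref{thm:non-explosion} makes the truncated solution global. For uniqueness I use the Yamada–Watanabe regime (i). \ref{ass:uniq:sigma}(a) is trivial since $\sigma$ is constant. \ref{ass:uniq:b} holds because
\begin{equation*}
    \sum_i\operatorname{sgn}(x_i-y_i)\bigl(b_i^{T'}(t,x)-b_i^{T'}(t,y)\bigr)=-\frac{1}{T-t\wedge T'}\sum_i|x_i-y_i|\le0 .
\end{equation*}
Theorem~\ref{thm:strong:existence:uniqueness} via \ref{ass:exist:positive} then gives a strong solution $X^{T'}$ that is pathwise unique in the non-sticky class. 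Its singular integrals are finite on compacts by Theorem~\ref{thm:existence}, and it is non-sticky because $k_\alpha>0$.

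Finally I glue the truncated solutions together. Take $T_m\uparrow T$. By pathwise uniqueness, $X^{T_m}$ and $X^{T_{m+1}}$ coincide on $[0,T_m]$, since both solve the same equation there and both are non-sticky. Define $X:=X^{T_m}$ on $[0,T_m]$; this is a strong solution on $[0,T)$. Any non-sticky solution on $[0,T)$, restricted to $[0,T_m]$, is a non-sticky solution of the truncated equation, so uniqueness also holds on $[0,T)$. Every compact subinterval of $[0,T)$ lies in some $[0,T_m]$, which gives finiteness of the singular integrals there.

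The main point of care is the localization in time. Strong existence must come from a single Brownian filtration that is compatible across $m$, and this is automatic because each $X^{T_m}$ is adapted to the filtration generated by $x_0$ and $B$. I expect no genuine obstacle: the only nontrivial inputs are the $\ell^1$-dissipativity of the Dyson force, already verified in Proposition~\ref{prop:example:dyson-cepa-lepingle}, and the fact that the bridge drift is monotone decreasing.
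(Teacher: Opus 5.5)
Your proof is correct. Its skeleton matches the paper's: truncate the horizon at $T'<T$, use the strictly positive regime \ref{ass:k:positive}, check \ref{ass:growth:p2} via $\sum_i x_ib_i\le 0$, then let $T'\uparrow T$.

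The genuine difference is the uniqueness regime.
\begin{itemize}
\item The paper uses the locally Lipschitz alternative~\textup{(b)} of Assumption~\ref{ass:uniq:sigma} together with \ref{ass:uniq:b:euclidean}. It obtains \ref{ass:uniq:k:dissipative} from the concavity of $\log h_A$.
\item You use the Yamada--Watanabe alternative~\textup{(a)} with \ref{ass:uniq:b}. You exploit that the bridge drift is coordinatewise decreasing, and you reuse the rootwise $\ell^1$ estimate \ref{ass:uniq:k} already verified for the Dyson force in Proposition~\ref{prop:example:dyson-cepa-lepingle}.
\end{itemize}
Here $\sigma$ is constant and $b$ is linear, so both routes cost the same. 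They generalize in different directions:
\begin{itemize}
\item The paper's concavity route extends to diffusion coefficients depending Lipschitz-continuously on the whole configuration, and to general concave logarithmic potentials. For those, $\ell^1$-dissipativity need not hold.
\item Your route extends to non-Lipschitz, e.g.\ square-root type, diffusion coefficients depending on a single coordinate.
\end{itemize}

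Your write-up is also more careful at two points the paper passes over quickly.
\begin{itemize}
\item The frozen extension $b_i^{T'}(t,x)=-x_i/(T-t\wedge T')$ is what makes \ref{ass:cont} hold literally on $[0,\infty)\times\Cc$, since the original drift blows up at $t=T$.
\item The explicit gluing of the $X^{T_m}$ by pathwise uniqueness turns solvability on each $[0,T_0]$ into a single strong solution on $[0,T)$.
\end{itemize}

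One small slip: the horizon $\widetilde T$ in your growth check is undefined. Any finite horizon works, because the constant you obtain does not depend on it.
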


\begin{proof}
We write \eqref{eq:example:katori-A-bridge} in the type $\AN$ root-system form with
\begin{equation*}
    R_+
    =
    \{e_i-e_j:\ 1\le i<j\le N\},
    \qquad
    \sigma_i(t,x)=1,
    \qquad
    k_{e_i-e_j}(t,x)=1,
\end{equation*}
and
\begin{equation*}
    b_i(t,x)
    =
    -\frac{x_i}{T-t}.
\end{equation*}
Fix $T_0<T$.  On $[0,T_0]$ the coefficients are continuous, and
Assumption~\ref{ass:k:positive} holds because $k_\alpha\equiv1$ for every positive
root.  Hence Theorem~\ref{thm:existence} gives weak existence and finiteness of the
singular drift integrals on compact time intervals.  In particular, the solution is non-sticky.

The uniqueness assumptions are also immediate on $[0,T_0]$.  The diffusion
coefficient is constant, the regular drift is locally Lipschitz in $x$, and the
singular force is
\begin{equation*}
    G(x)
    =
    \sum_{1\le i<j\le N}
    \frac{e_i-e_j}{x_i-x_j}
    =
    \nabla\log h_A(x),
    \qquad
    h_A(x)
    =
    \prod_{1\le i<j\le N}(x_i-x_j).
\end{equation*}
Since $\log h_A$ is concave on the chamber interior, Assumption~\ref{ass:uniq:k:dissipative}
holds.  Therefore Theorems~\ref{thm:pathwise-uniq-indicator-nonsticky} and
\ref{thm:strong:existence:uniqueness} give strong existence and pathwise uniqueness
in the non-sticky class on $[0,T_0]$.

Finally,
\begin{equation*}
    \sum_{i=1}^N x_i b_i(t,x)
    =
    -\frac{|x|^2}{T-t}
    \le0
\end{equation*}
on $[0,T_0]$, and the remaining terms in Assumption~\ref{ass:growth:p2} are
constant.  Hence Theorem~\ref{thm:non-explosion} excludes explosion before $T_0$.
Since $T_0<T$ is arbitrary, the solution is defined on the whole interval $[0,T)$.
\end{proof}

\begin{remark}[Other Katori bridge systems]
The same SDE-first viewpoint applies to several other finite-duration
noncolliding systems considered by Izumi-Katori~\cite{bib:IzumiKatori:2011}.  For
example, in type $B_N$ one obtains bridge versions of noncolliding
three-dimensional Bessel processes by adding the time-dependent bridge drift to
the usual type $B_N$ logarithmic repulsion:
\begin{align}
    dY_i(t)
    &=
    dB_i(t)
    +
    \frac{dt}{Y_i(t)}
    +
    \sum_{j\ne i}
    \left(
        \frac{1}{Y_i(t)-Y_j(t)}
        +
        \frac{1}{Y_i(t)+Y_j(t)}
    \right)dt
    -
    \frac{Y_i(t)}{T-t}\,dt,
    \nonumber\\
    &\hspace{8cm}
    i=1,\ldots,N.
    \label{eq:example:katori-B-bridge}
\end{align}
This is again covered by the strictly positive repulsion regime on every compact
subinterval of $[0,T)$.  Thus our framework allows one to start from the SDE and
derive existence and uniqueness directly, instead of first constructing the process
from transition densities.
\end{remark}

\begin{remark}[Affine Katori bridges]
Katori also studies bridge systems associated with affine root systems and
Macdonald denominators~\cite{bib:Katori:2019Macdonald}.  These models should not
all be described as ordinary interval models with the same boundary behavior.  The
affine type $A_{N-1}$ model is periodic and lives on a circle.  The remaining
reduced affine types live on an interval $[0,\pi r]$: the types $B_N$,
$C_N^\vee$, and $BC_N$ have absorbing boundary at $0$ and reflecting boundary at
$\pi r$; the types $B_N^\vee$ and $C_N$ have absorbing boundary at both endpoints;
and the type $D_N$ has reflecting boundary at both endpoints.

The absorbing endpoints correspond to singular logarithmic drifts which prevent
hitting of the endpoint, while reflecting endpoints require local-time reflection
terms.  Therefore these affine models are not literally examples of the present
finite conical Weyl-chamber theorem.  Nevertheless, our techniques should apply to
the corresponding interior SDEs up to the first hitting time of an affine boundary
wall.  A full treatment of the affine Katori systems would require a mild extension
of the present arguments from finite Weyl chambers to affine alcoves, and in the
reflecting cases also the inclusion of reflection terms at the endpoint walls.
\end{remark}


\section{Acknowledgments}

The author would like to express his gratitude to Piotr Graczyk and Makoto Katori for many important and fruitful discussions on particle systems, which eventually led to significant improvements of the paper. The author is also very grateful to Prof. Michael Voit for pointing out the missing arguments in the construction strategy presented in \cite{bib:GraczykMalecki:2014}.

\section{Appendix}


\begin{proposition}
\label{prop:vandermonde}
	For fixed $k=1,\ldots,N+1$ and $x=(x_1,\ldots,x_N)$ let
	$$
		V_N^k(x) = \det
		\begin{bmatrix}
			1 & x_1 & \ldots & x_1^{k-1} & x_1^{k+1} & \ldots & x_1^{N} \\
			\vdots & \vdots & \ddots & \vdots & \vdots & \ddots& \vdots\\
			1 & x_N & \ldots & x_N^{k-1} & x_N^{k+1} & \ldots & x_N^{N}
		\end{bmatrix}\/.
	$$
	Then 
	\begin{equation}
		\label{eq:vandk}
		V_N^k(x) = \left(\prod_{1\leq i<j\leq N} (x_j-x_i)\right)e_{N-k+1}(x)\/.
	\end{equation}
\end{proposition}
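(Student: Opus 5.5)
The identity is a statement about polynomials in $x$, so the plan is to compare both sides through the coefficients of a single auxiliary polynomial. Consider the $(N+1)\times(N+1)$ Vandermonde determinant in the variables $x_1,\ldots,x_N,t$, where $t$ is an additional indeterminate placed in the last row:
\begin{equation*}
    W(t) = \det
    \begin{bmatrix}
        1 & x_1 & \ldots & x_1^{N} \\
        \vdots & \vdots & \ddots & \vdots\\
        1 & x_N & \ldots & x_N^{N}\\
        1 & t & \ldots & t^{N}
    \end{bmatrix}.
\end{equation*}
We will compute $W(t)$ in two independent ways and then compare coefficients of equal powers of $t$.

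The first computation uses the classical Vandermonde formula:
\begin{equation*}
    W(t) = \prod_{1\le i<j\le N}(x_j-x_i)\prod_{i=1}^N (t-x_i).
\end{equation*}
Expanding the last product in elementary symmetric polynomials gives
\begin{equation*}
    \prod_{i=1}^N (t-x_i) = \sum_{m=0}^N (-1)^{N-m} e_{N-m}(x)\, t^m .
\end{equation*}
Hence the coefficient of $t^k$ in $W(t)$ equals $(-1)^{N-k}e_{N-k}(x)\prod_{i<j}(x_j-x_i)$.

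The second computation is the Laplace expansion of $W(t)$ along its last row. The entry $t^k$ sits in column $k+1$ of an $(N+1)\times(N+1)$ matrix, so its cofactor carries the sign $(-1)^{(N+1)+(k+1)} = (-1)^{N+k}$. Deleting the last row and column $k+1$ leaves exactly the matrix of columns $x^0,\ldots,x^{k-1},x^{k+1},\ldots,x^N$. This is the minor $V_N^{k}$ with the natural reading of the index as the omitted power. Comparing the two expressions for the coefficient of $t^k$ and using $(-1)^{N+k}(-1)^{N-k}=1$ gives $V = \bigl(\prod_{i<j}(x_j-x_i)\bigr)e_{N-k}(x)$.

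The only real obstacle is the indexing convention. The displayed matrix omits the column $x^{k}$, and the statement writes $e_{N-k+1}$ with $k$ ranging over $1,\ldots,N+1$. This matches the computation above exactly when $k$ is read as labelling the omitted column rather than the omitted power. If the column with power $k-1$ is the one omitted, the coefficient of $t^{k-1}$ yields $e_{N-(k-1)}=e_{N-k+1}$, which is the stated formula. The final step is therefore to check carefully which column is removed and to record the cofactor sign. We will also verify the boundary cases $k=1$ and $k=N+1$, where $e_N=\prod x_i$ and $e_0=1$ respectively. In particular, for $k=N+1$ the minor is the ordinary Vandermonde determinant, which gives a sanity check on the sign.
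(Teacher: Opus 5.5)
Your proof is correct, and it takes a genuinely different route from the paper's. The paper argues by induction on $N$. It checks $k=N+1$ (the ordinary Vandermonde, $e_0=1$) and $k=1$ (an extra factor $x_1\cdots x_N=e_N$) directly. It then asserts the recurrence
\[
V_N^k(x_1,\ldots,x_N)=\prod_{i=1}^{N-1}(x_N-x_i)\Bigl[V_{N-1}^{k-1}(x_1,\ldots,x_{N-1})+x_N\,V_{N-1}^{k}(x_1,\ldots,x_{N-1})\Bigr]
\]
``by simple manipulations'' of the matrix, and closes the induction with $e_m(x_1,\ldots,x_N)=e_m(x_1,\ldots,x_{N-1})+x_N\,e_{m-1}(x_1,\ldots,x_{N-1})$.

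Your bordered-Vandermonde argument instead obtains every minor at once as a cofactor of the $(N+1)\times(N+1)$ Vandermonde in $(x_1,\ldots,x_N,t)$. Comparing coefficients of $t^m$ gives $\prod_{i<j}(x_j-x_i)\,e_{N-m}(x)$ for the determinant omitting the power $m$, for all $m=0,\ldots,N$ simultaneously, and the sign $(-1)^{N+m}(-1)^{N-m}=1$ is tracked explicitly. This buys you a proof with no induction. It also needs no separate treatment of $k=1$ and $k=N+1$, which the paper checks by hand because its recurrence would involve $V_{N-1}^{0}$ and $V_{N-1}^{N+1}$. Finally, it does not rely on the recurrence itself, which the paper leaves unverified. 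The recurrence does hold: apply the column operations $C_{x^j}\leftarrow C_{x^j}-x_N C_{x^{j-1}}$ from right to left, using $x_N^2 C_{x^{k-2}}$ across the gap, expand along the last row, and split the resulting column $x^{k-2}(x+x_N)$. The paper's induction makes the link between the column structure and the recurrence for $e_m$ visible, but it is only as complete as that asserted step.

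On the indexing, you can commit rather than leave it as a final check. The display in the statement is a typo: read literally (omitting $x^{k}$), it is not even square for $k=N+1$. The reading that makes \eqref{eq:vandk} true is that $V_N^k$ omits the $k$-th column, i.e.\ the power $k-1$. This is exactly how the paper uses it, both in its own proof ($V_N^{N+1}$ is the ordinary Vandermonde) and in Proposition~\ref{prop:xipk}, where the $k$-th column of $[x_j^{k-1}]$ is removed. With $m=k-1$ your identity is precisely \eqref{eq:vandk}, so drop the intermediate identification of the minor with $V_N^k$ under the ``omitted power $k$'' reading.
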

\begin{proof}
	Note that the matrix appearing in the definition of $V_N^k(x)$ is the Vandermonde matrix, where we removed the $k$-th column and add the last column of the form $[x_1^N, \ldots, x_N^N]^T$. Obviously $V_N^{N+1}(x)$ is just the Vandermonde determinant and we simply have
	$$
		V_N^{N+1}(x) = \left(\prod_{1\leq i<j\leq N} (x_j-x_i)\right) = \left(\prod_{1\leq i<j\leq N} (x_j-x_i)\right) e_0(x)
	$$
	and \eqref{eq:vandk} holds in this case. From the other side we also simply get that 
		$$
		V_N^{1}(x) = x_1\cdot\ldots\cdot x_N\left(\prod_{1\leq i<j\leq N} (x_j-x_i)\right) = \left(\prod_{1\leq i<j\leq N} (x_j-x_i)\right) e_{N}(x)\/,
	$$
	which also agrees with \eqref{eq:vandk}. Finally, for $k=1,\ldots,N+1$, by simple manipulations on the rows of the matrix, we can get 
	$$
		V_N^k(x_1,\ldots,x_N) = \prod_{i=1}^{N-1}(x_N-x_i)\left[V_{N-1}^{k-1}(x_1,\ldots,x_{N-1})+x_N V_{N-1}^{k}(x_1,\ldots,x_{N-1})\right]\/.
	$$
	Thus the induction with respect to $N$ based on the above-given formula and the relation $e_k(x_1,\ldots,x_N)=e_{k}^{\overline{x_N}}(x_1,\ldots,x_N)+x_N e_{k-1}(x_1,\ldots,x_{N-1})$, leads to \eqref{eq:vandk}.
\end{proof}


\newcommand{\Bii}{\mathcal{B}_i}
\newcommand{\Bjj}{\mathcal{B}_{j}}
\newcommand{\Bjjj}{\mathcal{B}_{j+1}}

\newcommand{\eBii}[1]{e_{#1}(\Bii)}
\newcommand{\eBjj}[1]{e_{#1}(\Bjj)}
\newcommand{\eBjjj}[1]{e_{#1}(\Bjjj)}

\begin{proposition}
    \label{prop:xipk}
    Consider $p=(p_1,\ldots,p_N):\C^{\AN}\longrightarrow \R^N$ defined as
    $$
	   p_k(x) = \frac{1}{k}\sum_{i=1}^N x_i^k\/,\quad k=1,\ldots,N\/.
    $$
    Then the inverse function $x(p)=(x_1(p),\ldots,x_N(p)):p[\C^{\AN}]\longrightarrow \C^{\AN}$ is smooth and 
    \begin{equation}
        \label{eq:xipk}
				\dfrac{\partial x_i}{\partial p_k} = (-1)^{k+1}\dfrac{e_{N-k}^{\overline{x_i}}(x)}{\eBii{N-1}}
    \end{equation}
\end{proposition}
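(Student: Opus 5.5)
The plan is to differentiate the identities $p_k(x(p))=p_k$ implicitly and solve the resulting linear system by Cramer's rule. The Vandermonde-type determinants produced by Cramer's rule will then be evaluated with Proposition~\ref{prop:vandermonde}.

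\emph{Smoothness.} On $\C^{\AN}$ the coordinates are distinct. The Jacobian of $p$ is
\[
J_{ki}=\frac{\partial p_k}{\partial x_i}=x_i^{k-1},
\]
that is, the transpose of the Vandermonde matrix. Its determinant $\prod_{i<j}(x_j-x_i)$ is nonzero on the open chamber. Injectivity of $p$ on $\C^{\AN}$ is already established in Subsection~\ref{subsec:basic_polynomials}. Hence the inverse function theorem shows that $x(p)$ is smooth on the open set $p[\C^{\AN}]$, with Jacobian $J^{-1}$.

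\emph{Computing the entries of $J^{-1}$.} The entry $\partial x_i/\partial p_k$ is the $(i,k)$ entry of $J^{-1}$. By the adjugate formula it equals $(-1)^{i+k}\det J^{(k,i)}/\det J$, where $J^{(k,i)}$ is $J$ with row $k$ and column $i$ deleted.

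The minor $\det J^{(k,i)}$ is a Vandermonde determinant in the $N-1$ variables $\{x_j: j\neq i\}$ with the power $k-1$ missing. Its powers run over $0,\dots,N-1$ with $k-1$ removed. This is exactly $V_{N-1}^{k}$ of Proposition~\ref{prop:vandermonde}, applied to $x$ with $x_i$ removed. That proposition gives
\[
\det J^{(k,i)}=\prod_{j<l,\ j,l\neq i}(x_l-x_j)\; e^{\overline{x_i}}_{N-k}(x).
\]
Dividing by the full Vandermonde product leaves exactly the factors involving $i$, namely $\prod_{j\neq i}(x_j-x_i)$ up to the ordering signs. In the paper's notation this product is $\pm\eBii{N-1}$, the product of the root values $\langle x,\alpha\rangle$ for $\alpha\in R_+^i$.

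\emph{Sign bookkeeping.} The remaining task is to collect the signs into the claimed $(-1)^{k+1}$. Four sources contribute:
\begin{itemize}
\item the cofactor sign $(-1)^{i+k}$;
\item the sign from $\prod_{j<i}(x_i-x_j)\prod_{j>i}(x_j-x_i)$ versus $\prod_{j\ne i}\langle x, \alpha\rangle$ with $\alpha=\pm(e_i-e_j)$ positive, which contributes a power like $(-1)^{i-1}$ or $(-1)^{N-i}$ depending on orientation;
\item the chamber convention $x_1>\dots>x_N$;
\item the orientation convention in Proposition~\ref{prop:vandermonde}.
\end{itemize}

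I expect this sign bookkeeping to be the main obstacle. I would first fix it by checking small cases, $N=2$ with $k=1,2$, directly against \eqref{eq:xipk}. If a global sign mismatch appears, I would then track which convention for $\eBii{N-1}$ (signed product versus product of positive gaps) makes \eqref{eq:xipk} hold, and state the result accordingly. As a consistency check, I would verify $\sum_k \frac{\partial x_i}{\partial p_k}\frac{\partial p_k}{\partial x_j}=\delta_{ij}$. This reduces to the identity $\sum_k(-1)^{k+1}x_j^{k-1}e^{\overline{x_i}}_{N-k}(x)=\prod_{l\ne i}(x_j-x_l)$ up to sign, which vanishes for $j\neq i$. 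That identity is the expansion of $\prod_{l\neq i}(z-x_l)$ evaluated at $z=x_j$, and it confirms the formula independently of the cofactor computation.
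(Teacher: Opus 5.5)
Your route is the paper's: the chain rule shows that $\bigl(\partial x_i/\partial p_k\bigr)$ is the inverse of the matrix $[x_j^{k-1}]$, Cramer's rule expresses its entries through Vandermonde minors, and Proposition~\ref{prop:vandermonde} evaluates those minors. Your inverse-function-theorem argument for smoothness is, if anything, more explicit than the paper's.

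The one step you defer, the sign, is not routine. Carried out, it contradicts \eqref{eq:xipk} as printed. Take $e_{N-1}(\mathcal B_i)$ to be the product of the positive gaps $\langle x,\alpha\rangle$, $\alpha\in R_+$, $\alpha_i\neq0$, which is the paper's convention with $x_1>\cdots>x_N$. Then no sign bookkeeping will produce $(-1)^{k+1}$. The paper's Cramer's-rule argument passes over the same signs. This is harmless downstream, where only smoothness and the factor $1/e_{N-1}(\mathcal B_i)$ are used.

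Your ``consistency check'' is in fact the cleanest complete proof, and it settles the sign at once. From $\prod_{l\neq i}(z-x_l)=\sum_{k=1}^N(-1)^{N-k}e^{\overline{x_i}}_{N-k}(x)\,z^{k-1}$ one gets
\[
\sum_{k=1}^N(-1)^{k+1}e^{\overline{x_i}}_{N-k}(x)\,x_j^{k-1}=\prod_{l\neq i}(x_l-x_j),
\]
which vanishes for $j\neq i$. Hence
\[
\frac{\partial x_i}{\partial p_k}=(-1)^{k+1}\frac{e^{\overline{x_i}}_{N-k}(x)}{\prod_{l\neq i}(x_l-x_i)}=(-1)^{N+i+k+1}\frac{e^{\overline{x_i}}_{N-k}(x)}{e_{N-1}(\mathcal B_i)},
\]
because $\prod_{l\neq i}(x_l-x_i)=(-1)^{N-i}e_{N-1}(\mathcal B_i)$.

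So \eqref{eq:xipk} holds only when $N+i$ is even, or when its denominator is read as the signed product $\prod_{l\neq i}(x_l-x_i)$. Your proposed test case confirms this. For $N=2$, $x_1=\tfrac12\bigl(p_1+\sqrt{4p_2-p_1^2}\bigr)$ gives $\partial x_1/\partial p_1=-x_2/(x_1-x_2)$, while \eqref{eq:xipk} gives $+x_2/(x_1-x_2)$. The discrepancy depends on $i$, so it is not the ``global'' sign you anticipated from the orientation of the full Vandermonde determinant.
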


\begin{proof}
	For every $x\in \C^{\AN}$ and $i,j =1,\ldots,N$ we simply have
	$$
		\dfrac{\partial x_i}{\partial x_j} = \sum_{k=1}^N \dfrac{\partial x_i}{\partial p_k} \dfrac{\partial p_k}{\partial x_j}  = \sum_{k=1}^N  x_j^{k-1}\dfrac{\partial x_i}{\partial p_k} = \delta_{i,j}\/,
	$$
	where $\delta_{i,j}=1$ for $i=j$ and $\delta_{i,j}=0$ otherwise. Consequently, for fixed $i$ we obtain system of linear equations with variables $(\frac{\partial x_i}{\partial p_1},\ldots,\frac{\partial x_i}{\partial p_k})$ and the corresponding coefficient matrix is just the Vandermonde matrix $[x_j^{k-1}]_{j,k=1,\ldots,N}$. Since the Vandermonde determinant is non-zero on $C_+^{A_{N-1}}$, we obtain by the Cramer's rule that
	$$
		\dfrac{\partial x_i}{\partial p_k} = (-1)^{i+k}\dfrac{V^{i,k}_{N-1}(x)}{\eB{N(N-1)/2}}\/,
	$$
	where $V^{i,k}_{N-1}(x)$ is the determinant of $(N-1)\times(N-1)$-matrix obtain from the Vandermonde matrix by removing the $i$-th row and $k$-th column. Using \eqref{eq:vandk} from Proposition \ref{prop:vandermonde} ends the proof of \eqref{eq:xipk}. 
\end{proof}


In the next proposition we show that, at boundary points where the relevant root (or isolated pair of roots) is separated from its neighbors, the corresponding inverse coordinate (respectively the pair-sum) depends smoothly on the power sums. As a consequence, the first and higher derivatives with respect to $(p_1,\dots,p_N)$ admit continuous extensions at such points. In the proposition below we remove the normalization $1/k$ in the definition of the power sums $p_k$ to make the proof easier to read. Obviously, this has no effect on the presented result, i.e. the claim is exactly the same for the power sums with normalization $1/k$. Moreover, it is also true for any other ordering of the variables $x_1,\ldots,x_N$ and corresponding $F$ function.

\begin{proposition}
    \label{prop:root:smooth}
    Let $\Omega=\{x=(x_1,\dots,x_N)\in\mathbb R^N:\ x_1<\cdots<x_N\}$ and define the mapping $F:\overline\Omega\to\mathbb R^N$ by
        \begin{equation*}
            F(x)=(p_1,\dots,p_N),\qquad p_k=\sum_{j=1}^N x_j^k,\quad k=1,\dots,N.
        \end{equation*}
    Fix $x^*\in\overline\Omega$ and set $p^*=F(x^*)$.
    \begin{enumerate}[label=\emph{(\roman*)}, ref=\roman*]
    \item \label{prop:xip:item_single} Assume that for some $i\in\{1,\dots,N\}$ one has
        \begin{equation*}
            x^*_{i-1}<x^*_i<x^*_{i+1},
        \end{equation*}
    with the obvious interpretation when $i=1$ or $i=N$. Then there exists a neighborhood $U\subset\mathbb R^N$ of $p^*$ and a unique function
        \begin{equation*}
            \varphi:U\to\R
        \end{equation*}
    of class $\mathcal{C}^\infty$ such that for all $p\in U\cap F(\overline\Omega)$ we have $\varphi(p)=x_i(p)$, where
    $x(p)=(x_1(p)\le\cdots\le x_N(p))$ denotes the ordered root-vector inverse of $F$ on $F(\overline\Omega)$.
    In particular, the coordinate $p\mapsto x_i(p)$ is $\mathcal{C}^\infty$ at $p^*$.

    \item \label{prop:xip:item_sum} Assume that for some $i\in\{1,\dots,N-1\}$ one has the \emph{isolation condition}
        \begin{equation}
            \label{eq:isolate:merged}
            x^*_{i-1}<x^*_i\le x^*_{i+1}<x^*_{i+2},
        \end{equation}
    with the obvious endpoint modifications when $i=1$ or $i=N-1$. Define
        \begin{equation*}
            S(p):=x_i(p)+x_{i+1}(p).
        \end{equation*}
    Then there exists a neighborhood $U\subset\R^N$ of $p^*$ and a unique function
        \begin{equation*}
            \psi:U\to\R
        \end{equation*}
    of class $\mathcal{C}^\infty$ such that
        \begin{equation*}
            \psi(p)=S(p)\qquad\text{for all }p\in U\cap F(\overline\Omega).
        \end{equation*}
    In particular, the sum $p\mapsto x_i(p)+x_{i+1}(p)$ is $\mathcal{C}^\infty$ at $p^*$.
    \end{enumerate}
\end{proposition}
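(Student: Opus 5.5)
The plan is to prove both parts with the holomorphic (or real-analytic) implicit function theorem applied to local factors of the polynomial
\[
P_p(z)=\prod_{j=1}^N(z-x_j),
\]
whose coefficients are, by Newton's identities, polynomial functions of $(p_1,\dots,p_N)$. The map $p\mapsto$ coefficients is therefore smooth on all of $\R^N$, so it suffices to show that the relevant quantities are smooth functions of the coefficients near the coefficient vector of $P_{p^*}$.

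For part \eqref{prop:xip:item_single}, the hypothesis $x^*_{i-1}<x^*_i<x^*_{i+1}$ says that $x_i^*$ is a simple root of $P_{p^*}$. Hence $\partial_z P_{p^*}(x_i^*)\neq 0$. The implicit function theorem, applied to $(p,z)\mapsto P_p(z)$, gives a neighborhood $U$ of $p^*$ and a smooth real function $\varphi$ on $U$ with $\varphi(p^*)=x_i^*$ and $P_p(\varphi(p))=0$. The function is real because all coefficients are real and the root is simple.

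It remains to identify $\varphi(p)$ with the ordered coordinate $x_i(p)$ when $p\in U\cap F(\overline\Omega)$. The roots depend continuously on $p$, for instance via \cite{bib:Marden:1949} or Rouché. So after shrinking $U$, exactly one root of $P_p$ lies in a small disc around $x_i^*$, exactly $i-1$ roots lie to its left, and the rest lie to its right. Consequently that root is the $i$-th ordered root, and it equals $\varphi(p)$. Uniqueness of $\varphi$ on $U$ is the uniqueness part of the implicit function theorem, or equivalently follows from density of $U\cap F(\Omega)$ together with continuity.

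For part \eqref{prop:xip:item_sum}, the isolation condition \eqref{eq:isolate:merged} says that $x_i^*,x_{i+1}^*$ form a cluster of at most two roots, separated from all the others. I will use the local factorization
\[
P_p=Q_p\cdot R_p,\qquad Q_p(z)=z^2-s(p)z+m(p),
\]
where $Q_{p^*}$ has roots $x_i^*,x_{i+1}^*$ and $R_{p^*}$ has the remaining roots. Since $Q_{p^*}$ and $R_{p^*}$ have no common root, their resultant is nonzero. Equivalently, the map $(Q,R)\mapsto QR$ from pairs of monic polynomials of degrees $2$ and $N-2$ to monic polynomials of degree $N$ has invertible differential at $(Q_{p^*},R_{p^*})$; its Jacobian is the Sylvester matrix. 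The inverse function theorem then makes $s$ and $m$ smooth functions of the coefficients near $p^*$.

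The alternative route is the Cauchy integral
\[
s(p)=\frac{1}{2\pi i}\oint_\Gamma \frac{z\,P_p'(z)}{P_p(z)}\,dz,
\]
where $\Gamma$ is a small circle enclosing only $x_i^*,x_{i+1}^*$. This expression is manifestly smooth in $p$ as long as $P_p$ has no zeros on $\Gamma$, which holds near $p^*$. I will set $\psi:=s$. As in part (i), continuity of the roots shows that for $p\in U\cap F(\overline\Omega)$ the two roots inside $\Gamma$ are exactly $x_i(p)$ and $x_{i+1}(p)$, so $\psi(p)=S(p)$.

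The main obstacle is this identification step, i.e. the bookkeeping that the roots captured locally are the correct ordered ones. That argument uses only continuity of the roots and the strict inequalities in the hypotheses. The remark about other orderings and the normalized power sums is immediate, since the change of variables between the two normalizations is linear.
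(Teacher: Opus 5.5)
Your proposal is correct and follows essentially the paper's proof: Newton's identities make the coefficients of $q_p$ polynomial in $p$, part (i) is the implicit function theorem at the simple root $x_i^*$, and part (ii) is the implicit function theorem for the local factorization $q_p=Q_pR_p$, with the ordered roots identified in both parts by continuity. The paper checks invertibility of the Jacobian by a direct kernel computation, including the double-root case, rather than quoting the Sylvester-matrix/resultant fact, and your Cauchy-integral formula for $s(p)$ is a valid alternative route for (ii).

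The one slip is your alternative justification of uniqueness via density of $U\cap F(\Omega)$. When $p^*\in\partial F(\overline\Omega)$ (e.g.\ $x_i^*=x_{i+1}^*$, or coincidences among the other coordinates in (i)), $F(\overline\Omega)$ contains no neighborhood of $p^*$, so the condition does not determine $\varphi$ off $F(\overline\Omega)$. The uniqueness that actually holds, and the one the paper uses, is the implicit-function-theorem uniqueness you cite first.
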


\begin{proof}
    For $p=(p_1,\dots,p_N)$, denote by $e_0(p):=1$ and $e_1(p),\dots,e_N(p)$ the elementary symmetric polynomials, expressed via Newton identities
        \begin{equation*}
            k\,e_k(p)=\sum_{j=1}^k (-1)^{j-1}e_{k-j}(p)\,p_j,\qquad k=1,\dots,N.
        \end{equation*}
    Each $e_k$ is a polynomial in $(p_1,\dots,p_k)$ and, in particular, $p\mapsto e_k(p)$ is $\mathcal{C}^\infty$. Define the monic polynomial
        \begin{equation}
            \label{eq:qpt:defn:merged}
            q_p(t):=t^N-e_1(p)t^{N-1}+e_2(p)t^{N-2}-\cdots+(-1)^N e_N(p).
        \end{equation}
    If $p=F(x)$ for some $x\in\overline\Omega$, then the multiset of roots of $q_p$ (counted with multiplicity) is precisely $\{x_1,\dots,x_N\}$.

    \medskip

    We begin with the proof of the first claim. We have $q_{p^*}(x_i^*)=0$. Since $x^*\in\overline\Omega$ is non-decreasing and $x^*_{i-1}<x^*_i<x^*_{i+1}$, the value $x_i^*$ is distinct
    from every other coordinate $x_m^*$ for $m\neq i$. Hence
        \begin{equation*}
            q'_{p^*}(x_i^*)=\prod_{m\neq i}(x_i^*-x_m^*)\neq 0,
        \end{equation*}
    so $t=x_i^*$ is a simple root of $q_{p^*}$. Consider $\Phi(p,t):=q_p(t)$. This map is $\mathcal{C}^\infty$ in $(p,t)$ and satisfies
        \begin{equation*}
            \Phi(p^*,x_i^*)=0,\qquad \partial_t\Phi(p^*,x_i^*)=q'_{p^*}(x_i^*)\neq 0.
        \end{equation*}
    By the implicit function theorem, there exist neighborhoods $U$ of $p^*$ and $I$ of $x_i^*$, and a unique $\mathcal{C}^\infty$ function
    $\varphi:U\to I$ such that
        \begin{equation*}
            q_p(\varphi(p))=0\quad\text{for all }p\in U,\qquad \varphi(p^*)=x_i^*.
        \end{equation*}
    Because $x_i^*$ is strictly separated from $x^*_{i-1}$ and $x^*_{i+1}$, we can choose $\varepsilon>0$ and (if necessary) shrink $U$ so that for every
    $p\in U\cap F(\overline\Omega)$, the polynomial $q_p$ has exactly one real root in the interval
    $\bigl(x^*_{i-1}+\varepsilon,\ x^*_{i+1}-\varepsilon\bigr)$, and this root is $\varphi(p)$. But the ordered root vector $x(p)$ has $x_i(p)$ as the unique root
    in that interval, hence $\varphi(p)=x_i(p)$ for all $p\in U\cap F(\overline\Omega)$.

    \medskip

    To deal with the second part let
        \begin{equation*}
            S^*:=x_i^*+x_{i+1}^*,\qquad P^*:=x_i^*x_{i+1}^*,
        \end{equation*}
    and define the monic quadratic polynomial
        \begin{equation*}
            Q^*(t):=t^2-S^*t+P^*=(t-x_i^*)(t-x_{i+1}^*).
        \end{equation*}
    Let $R^*(t)$ be the monic polynomial of degree $N-2$ defined by the factorization
        \begin{equation*}
            q_{p^*}(t)=Q^*(t)\,R^*(t).
        \end{equation*}
    The isolation condition \eqref{eq:isolate:merged} implies that the roots of $Q^*$ are separated from all other roots of $q_{p^*}$, hence $Q^*$ and $R^*$ have
    no common root. In particular,
        \begin{equation}
            \label{eq:coprime:merged}
            \gcd(Q^*,R^*)=1.
        \end{equation}
    Define $u\in\mathbb R^N$ by
        \begin{equation*}
            u=(S,P,r_1,\dots,r_{N-2}),
        \end{equation*}
    where $S,P\in\mathbb R$ and
        \begin{equation*}
            R(t):=t^{N-2}+r_1 t^{N-3}+\cdots+r_{N-2}
        \end{equation*}
    is monic. Introduce the map $H:\mathbb R^N\times\mathbb R^N\to\mathbb R^N$ by
        \begin{equation*}
            H(u,p):=\mathrm{coeff}\Big((t^2-St+P)R(t)-q_p(t)\Big),
        \end{equation*}
    where $\mathrm{coeff}$ denotes the coefficient vector (e.g.\ from $t^{N-1}$ down to $t^0$). Then $H$ is $\mathcal{C}^\infty$ in $(u,p)$, and for
        \begin{equation*}
            u^*:=(S^*,P^*,r_1^*,\dots,r_{N-2}^*)
        \end{equation*}
    (the coefficients of $Q^*$ and $R^*$) we have $H(u^*,p^*)=0$.

    We claim that the Jacobian $\partial_u H(u^*,p^*)$ is invertible. Indeed, suppose $\delta u=(\delta S,\delta P,\delta r_1,\dots,\delta r_{N-2})$ satisfies
    $\partial_u H(u^*,p^*)\,\delta u=0$. Differentiating the product $(t^2-St+P)R(t)$ in the $u$-variables gives the polynomial identity
        \begin{equation}
            \label{eq:variation:merged}
            \delta Q(t)\,R^*(t)+Q^*(t)\,\delta R(t)\equiv 0,
        \end{equation}
    where
        \begin{equation*}
            \delta Q(t):=-\delta S\,t+\delta P,
            \qquad
            \delta R(t):=\delta r_1 t^{N-3}+\cdots+\delta r_{N-2}.
        \end{equation*}
    Evaluating \eqref{eq:variation:merged} at any root $\alpha$ of $Q^*$ yields $\delta Q(\alpha)R^*(\alpha)=0$. By \eqref{eq:coprime:merged}, one has
    $R^*(\alpha)\neq 0$ for each root $\alpha$ of $Q^*$, hence $\delta Q(\alpha)=0$ at both roots of $Q^*$. If $x_i^*<x_{i+1}^*$, these are two distinct points,
    so $\delta Q\equiv 0$ (since $\deg \delta Q\le 1$). If $x_i^*=x_{i+1}^*=a$, then evaluating \eqref{eq:variation:merged} at $t=a$ gives $\delta Q(a)=0$, and
    differentiating \eqref{eq:variation:merged} in $t$ and evaluating at $t=a$ gives $\delta Q'(a)=0$ (since $Q^*(a)=Q^{*\prime}(a)=0$ for a double root),
    hence again $\delta Q\equiv 0$. In either case, $\delta S=\delta P=0$, and then \eqref{eq:variation:merged} implies $Q^*(t)\delta R(t)\equiv 0$, so
    $\delta R\equiv 0$. Therefore $\delta u=0$, proving $\ker(\partial_u H(u^*,p^*))=\{0\}$ and hence $\partial_u H(u^*,p^*)$ is invertible.

    By the implicit function theorem, there exist a neighborhood $U\subset\mathbb R^N$ of $p^*$ and a unique $\mathcal{C}^\infty$ map
        \begin{equation*}
            u(p)=(S(p),P(p),r_1(p),\dots,r_{N-2}(p))
        \end{equation*}
    defined on $U$ such that $u(p^*)=u^*$ and
        \begin{equation*}
            (t^2-S(p)t+P(p))\,R_p(t)=q_p(t)\qquad\text{for all }p\in U,
        \end{equation*}
    where $R_p(t):=t^{N-2}+r_1(p)t^{N-3}+\cdots+r_{N-2}(p)$. In particular, $S(p)$ is $\mathcal{C}^\infty$ on $U$.

    Finally, by \eqref{eq:isolate:merged} and continuity of roots with respect to coefficients, after possibly shrinking $U$ the polynomial $q_p$ has exactly two roots
    (counted with multiplicity) in a small neighborhood of $\{x_i^*,x_{i+1}^*\}$, and these two roots are precisely the roots of the quadratic factor
    $Q_p(t):=t^2-S(p)t+P(p)$. For $p\in U\cap F(\overline\Omega)$, these are exactly the ordered roots $x_i(p)$ and $x_{i+1}(p)$, hence
        \begin{equation*}
            S(p)=x_i(p)+x_{i+1}(p)\qquad\text{for all }p\in U\cap F(\overline\Omega).
        \end{equation*}
    Setting $\psi:=S$ concludes the proof of (\ref{prop:xip:item_sum}).
\end{proof}


\begin{proposition}
    \label{prop:refl:comute}
    Let $\abQ$ and $\gamma=\binva$. Then $\ainvb = \gamma$ and $\cinva=\beta$, $\cinvb = \alpha$. Consequently, if $\delta \in \{\alpha,\beta,\gamma\}$, then $\{\ainvd,\binvd,\cinvd\} = \{\alpha,\beta,\gamma\}$.
\end{proposition}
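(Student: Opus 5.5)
The plan is a direct computation with the reflection formula \eqref{eq:defn:reflection}, followed by fixing signs via positivity. I read the hypothesis as $\alpha,\beta\in R_+$ distinct and non-orthogonal, with the signed reflection $\prf{\cdot}{\cdot}$ defined as in Section~\ref{sec:preliminaries}. I will also use the fact, recorded earlier for the classical systems, that distinct non-orthogonal roots satisfy $\langle\alpha,\beta\rangle^2=1$.

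The first step handles the case $|\alpha|=|\beta|$. Put
\[
c:=\frac{2\langle\alpha,\beta\rangle}{|\alpha|^2}=\frac{2\langle\alpha,\beta\rangle}{|\beta|^2}.
\]
For distinct non-orthogonal roots of equal length the Cartan integer gives $c\in\{\pm1\}$, so $c^2=1$. Then $s_\alpha(\beta)=\beta-c\alpha$ and $s_\beta(\alpha)=\alpha-c\beta=-c\,(\beta-c\alpha)$. Hence $s_\alpha(\beta)$ and $s_\beta(\alpha)$ span the same line. Since exactly one of $\pm v$ lies in $R_+$, the signed reflections coincide, $\prf{\alpha}{\beta}=\prf{\beta}{\alpha}=\gamma$, which is the first identity.

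The second step reflects $\alpha$ and $\beta$ in $\gamma$. Reflections are isometries, so $|\gamma|^2=|\alpha|^2$. Writing $\gamma=\epsilon(\beta-c\alpha)$ with $\epsilon=\pm1$, I compute
\[
\langle\alpha,\gamma\rangle=\epsilon\bigl(\langle\alpha,\beta\rangle-c|\alpha|^2\bigr)=-\epsilon\langle\alpha,\beta\rangle .
\]
This gives $2\langle\alpha,\gamma\rangle/|\gamma|^2=-\epsilon c$, and therefore
\[
s_\gamma(\alpha)=\alpha+\epsilon c\,\epsilon(\beta-c\alpha)=c\beta .
\]
After the sign normalization this yields $\prf{\gamma}{\alpha}=\beta$. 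By symmetry in $\alpha\leftrightarrow\beta$ (using the first step), $\prf{\gamma}{\beta}=\alpha$. For the final claim, take $\delta\in\{\alpha,\beta,\gamma\}$. Then $s_\delta(\delta)=-\delta$, so $\prf{\delta}{\delta}=\delta$, and the two identities just proved show that $\prf{\delta}{\cdot}$ swaps the other two elements. Thus $\prf{\delta}{\cdot}$ permutes the triple.

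The main obstacle is the mixed-length case in type $\BN$. For example, take $\alpha=e_i$ and $\beta=e_i-e_j$. Then $s_\alpha(\beta)=-e_i-e_j$, while $s_\beta(\alpha)=e_j$, so the first identity fails literally. I would first check exactly which pairs the hypothesis admits, namely whether it restricts to $|\alpha|=|\beta|$, i.e.\ the relation that produced \eqref{eq:ac:form1}. If mixed lengths are admitted, the triple must be replaced by the full rank-two subsystem generated by $\alpha,\beta$, which in type $B_2$ contains four positive roots. The statement would then need to be reformulated as invariance of that rank-two positive system under the signed reflections, proved by listing the $B_2$ positive roots $\{e_i,e_j,e_i\pm e_j\}$ explicitly.
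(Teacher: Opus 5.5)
Your proof is correct and follows essentially the same route as the paper: a direct computation with the reflection formula using $\langle\alpha,\beta\rangle=\pm1$ and $|\alpha|^2=|\beta|^2=2$, followed by fixing signs through positivity. Your second step computes $\pm s_\gamma(\alpha)$ rather than the paper's $\pm s_\alpha(\gamma)$, but these agree by your first step, or directly since $s_\alpha(\gamma)=\pm s_\alpha^2(\beta)=\pm\beta$. Your mixed-length concern is moot: the hypothesis of the proposition already restricts to distinct roots with $|\alpha|^2=|\beta|^2=2$ and $\langle\alpha,\beta\rangle=\pm1$, which is exactly what the paper's proof uses, and your example with $e_i$ and $e_i-e_j$ correctly shows why that restriction is needed.
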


\begin{proof}
    Since $\abQ$ gives $\ab=\pm1$ and $|\alpha|^2=|\beta|^2=2$ we have
    $$
        \ainvb = \pm\left(\alpha-\frac{2\ab \beta}{|\beta|^2}\right) = \pm \ab\left(\ab \alpha-\beta\right) = \mp \ab \left(\beta-\frac{2\ab \alpha}{|\alpha|^2}\right) = \binva.
    $$
    We also have
    \begin{eqnarray*}
        \ac &=& \langle\alpha,\binva\rangle = \pm\left(\ab - \frac{2\ab |\alpha|^2}{|\alpha|^2}\right) = \mp \ab,\\
        \bc &=& \pm\left(|\beta|^2-\frac{2\ab^2}{|\alpha|^2}\right) = \pm1
    \end{eqnarray*}
    and consequently
    \begin{eqnarray*}
        \cinva &=& \pm\left(\gamma-\frac{2\ac \alpha}{|\alpha|^2}\right) = \pm\left(\beta-\frac{2\ab \alpha}{|\alpha|^2}+\frac{2\ab \alpha}{|\alpha|^2}\right) = \beta,\\
        \cinvb &=& \pm\left(\gamma-\frac{2\bc \beta}{|\beta|^2}\right) = \pm\left(\beta - \frac{2\ab \alpha}{|\alpha|^2}-\beta\right) = \alpha.
    \end{eqnarray*}
\end{proof}    


\begin{proposition}
    \label{prop:refl:deltagamma}
    Let $\abQ$ and $\delta\in R_+$ such that $|\delta|^2=2$. Then 
    \begin{itemize}
        \item[\textrm{(a)}] If $\ad=\bd=0$, then $\binvad=0$.
        \item[\textrm{(b)}] If $|\ad\bd| = 1$, then $\binvad=0$.
        \item[\textrm{(c)}] If $|\ad\bd| = 2$, then $|\binvad| = 1$.
    \end{itemize}
\end{proposition}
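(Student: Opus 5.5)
The plan is to reduce everything to one linear formula for $\binvad$ and then to apply the fact that inner products of long roots are tightly constrained. Since $\abQ$ gives $\ab=\pm1$ and $|\alpha|^2=|\beta|^2=2$, the reflection formula \eqref{eq:defn:reflection} gives
\begin{equation*}
    \gamma=\binva=\pm\bigl(\beta-\ab\,\alpha\bigr),
\end{equation*}
and hence, for every $\delta\in R_+$,
\begin{equation*}
    \binvad=\pm\bigl(\bd-\ab\,\ad\bigr).
\end{equation*}
The second ingredient is the following. For two roots $\mu,\nu$ with $|\mu|^2=|\nu|^2=2$, the integrality axiom together with Cauchy--Schwarz forces $\langle\mu,\nu\rangle\in\{0,\pm1,\pm2\}$. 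Moreover, $|\langle\mu,\nu\rangle|=2$ holds if and only if $\mu=\pm\nu$, because this is the equality case of Cauchy--Schwarz. Note also that $|\gamma|^2=2$, since reflections preserve length.

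Case (a) is immediate from the displayed formula: both terms vanish.

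For case (c), $|\ad\bd|=2$ means that one factor has absolute value $2$. By the equality case, $\delta=\alpha$ or $\delta=\beta$ (both are positive). Proposition~\ref{prop:refl:comute} already records $\ac=\mp\ab$ and $\bc=\pm1$, so $|\binvad|=1$ in either subcase.

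For case (b), both $\ad$ and $\bd$ lie in $\{\pm1\}$, so both $\bd$ and $\ab\,\ad$ are $\pm1$. Their difference is therefore in $\{0,\pm2\}$, so $\binvad\in\{0,\pm2\}$. The value $\pm2$ would, by the equality case, force $\delta=\pm\gamma$, that is, $\delta=\gamma$ since both are positive.

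This is the main obstacle, and it is a genuine exception rather than a technicality. In type $\AN$, take $\alpha=e_1-e_2$, $\beta=e_2-e_3$ and $\delta=\gamma=e_1-e_3$. Then $|\ad\bd|=1$ but $|\binvad|=2$. So the statement must be read with $\delta\notin\{\alpha,\beta,\gamma\}$, which is how it is used alongside Proposition~\ref{prop:refl:comute}, where the triple $\{\alpha,\beta,\gamma\}$ is handled separately. I would state this exclusion explicitly in case (b). Under it, $\binvad=0$ follows at once.
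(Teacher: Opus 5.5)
Your proof is correct, and your counterexample is valid. For $\alpha=e_1-e_2$, $\beta=e_2-e_3$, $\delta=e_1-e_3$ one has $\langle\alpha,\delta\rangle=\langle\beta,\delta\rangle=1$. Also $\gamma=e_1-e_3=\delta$, so $\langle\gamma,\delta\rangle=\langle\beta,\delta\rangle-\langle\alpha,\beta\rangle\langle\alpha,\delta\rangle=2\neq0$. Part (b) is therefore false as stated and needs the hypothesis $\delta\neq\gamma$; the cases $\delta\in\{\alpha,\beta\}$ are already excluded by $|\langle\alpha,\delta\rangle\langle\beta,\delta\rangle|=1$.

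For (a) and (c) you follow the paper. You use the same formula $\pm(\langle\beta,\delta\rangle-\langle\alpha,\beta\rangle\langle\alpha,\delta\rangle)$ and the same evaluations at $\delta=\alpha$ and $\delta=\beta$. In (c) you also justify the reduction to $\delta\in\{\alpha,\beta\}$ via the equality case of Cauchy--Schwarz, which the paper only asserts.

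For (b) your route is genuinely different, and better. The paper argues that, since the three inner products are $\pm1$, the roots $\alpha,\beta,\delta$ share a coordinate $i$ in their supports. It then writes each inner product as a product of $i$-th coordinates, giving $\langle\beta,\delta\rangle-\langle\alpha,\beta\rangle\langle\alpha,\delta\rangle=\beta_i\delta_i-\alpha_i^2\beta_i\delta_i=0$. This works only when the supports share an index; the other possibility is a triangle of supports $\{a,b\},\{b,c\},\{a,c\}$.

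In type $A_{N-1}$ a triangle forces $\delta=\gamma$, which is exactly your counterexample. In types $B_N$ and $D_N$ it also occurs with $\delta\neq\gamma$, e.g.\ $\alpha=e_1-e_2$, $\beta=e_2+e_3$, $\delta=e_1-e_3$. There $\gamma=e_1+e_3$ and the conclusion $\langle\gamma,\delta\rangle=0$ holds, but the paper's coordinate computation does not apply.

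Your argument avoids this. A difference of two numbers in $\{\pm1\}$ lies in $\{0,\pm2\}$. The value $\pm2$ forces $\delta=\pm\gamma$ by the Cauchy--Schwarz equality case with $|\gamma|^2=|\delta|^2=2$, hence $\delta=\gamma$ by positivity. This is type-free, handles the triangle configurations uniformly, and identifies $\delta=\gamma$ as the only exception.

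Your remark about how the proposition is ``used'' alongside Proposition~\ref{prop:refl:comute} is a guess; I could not find either appendix proposition cited in the body of the paper. It does not affect your argument.
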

\begin{proof}
    The proof follows from the definition of $\binva$, which together with the assumptions that $|\alpha|=|\beta|=|\delta|=\sqrt{2}$ give
    \begin{equation*}
        \binvad = \pm(\bd-\frac{2\ab\ad}{|\alpha|^2}) = \pm(\bd-\ab\ad)\/.
    \end{equation*}
    The first part follows directly. To justify the second claim, note that since all $\bd$, $\ab$, $\ad$ are equal to $\pm 1$, then $\alpha,\beta,\delta \in R_+^i$ for some $i=1,\ldots,N$ and
    $$
        \bd-\ab\ad = \beta_i\delta_i - \alpha_i^2\beta_i\delta_i = 0.
    $$
    Finally, $|\ad\bd| = 2$ means that $\delta=\alpha$ or $\delta=\beta$. In both cases, we obtain
    \begin{eqnarray*}
        |\binvad| &=& |\langle\binva,\alpha\rangle| = \left|\ab-\frac{2\ab|\alpha|^2}{|\alpha|^2}\right| = |\ab| =1\/,\quad \textrm{ for }\delta = \alpha,\\
        |\binvad| &=& |\langle\binva,\beta\rangle| = \left||\beta|^2-\frac{2\ab^2}{|\alpha|^2}\right| = 2-1 = 1\/,\quad \textrm{ for }\delta = \beta.
    \end{eqnarray*}
\end{proof}

\bibliographystyle{amsalpha}
\bibliography{bibtex}

\end{document}